\newcommand{\N}{\mathbb{N}}
\newcommand{\Z}{\mathbb{Z}}
\newcommand{\R}{\mathbb{R}}
\newcommand{\C}{\mathbb{C}}
\newcommand{\U}{\mathbb{U}}
\DeclareMathOperator{\e}{\mathrm{e}}
\DeclareMathOperator{\s}{\mathrm{s}}
\DeclareMathOperator{\rep}{\mathrm{r}}
\newcommand{\floor}[1]{\left\lfloor #1 \right\rfloor}
\newcommand{\ceil}[1]{\left\lceil #1 \right\rceil}
\newcommand{\frp}[1]{\left\{ #1 \right\}}
\newcommand{\abs}[1]{\left| #1 \right|}
\newcommand{\dv}{\, \mid \,}
\newcommand{\norm}[1]{\left\| #1 \right\|}
\newcommand{\fourier}[1]{\widehat{#1}}
\newcommand{\conjugate}[1]{\overline{#1}}
\newcommand{\one}{\mathbf{1}}
\newtheorem{theorem}{Theorem}
\newtheorem{proposition}{Proposition}
\newtheorem{lemma}{Lemma}
\newtheorem{definition}{Definition}
\newtheorem{remark}{Remark}
\numberwithin{equation}{section}
\begin{document}
\title{Digital functions along Squares of Prime Numbers}

\author{Michael Drmota}
\email{michael.drmota@tuwien.ac.at}
\address{Institut f\"ur Diskrete Mathematik und Geometrie
TU Wien\\
Wiedner Hauptstr. 8--10\\
1040 Wien\\
Austria.}
\author{Jo\"el Rivat}
\email{joel.rivat@univ-amu.fr}
\address{Universit\'e d'Aix-Marseille\\
Institut Universitaire de France\\
Institut de Math\'ematiques de Marseille\\
CNRS UMR 7373\\
163, avenue de Luminy, Case 907\\
13288 MARSEILLE Cedex 9\\
France.}
%

\subjclass[2010]{Primary: 11A63, 11L20, 11N05, Secondary: 11N60, 11L03.}
\date{\today} 
\keywords{prime numbers, $q$-additive functions, exponential sums}
\thanks{The first author is supported by the
Austrian Science Foundation FWF, Grant I554
This work was supported by the joint ANR-FWF Grant 4945-N and ANR Grant 20-CE91-0006}

\begin{abstract}
  In the last 20 years the Gelfond conjectures concerning the well
  distribution of the sum-of-digits function along prime numbers and
  along squares have been solved
  and these results, which are strongly connected with the Sarnak
  conjecture,
  were generalized to $q$-multiplicative functions
  and automatic sequences.  
  In this paper we study a combination of both challenges and prove a
  Prime Number Theorem for $q$-multiplicative functions along squares
  which can be rewritten into a well distribution result along squares
  of primes.
\end{abstract}

\maketitle

\begin{center}
  \begin{minipage}[c]{0.8\linewidth}
     \setcounter{tocdepth}{1}
    \tableofcontents  
  \end{minipage}  
\end{center}

\section{Introduction}

It is a classical and still modern problem in Analytic Number Theory to 
decide whether a given (number-theoretic) complex valued sequence $f(n)$ is 
{\it M\"obius disjoint}, that is,
\begin{displaymath}
  \sum_{n\le x} \mu(n) f(n) = o(x) \qquad (x\to\infty),
\end{displaymath}
where $\mu(n)$ denotes the M\"obius function (defined by $\mu(1) = 1$,
$\mu(p_1\cdots p_k) = (-1)^k$ for a product of different prime numbers and
$\mu(n) = 0$ otherwise). It is very well known that the prime number
theorem is equivalent to the statement that $f(n)=1$ is
M\"obius disjoint, and that the Dirichlet prime number theorem is
equivalent to the property that periodic sequences $f(n)$ are M\"obius
disjoint.

Recently M\"obius Disjointness received a lot of attention due to the
{\it Sarnak Conjecture} \cite{bourgain-sarnak-ziegler-2013}
saying that all bounded deterministic
sequences $f(n)$ are M\"obius disjoint (actually the conjecture is
slightly more general and is formulated for sequences of the form
$f(n) = F(T^n x_0)$ for zero entropy dynamical systems $(X,T)$ on a
compact metric space $X$, with $x_0\in X$, and with a continuous
mapping $F:X\to \mathbb{C}$).  Deterministic sequences $f(n)$ can be
defined by the property that for all $\varepsilon > 0$, the set of
$k$-tuples
\begin{displaymath}
  \{(f (n + 0), \ldots , f (n + k - 1)) : n \ge 0\} \subseteq \mathbb{C}^k  
\end{displaymath}
can be covered by $\exp(o(k))$ many balls of radius $\varepsilon$, as
$k$ goes to infinity.  If the range of $f(n)$ is finite then this
means that the subword complexity of $f(n)$ is sub-exponential.

The Sarnak conjecture has been verified for several classes of
deterministic sequences, see
\cite{ferenczi-kulaga-lemanczyk-2018}
including nil-sequences \cite{green-tao-nilsequences-2012}
(the simplest examples of nil-sequences are Weyl sequences 
$f(n) = \exp(2\pi i \alpha n)$ with $\alpha\in \R$) 
and automatic sequences \cite{muellner-2017} 
(automatic sequences can be defined in severals ways
\cite{allouche-shallit-2003}, for example, as the image of a fixed point of constant
length morphisms on a finite alphabet; one of the most prominent example is the
Thue-Morse sequence $t(n) = (-1)^{s_2(n)} \bmod 2$, where $s_2(n)$
denotes the binary sum-of-digits function).

Similarly to M\"obius Disjointness, a sequence $f(n)$ is said to satisfy a 
{\it Prime Number Theorem (PNT)} if
\begin{displaymath}
  \sum_{n\le x} \Lambda(n) f(n) =  c x + o(x) \qquad (x\to\infty)  
\end{displaymath}
for some constant $c\in\C$, where $\Lambda(n)$ denotes the von Mangoldt
$\Lambda$-function (defined by $\Lambda(n) = \log p$ if $n = p^k$ for
a prime power $p^k$ and $\Lambda(n) = 0$ otherwise). PNT's 
are notoriously more difficult to obtain.
Indeed, there are many classes of deterministic sequences that do not
satisfy a PNT (although they are M\"obius disjoint),
for example automatic sequences, where only the logarithmic letter
densities exist \cite{adamczewski-drmota-muellner-2022}; for other
examples see \cite{ferenczi-1997,mauduit-rivat-RS-primes}.
Nevertheless it is
known that nil-sequences \cite{green-tao-nilsequences-2012} and
primitive automatic sequences \cite{muellner-2017} satisfy a PNT.  
It is also conjectured that horocyle flows and
parabolic systems satisfy a PNT
\cite{drmota-lemanczyk-mullner-rivat-2024}.

Several variations and extensions of the Sarnak Conjecture
have been proposed
(see
\cite{ferenczi-kulaga-lemanczyk-2018,drmota-lemanczyk-mullner-rivat-2024}).
The so-called
{\it polynomial Sarnak conjecture} 
(formulated by Eisner \cite{eisner-2015}) 
claims that $f(P(n))$ should be M\"obius disjoint for non-negative
integer valued polynomials $P(x)$. Actually this conjecture is false
in general even if one concentrates on minimal topological dynamical
systems \cite{Hu-Li-Sh-Ye,lemanczyk-2009}. However, it seems
to be true in several interesting instances.  Nil-sequences provide an
{\it easy} example: if $f(n)$ is a nil-sequence
and $P(x)$ is a non-negative integer valued polynomial,
then $f(P(n))$ is again a nil-sequence.
In particular we have a prime number
theorem for $f(P(n))$ in this case, too. Furthermore, since automatic
sequences along arithmetic progressions are still automatic, the
polynomial Sarnak conjecture holds in this case (although this is a
quite trivial remark). However, this becomes much more difficult for
polynomial subsequences of higher degree. A lot of effort has been
invested to study subsequences
along squares $a(n^2)$ of automatic sequences $a(n)$.
It is known that the density properties of $a(n^2)$ are the
same as that of $a(n)$
\cite{mauduit-rivat-2009,adamczewski-drmota-muellner-2022}.
Moreover, it has been shown that the sequence $t(n^2)$ has full
complexity \cite{moshe-2007}. Furthermore $t(n^2)$ is a {\it normal
sequence} \cite{drmota-mauduit-rivat-normality-TM}, that is, all
subwords appear with the expected frequency. This is a quite
remarkable property since the Thue-Morse sequence $t(n)$ (as all
automatic sequences) has at most linear complexity (and is, thus,
deterministic).  In \cite{drmota-mauduit-rivat-spiegelhofer} it was
shown that $t(n^2) = (-1)^{s_2(n^2)} = \exp(i \pi s_2(n^2))$ and more generally
$a(n^2) = \exp( 2\pi i \gamma s_q(n^2))$, where $s_q(n)$ denotes the
$q$-adic sum-of-digits function, is M\"obius disjoint:
  \begin{displaymath}
    \sum_{n\le x} \mu(n) \exp(2\pi i\gamma s_q(n^2)) = o(x)
    \qquad (x\to\infty).  
  \end{displaymath}
This also provides an
explicit example of a sequence having maximal topological entropy and
being M\"obius disjoint.

So one might conjecture that the polynomial
Sarnak conjecture holds for all automatic sequences, that is,
the sequence $a(P(n))$ is M\"obius disjoint for all automatic
sequences $a(n)$ and 
all non-negative integer valued polynomial $P(x)$:
\begin{displaymath}
  \sum_{n\le x} \mu(n) a(P(n)) = o(x)
  \qquad (x\to\infty).  
\end{displaymath}
We are far away from proving such a general statement but the results
so far support this conjecture.
We may even conjecture more than the
M\"obius disjointness for $f(P(n))$,
namely that for all primitive automatic sequences $a(n)$
and for all non-negative integer valued polynomials $P(x)$ the sequence 
$a(P(n))$ might satisfy a PNT:
\begin{displaymath}
  \sum_{n\le x} \Lambda(n) a(P(n)) = c x + o(x)
  \qquad (x\to\infty)
\end{displaymath}
for some constant $c\in \C$.
This can be also seen in the context of
the Gelfond problems
\cite{gelfond-1968}
on the (special automatic) sequences $s_q(n) \bmod m$,
that,
after some partial results
\cite{fouvry-mauduit-1996-1,fouvry-mauduit-1996-2,
  dartyge-tenenbaum-2005-Fourier},
have been solved for prime numbers \cite{mauduit-rivat-2010},
squares \cite{mauduit-rivat-2009} and cubes \cite{spiegelhofer-cubes}.

The main purpose of the present paper is to support the polynomial
version of this conjecture by proving
the first Prime Number Theorem for non-linear polynomials (actually with
$P(x) = x^2$).

\begin{theorem}\label{ThMain}
  Suppose that $\gamma \in \mathbb{Q}$ with $(q-1)\gamma \not\in \mathbb{Z}$.
  Then there exists a number $q_0 > 0$ that
  depends only on the denominator of $\gamma$ so that
  for all prime numbers $q\ge q_0$ we have
  \begin{displaymath}
    \sum_{n\le x} \Lambda(n) \exp(2\pi i\gamma s_q(n^2))
    =
    O\left( x^{1-\frac {c}{\log q}\norm{(q-1)\gamma}^2} \right)
  \end{displaymath}
  for some absolute positive constant $c$.
\end{theorem}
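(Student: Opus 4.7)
The plan is to combine the Mauduit--Rivat machinery for $q$-multiplicative functions along primes \cite{mauduit-rivat-2010} with the one for $q$-multiplicative functions along squares \cite{mauduit-rivat-2009}, each of which is already delicate in isolation. Set $f(n)=\exp(2\pi i\gamma s_q(n))$, which is $q$-multiplicative, and $F(n)=f(n^2)$. The first step will be to apply Vaughan's identity to
\[
\sum_{n\le x}\Lambda(n)F(n)
\]
in order to reduce matters to standard Type~I sums
\[
S_I=\sum_{m\le M}a_m\sum_{n\le x/m}F(mn)
\]
and Type~II sums
\[
S_{II}=\sum_{M<m\le 2M}\sum_{N<n\le 2N}a_m b_n F(mn),
\]
with essentially bounded coefficients; the Type~II ranges will be the hard case.

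The second step is digit truncation and Fourier expansion. For a parameter $\lambda$ to be optimised at the end, I would replace $s_q((mn)^2)$ by a truncation $s_q^{(\lambda)}$ depending only on the digits of $(mn)^2$ in a window $[\mu,\lambda)$; the resulting error is absorbed by carry-propagation estimates off a set of small density. The corresponding $q^\lambda$-periodic function $f_\lambda$ then admits a discrete Fourier expansion
\[
f_\lambda(u)=\sum_{h\bmod q^\lambda}\fourier{f_\lambda}(h)\exp\!\bigl(2\pi i hu/q^\lambda\bigr).
\]
Because $f$ is $q$-multiplicative, $\fourier{f_\lambda}(h)$ factors as a product of local Gauss-type sums over the $\lambda$ digits, and the Gelfond--Mauduit--Rivat estimate gives, for prime $q$ and $\gamma$ with $(q-1)\gamma\not\in\Z$, a uniform bound
\[
\max_h\abs{\fourier{f_\lambda}(h)}\ll q^{-c_0\lambda\norm{(q-1)\gamma}^2}
\]
for an absolute $c_0>0$. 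This is the non-trivial gain which will eventually produce the exponent of the theorem.

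For the Type~I sums, once truncation and Fourier expansion are inserted one is reduced to bounding the quadratic exponential sum
\[
\sum_n\exp\!\bigl(2\pi i h(mn)^2/q^\lambda\bigr)
\]
uniformly in $h$ and $m$. A single van der Corput step suffices here: primality of $q$ guarantees that $hm^2/q^\lambda$ rarely admits an exceptionally good rational approximation, so a power saving in $x$ follows.

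The Type~II sums are the main obstacle, because they couple the bilinear (prime) structure produced by Vaughan with the quadratic (square) structure of $F$. I would apply Cauchy--Schwarz over $m$ to obtain
\[
\abs{S_{II}}^2\ll M\sum_{n_1,n_2}b_{n_1}\overline{b_{n_2}}\sum_m f((mn_1)^2)\overline{f((mn_2)^2)}.
\]
After truncation and Fourier expansion the inner sum over $m$ becomes a weighted superposition of quadratic sums $\sum_m\exp\!\bigl(2\pi i m^2(h_1n_1^2-h_2n_2^2)/q^\lambda\bigr)$, to which van der Corput is applied a second time in $m$: the shift $m\mapsto m+r$ linearises the phase, reducing matters to a linear sum with coefficient $2r(h_1n_1^2-h_2n_2^2)/q^\lambda$ that can be handled geometrically except when this coefficient is rational with tiny denominator. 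The contribution of the bad tuples $(n_1,n_2,h_1,h_2,r)$ is then controlled combinatorially, using primality of $q$ to handle the $q$-adic valuation of $h_1n_1^2-h_2n_2^2$. Combining the resulting bound with the Fourier $L^\infty$-estimate and optimising $\lambda$ against the Type~II ranges $M,N$ will yield the final exponent $1-c\norm{(q-1)\gamma}^2/\log q$. The crux of the whole proof lies exactly here: the losses from carry propagation and from the two Cauchy--Schwarz / van der Corput steps must remain strictly smaller than the Fourier gain $q^{-c_0\lambda\norm{(q-1)\gamma}^2}$, which is why $q$ must be taken large in terms of the denominator of $\gamma$.
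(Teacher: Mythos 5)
Your overall skeleton (Vaughan's identity, digit truncation with carry control, discrete Fourier expansion of the truncated $q$-multiplicative function, van der Corput differencing, and the $L^\infty$ bound on the Fourier transform) is indeed the paper's framework, but there is a genuine gap precisely at the step you dispose of in one sentence, namely that the ``bad tuples'' are ``controlled combinatorially''. After your single Cauchy--Schwarz and a Weyl shift in $m$, the near-diagonal terms are those with $h_1n_1^2\equiv h_2n_2^2 \bmod q^{\lambda-v}$, and their contribution carries the weight $\abs{\fourier{f_\lambda}(h_1)}\,\abs{\fourier{f_\lambda}(h_2)}$ with $h_2$ confined to a residue class modulo $q^{\lambda-v}$. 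The only tools available to sum this are the sup-norm bound $\max_h\abs{\fourier{f_\lambda}(h)}\ll q^{-c(f)\lambda}$ and the $L^1$ bound $\sum_h\abs{\fourier{f_\lambda}(h)}\ll q^{\eta(f)\lambda}$ of Lemma~\ref{Le1} --- a quantity your proposal never mentions although it is indispensable for summing over the Fourier frequencies at all. Combining them leaves a factor $q^{(\eta(f)-c(f))\lambda}$ on the near-diagonal, and since one always has $c(f)\le\eta(f)$ this factor is $\geq 1$: no saving. This is exactly the obstruction the paper isolates (``diophantine constraints that cannot be directly handled''), and overcoming it is the paper's main contribution: a double Cauchy--Schwarz/van der Corput producing a fourth-power expression in four frequencies $h_1,\dots,h_4$, isolation of the truly dangerous range $h_1-h_2-h_3+h_4=0$ with $\abs{h_1-h_2}<q^{\rho_3}$, and then a Fourier inversion ``back and forth'' through convolutions of Vaaler kernels (Lemma~\ref{lemma:chi_H-convolution-chi_H-2}) followed by a second Fourier analysis on a shorter digit window and Gauss-sum estimates (Section~\ref{section:typeIIsums}). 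Nothing in your sketch substitutes for this.

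Two further points. First, the reason $q$ must be large is not the carry-propagation or van der Corput losses you cite, but the competition between $c(f)\gg\norm{(q-1)\gamma}^2/\log q$ and $\eta(f)\ll\log\log q/\log q$: the exponent actually obtained is of the shape $d_1c(f)-d_2\eta(f)^2$, and its positivity forces $\norm{(q-1)\gamma}^2\gg(\log\log q)^2/\log q$, which for rational $\gamma$ holds once $q$ exceeds a bound depending only on the denominator. Second, the Type~I sums are not settled by ``a single van der Corput step'': the same near-diagonal phenomenon reappears there (pairs of frequencies $h,k$ with $q^\delta\mid h-k$ and $\delta$ close to $\lambda_1$), and the paper needs a covering of the bad frequencies by almost arithmetic progressions combined with the large sieve (Section~\ref{section:typeIsums}) to close the argument.
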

In particular, by setting $\gamma = 1/m$ this implies that the automatic
sequence $s_q(n) \bmod m$ satisfies a PNT along squares
for sufficiently large prime bases $q$.
This means that 
\begin{equation}\label{eq:MainResultForPrimes}
  \# \{ p\in \mathbb{P}:  p\le x, \  s_q(p^2) \equiv a \bmod m \}
  =
  \frac{\pi(x)}{m}
  +
  O\left( x^{1-\frac{c'}{\log q}\norm{(q-1)\gamma}^2} \right)
\end{equation}
holds for all integers $a$ and for all $m$ that are coprime to $q-1$ 
(for sufficiently large prime bases $q$),
where $c'$ is an absolute positive constant.

\medskip
\subsection*{Proof strategy and plan of the paper}~

In order to prove our results, the challenge is to overcome the
difficulties inherant to the prime numbers combined with the
difficulties arising from the squares.

Applying a combinatorial identity (e.g. Vaughan's identity)
the problem is reduced to type I and type II sums
involving a function still containing squares.

The problem is then again reduced to the estimate of double exponential sums
and Fourier analytic methods.
This leads to the analysis of so called diagonal and non diagonal terms.

The main difficulty arise 
from the diagonal terms which produce
diophantine constraints than cannot be directly handled.
In the type II sums (Section~\ref{section:typeIIsums}),
the new idea is to reinterpret these sums by Fourier inversion, which
leads to convolutions of Vaaler kernels than need a proper study.
Exploiting their properties we can start a second Fourier analysis
which permits us to conclude.
In the type I sums (Section~\ref{section:typeIsums}),
we combine a covering argument with the large sieve.

Due to the technical difficulty of the proof, we need a wide
collection of preliminary results on approximation, convolution,
Fourier analysis of $q$-multiplicative functions and exponential sums
(Section~\ref{section:preliminaries}).

\medskip
\subsection*{Notation and basic definitions}~

We denote by $\N$ the set of non negative integers, 
by $\U$ the set of complex numbers of modulus~$1$,
by $\mathbb{P}$ the set of prime numbers.

For $x\in\R$ we denote 
by $\norm{x}$ the distance of $x$ to the nearest integer,
and we set $\e(x) = \exp(2i\pi x)$.
For $Q>0$ and $x\in\R$, we set $\e_Q(x) = \exp(2i\pi x/Q)$.
For $\ell\in\Z$ and $x\in\R$, we set $\e^\ell(x)=\e(\ell x)$.
For $z\in\C$, we recall that the arithmetic functions
$\sigma_z(n) = \sum_{d\dv n} d^z$ (sum of powers of divisors), and
\begin{math}
  \tau(n) = \sigma_0(n) = \sum_{d\dv n} 1
\end{math}
(number of divisors),
are multiplicative.
We denote by $\omega(n)$ the number of distinct prime factors of $n$.
If $f$ and $g$ are two functions with $g$ taking strictly positive
values such that $f/g$ is bounded, we write $f\ll g$ (or $f=O(g)$).

In all this paper $q$ denotes an integer greater or equal to $2$ and
for any positive integer $n$, 
\begin{equation}
  \label{eq:representation-in-base-q}
  n = \sum_{j\geq 0} \varepsilon_{j}(n) \, q^j
  \mbox{ with }
  \varepsilon_{j}(n)\in\{0,\ldots,q-1\} \mbox{ for all } j\in\N
\end{equation}
is the representation of $n$ in base $q$.

\medskip
\subsection*{$q$-additive and $q$-multiplicative functions}~

The notion of $q$-additive function has been introduced independently
by Bellman and Shapiro in \cite{bellman-shapiro-1948} and by Gelfond in
\cite{gelfond-1968}.
\begin{definition}
  A function $h:\N \to \R$ is $q$-additive 
  (resp. strongly $q$-additive) 
  if for all $(a,b)\in\N\times\{0,\ldots,q-1\}$,
  we have 
  \begin{displaymath}
    h(aq+b) = h(aq)+h(b)
  \end{displaymath}
  (resp. $h(aq+b) = h(a)+h(b)$).
\end{definition}
It follows that any $q$-additive function $h$ verifies $h(0)=0$. 
If $h$ is a strongly $q$-additive function then $h$ is uniquely
determined by the values $h(1)$,\ldots, $h(q-1)$ and for any positive
integer $n$ written in base $q$ as \eqref{eq:representation-in-base-q},
we have
\begin{displaymath}
  h \left( \sum_{j\geq 0} \varepsilon_{j}(n) \, q^j \right)
  =
  \sum_{j\geq 0} h(\varepsilon_{j}(n))
  .
\end{displaymath}
The most classical example of $q$-additive function is the $q$-ary
sum-of-digits function defined by 
\begin{math}
  \s_q(n) = \sum_{j\geq 0} \varepsilon_{j}(n)  .
\end{math}

In a similar way we can define the notions of $q$-multiplicative
function and strongly $q$-multiplicative function:
\begin{definition}\label{definition:q-multiplicative-function}
  A function $f:\N \to \U$ is $q$-multiplicative
  (resp. strongly $q$-multiplicative) 
  if for all $(a,b)\in\N\times\{0,\ldots,q-1\}$,
  we have 
  \begin{math}
    f(aq+b) = f(aq) \, f(b)
  \end{math}
  (resp. $f(aq+b) = f(a) \, f(b)$).
\end{definition}
If $h$ is a $q$-additive (resp. strongly $q$-additive) function then
$f = \e(h)$ is $q$-multiplicative (resp. strongly $q$-multiplicative).
Conversely if $f = \e(h)$ is a $q$-multiplicative (resp. strongly
$q$-multiplicative) function from $\N$ to $\U$ then $h$ is
$q$-additive (resp. strongly $q$-additive) modulo $1$.

\begin{definition}\label{definition:proper}
  A strongly $q$-multiplicative function is called {\bf proper} if 
  it is not of the form $f(n) = \e(\gamma n)$ 
  with $(q-1)\gamma \in \mathbb{Z}$.
\end{definition}

In particular, if $f(n) = \e(\gamma s_q(n))$ then $f(n)$ is proper 
if and only if $(q-1)\gamma \not\in \mathbb{Z}$.

\section{Statement of the results}

We suppose that $f(n)$ is a proper strongly $q$-multiplicative function.
For such a function we define the Fourier transform $F_\lambda(t)$ by
\begin{equation}\label{eq:definition-F-lambda}
  F_\lambda(t)
  =
  \frac 1{q^\lambda}
  \sum_{0\le u < q^\lambda} f(u) \e\left(  \frac {-tu}{q^\lambda} \right)
\end{equation}
(see Section~\ref{subsecFourier}) and constants $c(f)$ and $\eta(f)$ by
\begin{equation}\label{eq:definition-c-f}
  c(f)
  =
  \frac{-\log \left( \max_{t\in\R} |F_1(t) F_1(qt)| \right)}{2 \log q}
\end{equation}
and 
\begin{displaymath}\label{eq:definition-eta-f}
  \eta(f) = \frac 1{\log q} 
  \log 
  \left( 
    \max_{0\le t\le 1} \sum_{0\le r < q} 
    \left|F_1\left( q t + r \right) \right|
  \right)
  .
\end{displaymath}
We note that $0< c(f)\le \eta(f)$,
see sections
\ref{subsection:Fourier-Transform-q-multiplicative-function}
and \ref{sec:L1estimates}.

The main result of the paper is the following theorem:

\begin{theorem}\label{Thmainresult}
  Let  $q\ge 2$ be a prime number and
  suppose that $f(n)$ is a proper strongly $q$-multiplicative function
  such that $\eta(f) \le 1/2000$.

  Then, uniformly for $\theta \in \mathbb{R}$, we have 
  \begin{displaymath}
    \sum_{n\le x} 
    \Lambda(n) f(n^2) \e(\theta n) = 
    O\left( (\log x)^{\frac 52}
      x^{1- (d_1 c(f) - d_2 \eta(f)^2) }
    \right)
  \end{displaymath}
  for some explicit absolute constants $d_1,d_2 > 0$.
\end{theorem}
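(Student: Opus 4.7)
The plan is to apply Vaughan's identity to the von Mangoldt factor and then handle the resulting Type I and Type II sums by two distinct Fourier analytic approaches. After dyadic decomposition, Vaughan's identity reduces the task to bounding Type I sums
$$S_I = \sum_{m \sim M} a_m \sum_{n \sim N} f((mn)^2) \, \e(\theta mn),$$
where $a_m$ is a divisor-type coefficient, and Type II sums
$$S_{II} = \sum_{m \sim M} \sum_{n \sim N} a_m b_n \, f((mn)^2) \, \e(\theta mn),$$
with $MN \asymp x$ and ranges of $M$ dictated by the Vaughan cutoff parameter, to be optimized against the resulting bounds.

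For the Type II sums I would apply Cauchy--Schwarz in $m$ to discard $a_m$, producing a double sum over pairs $(n_1, n_2)$ of
$$\sum_{m \sim M} f((mn_1)^2) \, \overline{f((mn_2)^2)} \, \e(\theta m (n_1 - n_2)).$$
Truncating $f$ at an appropriate digit level $\lambda$ and expanding via the Fourier transform $F_\lambda$ of \eqref{eq:definition-F-lambda} converts the nonlinear factor $(mn_j)^2 \bmod q^\lambda$ into pure exponentials, after which standard quadratic Gauss-sum estimates over $m$ (valid since $q$ is prime) yield a saving governed by $c(f)$ on generic pairs. The diagonal pairs where $n_1$ and $n_2$ agree modulo large powers of $q$ are what the proof-strategy remarks identify as the central obstacle: the induced diophantine constraint cannot be handled directly. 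Following the paper's new idea, I would Fourier-invert these diagonal contributions, rewriting the digit-matching condition as a convolution of Vaaler kernels; a careful analysis of the support and smoothness of this convolution permits a second application of Fourier inversion in which the $L^1$-type constant $\eta(f)$ enters, producing the loss $-d_2 \eta(f)^2$ in the final exponent. The hypothesis $\eta(f) \le 1/2000$ is precisely what ensures this loss cannot overwhelm the main saving $d_1 c(f)$.

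For the Type I sums, Cauchy--Schwarz would be wasteful since the inner variable carries no oscillating coefficient; instead I would cover the range of $m$ by short arithmetic progressions, so that $mn$ has essentially fixed high-order $q$-digits on each cell and $f((mn)^2)$ becomes close to a short-range $q$-adic object, and then apply the large sieve inequality on each cell, summing the $L^1$-type $\eta(f)$-estimate over the cover. The main obstacle throughout is the Type II diagonal, whose treatment via Vaaler-kernel convolution is the genuine innovation; the remaining ingredients are standard bilinear-form manipulations together with the preliminary $F_\lambda$ machinery of Section~\ref{section:preliminaries}. The $(\log x)^{5/2}$ factor arises from divisor-sum bookkeeping inherent to Vaughan's identity, and balancing the Type I and Type II exponents over the Vaughan cutoff yields the stated saving $1 - (d_1 c(f) - d_2 \eta(f)^2)$, uniformly in $\theta$.
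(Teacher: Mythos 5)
Your high-level skeleton (Vaughan's identity, separate Type I/Type II treatments, Vaaler kernels for the diagonal, a covering plus large sieve for Type I) matches the paper, but the concrete Type II reduction has a genuine gap. A single Cauchy--Schwarz in $m$ produces correlations $f((mn_1)^2)\overline{f((mn_2)^2)}$ whose arguments differ by $m^2(n_1^2-n_2^2)$, a quantity of full size $\asymp x^2$; consequently the product depends on \emph{all} $\sim 2(\mu+\nu)\log_q x$ digits, and there is no justification for ``truncating $f$ at an appropriate digit level $\lambda$''. If you are forced to take $\lambda\approx 2(\mu+\nu)$, the Fourier expansion via $F_\lambda$ carries an $L^1$ loss of order $q^{\eta(f)\lambda}$ per factor that the Gauss-sum saving cannot absorb. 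The paper's essential device, which your proposal omits, is a \emph{double} van der Corput differencing: first in $n$ with a small shift $r<q^\rho$, then in $m$ with a shift $sq^{\kappa_1}$ that is a multiple of $q^{\kappa_1}$. The resulting fourth-order multiplicative difference
\begin{displaymath}
  f\bigl((m+sq^{\kappa_1})^2(n+r)^2\bigr)\,\conjugate{f}\bigl(m^2(n+r)^2\bigr)\,
  \conjugate{f}\bigl((m+sq^{\kappa_1})^2n^2\bigr)\,f\bigl(m^2n^2\bigr)
\end{displaymath}
depends, up to the carry-propagation Lemma~\ref{Le:carryproperty}, only on the digits in a window $[\kappa_1,\kappa_2]$ of length $\lambda=\mu+\nu+2\rho+\widetilde{\rho}$, roughly half the total; this localization is what makes the subsequent Fourier expansion and the balance between $c(f)$ and $\eta(f)^2$ possible. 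Relatedly, you misplace the diagonal: the obstruction is not pairs $(n_1,n_2)$ congruent modulo large powers of $q$, but the frequency-space hyperplane $h_1-h_2-h_3+h_4=0$ with $\abs{h_1-h_2}$ small, and it is there that the Vaaler-kernel convolution and second Fourier analysis are deployed.

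On the Type I side your description is also off target: the covering by almost arithmetic progressions in the paper takes place in the \emph{frequency} variable $h$ (after a van der Corput step, Fourier inversion, and a partition according to $\gcd(h-k,q^{\lambda_1})=q^\delta$), not in the range of $m$, and the decisive saving in the near-diagonal regime comes from the $L^\infty$ decay $\max_t\abs{F_\alpha(t)}\ll q^{-c(f)\alpha}$ via Lemma~\ref{lemma:L2-along-almost-arithmetic-progressions}, not from the $\eta(f)$-based $L^1$ estimate. As written, neither the Type II nor the Type I argument would close.
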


As the following proof shows we can choose these constants $d_1,d_2$
explicitly:
\begin{displaymath}
  d_1 = \frac 1{40}, \ d_2 = \frac{16}5.
\end{displaymath}

In the special case of $f(n) = \e(\gamma s_q(n))$ (where
$(q-1)\gamma \in \mathbb{Z}$) we know proper bounds for
$c_q = c(\e(\gamma s_q(n)))$ and $\eta_q = \eta(\e(\gamma s_q(n)))$
(see Sections~\ref{subsecFourier}):
\begin{displaymath}
c_q \ge
  \frac{\pi^2(q-1)}{12(q+1)\log q} \norm{(q-1)\gamma}^2
\end{displaymath}
and by Lemme 14 of \cite{mauduit-rivat-2010} we have
\begin{displaymath}
\eta_q \le \frac 1{\log q} 
\left(
\frac 2 {q \sin \frac{\pi}{2q}} + \frac 2\pi \log \frac{2q}{\pi}
\right).
\end{displaymath}
This means that
\begin{displaymath}
c_q \ge d_3 \frac{\norm{(q-1)\gamma}^2}{\log q}
\end{displaymath}
and
\begin{displaymath}
\eta_q \le d_4 \frac{\log\log q}{\log q}
\end{displaymath}
for properly chosen constants $d_3, d_4 > 0$.
Thus,
if $q$ is sufficiently large and if we have the condition
\begin{equation}\label{eqqcondition}
  \norm{(q-1)\gamma}^2
  \ge
  \frac{2 d_2 d_4^2}{d_1d_3} \frac{(\log\log q)^2}{\log q},
\end{equation}
we certainly have
\begin{displaymath}
  d_1 c_q - d_2 \eta_q^2
  \ge \frac{d_1}2 c_q
  \ge \frac{d_1d_3}2 \frac{\norm{(q-1)\gamma}^2}{\log q}
  .
\end{displaymath}
This condition is very likely to hold if the prime number $q$ is
sufficiently large since 
$\norm{(q-1)\gamma}$ is usually not too small. In particular if 
$\gamma = a/b$ is a rational number
with $(q-1)\gamma \not \in \mathbb{Z}$ then 
we have $\norm{(q-1)\gamma} \ge 1/b$ so that 
(\ref{eqqcondition}) is certainly true for sufficiently large
prime numbers $q \ge q_0$ 
(depending on the denominator $b$ of $\gamma$).
Thus, Theorem~\ref{ThMain} is a direct corollary
of Theorem~\ref{Thmainresult}.
Moreover, \eqref{eq:MainResultForPrimes} follows from
Theorem~\ref{ThMain} by partial summation.

\section{Preliminaries}\label{section:preliminaries}

In this section we collect several preliminary results that will be
used for the proof of Theorem~\ref{Thmainresult}, mainly on
approximation, convolution, Fourier analysis of $q$-multiplicative
functions, and on exponential sums.

\medskip
\subsection{Detection of digits}~

For $a\in\Z$ and $\kappa\in\N$ we denote by $\rep_\kappa(a)$ 
the unique integer $r\in\{0,\ldots,q^{\kappa}-1\}$
such that $a\equiv r \bmod q^{\kappa}$. 
More generally for integers $0\leq \kappa_1 \leq \kappa_2$ 
we denote by $\rep_{\kappa_1,\kappa_2}(a)$
the unique integer $u\in\{0,\ldots,q^{\kappa_2-\kappa_1}-1\}$
such that $a = k q^{\kappa_2} + u q^{\kappa_1} +v$ for some
$v\in\{0,\ldots,q^{\kappa_1}-1\}$ and $k\in\Z$.
We notice that we have 
\begin{math}
  \rep_{\kappa_1,\kappa_2}(a)
  =
  \floor{\frac{\rep_{\kappa_2}(a)}{q^{\kappa_1}}}
\end{math}
and for any $u\in\{0,\ldots,q^{\kappa_2-\kappa_1}-1 \}$, 
\begin{equation}\label{eq:digit-detection}
  \rep_{\kappa_1,\kappa_2}(a) = u
  \Longleftrightarrow
  \frac{a}{q^{\kappa_2}} 
  \in 
  \left[ 
    \frac{u}{q^{\kappa_2-\kappa_1}}, \frac{u+1}{q^{\kappa_2-\kappa_1}}
  \right)
  + \Z.
\end{equation}
For $a\geq 0$, $\rep_\kappa(a)$ is the integer obtained from the
$\kappa$ least significant digits of $a$, while 
$\rep_{\kappa_1,\kappa_2}(a)$ is the integer obtained
using the digits of $a$ of index in $\{\kappa_1,\ldots,\kappa_2-1\}$.

\medskip
\subsection{Beurling--Selberg approximation}~

For $\alpha\in\R$ with $0\leq \alpha<1$ we denote by $\chi^*_\alpha$ the
characteristic function of the interval
$[-\alpha/2,\alpha/2[$ modulo $1$:
\begin{equation}
  \label{eq:definition-chi-star}
  \chi^*_\alpha(x)
  =
  \one_{[-\alpha/2,\alpha/2[+\Z}(x)
  =
  \floor{x+\frac{\alpha}{2}}-\floor{x-\frac{\alpha}{2}}
  ,
\end{equation}
with real valued Fourier coefficients
\begin{equation}
  \label{eq:definition-fourier-chi-star}
  \fourier{\chi^*_\alpha}(0) = \alpha,
  \quad
  \forall h\neq 0,\
  \fourier{\chi^*_\alpha}(h)
  =
  \int_{-1/2}^{1/2} \chi^*_\alpha(t) \e(-ht) dt
  =
  \frac{\sin \pi h \alpha}{\pi h}
  .
\end{equation}
Applying Theorem 19 of Vaaler \cite{vaaler-1985},
for any integer $H\geq 1$, there exist real valued
trigonometric polynomials $\chi_{\alpha,H}^*(x)$ and $B_{\alpha,H}^*(x)$ 
such that for all $x\in\R$
\begin{equation}\label{eq:vaaler-approximation-chi-star}
  \abs{ \chi^*_\alpha(x) - \chi_{\alpha,H}^*(x) }
  \leq
  B_{\alpha,H}^*(x)
  ,
\end{equation}
where
\begin{equation}\label{eq:definition-chi-H-star}
  \chi_{\alpha,H}^*(x)
  =
  \sum_{\abs{h}\leq H}  \fourier{\chi_{\alpha,H}^*}(h) \e(h x),\
\end{equation}
\begin{align}\label{eq:definition-B-H-star}
  B_{\alpha,H}^*(x)
  &=
  \frac{
    \sin^2\left(\pi (H+1) \left(x-\frac{\alpha}{2}\right)\right)
  }{2(H+1)^2 \sin^2 \left(\pi \left(x-\frac{\alpha}{2}\right)\right)}
  +
    \frac{
      \sin^2\left(\pi (H+1) \left(x+\frac{\alpha}{2}\right)\right)
    }{2 (H+1)^2 \sin^2 \left(\pi \left(x+\frac{\alpha}{2}\right)\right)}
  \\ \nonumber
  &=
  \sum_{\abs{h}\leq H} \fourier{B_{\alpha,H}^*}(h) \e(h x)
\end{align}
with real valued Fourier coefficients
\begin{equation}\label{eq:vaaler-coef-chi-star-H}
  \fourier{\chi_{\alpha,H}^*}(h) 
  = 
  \begin{cases}
    \fourier{\chi^*_\alpha}(h)
    \left( 
      \pi \tfrac{\abs{h}}{H+1} \left(1-\tfrac{\abs{h}}{H+1}\right) 
      \cot \pi \tfrac{\abs{h}}{H+1}
      +
      \tfrac{\abs{h}}{H+1}
    \right)
    & \text{ for } \abs{h} \leq H,
    \\
    0
    & \text{ for } \abs{h} \geq H+1,
  \end{cases}
\end{equation}
and
\begin{equation}\label{eq:vaaler-coef-B-star-H}
  \fourier{B_{\alpha,H}^*}(h)
  = 
  \begin{cases}
    \tfrac{1}{H+1} \left(1-\tfrac{\abs{h}}{H+1}\right) 
    \cos(\pi h \alpha)
    & \text{ for } \abs{h} \leq H,
    \\
    0
    & \text{ for } \abs{h} \geq H+1,
  \end{cases}
\end{equation}
satisfying
\begin{equation}\label{eq:vaaler-coef-majoration-star}
  \fourier{\chi_{\alpha,H}^*}(0) = \alpha,\ 
  \abs{ \fourier{\chi_{\alpha,H}^*}(h) }
  \leq
  \abs{ \fourier{\chi_{\alpha}^*}(h) }
  \leq
  \min\left(\alpha,\tfrac{1}{\pi\abs{h}}\right),\
  \abs{  \fourier{B_{\alpha,H}^*}(h)}
  \leq
  \fourier{B_{\alpha,H}^*}(0) = \tfrac{1}{H+1}.
\end{equation}

For $\alpha\in\R$ with $0\leq \alpha<1$ we denote by $\chi_\alpha$ the
characteristic function of the interval
$[0,\alpha[$ modulo $1$:
\begin{equation}\label{eq:definition-chi}
  \chi_\alpha(x)
  =
  \one_{[0,\alpha)+\Z} (x)
  =
  \floor{x} - \floor{x-\alpha}
  =
  \chi^*_\alpha\left(x-\frac{\alpha}{2}\right)
  .
\end{equation}
For any integer $H\geq 1$,
the real valued trigonometric polynomials
$\chi_{\alpha,H}(x) = \chi_{\alpha,H}^*\left(x-\frac{\alpha}{2}\right)$
and
$B_{\alpha,H}(x) = B_{\alpha,H}^*\left(x-\frac{\alpha}{2}\right)$ 
are such that for all $x\in\R$
\begin{equation}\label{eq:vaaler-approximation-chi}
  \abs{ \chi_\alpha(x) - \chi_{\alpha,H}(x) }
  \leq
  B_{\alpha,H}(x),
\end{equation}
where
\begin{equation}\label{eq:definition-A-B}
  \chi_{\alpha,H}(x)
  =
  \sum_{\abs{h}\leq H}  \fourier{\chi_{\alpha,H}}(h)  \e(h x),\
  B_{\alpha,H}(x) = \sum_{\abs{h}\leq H}   \fourier{B_{\alpha,H}}(h) \e(h x)
\end{equation}
with Fourier coefficients
\begin{equation}\label{eq:vaaler-coef-chi-B}
  \fourier{\chi_{\alpha,H}}(h) 
  =
  \fourier{\chi_{\alpha,H}^*}(h)
  \e\left(\tfrac{-h\alpha}{2}\right)
  ,\quad
  \fourier{B_{\alpha,H}}(h) = 
  \fourier{B_{\alpha,H}^*}(h)
  \e\left(\tfrac{-h\alpha}{2} \right),
\end{equation}
satisfying
\begin{equation}\label{eq:vaaler-coef-majoration}
  \fourier{\chi_{\alpha,H}}(0)  = \alpha,\ 
  \abs{ \fourier{\chi_{\alpha,H}}(h) }
  \leq
  \abs{ \fourier{\chi_{\alpha}}(h) }
  \leq
  \min\left(\alpha,\tfrac{1}{\pi\abs{h}}\right),\
  \abs{  \fourier{B_{\alpha,H}}(h)}
  \leq
  \fourier{B_{\alpha,H}}(0) = \tfrac{1}{H+1}.
\end{equation}

\medskip
\subsection{Convolutions and further properties of Beurling--Selberg
  approximation}~

For $0<\alpha\leq 1/2$, the function $\chi_\alpha^* * \chi_\alpha^*$ 
satisfies 
\begin{displaymath}
  \forall x\in\R,\
  \chi_\alpha^* * \chi_\alpha^*(x)
  =
  \int_{-1/2}^{1/2} \chi_\alpha^*(x-t)\chi_\alpha^*(t) dt
  =
  \alpha \max\left(1-\frac{\norm{x}}{\alpha},0\right)
  ,
\end{displaymath}
so that
\begin{equation}\label{eq:chi-star-convolution-chi-star-of-zero}
  \chi_\alpha^* * \chi_\alpha^*(0)
  =
  \int_{-1/2}^{1/2} {\chi_\alpha^*}^2(x) dx
  =\alpha
\end{equation}
and for $\norm{x} \geq \alpha$, 
\begin{equation}\label{eq:chi-star-convolution-chi-star-non-zero}
  \chi_\alpha^* * \chi_\alpha^*\left(x\right)  = 0
  .
\end{equation}
The Fourier expansion of
\begin{math}
  \chi_{\alpha}^* * \chi_{\alpha}^*
\end{math}
is
\begin{equation}
  \label{eq:chi-star-convolution-chi-star-fourier-expansion}
  \chi_{\alpha}^* * \chi_{\alpha}^*\left(x\right)
  =
  \sum_{h\in\Z}
  \abs{\fourier{\chi_{\alpha}^*}(h)}^2
  \e\left( h x \right)
  .
\end{equation}
\begin{lemma}\label{lemma:chi-convolution-chi-average}
  For $U\in\N$ with $U \geq 2$, $\alpha=1/U$
  and $a\in\Z$ we have
  \begin{displaymath}
    \sum_{k\in\Z}
    \abs{\fourier{\chi_{\alpha}^*}\left(kU+a\right)}^2
    =
    \frac{1}{U^2} 
    .
  \end{displaymath}    
\end{lemma}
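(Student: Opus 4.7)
The plan is to read off the desired sum from the Fourier expansion of $\chi_\alpha^**\chi_\alpha^*$ by exploiting the fact that $\alpha=1/U$ is small enough that this convolution is supported only at $0$ modulo $1/U$.

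First I would set $\phi=\chi_\alpha^**\chi_\alpha^*$ and recall from \eqref{eq:chi-star-convolution-chi-star-fourier-expansion} that
\[
\phi(x)=\sum_{h\in\Z}\abs{\fourier{\chi_\alpha^*}(h)}^2\e(hx).
\]
Given $a\in\Z$, the natural way to isolate the frequencies $h\equiv a\pmod U$ is the discrete character filter: multiply by $\e(aj/U)$ and average over $j=0,\ldots,U-1$. Applying this to $\phi$ evaluated at $-j/U$, and interchanging summation with the use of the orthogonality relation
\[
\sum_{j=0}^{U-1}\e\bigl((a-h)j/U\bigr)=U\,\one_{h\equiv a\,(\mathrm{mod}\,U)},
\]
I obtain
\[
\sum_{j=0}^{U-1}\phi(-j/U)\,\e(aj/U)=U\sum_{k\in\Z}\abs{\fourier{\chi_\alpha^*}(kU+a)}^2.
\]

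Next I would evaluate the left-hand side directly using the explicit shape of $\phi$. From the formula $\chi_\alpha^**\chi_\alpha^*(x)=\alpha\max(1-\norm{x}/\alpha,0)$, and since $\alpha=1/U$, for each $j\in\{1,\ldots,U-1\}$ we have $\norm{-j/U}\geq 1/U=\alpha$, so \eqref{eq:chi-star-convolution-chi-star-non-zero} gives $\phi(-j/U)=0$. Only the $j=0$ term survives, and by \eqref{eq:chi-star-convolution-chi-star-of-zero} it contributes $\phi(0)=\alpha=1/U$. Dividing by $U$ yields the claimed identity
\[
\sum_{k\in\Z}\abs{\fourier{\chi_\alpha^*}(kU+a)}^2=\frac{1}{U^2}.
\]

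There is no genuine obstacle here; the only point to double-check is the boundary case $\norm{-j/U}=\alpha$ (occurring when $j=1$ or $j=U-1$), where the tent function $\alpha\max(1-\norm{x}/\alpha,0)$ is exactly zero, so the vanishing of $\phi(-j/U)$ for all $j\neq 0$ is genuine and not off by a boundary term.
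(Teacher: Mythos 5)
Your proof is correct and is essentially the paper's own argument: both expand $\abs{\fourier{\chi_\alpha^*}}^2$ via the Fourier series of $\chi_\alpha^**\chi_\alpha^*$, filter the residue class $h\equiv a \pmod U$ with the orthogonality of characters, and then observe that the tent function vanishes at all nonzero multiples of $1/U$ so only the term at $0$ survives, contributing $\alpha=1/U$. The remark about the boundary case $\norm{x}=\alpha$ is a nice extra check but the paper's cited identity \eqref{eq:chi-star-convolution-chi-star-non-zero} already covers it.
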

\begin{proof}
  We have
  \begin{align*}
    \sum_{k\in\Z}
    \abs{\fourier{\chi_{\alpha}^*}\left(kU+a\right)}^2
    &=
    \sum_{h\in\Z}
    \abs{\fourier{\chi_{\alpha}^*}(h)}^2
    \frac{1}{U} \sum_{0\leq u < U} \e\left(\frac{(h-a)u}{U}\right)
    \\
    &=
      \frac{1}{U} \sum_{0\leq u < U}
      \e\left(\frac{-a u}{U}\right)
      \sum_{h\in\Z}
      \abs{\fourier{\chi_{\alpha}^*}(h)}^2
      \e\left(\frac{h u}{U}\right)
    \\
    &=
      \frac{1}{U} \sum_{0\leq u < U}
      \e\left(\frac{-a u}{U}\right)
      \chi_{\alpha}^* * \chi_{\alpha}^*\left(\frac{u}{U}\right)
      .
  \end{align*}
  By \eqref{eq:chi-star-convolution-chi-star-non-zero},
  for $u\in\{1,\ldots,U-1\}$
  we have
  \begin{math}
    \chi_{\alpha}^* * \chi_{\alpha}^*\left(\frac{u}{U}\right)=0,
  \end{math}
  hence the sum is reduced to $u=0$,
  and by \eqref{eq:chi-star-convolution-chi-star-of-zero}
  we get the result.
\end{proof}

In the context of a multidimensional approximation
by~\eqref{eq:vaaler-approximation-chi-star},
products of $\fourier{\chi_{\alpha,H}^*}$ will arise,
which may lead to convolutions of $\chi_{\alpha,H}^*$ functions.
Since $\chi_{\alpha,H}^*$ approximates
the characteristic function $\chi_\alpha^*$
of the interval $[-\alpha/2,\alpha/2)$ modulo $1$,
it follows that
\begin{math}
  \chi_{\alpha,H}^* * \chi_{\alpha,H}^*
\end{math}
is expected to be close to
\begin{math}
  \chi_\alpha^* * \chi_\alpha^*
\end{math}
and therefore more or less concentrated around the origin.
We will show this property in
Lemma~\ref{lemma:chi_H-convolution-chi_H}. 
Up to an admissible error term,
this permits to take advantage of a compact support alternatively on
both sides of Fourier.

\begin{lemma}\label{lemma:chi-convolution-B_H}
  For $(U,H)\in\N^2$ with $2 \leq U \leq H+1$
  and $\alpha=1/U$, we have
  \begin{displaymath}
    \sum_{0\leq u < U}
    \chi_\alpha^* * B_{\alpha,H}^*\left(\frac{u}{U}\right)
    =
    \frac{1}{H+1} 
    .
  \end{displaymath}  
\end{lemma}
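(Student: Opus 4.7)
The plan is to expand the convolution in its Fourier series, swap the order of summation, and exploit the fact that $\widehat{\chi_\alpha^*}$ vanishes at nonzero multiples of $U$ when $\alpha = 1/U$.

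First I would write, using \eqref{eq:chi-star-convolution-chi-star-fourier-expansion} style reasoning (or just the general Fourier expansion of a convolution on $\mathbb{R}/\mathbb{Z}$, combined with \eqref{eq:definition-chi-H-star}, \eqref{eq:definition-B-H-star}, and \eqref{eq:definition-fourier-chi-star}),
\begin{displaymath}
  \chi_\alpha^* * B_{\alpha,H}^*(x)
  =
  \sum_{\abs{h}\leq H} \fourier{\chi_\alpha^*}(h)\, \fourier{B_{\alpha,H}^*}(h) \, \e(h x).
\end{displaymath}
Summing over $u\in\{0,\ldots,U-1\}$ and using the orthogonality identity
\begin{math}
  \sum_{0\leq u < U} \e(hu/U) = U \cdot \one_{U \mid h},
\end{math}
the double sum collapses to
\begin{displaymath}
  \sum_{0\leq u < U} \chi_\alpha^* * B_{\alpha,H}^*\left(\frac{u}{U}\right)
  =
  U \sum_{\substack{\abs{h}\leq H \\ U \mid h}}
  \fourier{\chi_\alpha^*}(h)\, \fourier{B_{\alpha,H}^*}(h).
\end{displaymath}

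Next I would use the key vanishing property: since $\alpha = 1/U$, for $h = kU$ with $k\neq 0$ formula \eqref{eq:definition-fourier-chi-star} gives
\begin{displaymath}
  \fourier{\chi_\alpha^*}(kU) = \frac{\sin(\pi k U / U)}{\pi k U} = \frac{\sin(\pi k)}{\pi k U} = 0,
\end{displaymath}
so only $h=0$ survives in the above sum. Substituting the values $\fourier{\chi_\alpha^*}(0) = \alpha = 1/U$ and $\fourier{B_{\alpha,H}^*}(0) = 1/(H+1)$ (from \eqref{eq:vaaler-coef-majoration-star}) yields
\begin{displaymath}
  \sum_{0\leq u < U} \chi_\alpha^* * B_{\alpha,H}^*\left(\frac{u}{U}\right)
  =
  U \cdot \frac{1}{U} \cdot \frac{1}{H+1}
  =
  \frac{1}{H+1},
\end{displaymath}
which is the claim.

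There is no substantial obstacle: the condition $U \leq H+1$ is only used to ensure that $h=0$ is actually included in the truncation range $\abs{h}\leq H$ (which is automatic here), and the proof is essentially an orthogonality plus sampling-of-zeros argument, exactly analogous to Lemma~\ref{lemma:chi-convolution-chi-average} but with one factor $\fourier{\chi_\alpha^*}$ replaced by the constant-weight Vaaler coefficient $\fourier{B_{\alpha,H}^*}$.
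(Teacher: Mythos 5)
Your proof is correct and coincides with the paper's own (second, "alternative") argument: the authors also expand the convolution in its Fourier series, use orthogonality over $u$ to restrict to $h\equiv 0 \bmod U$, and invoke $\fourier{\chi_\alpha^*}(kU)=0$ for $k\neq 0$. The paper additionally gives a first, even more direct proof by writing the sum as $\int_0^1 B_{\alpha,H}^*(t')\,dt' = \fourier{B_{\alpha,H}^*}(0)$, but the two routes are equivalent in substance.
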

\begin{proof}
  By \eqref{eq:definition-chi-star} and
  \eqref{eq:vaaler-coef-majoration-star} we have
  \begin{align*}
    \sum_{0\leq u < U}
    \chi_\alpha^* * B_{\alpha,H}^*\left(\frac{u}{U}\right)
    &
      =
      \sum_{0\leq u < U}
      \int_{-\frac{\alpha}{2}}^{\frac{\alpha}{2}}
      B_{\alpha,H}^*\left( \frac{u}{U}-t \right) dt
      =
      \sum_{0\leq u < U}
      \int_{\frac{u-\frac12}{U}}^{\frac{u+\frac12}{U}}
      B_{\alpha,H}^*\left( t' \right) dt'
    \\
    &
      =
      \int_0^1 B_{\alpha,H}^*\left( t' \right) dt'
      =
      \fourier{B_{\alpha,H}^*}(0)
      = \frac{1}{H+1} 
      .
  \end{align*}
  Alternatively we can prove this equality by using the
  Fourier expansion. By~\eqref{eq:definition-fourier-chi-star} we have
  $\fourier{\chi_\alpha^*}(0) = \alpha = 1/U$ and
  $\fourier{\chi_\alpha^*}(k U) = 0$ for $k\neq 0$:
  \begin{align*}
    \sum_{0\leq u < U}
    \chi_\alpha^* * B_{\alpha,H}^* \left(\frac{u}{U}\right)
    &
      =
      \sum_{0\leq u < U}
      \sum_{\abs{h}\leq H}
      \fourier{\chi_\alpha^*}(h) \, \fourier{B_{\alpha,H}^*}(h)
      \e\left( \frac{h u}{U} \right)
    \\
    &
      =
      U
      \sum_{\abs{h}\leq H}
      \one_{h\equiv 0 \bmod U}
      \fourier{\chi_\alpha^*}(h) \, \fourier{B_{\alpha,H}^*}(h)
    \\
    &
      =
      U
      \fourier{\chi_\alpha^*}(0) \, \fourier{B_{\alpha,H}^*}(0)
      +
      U
      \sum_{1\leq \abs{k} < K}
      \fourier{\chi_\alpha^*}(k U) \,
      \fourier{B_{\alpha,H}^*}(k  U)
    \\
    &
      = \fourier{B_{\alpha,H}^*}(0)
      = \frac{1}{H+1} 
      .
  \end{align*}
\end{proof}

\begin{lemma}\label{lemma:B_H-convolution-B_H}
  For $(U,H)\in\N^2$ with $2 \leq U \leq H+1$
  and $\alpha=1/U$, we have
  \begin{displaymath}
    \sum_{0\leq u < U}
    B_{\alpha,H}^* * B_{\alpha,H}^* \left(\frac{u}{U}\right)
    \leq
    \frac{1}{H+1}
    .
  \end{displaymath}
\end{lemma}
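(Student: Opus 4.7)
My plan is to follow the Fourier-analytic pattern of Lemma~\ref{lemma:chi-convolution-B_H} and then reduce the required bound to an explicit polynomial identity. Expanding $B_{\alpha,H}^**B_{\alpha,H}^*$ via its Fourier series (whose coefficients are $\fourier{B_{\alpha,H}^*}(h)^2$) and using the orthogonality relation $\sum_{0\le u<U}\e(hu/U) = U\one_{U\mid h}$, one obtains
\begin{displaymath}
  \sum_{0\le u<U} B_{\alpha,H}^* * B_{\alpha,H}^*\left(\tfrac{u}{U}\right)
  = U \sum_{|k|\le M} \fourier{B_{\alpha,H}^*}(kU)^2,
\end{displaymath}
where $M := \lfloor H/U\rfloor$. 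Since $\alpha = 1/U$ gives $\cos(\pi kU\alpha) = (-1)^k$, formula~\eqref{eq:vaaler-coef-B-star-H} yields $\fourier{B_{\alpha,H}^*}(kU)^2 = (H+1)^{-2}(1-|k|U/(H+1))^2$, so the lemma is reduced to the purely arithmetic inequality
\begin{displaymath}
  S := \sum_{|k|\le M}(H+1-|k|U)^2 \le \frac{(H+1)^3}{U}.
\end{displaymath}

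To handle this inequality I would perform the Euclidean decomposition $H+1 = MU + r$ with $r\in\{1,\dots,U\}$, which is valid because $MU \le H < (M+1)U$. Then $H+1 - |k|U = (M-|k|)U + r$, and $S$ becomes the sum of squares of an arithmetic progression; evaluating it via $\sum_{j=0}^M j^2 = M(M+1)(2M+1)/6$ and comparing with the expansion of $(MU+r)^3/U$, I expect to verify the key identity
\begin{displaymath}
  \frac{(H+1)^3}{U} - S
  = \frac{U^2 M(M-1)(M+1)}{3} + U r M^2 + (M-1) r^2 + \frac{r^3}{U}.
\end{displaymath}
For $M \ge 1$ every term on the right is manifestly non-negative. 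For $M = 0$, the hypotheses $H < U$ and $U \le H+1$ jointly force $U = H+1$, hence $r = U$, so the expression collapses to $r^3/U - r^2 = 0$. In both cases the bound follows.

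The main obstacle is that no simple bound on $S$ suffices: using the pointwise inequality $(1-|k|a)^2 \le 1-|k|a$ with $a = U/(H+1)$ would establish the lemma only when $U$ divides $H+1$, and fails strictly when $r < U$. One is therefore forced to derive the exact algebraic identity above, and the delicate point is the grouping of its right-hand side so that the possibly negative term $(M-1)r^2$ is absorbed into the non-negative cubic pieces, with the boundary case $M=0$ saved precisely by the tight relation $r = U$.
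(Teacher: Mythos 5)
Your proof is correct and follows essentially the same route as the paper's: Fourier expansion of the convolution, orthogonality over $u$ to restrict to frequencies $h=kU$, exact evaluation of $\sum_{|k|}(H+1-|k|U)^2$ via the Euclidean decomposition of $H+1$, and an algebraic identity showing the difference from $(H+1)^3/U$ is non-negative. The only cosmetic differences are your choice of remainder $r\in\{1,\dots,U\}$ (versus the paper's $R\in\{0,\dots,U-1\}$ with quotient $K\geq 1$) and the separate treatment of the boundary case $M=0$, which the paper avoids by forcing $K\geq 1$.
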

\begin{proof}
  Remembering by \eqref{eq:vaaler-coef-B-star-H} that
  \begin{displaymath}
    \fourier{B_{\alpha,H}^*}(h)
    = 
    \tfrac{1}{H+1} \left(1-\tfrac{\abs{h}}{H+1}\right) 
    \cos(\pi h \alpha)
    ,
  \end{displaymath}
  writing $H+1 = K U + R$
  with integers $K\geq 1$ and $0\leq R \leq U-1$,
  we have
  \begin{align*}
    \sum_{0\leq u < U}
    B_{\alpha,H}^* * B_{\alpha,H}^* \left(\frac{u}{U}\right)
    &
      =
      \sum_{0\leq u < U}
      \sum_{\abs{h}\leq H+1}
      \left(\fourier{B_{\alpha,H}^*}(h)\right)^2
      \e\left( \frac{h u}{U} \right)
    \\
    &
      =
      \frac{U}{(H+1)^2}
      \sum_{\abs{h}\leq H+1}
      \one_{h\equiv 0 \bmod U}
      \left(1-\tfrac{\abs{h}}{H+1}\right)^2
      \cos^2\left(\pi \frac{h}{U} \right)  
    \\
    &
      =
      \frac{U}{(H+1)^2}
      \sum_{\abs{k} \leq K}
      \left(1-\tfrac{\abs{k} U}{H+1}\right)^2
      \cos^2\left(\pi \abs{k} \right)  
    \\
    &
      =
      \frac{U}{(H+1)^4}
      \sum_{\abs{k} \leq K} \left(H+1-\abs{k} U\right)^2  
  \end{align*}
  and 
  \begin{align*}
    \sum_{\abs{k} \leq K}
    \left(H+1-\abs{k} U\right)^2  
    &=
      \sum_{\abs{k} \leq K}
      \left(
      R^2
      +2 R (K-\abs{k}) U
      + (K-\abs{k})^2 U^2
      \right)
    \\
    &=
      (2K+1) R^2
      +2 R K^2 U
      + \frac{2K^3+K}{3} U^2
  \end{align*}
  so that
  \begin{multline*}
    U \sum_{\abs{k} \leq K}
    \left(H+1-\abs{k} U\right)^2  
    =
    R^2 U
    + 2 R^2 K U
    + 2 R K^2 U^2
    + \frac{2K^3+K}{3} U^3
    \\
    =
    \left( K U +R \right)^3
    - R^2 \left(R - U \right)
    - R^2 K U
    - R K^2 U^2
    - \frac{K}{3} (K^2-1) U^3 
    ,
  \end{multline*}
  and finally
  \begin{displaymath}
    \sum_{0\leq u < U}
    B_{\alpha,H}^* * B_{\alpha,H}^* \left(\frac{u}{U}\right)
    \leq
    \frac{\left( K U +R \right)^3}{(H+1)^4}
    =
    \frac{1}{H+1}
    ,
  \end{displaymath}
  as expected.
\end{proof}

\begin{lemma}\label{lemma:chi_H-convolution-chi_H}
  For $(U,H)\in\N^2$ with $2 \leq U \leq H+1$
  and $\alpha=1/U$, we have
  \begin{multline*}
    \sum_{0\leq u < U}
    \abs{
      \chi_{\alpha,H}^* * \chi_{\alpha,H}^*\left(\frac{u}{U}\right)
      -
      \chi_\alpha^* * \chi_\alpha^*\left(\frac{u}{U}\right)
    }
    \\
    =
    \abs{ \chi_{\alpha,H}^* * \chi_{\alpha,H}^*(0) - \alpha }
    +
    \sum_{1\le u < U}
    \abs{  \chi_{\alpha,H}^* * \chi_{\alpha,H}^*\left(\frac{u}{U}\right)}
    \leq \frac{3}{H+1} 
    .
  \end{multline*}
\end{lemma}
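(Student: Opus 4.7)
The equality in the statement is essentially free. Since $\alpha=1/U$, for $u\in\{1,\ldots,U-1\}$ we have $\norm{u/U}\ge 1/U = \alpha$, so \eqref{eq:chi-star-convolution-chi-star-non-zero} gives $\chi_\alpha^**\chi_\alpha^*(u/U)=0$, while \eqref{eq:chi-star-convolution-chi-star-of-zero} gives $\chi_\alpha^**\chi_\alpha^*(0)=\alpha$. Hence the left-hand side of the lemma immediately collapses to the middle expression, and only the inequality $\le 3/(H+1)$ remains to be established.

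For the inequality I would use the bilinear identity
\begin{displaymath}
  \chi_{\alpha,H}^**\chi_{\alpha,H}^* - \chi_\alpha^**\chi_\alpha^*
  = (\chi_{\alpha,H}^* - \chi_\alpha^*) * \chi_{\alpha,H}^* + \chi_\alpha^* * (\chi_{\alpha,H}^* - \chi_\alpha^*),
\end{displaymath}
which reduces the problem to controlling each error factor by a nonnegative function whose $L^1$-mass along the arithmetic progression $\{u/U : 0\le u<U\}$ is already known. The Vaaler bound \eqref{eq:vaaler-approximation-chi-star} supplies exactly this: $|\chi_{\alpha,H}^* - \chi_\alpha^*|\le B_{\alpha,H}^*$ pointwise, and combining this with $\chi_\alpha^*\ge 0$ and $B_{\alpha,H}^*\ge 0$ (the latter visible directly from \eqref{eq:definition-B-H-star}) also yields $|\chi_{\alpha,H}^*|\le \chi_\alpha^* + B_{\alpha,H}^*$. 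Taking absolute values pointwise in the identity above and majorising convolutions by convolutions of absolute values, I obtain
\begin{displaymath}
  \abs{\chi_{\alpha,H}^**\chi_{\alpha,H}^* - \chi_\alpha^**\chi_\alpha^*}
  \le B_{\alpha,H}^* * (\chi_\alpha^* + B_{\alpha,H}^*) + \chi_\alpha^* * B_{\alpha,H}^*
  = 2\,\chi_\alpha^**B_{\alpha,H}^* + B_{\alpha,H}^**B_{\alpha,H}^*.
\end{displaymath}

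The final step is then purely mechanical: summing this pointwise inequality over $x=u/U$ for $0\le u<U$ and invoking Lemma~\ref{lemma:chi-convolution-B_H} for the first term and Lemma~\ref{lemma:B_H-convolution-B_H} for the second gives the bound $2/(H+1) + 1/(H+1) = 3/(H+1)$, as claimed. The only place I expect to pause is verifying that nonnegativity is genuinely preserved at each majorisation so that the two preceding lemmas apply termwise; beyond this small check, the argument is routine once the bilinear splitting of the difference of convolutions is in hand.
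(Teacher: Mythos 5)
Your proof is correct and follows essentially the same route as the paper: the equality via the support and value properties \eqref{eq:chi-star-convolution-chi-star-of-zero}--\eqref{eq:chi-star-convolution-chi-star-non-zero}, a bilinear splitting of the difference of convolutions controlled pointwise by the Vaaler majorant, and then Lemmas~\ref{lemma:chi-convolution-B_H} and~\ref{lemma:B_H-convolution-B_H} applied to the resulting bound $2\,\chi_\alpha^**B_{\alpha,H}^*+B_{\alpha,H}^**B_{\alpha,H}^*$. The only (harmless) difference is your two-term identity $(a{-}b)*a+b*(a{-}b)$ with the extra observation $|\chi_{\alpha,H}^*|\le\chi_\alpha^*+B_{\alpha,H}^*$, versus the paper's three-term identity with a subtracted cross term; both yield the identical final majorant.
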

\begin{remark}
  This shows that
  \begin{math}
    \chi_{\alpha,H}^* * \chi_{\alpha,H}^*(0) 
  \end{math}
  is close to $\alpha = 1/U$
  and that the values of
  $\chi_{\alpha,H}^* * \chi_{\alpha,H}^*(u/U)$ are (almost) negligible
  for $u\in\{1,\ldots,U-1\}$.
\end{remark}
\begin{proof}
  We write 
  \begin{multline*}
    \chi_\alpha^* * \chi_\alpha^* - \chi_{\alpha,H}^** \chi_{\alpha,H}^*
    \\
    =
    \left( \chi_\alpha^* -\chi_{\alpha,H}^*\right) * \chi_\alpha^*
    +
    \chi_\alpha^** \left( \chi_\alpha^* -\chi_{\alpha,H}^*\right) 
    -
    \left( \chi_\alpha^* -\chi_{\alpha,H}^*\right)
    * \left( \chi_\alpha^* -\chi_{\alpha,H}^*\right) 
    ,
  \end{multline*}
  and by \eqref{eq:vaaler-approximation-chi-star} we have
  \begin{equation}\label{eq:approximation-chi-star-convolution-chi-star}
    \abs{
      \chi_\alpha^* * \chi_\alpha^* - \chi_{\alpha,H}^**  \chi_{\alpha,H}^*
    }
    \leq
    2 \, \chi_\alpha^* * B_{\alpha,H}^* + B_{\alpha,H}^* * B_{\alpha,H}^*
    .
  \end{equation}
  With \eqref{eq:chi-star-convolution-chi-star-of-zero},
  \eqref{eq:chi-star-convolution-chi-star-non-zero}
  and \eqref{eq:approximation-chi-star-convolution-chi-star},
  Lemma~\ref{lemma:chi-convolution-B_H}
  and Lemma~\ref{lemma:B_H-convolution-B_H}
  we deduce
  \begin{multline*}
    \abs{  \chi_{\alpha,H}^* * \chi_{\alpha,H}^*(0) - \alpha}
    +
    \sum_{1\leq u < U}
    \abs{ \chi_{\alpha,H}^* * \chi_{\alpha,H}^*\left(\frac{u}{U}\right)}
    \\
    =
    \sum_{0\leq u < U}
    \abs{
      \chi_{\alpha,H}^* * \chi_{\alpha,H}^*\left(\frac{u}{U}\right)
      -
      \chi_\alpha^* * \chi_\alpha^*\left(\frac{u}{U}\right)
    }
    \\
    \leq
    2
    \sum_{0\leq u < U}
    \chi_\alpha^* * B_{\alpha,H}^*\left(\frac{u}{U}\right)
    +
    \sum_{0\leq u < U}
    B_{\alpha,H}^* * B_{\alpha,H}^* \left(\frac{u}{U}\right) 
    \leq
    \frac{3}{H+1}
    ,
  \end{multline*}
  as expected.
\end{proof}

For $0<\alpha\leq 1/2$ and $\ell \in \Z$, 
we also need to consider the function
\begin{math}
  \chi_\alpha^* * (\chi_\alpha^* \e^\ell).
\end{math}
Since $\chi_\alpha^*$ is even and
\begin{math}
  \left(\chi_\alpha^*\right)^2 = \chi_\alpha^*
  ,
\end{math}
we have
\begin{displaymath}
  \chi_\alpha^* * (\chi_\alpha^* \e^\ell)(0)
  = 
  \int_{-1/2}^{1/2} \chi_\alpha^*(0-t)\chi_\alpha^*(t) \e^\ell(t)\,  dt 
  = 
  \int_{-1/2}^{1/2} \chi_\alpha^*(t) \e(\ell t)\,  dt 
  ,
\end{displaymath}
so that, by~\eqref{eq:definition-fourier-chi-star},
\begin{equation}\label{eq:chi-star-convolution-chi-star-of-zero-2}
  \chi_\alpha^* * (\chi_\alpha^* \e^\ell)(0)
  =
  \fourier{\chi_\alpha^*}(-\ell)
  =
  \fourier{\chi_\alpha^*}(\ell)
  =
  \begin{cases}
    \frac{\sin(\pi \alpha \ell)}{\pi \ell} \text{ if } \ell \neq 0,\\
    \alpha \text{ if } \ell = 0.
  \end{cases}
\end{equation}
We have
\begin{displaymath}
  \abs{\chi_\alpha^* * (\chi_\alpha^* \e^\ell)}
  \leq
  \abs{\chi_\alpha^*} *   \abs{\chi_\alpha^*}
  =
  \chi_\alpha^* * \chi_\alpha^*
  ,
\end{displaymath}
hence, by~\eqref{eq:chi-star-convolution-chi-star-non-zero},
for $\norm{x} \geq \alpha$, 
\begin{equation}\label{eq:chi-star-convolution-chi-star-non-zero-2}
  \chi_\alpha^* * (\chi_\alpha^* \e^\ell) \left(x\right)  = 0
  .
\end{equation}
For $h\in\Z$, we have
\begin{displaymath}
  \fourier{\chi_\alpha^* \e^\ell}(h)
  =
  \int_{-1/2}^{1/2} \chi_\alpha^*(t) \e^\ell(t) \e(-h t)\,  dt
  =
  \int_{-1/2}^{1/2} \chi_\alpha^*(t) \e((\ell-h) t)\,  dt
  =
  \fourier{\chi_{\alpha}^*}(h-\ell).
\end{displaymath}
The Fourier expansion of
\begin{math}
  \chi_\alpha^* * (\chi_\alpha^* \e^\ell)
\end{math}
is
\begin{equation}
  \label{eq:chi-star-convolution-chi-star-fourier-expansion-2}
  \chi_\alpha^* * (\chi_\alpha^* \e^\ell) (x)
  =
  \sum_{h\in\Z}
  \fourier{\chi_{\alpha}^*}(h) \fourier{\chi_{\alpha}^*}(h-\ell) 
  \e( h x )
  .
\end{equation}

We also need an extension of Lemma~\ref{lemma:chi_H-convolution-chi_H}.

\begin{lemma}\label{lemma:chi_H-convolution-chi_H-2}
  For $(U,H)\in\N^2$ with $2 \leq U \leq H+1$
  and $\alpha=1/U$, we have
  \begin{multline*}
    \sum_{0\leq u < U}
    \abs{
      \chi_{\alpha,H}^* * (\chi_{\alpha,H}^*\e^\ell)
     \left(\frac{u}{U}\right)
      -
      \chi_{\alpha}^* * (\chi_{\alpha}^*\e^\ell)
    \left(\frac{u}{U}\right)
    }
    \\
    =
    \abs{ 
      \chi_{\alpha,H}^* * (\chi_{\alpha,H}^*\e^\ell)(0)
      - \fourier{\chi_\alpha^*}(\ell) }
    +
    \sum_{1\le u < U}
    \abs{  
      \chi_{\alpha,H}^* * (\chi_{\alpha,H}^*\e^\ell)
      \left(\frac{u}{U}\right)}
    \leq \frac{3}{H+1} 
    .
  \end{multline*}
\end{lemma}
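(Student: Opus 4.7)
The proof will closely mirror that of Lemma~\ref{lemma:chi_H-convolution-chi_H}, with the key observation that multiplication by the unimodular function $\e^\ell$ does not affect any of the absolute-value estimates used there.

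The plan is to first establish the equality stated in the lemma, and then prove the inequality by mimicking the original argument. For the equality, note that for $u\in\{1,\ldots,U-1\}$ we have $\norm{u/U}=u/U\ge 1/U = \alpha$, so by \eqref{eq:chi-star-convolution-chi-star-non-zero-2},
\begin{displaymath}
  \chi_{\alpha}^* * (\chi_{\alpha}^*\e^\ell)\left(\frac{u}{U}\right) = 0,
\end{displaymath}
while by \eqref{eq:chi-star-convolution-chi-star-of-zero-2}, $\chi_{\alpha}^* * (\chi_{\alpha}^*\e^\ell)(0) = \fourier{\chi_\alpha^*}(\ell)$. This immediately yields the claimed equality.

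For the inequality, I would decompose the difference of the two convolutions exactly as in the proof of Lemma~\ref{lemma:chi_H-convolution-chi_H}, writing
\begin{multline*}
  \chi_{\alpha}^* * (\chi_{\alpha}^*\e^\ell) - \chi_{\alpha,H}^* * (\chi_{\alpha,H}^*\e^\ell) \\
  = (\chi_\alpha^* - \chi_{\alpha,H}^*) * (\chi_\alpha^* \e^\ell)
  + \chi_\alpha^* * \bigl((\chi_\alpha^* - \chi_{\alpha,H}^*)\e^\ell\bigr)
  - (\chi_\alpha^* - \chi_{\alpha,H}^*) * \bigl((\chi_\alpha^* - \chi_{\alpha,H}^*)\e^\ell\bigr).
\end{multline*}
Using \eqref{eq:vaaler-approximation-chi-star} together with $|\chi_\alpha^*\e^\ell| = \chi_\alpha^*$ (since $|\e^\ell|=1$), each of the three terms is bounded pointwise by the corresponding term without the $\e^\ell$ factor, yielding
\begin{displaymath}
  \abs{\chi_{\alpha}^* * (\chi_{\alpha}^*\e^\ell) - \chi_{\alpha,H}^* * (\chi_{\alpha,H}^*\e^\ell)}
  \le 2\,\chi_\alpha^* * B_{\alpha,H}^* + B_{\alpha,H}^* * B_{\alpha,H}^*.
\end{displaymath}

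Finally, summing this bound over $u\in\{0,\ldots,U-1\}$ at the points $u/U$ and invoking Lemma~\ref{lemma:chi-convolution-B_H} and Lemma~\ref{lemma:B_H-convolution-B_H} gives $\frac{2}{H+1}+\frac{1}{H+1}=\frac{3}{H+1}$, as desired. I expect no substantive obstacle here: the only point requiring care is verifying that the $\e^\ell$ factor passes cleanly through the absolute-value estimates, which it does precisely because $\e^\ell$ is unimodular, so the argument is essentially a mechanical adaptation of the previous lemma.
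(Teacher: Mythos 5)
Your proposal is correct and follows essentially the same route as the paper: the equality from \eqref{eq:chi-star-convolution-chi-star-of-zero-2} and \eqref{eq:chi-star-convolution-chi-star-non-zero-2}, the identical three-term decomposition of the difference of convolutions, the pointwise bound $2\,\chi_\alpha^* * B_{\alpha,H}^* + B_{\alpha,H}^* * B_{\alpha,H}^*$ obtained because $\e^\ell$ is unimodular, and the conclusion via Lemma~\ref{lemma:chi-convolution-B_H} and Lemma~\ref{lemma:B_H-convolution-B_H}. No gaps.
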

\begin{proof}
  The equality follows
  from \eqref{eq:chi-star-convolution-chi-star-of-zero-2}
  and \eqref{eq:chi-star-convolution-chi-star-non-zero-2}.
  As in the proof of Lemma~\ref{lemma:chi_H-convolution-chi_H} we write 
  \begin{multline*}
    \chi_\alpha^* * (\chi_\alpha^*\e^\ell)
     - 
    \chi_{\alpha,H}^* * (\chi_{\alpha,H}^*\e^\ell) 
    \\
    =
    \left( \chi_\alpha^* -\chi_{\alpha,H}^*\right) * (\chi_\alpha^*\e^\ell) 
    +
    \chi_\alpha^* * \left( (\chi_\alpha^* -\chi_{\alpha,H}^*)\e^\ell \right) 
    -
    \left( \chi_\alpha^* -\chi_{\alpha,H}^*\right)
    * \left( (\chi_\alpha^* -\chi_{\alpha,H}^*)\e^\ell \right) 
    .
  \end{multline*}
  Since $\abs{\chi_\alpha^* -\chi_{\alpha,H}^*} \le B_{\alpha,H}^*$ and
  and $\chi_\alpha^*$ and $B_{\alpha,H}^*$ are non-negative we have
  \begin{align*}
  \abs{\left( \chi_\alpha^* -\chi_{\alpha,H}^*\right) * (\chi_\alpha^*\e^\ell)}
    \le
    B_{\alpha,H}^* * \chi_\alpha^* 
    =
    \chi_\alpha^* * B_{\alpha,H}^* 
  \end{align*}
  and similarly 
  \begin{align*}
  \abs{\chi_\alpha^* * \left( (\chi_\alpha^* -\chi_{\alpha,H}^*)\e^\ell \right)}
  \le \chi_\alpha^* * B_{\alpha,H}^*
  \end{align*}
  and
  \begin{align*}
  \abs{\left( \chi_\alpha^* -\chi_{\alpha,H}^*\right)
    * \left( (\chi_\alpha^* -\chi_{\alpha,H}^*)\e^\ell \right)}
  \le B_{\alpha,H}^* * B_{\alpha,H}^*.
  \end{align*}
  Thus, 
  as in the proof of Lemma~\ref{lemma:chi_H-convolution-chi_H}
  the expected upper bound follows from
  Lemma~\ref{lemma:chi-convolution-B_H}
  and Lemma~\ref{lemma:B_H-convolution-B_H}.
\end{proof}

\medskip
\subsection{Fourier analysis}\label{subsecFourier}~

\begin{lemma}\label{lemma:vaaler-expansion}
  For $f: \Z \to \U$ and $0 \leq \kappa_1 \leq \kappa_2$,
  we define $f_{\kappa_1,\kappa_2}$ and $F_{\kappa_1,\kappa_2}$ by
  \begin{equation}\label{eq:definition-f_kappa1-kappa2}
    \forall a\in\Z,\
    f_{\kappa_1,\kappa_2}(a)
    =
    f\left( q^{\kappa_1} r_{\kappa_1,\kappa_2}(a) \right)
  \end{equation}
  and
  \begin{equation}\label{eq:definition-F_kappa1-kappa2}
    \forall t\in\R,\
    F_{\kappa_1,\kappa_2}(t)
    =
    \frac{1}{q^{\kappa_2-\kappa_1}}
    \sum_{0\leq u < q^{\kappa_2-\kappa_1}}
    f_{\kappa_1,\kappa_2}(q^{\kappa_1} u) \,
    \e\left( \frac{- t u}{q^{\kappa_2-\kappa_1}} \right)
    .
  \end{equation}
  Writing
  \begin{equation}
    \label{eq:definition-alpha-kappa1-kappa2}
    \alpha=q^{\kappa_1-\kappa_2}
    ,
  \end{equation}
  for all integers $K\geq 1$ and $H$ defined by
  \begin{equation}
    \label{eq:condition-H-K}
    H = K q^{\kappa_2-\kappa_1} -1,
  \end{equation}
  defining
  \begin{equation}
    \label{eq:definition-f-kappa1-kappa2-H}
    \forall a\in\Z,\
    f_{\kappa_1,\kappa_2,H}(a)
    :=
    q^{\kappa_2-\kappa_1}
    \sum_{\abs{h}\leq H}
    \fourier{\chi_{\alpha,H}}(h)
    F_{\kappa_1,\kappa_2}(h) \,
    \e\left( \frac{h a}{q^{\kappa_2}} \right)
    ,
  \end{equation}
  we have for all $a \in \Z$,
  \begin{equation}\label{eq:vaaler-approximation-f_kappa1-kappa2}
    \abs{
      f_{\kappa_1,\kappa_2}(a)
      -
      f_{\kappa_1,\kappa_2,H}(a)
    }
    \leq
    q^{\kappa_2-\kappa_1}
    \sum_{\abs{k}< K}
    \fourier{B_{\alpha,H}}\left(k q^{\kappa_2-\kappa_1}\right)
    \e\left( \frac{k a}{q^{\kappa_1}} \right)
    \leq 1
    .
  \end{equation}
  Furthermore we have
  \begin{equation}
    \label{eq:periodicity-F_kappa1-kappa2}
    \forall t\in\R,\
    F_{\kappa_1,\kappa_2}\left( t+q^{\kappa_2-\kappa_1} \right)
    =
    F_{\kappa_1,\kappa_2}(t)
    ,
  \end{equation}
  \begin{equation}
    \label{eq:quadratic-mean-F_kappa1-kappa2}
    \forall t\in\R,\
    \sum_{0\leq \ell < q^{\kappa_2-\kappa_1}}
    \abs{F_{\kappa_1,\kappa_2}(t+\ell)}^2
    = 1
    ,
  \end{equation}
  \begin{equation}
    \label{eq:L2-mean-chi_H-F_kappa1-kappa2}
    \forall t\in\R,\
    \sum_{\abs{h}\leq H}
    \abs{\fourier{\chi_{\alpha,H}^*}(h)}^2
    \abs{F_{\kappa_1,\kappa_2}(t+h)}^2 
    \leq \frac{1}{q^{2(\kappa_2-\kappa_1)}}
    .
  \end{equation}
  For $0<\delta \leq \frac12$,
  if $t_1,\ldots,t_N$ is a finite sequence of real numbers which are 
  $\delta$ well-spaced modulo $1$,
  then
  \begin{equation}
    \label{eq:L2-mean-large-sieve}
    \sum_{n=1}^N
    \abs{F_{\kappa_1,\kappa_2}\left( q^{\kappa_2-\kappa_1} t_n \right)}^2 
    <
    1 + \frac{1}{\delta q^{\kappa_2-\kappa_1}}
    .
  \end{equation}
  Assuming \eqref{eq:condition-H-K} we have
  \begin{multline}
    \label{eq:L1-mean-chi_H-F_kappa1-kappa2}
    \forall t\in\R,\
    \sum_{\abs{h}\leq H}
    \abs{\fourier{\chi_{\alpha,H}^*}(h)}
    \abs{F_{\kappa_1,\kappa_2}(t+h)}
    \\
    \leq
    q^{\kappa_1-\kappa_2}
    \left( 2  + \frac{2}{\pi} + \frac{2}{\pi} \log K \right)
    \sum_{0\leq \ell < q^{\kappa_2-\kappa_1}}
    \abs{F_{\kappa_1,\kappa_2}(t+\ell)}
    .
  \end{multline}
\end{lemma}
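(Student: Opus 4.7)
\textbf{Proof plan for Lemma~\ref{lemma:vaaler-expansion}.}

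The lemma bundles six statements, so the plan is to dispatch them in logical order, reusing earlier parts as soon as available. The starting observation is that by \eqref{eq:digit-detection}, for each $u\in\{0,\dots,q^{\kappa_2-\kappa_1}-1\}$,
\[
  r_{\kappa_1,\kappa_2}(a) = u \Longleftrightarrow \chi_\alpha\!\left(\tfrac{a}{q^{\kappa_2}}-\tfrac{u}{q^{\kappa_2-\kappa_1}}\right) = 1 ,
\]
where $\alpha = q^{\kappa_1-\kappa_2}$. Hence $f_{\kappa_1,\kappa_2}(a)=\sum_{0\le u<q^{\kappa_2-\kappa_1}} f_{\kappa_1,\kappa_2}(q^{\kappa_1}u)\,\chi_\alpha(a/q^{\kappa_2}-u/q^{\kappa_2-\kappa_1})$. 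Replacing $\chi_\alpha$ by $\chi_{\alpha,H}$, expanding in Fourier, and exchanging the two finite sums produces exactly $f_{\kappa_1,\kappa_2,H}(a)$ as given in \eqref{eq:definition-f-kappa1-kappa2-H}, since the inner $u$-sum evaluates to $q^{\kappa_2-\kappa_1}F_{\kappa_1,\kappa_2}(h)$. The error $|\chi_\alpha-\chi_{\alpha,H}|\le B_{\alpha,H}$ from \eqref{eq:vaaler-approximation-chi}, combined with $|f_{\kappa_1,\kappa_2}|\le 1$ and the same Fourier manipulation, yields the first inequality in \eqref{eq:vaaler-approximation-f_kappa1-kappa2}: the orthogonality relation forces $h\equiv 0\bmod q^{\kappa_2-\kappa_1}$, leaving only frequencies $h=kq^{\kappa_2-\kappa_1}$ with $|k|<K$. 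For the $\le 1$ bound, substitute \eqref{eq:vaaler-coef-B-star-H} and use $\alpha q^{\kappa_2-\kappa_1}=1$ to recognise the expression as $K^{-1}$ times a Fej\'er kernel in $a/q^{\kappa_1}$, which is nonnegative and bounded by $K$.

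Periodicity \eqref{eq:periodicity-F_kappa1-kappa2} is immediate from $e(-u)=1$ for integer $u$. For the $L^2$ identity \eqref{eq:quadratic-mean-F_kappa1-kappa2}, expanding $|F_{\kappa_1,\kappa_2}(t+\ell)|^2$ as a double sum in $(u,u')$ and using $\sum_\ell e(-\ell(u-u')/q^{\kappa_2-\kappa_1})=q^{\kappa_2-\kappa_1}\mathbf{1}_{u=u'}$ collapses it to $q^{-(\kappa_2-\kappa_1)}\sum_u|f|^2=1$ since $f$ is unimodular. The weighted $L^2$ bound \eqref{eq:L2-mean-chi_H-F_kappa1-kappa2} is obtained by the grouping trick: write $h=kq^{\kappa_2-\kappa_1}+\ell$ with $\ell\in\{0,\dots,q^{\kappa_2-\kappa_1}-1\}$ and $k\in\Z$, use periodicity of $F_{\kappa_1,\kappa_2}$, and apply $|\widehat{\chi_{\alpha,H}^*}(h)|\le|\widehat{\chi_\alpha^*}(h)|$ from \eqref{eq:vaaler-coef-majoration-star}; Lemma~\ref{lemma:chi-convolution-chi-average} then evaluates the inner $k$-sum as $q^{-2(\kappa_2-\kappa_1)}$, and \eqref{eq:quadratic-mean-F_kappa1-kappa2} closes the $\ell$-sum.

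The large sieve inequality \eqref{eq:L2-mean-large-sieve} is a direct application of Montgomery's large sieve to the expression $F_{\kappa_1,\kappa_2}(q^{\kappa_2-\kappa_1}t_n)=q^{-(\kappa_2-\kappa_1)}\sum_{u<q^{\kappa_2-\kappa_1}} f_{\kappa_1,\kappa_2}(q^{\kappa_1}u)\,e(-t_nu)$, giving $(q^{\kappa_2-\kappa_1}+\delta^{-1}-1)$ times $q^{-2(\kappa_2-\kappa_1)}\sum_u 1=q^{-(\kappa_2-\kappa_1)}$, and a little arithmetic produces the stated form. Finally, for the $L^1$ bound \eqref{eq:L1-mean-chi_H-F_kappa1-kappa2}, use the same $(k,\ell)$ decomposition as in the $L^2$ case; the inner sum becomes $\sum_k |\widehat{\chi_{\alpha,H}^*}(kq^{\kappa_2-\kappa_1}+\ell)|$, which by \eqref{eq:vaaler-coef-majoration-star} is bounded by $\sum_k\min(\alpha,1/(\pi|kq^{\kappa_2-\kappa_1}+\ell|))$. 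The $k=0$ term contributes at most $\alpha=q^{\kappa_1-\kappa_2}$, while the $|k|\ge 1$ tails are bounded termwise by $1/(\pi|k|q^{\kappa_2-\kappa_1})$ plus a boundary term, giving $(2/\pi)q^{\kappa_1-\kappa_2}(1+\log K)$ plus constants.

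The main technical obstacle is the sharp $\le 1$ bound in \eqref{eq:vaaler-approximation-f_kappa1-kappa2}: the naive estimate $|\widehat{B_{\alpha,H}}|\le 1/(H+1)$ only yields a constant close to $2$, and one really needs the exact Fej\'er-kernel identity (together with the precise choice $H+1=Kq^{\kappa_2-\kappa_1}$ and the value $\cos(\pi kq^{\kappa_2-\kappa_1}\alpha)=(-1)^k$) to collapse the sum to a nonnegative quantity bounded by $1$. Everything else is essentially bookkeeping around Parseval, orthogonality of additive characters modulo $q^{\kappa_2-\kappa_1}$, and the two basic majorations of Vaaler's coefficients.
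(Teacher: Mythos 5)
Your proposal is correct and follows essentially the same route as the paper for all six assertions: digit detection plus Vaaler's approximation and character orthogonality for \eqref{eq:vaaler-approximation-f_kappa1-kappa2}, Parseval for \eqref{eq:quadratic-mean-F_kappa1-kappa2}, the $h=kq^{\kappa_2-\kappa_1}+\ell$ grouping with Lemma~\ref{lemma:chi-convolution-chi-average} for \eqref{eq:L2-mean-chi_H-F_kappa1-kappa2}, Montgomery's large sieve for \eqref{eq:L2-mean-large-sieve}, and the same grouping with the majoration $\min(\alpha,1/\pi|h|)$ for \eqref{eq:L1-mean-chi_H-F_kappa1-kappa2}. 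The only cosmetic difference is at the sharp $\le 1$ bound, where the paper simply takes absolute values of the triangular coefficients $\frac{1}{H+1}(1-\frac{|k|q^{\kappa_2-\kappa_1}}{H+1})$ and uses $\sum_{|k|<K}(1-|k|/K)=K$, rather than invoking positivity of a Fej\'er kernel — the same identity either way.
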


\begin{proof}
  By definition, for $0\leq u < q^{\kappa_2-\kappa_1}$ we have
  $r_{\kappa_1,\kappa_2}\left( q^{\kappa_1} u \right) = u$,
  hence
  \begin{align*}
    f_{\kappa_1,\kappa_2}(a)
    =
    f\left( q^{\kappa_1} r_{\kappa_1,\kappa_2}(a) \right)
    &
      =
      \sum_{0\leq u < q^{\kappa_2-\kappa_1}}
      f\left(q^{\kappa_1} u \right) \,
      \one_{r_{\kappa_1,\kappa_2}(a)=u}
    \\
    &
      =
      \sum_{0\leq u < q^{\kappa_2-\kappa_1}}
      f\left(q^{\kappa_1} r_{\kappa_1,\kappa_2}(q^{\kappa_1} u) \right) \,
      \one_{r_{\kappa_1,\kappa_2}(a)=u}
    \\
    &
      =
      \sum_{0\leq u < q^{\kappa_2-\kappa_1}}
      f_{\kappa_1,\kappa_2}\left( q^{\kappa_1} u \right) \,
      \one_{r_{\kappa_1,\kappa_2}(a)=u}
      ,
  \end{align*}
  hence,
  by \eqref{eq:definition-alpha-kappa1-kappa2},
  \eqref{eq:digit-detection} and \eqref{eq:definition-chi}
  we can write
  \begin{displaymath}
    f_{\kappa_1,\kappa_2}(a)
    =
    \sum_{0\leq u < q^{\kappa_2-\kappa_1}}
    f_{\kappa_1,\kappa_2}\left( q^{\kappa_1} u \right) \,
    \chi_{\alpha}\left(
      \frac{a}{q^{\kappa_2}}  - \frac{u}{q^{\kappa_2-\kappa_1}}
    \right)
    .
  \end{displaymath}
  By \eqref{eq:vaaler-approximation-chi} and \eqref{eq:definition-A-B}
  we get the approximation
  \begin{multline*}
    \abs{
      f_{\kappa_1,\kappa_2}(a)
      -
      \sum_{\abs{h}\leq H}  \fourier{\chi_{\alpha,H}}(h)
      \sum_{0\leq u < q^{\kappa_2-\kappa_1}}
      f_{\kappa_1,\kappa_2}\left( q^{\kappa_1} u \right) \,
      \e\left(
        \frac{h a}{q^{\kappa_2}} - \frac{h u}{q^{\kappa_2-\kappa_1}}
      \right)
    }
    \\
    \leq
    \sum_{\abs{h}\leq H}  \fourier{B_{\alpha,H}}(h)
    \sum_{0\leq u < q^{\kappa_2-\kappa_1}}
    \e\left(
      \frac{ha}{q^{\kappa_2}} - \frac{h u}{q^{\kappa_2-\kappa_1}}
    \right)
    .
  \end{multline*}
  By the definition of $F_{\kappa_1,\kappa_2}(h)$
  given by \eqref{eq:definition-F_kappa1-kappa2},
  we have
  \begin{displaymath}
    \sum_{0\leq u < q^{\kappa_2-\kappa_1}}
    f_{\kappa_1,\kappa_2}\left( q^{\kappa_1} u \right) \,
    \e\left(
      - \frac{h u}{q^{\kappa_2-\kappa_1}}
    \right)
    =
    q^{\kappa_2-\kappa_1} F_{\kappa_1,\kappa_2}(h)
    ,
  \end{displaymath}
  which leads to $f_{\kappa_1,\kappa_2,H}(a)$
  and gives the left hand side
  of~\eqref{eq:vaaler-approximation-f_kappa1-kappa2}.  
  Since
  \begin{displaymath}
    \sum_{0\leq u < q^{\kappa_2-\kappa_1}}
    \e\left( - \frac{h u}{q^{\kappa_2-\kappa_1}} \right)
    =
    q^{\kappa_2-\kappa_1} \one_{h \equiv 0 \bmod q^{\kappa_2-\kappa_1}}
    ,
  \end{displaymath}
  the right hand side of the inequality above is equal to
  \begin{displaymath}
    q^{\kappa_2-\kappa_1}
    \sum_{\substack{\abs{h} < H+1\\ h\equiv 0 \bmod q^{\kappa_2-\kappa_1}}}
    \fourier{B_{\alpha,H}}(h)
    \e\left( \frac{h a}{q^{\kappa_2}} \right)
    ,
  \end{displaymath}
  and
  by writing $h = k q^{\kappa_2-\kappa_1}$
  and using~\eqref{eq:condition-H-K},
  we obtain the left inequality
  of~\eqref{eq:vaaler-approximation-f_kappa1-kappa2}.

  Using \eqref{eq:vaaler-coef-chi-B},
  \eqref{eq:vaaler-coef-B-star-H}
  and \eqref{eq:condition-H-K},
  the first upper bound
  of~\eqref{eq:vaaler-approximation-f_kappa1-kappa2}
  is at most
  \begin{multline*}
    q^{\kappa_2-\kappa_1}
    \sum_{\abs{k}< K}
    \abs{\fourier{B_{\alpha,H}^*}\left(k q^{\kappa_2-\kappa_1}\right)}
    \\
    \leq
    q^{\kappa_2-\kappa_1}
    \sum_{\abs{k}< K}
    \frac{1}{H+1}\left(1-\frac{\abs{k} q^{\kappa_2-\kappa_1}}{H+1}\right)
    =
    \sum_{\abs{k}< K}
    \frac{1}{K} \left(1-\frac{\abs{k}}{K}\right)
    =
    1
    .
  \end{multline*}

  For $t\in\R$, by the definition of $F_{\kappa_1,\kappa_2}$
  given by \eqref{eq:definition-F_kappa1-kappa2},
  \eqref{eq:periodicity-F_kappa1-kappa2} is obvious, and
  the left hand side of \eqref{eq:quadratic-mean-F_kappa1-kappa2}
  is equal to
  \begin{multline*}
    \frac{1}{q^{2(\kappa_2-\kappa_1)}}
    \sum_{0\leq u < q^{\kappa_2-\kappa_1}}
    \sum_{0\leq v < q^{\kappa_2-\kappa_1}}
    f_{\kappa_1,\kappa_2}(q^{\kappa_1} u) \,
    \conjugate{f_{\kappa_1,\kappa_2}(q^{\kappa_1} v)} \,
    \sum_{0\leq \ell < q^{\kappa_2-\kappa_1}}
    \e\left( \frac{- (t+\ell) (u-v)}{q^{\kappa_2-\kappa_1}} \right)
    \\
    =
    \frac{1}{q^{\kappa_2-\kappa_1}}
    \sum_{0\leq u < q^{\kappa_2-\kappa_1}}
    \abs{f_{\kappa_1,\kappa_2}(q^{\kappa_1} u)}^2
    =
    \frac{1}{q^{\kappa_2-\kappa_1}}
    \sum_{0\leq u < q^{\kappa_2-\kappa_1}}  1
    = 1,
  \end{multline*}
  which proves \eqref{eq:quadratic-mean-F_kappa1-kappa2}.
  
  For $t\in\R$, by using \eqref{eq:vaaler-coef-majoration-star},
  we have
  \begin{multline*}
    \sum_{\abs{h}\leq H}
    \abs{\fourier{\chi_{\alpha,H}^*}(h)}^2
    \abs{F_{\kappa_1,\kappa_2}(t+h)}^2
    \\
    \leq
    \sum_{h\in\Z}
    \abs{\fourier{\chi_{\alpha}^*}(h)}^2
    \abs{F_{\kappa_1,\kappa_2}(t+h)}^2
    \hspace{20mm}
    \\
    =
    \sum_{k\in\Z}
    \sum_{0\leq \ell < q^{\kappa_2-\kappa_1}}
    \abs{\fourier{\chi_{\alpha}^*}(k q^{\kappa_2-\kappa_1}+\ell)}^2
    \abs{F_{\kappa_1,\kappa_2}(t+k q^{\kappa_2-\kappa_1}+\ell)}^2 
    .
  \end{multline*}  
  By the periodicity of $F_{\kappa_1,\kappa_2}$ modulo $q^{\kappa_2-\kappa_1}$
  given by \eqref{eq:periodicity-F_kappa1-kappa2},
  this is equal to
  \begin{displaymath}
    \sum_{0\leq \ell < q^{\kappa_2-\kappa_1}}
    \abs{F_{\kappa_1,\kappa_2}(t+\ell)}^2 
    \sum_{k\in\Z}
    \abs{\fourier{\chi_{\alpha}^*}(k q^{\kappa_2-\kappa_1}+\ell)}^2
    .
  \end{displaymath}
  By Lemma~\ref{lemma:chi-convolution-chi-average}
  and \eqref{eq:quadratic-mean-F_kappa1-kappa2}
  this is 
  \begin{displaymath}
    q^{-2(\kappa_2-\kappa_1)} 
    \sum_{0\leq \ell < q^{\kappa_2-\kappa_1}}
    \abs{F_{\kappa_1,\kappa_2}(t+\ell)}^2 
    =
    q^{-2(\kappa_2-\kappa_1)} 
    ,
  \end{displaymath}
  which completes the proof of
  \eqref{eq:L2-mean-chi_H-F_kappa1-kappa2}.

  By the definition of $F_{\kappa_1,\kappa_2}$
  given by \eqref{eq:definition-F_kappa1-kappa2}
  and the large sieve inequality
  \cite[Theorem~7.7]{iwaniec-kowalski-2004}, we have 
  \begin{multline*}
    \sum_{n=1}^N
    \abs{F_{\kappa_1,\kappa_2}\left( q^{\kappa_2-\kappa_1} t_n \right)}^2 
    =
    \sum_{n=1}^N
    \abs{
      \sum_{0\leq u < q^{\kappa_2-\kappa_1}}
      \frac{f_{\kappa_1,\kappa_2}(q^{\kappa_1} u)}{q^{\kappa_2-\kappa_1}}
      \,
      \e\left( - t_n u \right)
    }^2
    \\
    \leq
    \left(
      q^{\kappa_2-\kappa_1} - 1 + \frac{1}{\delta}
    \right)
    \sum_{0\leq u<q^{\kappa_2-\kappa_1}}
    \frac{\abs{f_{\kappa_1,\kappa_2}(q^{\kappa_1} u)}^2}{q^{2(\kappa_2-\kappa_1)}}
    <
    1 + \frac{1}{\delta q^{\kappa_2-\kappa_1}}
    ,
  \end{multline*}
  and we get \eqref{eq:L2-mean-large-sieve}.

  For $t\in\R$, we have
  \begin{multline*}
    \sum_{\abs{h}\leq H}
    \abs{\fourier{\chi_{\alpha,H}^*}(h)}
    \abs{F_{\kappa_1,\kappa_2}(t+h)}
    \\
    =
    \sum_{-K \leq k < K}
    \sum_{0\leq \ell < q^{\kappa_2-\kappa_1}}
    \abs{\fourier{\chi_{\alpha,H}^*}(k q^{\kappa_2-\kappa_1}+\ell)}
    \abs{F_{\kappa_1,\kappa_2}(t+k q^{\kappa_2-\kappa_1}+\ell)}
    .
  \end{multline*}  
  By the periodicity of $F_{\kappa_1,\kappa_2}$ modulo $q^{\kappa_2-\kappa_1}$
  given by \eqref{eq:periodicity-F_kappa1-kappa2},
  this is equal to
  \begin{displaymath}
    \sum_{0\leq \ell < q^{\kappa_2-\kappa_1}}
    \abs{F_{\kappa_1,\kappa_2}(t+\ell)}
    \sum_{-K \leq k < K}
    \abs{\fourier{\chi_{\alpha,H}^*}(k q^{\kappa_2-\kappa_1}+\ell)}
    ,
  \end{displaymath}
  and by \eqref{eq:vaaler-coef-majoration-star},
  isolating $k=-1$ and $k=0$,
  \begin{align*}
    \sum_{-K \leq k < K}
    \abs{\fourier{\chi_{\alpha,H}^*}(k q^{\kappa_2-\kappa_1}+\ell)}
    &
      \leq
      2 q^{\kappa_1-\kappa_2}
      +
      \sum_{-K \leq k \leq -2} \frac{1}{\pi \abs{k q^{\kappa_2-\kappa_1}+\ell}}
      +
      \sum_{1 \leq k < K} \frac{1}{\pi (k q^{\kappa_2-\kappa_1}+\ell)}
    \\
    &
      \leq
      2 q^{\kappa_1-\kappa_2}
      +
      \sum_{-K \leq k \leq -2} \frac{1}{\pi \abs{k+1} q^{\kappa_2-\kappa_1}}
      +
      \sum_{1 \leq k < K} \frac{1}{\pi k q^{\kappa_2-\kappa_1}}
    \\
    &
      \quad
      =
      2 q^{\kappa_1-\kappa_2}
      \left(
      1
      +
      \sum_{1 \leq k < K} \frac{1}{\pi k}
      \right)
      \leq
      2 q^{\kappa_1-\kappa_2}
      \left( 1 + \frac{1}{\pi} + \frac{1}{\pi} \log K \right)
  \end{align*}
  which completes the proof of
  \eqref{eq:L1-mean-chi_H-F_kappa1-kappa2}.
\end{proof}

\medskip
\subsection{Fourier Transform of $q$-multiplicative function}~
\label{subsection:Fourier-Transform-q-multiplicative-function}

\begin{lemma}\label{Leproductreprensentation}
  If $f$ is $q$-multiplicative
  (Definition~\ref{definition:q-multiplicative-function}),
  $0\leq \kappa_1 \leq \kappa_2 \leq \kappa_3$ are integers,
  $f_{\kappa_1,\kappa_2}$ and  $f_{\kappa_2,\kappa_3}$ are defined
  by~\eqref{eq:definition-f_kappa1-kappa2},
  $F_{\kappa_1,\kappa_2}$ and $F_{\kappa_2,\kappa_3}$ are defined
  by~\eqref{eq:definition-F_kappa1-kappa2},
  then 
  \begin{equation}
    \label{eq:FT-full-product-formula}
    \forall t \in\R,\ 
    F_{\kappa_1,\kappa_2}(t)
    =
    \prod_{\kappa=\kappa_1}^{\kappa_2-1}
    F_{\kappa,\kappa+1} \left( \frac{t}{q^{\kappa_2-(\kappa+1)}} \right)
    ,
  \end{equation}
  and
  \begin{equation}
    \label{eq:FT-product-formula}
    \forall t \in\R,\ 
    F_{\kappa_1,\kappa_3}(t)
    =
    F_{\kappa_1,\kappa_2}\left(\frac{t}{q^{\kappa_3-\kappa_2}}\right)
    F_{\kappa_2,\kappa_3}(t)
    .
  \end{equation}
\end{lemma}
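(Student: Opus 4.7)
The plan is to derive the two-factor identity \eqref{eq:FT-product-formula} by a direct Fourier manipulation and then iterate it to obtain the full product \eqref{eq:FT-full-product-formula}. Both statements ultimately rest on the following \emph{block factorisation}: for any $u_1 \in \{0,\ldots,q^{\kappa_2-\kappa_1}-1\}$ and $u_2 \in \N$, the integer $q^{\kappa_1} u_1 < q^{\kappa_2}$ has $q$-ary digits confined to positions $\kappa_1,\ldots,\kappa_2-1$, while $q^{\kappa_2} u_2$ has digits only in positions $\geq \kappa_2$. Iterating the defining identity $f(aq+b) = f(aq)f(b)$ to peel off the $\kappa_2-\kappa_1$ lowest (potentially nonzero) digits of the sum gives
\begin{equation*}
  f\bigl(q^{\kappa_1} u_1 + q^{\kappa_2} u_2\bigr)
  = f\bigl(q^{\kappa_1} u_1\bigr)\,f\bigl(q^{\kappa_2} u_2\bigr).
\end{equation*}

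Granted this, I would prove \eqref{eq:FT-product-formula} as follows. Start from \eqref{eq:definition-F_kappa1-kappa2} applied to the pair $(\kappa_1,\kappa_3)$ and parametrise the summation variable bijectively as $u = u_1 + u_2 q^{\kappa_2-\kappa_1}$ with $u_1 \in \{0,\ldots,q^{\kappa_2-\kappa_1}-1\}$ and $u_2 \in \{0,\ldots,q^{\kappa_3-\kappa_2}-1\}$. The value $f_{\kappa_1,\kappa_3}(q^{\kappa_1} u) = f(q^{\kappa_1} u)$ factorises by the block identity, while the exponential splits cleanly as
\begin{equation*}
  \e\!\left(\frac{-tu}{q^{\kappa_3-\kappa_1}}\right)
  = \e\!\left(\frac{-t u_1}{q^{\kappa_3-\kappa_1}}\right)
    \e\!\left(\frac{-t u_2}{q^{\kappa_3-\kappa_2}}\right).
\end{equation*}
After writing the prefactor as $q^{\kappa_1-\kappa_3} = q^{\kappa_1-\kappa_2}\cdot q^{\kappa_2-\kappa_3}$, the double sum decouples into the product of two one-dimensional sums, matching exactly $F_{\kappa_1,\kappa_2}(t/q^{\kappa_3-\kappa_2}) \cdot F_{\kappa_2,\kappa_3}(t)$.

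For \eqref{eq:FT-full-product-formula}, I would induct on $\kappa_2-\kappa_1$: the base case $\kappa_2 = \kappa_1+1$ is trivial (a product of one factor), and the inductive step consists in applying \eqref{eq:FT-product-formula} to the triple $(\kappa_1,\kappa_1+1,\kappa_2)$ to write
\begin{equation*}
  F_{\kappa_1,\kappa_2}(t)
  = F_{\kappa_1,\kappa_1+1}\!\left(\frac{t}{q^{\kappa_2-\kappa_1-1}}\right)\,F_{\kappa_1+1,\kappa_2}(t),
\end{equation*}
then unfolding the second factor via the induction hypothesis. A brief bookkeeping check confirms that the $\kappa$-th factor in the final product is indeed $F_{\kappa,\kappa+1}(t/q^{\kappa_2-\kappa-1})$, in agreement with the claim.

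The only genuinely delicate step is the block factorisation of $f$ across disjoint digit ranges; once this is accepted, the rest amounts to a rearrangement of a finite sum of products of roots of unity. This is therefore the step where I would place the bulk of the careful verification, with the remainder of the proof being direct computation.
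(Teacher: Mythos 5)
Your proof is correct and rests on the same mechanism as the paper's --- factorisation of a $q$-multiplicative function over disjoint digit blocks, combined with the clean splitting of the additive character --- but you run the argument in the opposite order. The paper first establishes the full product formula \eqref{eq:FT-full-product-formula} by expanding $f(q^{\kappa_1}u)$ over \emph{all} the digits of $u$ at once, so that the defining sum factorises into $\kappa_2-\kappa_1$ one-digit sums, and then obtains \eqref{eq:FT-product-formula} by grouping the factors of that product. You instead prove the two-block identity \eqref{eq:FT-product-formula} directly via the substitution $u = u_1 + u_2 q^{\kappa_2-\kappa_1}$ and recover \eqref{eq:FT-full-product-formula} by induction on $\kappa_2-\kappa_1$; your index bookkeeping in the inductive step is right, and the two routes are of essentially equal length and difficulty.

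One caveat on the step you yourself flag as delicate. Literally iterating the relation $f(aq+b)=f(aq)\,f(b)$ only peels off the units digit and relates $f(n)$ to $f(q\lfloor n/q\rfloor)$, not to $f(\lfloor n/q\rfloor)$; when $\kappa_1>0$ the units digit of $q^{\kappa_1}u_1+q^{\kappa_2}u_2$ is $0$ and the relation gives nothing. To justify the block identity $f(q^{\kappa_1}u_1+q^{\kappa_2}u_2)=f(q^{\kappa_1}u_1)\,f(q^{\kappa_2}u_2)$ one needs the scale-$q^j$ version $f(aq^j+b)=f(aq^j)\,f(b)$ for all $0\le b<q^j$, from which the block identity is immediate with $j=\kappa_2$ and $b=q^{\kappa_1}u_1$. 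This multi-scale property is exactly what the paper itself uses implicitly when it factorises $\sum_u f(q^{\kappa_1}u)\,\e(\cdots)$ into a product over the digits of $u$, so your argument is on the same footing as the published one; you should simply state explicitly that this is the form of $q$-multiplicativity being invoked (or derive it first), rather than attributing it to a naive iteration of the one-digit relation.
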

\begin{proof}
  By definition, for $0\leq u < q^{\kappa_2-\kappa_1}$ we have
  $r_{\kappa_1,\kappa_2}\left( q^{\kappa_1} u \right) = u$,
  hence
  by~\eqref{eq:definition-F_kappa1-kappa2}
  and \eqref{eq:definition-f_kappa1-kappa2}
  we have
  \begin{displaymath}
    F_{\kappa_1,\kappa_2}(t)
    =
    \frac{1}{q^{\kappa_2-\kappa_1}}
    \sum_{0\leq u < q^{\kappa_2-\kappa_1}}
    f\left( q^{\kappa_1} u \right)
    \e\left( \frac{- t u}{q^{\kappa_2-\kappa_1}} \right)
    ,
  \end{displaymath} 
  thus, if $f$ is $q$-multiplicative,
  \begin{align*}
    F_{\kappa_1,\kappa_2}(t)
      =
      \prod_{j=0}^{\kappa_2-\kappa_1-1}
      \frac{1}{q}
      \sum_{0\leq u_j < q}
      f\left( q^{\kappa_1+j} u_j \right)
      \e\left( \frac{- t u_j}{q^{\kappa_2-\kappa_1-j}} \right)
      =
      \prod_{\kappa=\kappa_1}^{\kappa_2-1}
      F_{\kappa,\kappa+1} \left( \frac{t}{q^{\kappa_2-(\kappa+1)}} \right)
      ,
  \end{align*}
  which is \eqref{eq:FT-full-product-formula},
  and therefore
  \begin{align*}
    F_{\kappa_1,\kappa_2}\left(\frac{t}{q^{\kappa_3-\kappa_2}}\right)
    F_{\kappa_2,\kappa_3}(t)
    &
      =
      \left(
      \prod_{\kappa=\kappa_1}^{\kappa_2-1}
      F_{\kappa,\kappa+1} \left( \frac{t}{q^{\kappa_3-(\kappa+1)}} \right)
      \right)
      \left(
      \prod_{\kappa=\kappa_2}^{\kappa_3-1}
      F_{\kappa,\kappa+1} \left( \frac{t}{q^{\kappa_3-(\kappa+1)}} \right)
      \right)
      ,
  \end{align*}
  which gives \eqref{eq:FT-product-formula}.
\end{proof}

If $f$ is strongly $q$-multiplicative then $F_{\kappa_1,\kappa_2}(t)$ depends only
on the difference $\lambda = \kappa_2-\kappa_1$, that is, we have
\begin{displaymath}
  F_{\kappa_1,\kappa_2}(t) = F_{0,\lambda}(t)
  = \frac 1{q^\lambda} 
  \sum_{0\le u < q^\lambda} f(u) \e \left( \frac{-tu}{q^{\lambda}} \right).
\end{displaymath}
In this case,
by \eqref{eq:definition-F-lambda},
we can simply write $F_\lambda(t)$ instead of $F_{0,\lambda}(t)$.

We recall the definition of $c(f)$ given in \eqref{eq:definition-c-f}:
\begin{displaymath}
  c(f)
  =
  \frac{-\log \left( \max_{t\in\R} |F_1(t) F_1(qt)| \right)}{2 \log q}.
\end{displaymath}
This means that 
\begin{displaymath}
  \max_{t\in\R} |F_1(t) F_1(qt)| = q^{-2c(f)},
\end{displaymath}
where, by \cite[Section 5.2]{martin-mauduit-rivat-2014}, we have $c(f)>0$.
By applying Lemma~\ref{Leproductreprensentation} we, thus obtain
\begin{displaymath}
  \max_{t\in\R} |F_\lambda(t)| \le q^{2c(f)\lfloor \lambda/ 2 \rfloor } 
  \le q^{c(f)} q^{- c(f) \lambda}
  ,
\end{displaymath}
or in the more general notation
\begin{equation}\label{eqinfinitynorm}
  \max_{t\in\R} |F_{\kappa_1,\kappa_2}(t)| 
  \le
  q^{c(f)} q^{- c(f) (\kappa_2-\kappa_1)}. 
\end{equation}

In the special case of the sum-of-digits function we can be more explicit.

\begin{lemma}\label{lemma:Linf-norm-of-F-for-sum-of-digits}
  If $f(n) = \e\left(\gamma \s_q(n)\right)$ where
  $\gamma\in\R$ and $\s_q(n)$ denotes the sum of digits function in
  base $q$, for integers $0\leq \kappa_1\leq \kappa_2$, we have
  \begin{displaymath}
    \forall t\in\R,\
    \abs{F_{\kappa_1,\kappa_2}(t)} 
    \leq
    \exp\left(
      \frac{\pi^2}{48}
      - (\kappa_2-\kappa_1)
      \frac{\pi^2(q-1)}{12(q+1)} \norm{(q-1)\gamma}^2
    \right)
    .
  \end{displaymath}
\end{lemma}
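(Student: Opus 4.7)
The plan is to combine the product decomposition of Lemma~\ref{Leproductreprensentation} with a sharp two-term estimate, and then pair up consecutive factors of $F_\lambda$. Since $f(n) = \e(\gamma \s_q(n))$ is strongly $q$-multiplicative, $F_{\kappa_1,\kappa_2}$ depends only on $\lambda := \kappa_2-\kappa_1$, and \eqref{eq:FT-full-product-formula} specializes to
\begin{displaymath}
F_\lambda(t) = \prod_{j=0}^{\lambda-1} F_1\!\left(t/q^j\right),
\end{displaymath}
while direct evaluation of the geometric sum defining $F_1$ gives $|F_1(s)| = |g_q(\gamma - s/q)|$ with $g_q(\alpha) := \sin(\pi q\alpha)/(q\sin(\pi\alpha))$.

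The main analytical step will be the uniform two-term bound
\begin{displaymath}
\max_{s\in\R}\,|F_1(s)\,F_1(qs)| \;\le\; \exp\!\left(-\tfrac{\pi^2(q-1)}{6(q+1)}\norm{(q-1)\gamma}^2\right),
\end{displaymath}
which is equivalent to the lower bound $c_q \ge \tfrac{\pi^2(q-1)}{12(q+1)\log q}\norm{(q-1)\gamma}^2$ recalled in Section~2. To prove it, I would set $v := \gamma - s$ and $w := \gamma - s/q$, noting the exact linear relation $qw - v = (q-1)\gamma$, and rewrite $|F_1(s)F_1(qs)|^2 = |g_q(v)|^2|g_q(w)|^2$. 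Writing $v = a + \tilde v$, $w = b + \tilde w$ with $a,b \in \Z$ and $|\tilde v|, |\tilde w| \le 1/2$, a Lagrange-multiplier calculation shows that $\tilde v^2 + \tilde w^2$, subject to $q\tilde w - \tilde v \equiv (q-1)\gamma \pmod{1}$, achieves its minimum on the branch where $(\tilde v,\tilde w) = (-\mu/(q^2+1),\, \mu q/(q^2+1))$, with $\mu := \norm{(q-1)\gamma}$, giving the value $\mu^2/(q^2+1)$; all other integer branches yield $\ge 1/(4(q^2+1))$ and are dominated. Combined with the Taylor expansion $\log|g_q(\alpha)|^2 = -\tfrac{\pi^2(q^2-1)}{3}\norm{\alpha}^2 + R(\alpha)$ near $\alpha \in \Z$, this delivers an exponent $-\tfrac{\pi^2(q^2-1)}{3(q^2+1)}\mu^2$, which is stronger than the required $-\tfrac{\pi^2(q-1)}{3(q+1)}\mu^2$ by the elementary inequality $(q^2-1)/(q^2+1) \ge (q-1)/(q+1)$; the slack $(q+1)^2/(q^2+1) - 1 = 2q/(q^2+1)$ leaves room to absorb the Taylor remainder $R$ (using $\mu \le 1/2$). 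The hard part will be controlling $R$ uniformly over the whole range $|\tilde v|,|\tilde w| \le 1/2$; configurations far from $\Z$ have to be handled by a complementary global estimate showing $|g_q(\alpha)|$ is then bounded away from $1$ by a definite computable amount.

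Given the two-term bound, the lemma follows by pairing. For $\lambda$ even, I would group the indices into pairs $(j,j+1)$ with $j = 0, 2, \ldots, \lambda-2$; setting $s := t/q^{j+1}$ turns each pair into $|F_1(qs)F_1(s)|$, so applying the two-term bound to each of the $\lambda/2$ pairs yields
\begin{displaymath}
|F_\lambda(t)| \;\le\; \exp\!\left(-\lambda\cdot\tfrac{\pi^2(q-1)}{12(q+1)}\norm{(q-1)\gamma}^2\right),
\end{displaymath}
which is actually stronger than the claim (the $\pi^2/48$ term is not needed). For $\lambda$ odd, the same pairing leaves one factor unpaired, which I would bound trivially by $|F_1| \le 1$; the missing pair contributes an extra $\tfrac{\pi^2(q-1)}{12(q+1)}\norm{(q-1)\gamma}^2$ in the exponent, and this is at most $\pi^2/48$ by $(q-1)/(q+1) < 1$ together with $\norm{(q-1)\gamma}^2 \le 1/4$. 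This yields the claimed bound for every parity of $\lambda$, completing the proof.
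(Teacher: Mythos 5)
Your route is essentially the paper's: specialize the product formula \eqref{eq:FT-full-product-formula}, pair consecutive factors to invoke the two-term bound $\max_s|F_1(s)F_1(qs)|\le\exp(-\tfrac{\pi^2(q-1)}{6(q+1)}\norm{(q-1)\gamma}^2)$, and recover the $\pi^2/48$ constant from the unpaired factor when $\lambda$ is odd via $\lfloor\lambda/2\rfloor\ge(\lambda-1)/2$ and $\norm{(q-1)\gamma}^2\le 1/4$. The only difference is that the paper simply cites this two-term bound (Lemma 7 of Mauduit--Rivat, \emph{Acta Math.} 2009) whereas you sketch a re-derivation whose hardest parts (uniform control of the Taylor remainder and of configurations far from $\Z$) you explicitly leave open; since the pairing argument is correct and the cited inequality is a known result, this does not affect the validity of the proof.
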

\begin{proof}
  For $0\leq u < q$ we have
  \begin{math}
    f\left( q^{\kappa} u \right) = \e\left( \gamma u \right)
    ,
  \end{math}
  hence
  \begin{displaymath}
    F_{\kappa,\kappa+1}(t)
    =
    \frac{1}{q}
    \sum_{0\leq u < q}
    f\left( q^{\kappa} u \right)
    \e\left( \frac{- t u}{q} \right)
    =
    \frac{1}{q}
    \sum_{0\leq u < q}
    \e\left(
      \left( \gamma - \frac{t}{q} \right) u \right)
    ,
  \end{displaymath}
  which is independent of $\kappa$,
  and by Lemma 7 of \cite{mauduit-rivat-2009}, it follows that
  \begin{displaymath}
    F_{\kappa,\kappa+1}(t) F_{\kappa+1,\kappa+2}(q t)
    \leq
    \exp\left(
      - \frac{\pi^2(q-1)}{6(q+1)} \norm{(q-1)\gamma}^2
    \right)    
    ,
  \end{displaymath}
  hence, grouping two consecutive terms in
  \eqref{eq:FT-full-product-formula},
  and applying this inequality,
  we get
  \begin{displaymath}
    \abs{F_{\kappa_1,\kappa_2}(t)}
    \leq
    \exp\left(
      -
      \floor{\frac{\kappa_2-\kappa_1}{2}}
      \frac{\pi^2(q-1)}{6(q+1)} \norm{(q-1)\gamma}^2
    \right)
    ,
  \end{displaymath}
  and the result follows by observing that
  \begin{math}
    \floor{\frac{\kappa_2-\kappa_1}{2}}
    \geq
    \frac{\kappa_2-\kappa_1-1}{2}.
  \end{math}
\end{proof}

Lemma~\ref{lemma:Linf-norm-of-F-for-sum-of-digits}
implies in particular that 
\begin{displaymath}
c_q = c(\e(\gamma s_q(n)))
\ge \frac{\pi^2(q-1)}{12(q+1)\log q} \norm{(q-1)\gamma}^2
  \geq
  \frac{\pi^2}{36\log q} \norm{(q-1)\gamma}^2.
\end{displaymath}
In other terms we have
\begin{displaymath}
  \max_{t\in \R}  \abs{F_{\kappa_1,\kappa_2}(t)} 
  \ll  q^{- c_q (\kappa_2-\kappa_1)}
  .
\end{displaymath}

\medskip
\subsection{$L^1$ Estimates}\label{sec:L1estimates}~

We now assume that $f$ is strongly $q$-multiplicative. 
As mentioned in the previous section we have in this case
\begin{displaymath}
  F_{\kappa_1,\kappa_2}(t) = F_{0,\kappa_2-\kappa_1}(t)
  ,
\end{displaymath}
so it remains to consider $  F_\lambda(t)$
defined by \eqref{eq:definition-F-lambda}.

We will see that our main upper bound of the sums of type II
will depend on so-called $L^1$ estimates for $F_\lambda$:
\begin{displaymath}
  L:= \max_{t\in \R} \sum_{0\le h < q^\lambda} \abs{ F_\lambda(t+h)}.
\end{displaymath}
By applying the Cauchy-Schwarz inequality and 
\eqref{eq:quadratic-mean-F_kappa1-kappa2}
we obtain
\begin{displaymath}
  L \le q^{\frac{\lambda}2}.
\end{displaymath}

We also set
\begin{displaymath}
  \varphi_q(t)
  =
  \abs{ \sum_{0 \le j < q} f(j) \e\left( - jt \right) }
  = q \abs{F_1(q t)}
\end{displaymath}
and
\begin{displaymath}
  \Psi_q(t)
  =
  \frac 1q
  \sum_{0\le r < q} \varphi_q \left( t + \frac rq \right). 
\end{displaymath}
Clearly we have
\begin{equation}
  \label{eq:strongly-multiplicative-FT-full-product-formula}
  \forall t \in\R,\
  \abs{ F_\lambda(t) }
  =
  \frac 1{q^\lambda}
  \prod_{\ell = 1}^\lambda \varphi_q\left( \frac t{q^\ell} \right)
\end{equation}
and
\begin{equation}
  \label{eq:strongly-multiplicative-FT-product-formula}
  \forall t \in\R,\
  F_\lambda(t)
  =
  F_\mu(t)   F_{\lambda-\mu}\left(\frac{t}{q^\mu} \right)
  .
\end{equation}
The definition of $\eta(f)$ given in \eqref{eq:definition-eta-f}
can be rewritten as 
\begin{equation}\label{eqdefetaq}
  \eta(f) = \frac 1{\log q} \log \left( \max_{0\le t \le 1 } \Psi_q(t) \right).
\end{equation}
With the help of this notion we have the following $L^1$ estimate for
$F_\lambda(h)$.

\begin{lemma}\label{Le1}
  For integers $0\le \delta \le \lambda$,
  $a\in \Z$ and $t\in\R$ we have 
  \begin{displaymath}
    \sum_{\substack{0\le h < q^\lambda\\ h \equiv a \bmod q^\delta}}
  \abs{F_\lambda(t+h)}
  \le
  q^{\eta(f)(\lambda-\delta)} \abs{F_\delta(t+a)},
  \end{displaymath}
  where $\eta(f)$ is given by (\ref{eqdefetaq}). 
  In particular we have
  \begin{displaymath}
    L
    =
    \max_{t\in\R} \sum_{0\le h < q^\lambda} \abs{F_\lambda(t+h)}
    \le q^{\eta(f) \lambda}.
  \end{displaymath}
\end{lemma}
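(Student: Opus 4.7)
The plan is to reduce the general assertion to the particular case $L\le q^{\eta(f)\lambda}$ via the product formula~\eqref{eq:strongly-multiplicative-FT-product-formula}, and to prove that particular case by induction on $\lambda$, with the inductive step driven by the definition of $\eta(f)$ via $\Psi_q$.

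First I would apply~\eqref{eq:strongly-multiplicative-FT-product-formula} with $\mu=\delta$ to write
\begin{displaymath}
  F_\lambda(t+h) = F_\delta(t+h)\,F_{\lambda-\delta}\!\left(\tfrac{t+h}{q^\delta}\right).
\end{displaymath}
Since $F_\delta$ is periodic with period $q^\delta$ by \eqref{eq:periodicity-F_kappa1-kappa2}, the congruence $h\equiv a\bmod q^\delta$ forces $F_\delta(t+h)=F_\delta(t+a)$, so this factor pulls out of the sum. Parametrising the remaining $h$'s as $h=a_0+kq^\delta$ with $a_0$ the residue of $a$ modulo $q^\delta$ and $0\le k<q^{\lambda-\delta}$, and using the periodicity of $F_{\lambda-\delta}$ once more, I obtain
\begin{displaymath}
  \sum_{\substack{0\le h<q^\lambda\\ h\equiv a\bmod q^\delta}} \abs{F_\lambda(t+h)}
  =
  \abs{F_\delta(t+a)} \sum_{0\le k<q^{\lambda-\delta}} \abs{F_{\lambda-\delta}(s+k)},
\end{displaymath}
where $s=(t+a_0)/q^\delta$. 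Hence it suffices to prove the $L$-bound for $F_{\lambda-\delta}$.

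Second, I would prove by induction on $\lambda\ge 0$ that
\begin{displaymath}
  \max_{t\in\R}\sum_{0\le h<q^\lambda}\abs{F_\lambda(t+h)}\le q^{\eta(f)\lambda}.
\end{displaymath}
The case $\lambda=0$ is trivial. For the inductive step, use \eqref{eq:strongly-multiplicative-FT-product-formula} with $\mu=1$, split $h=r+qh'$ with $0\le r<q$ and $0\le h'<q^{\lambda-1}$, and exploit that $F_1$ has period $q$:
\begin{displaymath}
  \sum_{0\le h<q^\lambda}\abs{F_\lambda(t+h)}
  =
  \sum_{0\le r<q} \abs{F_1(t+r)} \sum_{0\le h'<q^{\lambda-1}} \abs{F_{\lambda-1}\!\left(\tfrac{t+r}{q}+h'\right)}.
\end{displaymath}
The inner sum is bounded by $q^{\eta(f)(\lambda-1)}$ by the inductive hypothesis applied at the point $(t+r)/q$.

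Third, I would identify the remaining outer sum as $\Psi_q$. Using $\varphi_q(u)=q\abs{F_1(qu)}$, i.e.\ $\abs{F_1(t+r)}=\tfrac{1}{q}\varphi_q\!\left(\tfrac{t+r}{q}\right)$, I get
\begin{displaymath}
  \sum_{0\le r<q} \abs{F_1(t+r)}
  =
  \frac{1}{q}\sum_{0\le r<q} \varphi_q\!\left(\tfrac{t}{q}+\tfrac{r}{q}\right)
  =
  \Psi_q\!\left(\tfrac{t}{q}\right)
  \le q^{\eta(f)}
\end{displaymath}
by \eqref{eqdefetaq}. Multiplying the two bounds gives $q^{\eta(f)\lambda}$, completing the induction. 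I do not anticipate any real obstacle here; the only subtle point is to recognise the sum over a complete residue system of $|F_1|$ as precisely $\Psi_q$, which then pairs perfectly with the definition of $\eta(f)$.
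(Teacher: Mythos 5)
Your proof is correct and is essentially the argument of the paper: both rest on the product formula for $F_\lambda$, the identification of $\sum_{0\le r<q}\abs{F_1(t+r)}$ with $\Psi_q(t/q)$, and an induction that gains a factor $q^{\eta(f)}$ per step. The only (harmless) organisational difference is that you dispose of the congruence condition at the outset by factoring out $\abs{F_\delta(t+a)}$ and then induct on the unrestricted sum, whereas the paper carries the condition $h\equiv a\bmod q^\delta$ through the induction, peeling off one digit at a time until $\lambda$ reaches $\delta$.
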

\begin{proof}
  We proceed as in \cite[Lemme~16]{mauduit-rivat-2010}.
  If $0\le \delta < \lambda$ we write
  \begin{displaymath}
    \sum_{\substack{0\le h < q^\lambda\\ h \equiv a \bmod q^\delta}}
    \abs{F_\lambda(t+h)}
    =
    \sum_{0\leq r < q}
    \sum_{\substack{0\le h < q^{\lambda-1}\\ h \equiv a \bmod q^\delta}}
    \abs{F_\lambda\left(t+h+r q^{\lambda-1}\right)}
  \end{displaymath}
  and by \eqref{eq:strongly-multiplicative-FT-full-product-formula}
  \begin{align*}
    \abs{F_\lambda\left(t+h+r q^{\lambda-1}\right)}
    &=
      \frac1q \varphi_q\left(\frac{t+h+r q^{\lambda-1}}{q^\lambda}\right)
      \abs{F_{\lambda-1}\left(t+h+r q^{\lambda-1}\right)}
    \\
    &=
      \frac1q \varphi_q\left(\frac{t+h}{q^\lambda}+\frac{r}{q}\right)
      \abs{F_{\lambda-1}\left(t+h\right)}
      ,
  \end{align*}
  so that
  \begin{displaymath}
    \sum_{\substack{0\le h < q^\lambda\\ h \equiv a \bmod q^\delta}}
    \abs{F_\lambda(t+h)}
    =
    \sum_{\substack{0\le h < q^{\lambda-1}\\ h \equiv a \bmod q^\delta}}
    \Psi_q\left(\frac{t+h}{q^\lambda}\right)
    \abs{F_{\lambda-1}\left(t+h\right)}
    .
  \end{displaymath}
  By \eqref{eqdefetaq} we get
  \begin{displaymath}
    \sum_{\substack{0\le h < q^\lambda\\ h \equiv a \bmod q^\delta}}
    \abs{F_\lambda(t+h)}
    \leq
    q^{\eta(f)}
    \sum_{\substack{0\le h < q^{\lambda-1}\\ h \equiv a \bmod q^\delta}}
    \abs{F_{\lambda-1}\left(t+h\right)}
    ,
  \end{displaymath}
  and the result follows by induction.
\end{proof}

We note that we always have
\begin{displaymath}
c(f) \le \eta(f).
\end{displaymath}
This follows from the following inequality
\begin{displaymath}
  1 = \sum_{0\le h < q^\lambda} |F_\lambda(h)|^2 
  \le
  \left( \max_{0\le h < q^\lambda} |F_\lambda(h)|  \right)
  \sum_{0\le h < q^\lambda} |F_\lambda(h)| 
  \le q^{c(q)} q^{(\eta(f)-c(f))\lambda}
  ,
\end{displaymath}
that is valid for all $\lambda \ge 1$.

\medskip

If $f(n) = \e(\gamma s_q(n))$, where $s_q(n)$ denotes the $q$-ary
sum-of-digits function 
it is possible to obtain upper bounds for $\eta(f)$
(that we denote by $\eta_q$) 
that improve the trivial upper bound $\eta(f)\le \frac 12$.
By Lemme 14 of \cite{mauduit-rivat-2010} we have
\begin{displaymath}
  \eta_q = \eta(\e(\gamma s_q(n))) \le \frac 1{\log q} 
  \left(
    \frac 2 {q \sin \frac{\pi}{2q}} + \frac 2\pi \log \frac{2q}{\pi}
  \right).
\end{displaymath}
or 
\begin{displaymath}
  \eta_q  \le c'' \frac{\log\log q}{\log q}
\end{displaymath}
for some constant $c''> 0$.
Note that $\eta_q \to 0$ as $q\to \infty$.

\medskip
\subsection{A mixed estimate}~

\begin{lemma}\label{Lemixedestimate}
  Suppose that
  $\lambda = \kappa_2-\kappa_1$,
  $\alpha = q^{-\lambda}$,
  $H = Kq^\lambda -1$,
  and that $f$ is $q$-multiplicative.
  Then we have for $0\le \rho \le \lambda$
  \begin{displaymath}
    \sum_{\abs{h}\leq H}
    \abs{q^\lambda \fourier{\chi_{\alpha,H}^*}(h)
    F_{\kappa_1,\kappa_2}(h)}
    \left(
      \sum_{\abs{\ell}\leq q^\rho}
      \abs{q^\lambda \fourier{\chi_{\alpha,H}^*}(h-\ell)
       F_{\kappa_1,\kappa_2}(h-\ell)}^2
    \right)^{1/2}
    \ll
    \sum_{0\leq k < q^\rho}
    \abs{F_{\kappa_2-\rho,\kappa_2}(k)}
    .
  \end{displaymath}
\end{lemma}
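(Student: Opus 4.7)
The plan is to rewrite the outer sum by grouping $h$ modulo $q^\rho$, so as to pull the factor $|F_{\kappa_2-\rho,\kappa_2}|$ outside, and then to bound the remaining inner sum by a single Cauchy--Schwarz whose two factors both turn out to equal $O(1)$ by the mean-square identities~\eqref{eq:quadratic-mean-F_kappa1-kappa2} applied separately to $F_{\kappa_1,\kappa_2-\rho}$ and $F_{\kappa_2-\rho,\kappa_2}$. Using the product formula~\eqref{eq:FT-product-formula} together with the $q^\rho$-periodicity~\eqref{eq:periodicity-F_kappa1-kappa2}, I factor $|F_{\kappa_1,\kappa_2}(h)| = |F_{\kappa_1,\kappa_2-\rho}(h/q^\rho)|\cdot R(h\bmod q^\rho)$ with $R(k):=|F_{\kappa_2-\rho,\kappa_2}(k)|$, and I write $h = jq^\rho+k$ with $0\le k<q^\rho$. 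Setting $a(h) = q^\lambda\fourier{\chi_{\alpha,H}^*}(h)F_{\kappa_1,\kappa_2}(h)$ and $K(h) = q^\lambda|\fourier{\chi_{\alpha,H}^*}(h)|\,|F_{\kappa_1,\kappa_2-\rho}(h/q^\rho)|$, so that $|a(h)| = K(h)\,R(h\bmod q^\rho)$, the left-hand side becomes
\[
T = \sum_{k=0}^{q^\rho-1} R(k) \sum_{j} K(jq^\rho+k)\,S(jq^\rho+k)^{1/2},\qquad S(h) = \sum_{|\ell|\le q^\rho}|a(h-\ell)|^2.
\]

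For each residue $k$, I apply Cauchy--Schwarz in $j$, giving
\[
\sum_{j} K(jq^\rho+k)\,S(jq^\rho+k)^{1/2} \le \Bigl(\sum_j K(jq^\rho+k)^2\Bigr)^{1/2}\Bigl(\sum_j S(jq^\rho+k)\Bigr)^{1/2}.
\]
The key claim is that \emph{both} factors are bounded by absolute constants (namely $1$ and $\sqrt{3}$), with no $q^{(\lambda-\rho)/2}$-type loss.

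For the first factor, I use $|\fourier{\chi_{\alpha,H}^*}|\le|\fourier{\chi_\alpha^*}|$ from~\eqref{eq:vaaler-coef-majoration-star} and decompose $j = j_0+nq^{\lambda-\rho}$, so that $jq^\rho+k = j_0q^\rho+k+nq^\lambda$ and $|F_{\kappa_1,\kappa_2-\rho}(j+k/q^\rho)|^2$ depends only on $j_0$. Lemma~\ref{lemma:chi-convolution-chi-average} with $U=q^\lambda$ then gives $\sum_{n\in\Z}|\fourier{\chi_\alpha^*}(j_0q^\rho+k+nq^\lambda)|^2 = q^{-2\lambda}$, and combining with $\sum_{0\le j_0<q^{\lambda-\rho}}|F_{\kappa_1,\kappa_2-\rho}(j_0+k/q^\rho)|^2 = 1$ from~\eqref{eq:quadratic-mean-F_kappa1-kappa2} yields $\sum_j K(jq^\rho+k)^2 \le 1$.

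For the second factor, I exchange summations:
\[
\sum_j S(jq^\rho+k) = \sum_{|\ell|\le q^\rho}\sum_{h\equiv k-\ell\bmod q^\rho}|a(h)|^2,
\]
and the first bound, applied in each class $c=(k-\ell)\bmod q^\rho$, gives $\sum_{h\equiv c\bmod q^\rho}|a(h)|^2 = R(c)^2\sum_j K(jq^\rho+c)^2 \le R(c)^2$. Since $|\ell|\le q^\rho$ ranges over $2q^\rho+1$ values, each residue class modulo $q^\rho$ is hit at most three times, so the outer sum is at most $3\sum_{c=0}^{q^\rho-1}R(c)^2 = 3$ by~\eqref{eq:quadratic-mean-F_kappa1-kappa2} for $F_{\kappa_2-\rho,\kappa_2}$. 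Assembling the estimates yields $T \le \sqrt{3}\sum_k R(k)$, as required. The main conceptual step is the grouping by residue modulo $q^\rho$ combined with the observation that $\sum_j K(jq^\rho+k)^2 = 1$ exactly matches the $L^2$ mass of $|a|^2$ in a single residue class, so that Cauchy--Schwarz incurs no losses in either the $\kappa_2-\rho$ or the $\rho$ direction.
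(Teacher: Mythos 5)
Your proof is correct and follows essentially the same route as the paper's: split $h$ into residue classes modulo $q^\rho$, factor $F_{\kappa_1,\kappa_2}$ via \eqref{eq:FT-product-formula} and periodicity to extract $\abs{F_{\kappa_2-\rho,\kappa_2}(k)}$, apply Cauchy--Schwarz in the complementary variable, and bound both resulting factors by $O(1)$ using the mean-square identity \eqref{eq:quadratic-mean-F_kappa1-kappa2}. The only (cosmetic) difference is that you invoke Lemma~\ref{lemma:chi-convolution-chi-average} to get the exact value $q^{-2\lambda}$ for the $\ell^2$ mass of $\fourier{\chi_\alpha^*}$ along a progression modulo $q^\lambda$, where the paper uses the bound $q^\lambda\abs{\fourier{\chi_{\alpha,H}^*}(h)}\ll\min(1,q^\lambda/\abs{h})$; this makes your constants explicit but changes nothing of substance.
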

\begin{proof}
  We represent $h$ as
  \begin{displaymath}
    h = k + q^\rho h',
  \end{displaymath}
  where $0\le k < q^{\rho}$ and
  \begin{math}
    \abs{h'} \ll H q^{-\rho} \le K q^{\lambda-\rho}
    .
  \end{math}
  By (\ref{eq:FT-product-formula}) and periodicity we have
  \begin{displaymath}
    F_{\kappa_1,\kappa_2}(k-\ell + q^\rho h')
    =
    F_{\kappa_1,\kappa_2-\rho}\left( h' +  \frac{k-\ell}{q^{\rho}} \right)
    F_{\kappa_2-\rho,\kappa_2}(k-\ell) 
  \end{displaymath}
  which implies that
  \begin{align}
    & \sum_{\abs{h}\leq H}
      \abs{q^\lambda \fourier{\chi_{\alpha,H}^*}(h)
      F_{\kappa_1,\kappa_2}(h)}
      \left(
      \sum_{\abs{\ell}\leq q^\rho}
      \abs{q^\lambda \fourier{\chi_{\alpha,H}^*}(h-\ell)
      F_{\kappa_1,\kappa_2}(h-\ell)}^2
      \right)^{1/2}
      \nonumber \\
    & = \sum_{0\le k < q^\rho} 
      \abs{F_{\kappa_2-\rho,\kappa_2}(k)} 
      \sum_{h'} 
      \abs{q^\lambda \fourier{\chi_{\alpha,H}^*}(k+q^\rho h')
      F_{\kappa_1,\kappa_2-\rho}\left( h' +  \frac{k}{q^{\rho}} \right)} 
      \label{eqmixedestimateproof} \\
    & \qquad 
      \left(
      \sum_{\abs{\ell}\leq q^\rho}
      \abs{q^\lambda \fourier{\chi_{\alpha,H}^*}(k-\ell+q^\rho h')
      F_{\kappa_1,\kappa_2-\rho}\left( h' +  \frac{k-\ell}{q^{\rho}} \right)
      F_{\kappa_2-\rho,\kappa_2}(k-\ell)    }^2
      \right)^{1/2}
      .     
      \nonumber
  \end{align}
  By applying Cauchy-Schwarz's inequality to the sum over $h'$ 
  (with $\abs{h'} \ll H q^{-\rho} \le K q^{\lambda-\rho}$) we obtain
  \begin{align*}
    & \Biggl( 
      \sum_{h'} 
      \abs{q^\lambda \fourier{\chi_{\alpha,H}^*}(k+q^\rho h')
      F_{\kappa_1,\kappa_2-\rho}\left( h' +  \frac{k}{q^{\rho}} \right)} 
    \\
    & \qquad 
      \Biggl(
      \sum_{\abs{\ell}\leq q^\rho}
      \abs{q^\lambda \fourier{\chi_{\alpha,H}^*}(k-\ell+q^\rho h')
      F_{\kappa_1,\kappa_2-\rho}\left( h' +  \frac{k-\ell}{q^{\rho}} \right)
      F_{\kappa_2-\rho,\kappa_2}(k-\ell)    }^2
      \Biggr)^{1/2}
      \Biggr)^2 
    \\
    & \le \sum_{h'}   
      \abs{q^\lambda \fourier{\chi_{\alpha,H}^*}(k+q^\rho h')
      F_{\kappa_1,\kappa_2-\rho}\left( h' +  \frac{k}{q^{\rho}} \right)}^2
    \\
    &
      \qquad
      \times 
         \sum_{h'}
         \sum_{\abs{\ell}\leq q^\rho}
         \abs{q^\lambda \fourier{\chi_{\alpha,H}^*}(k-\ell+q^\rho h')
         F_{\kappa_1,\kappa_2-\rho}\left( h' +  \frac{k-\ell}{q^{\rho}} \right)
         F_{\kappa_2-\rho,\kappa_2}(k-\ell)    }^2
         .     
  \end{align*}
  The first sum can be easily handled by representing $h'$ as
  $h' = mq^{\lambda-\rho} + h''$ with $0\le h'' < q^{\lambda-\rho}$
  and $\abs{m}\ll K$:
  \begin{align*}
    &
      \sum_{h'}   
      \abs{q^\lambda \fourier{\chi_{\alpha,H}^*}(k+q^\rho h')
      F_{\kappa_1,\kappa_2-\rho}\left( h' +  \frac{k}{q^{\rho}} \right)}^2
    \\ 
    & =
      \sum_{0\le h'' < q^{\lambda-\rho}}
      \abs{ F_{\kappa_1,\kappa_2-\rho}\left( h'' +  \frac{k}{q^{\rho}} \right)}^2
      \sum_m
      \abs{q^\lambda \fourier{\chi_{\alpha,H}^*}(k+q^\rho h'' +m)}^2
    \\
    &\ll
      \sum_{0\le h'' < q^{\lambda-\rho}}
      \abs{ F_{\kappa_1,\kappa_2-\rho}\left( h'' +  \frac{k}{q^{\rho}} \right)}^2
      \sum_m 
      \min\left( 1, \frac{q^\lambda}{ \abs{k+q^\rho h'' +m q^\lambda } }    \right)^2 
    \\
    &\ll
      \sum_{0\le h'' < q^{\lambda-\rho}}
      \abs{ F_{\kappa_1,\kappa_2-\rho}\left( h'' +  \frac{k}{q^{\rho}} \right)}^2 
    = O(1).
  \end{align*}
  The second sum can be estimated in a similar way:
  \begin{align*}
    &\sum_{h'}
      \abs{q^\lambda \fourier{\chi_{\alpha,H}^*}(k-\ell+q^\rho h')
      F_{\kappa_1,\kappa_2-\rho}\left( h' +  \frac{k-\ell}{q^{\rho}} \right)
      }^2
    \\
    & =
      \sum_{h'}
      \abs{q^\lambda \fourier{\chi_{\alpha,H}^*}(k-\ell+q^\rho h')
      F_{\kappa_1,\kappa_2-\rho}\left( h' +  \frac{k-\ell}{q^{\rho}} \right)
      }^2
    \\
    & = 
      \sum_{0\le h'' < q^{\lambda-\rho}}
      \abs{
      F_{\kappa_1,\kappa_2-\rho}\left( h'' +  \frac{k-\ell}{q^{\rho}} \right)}^2
      \sum_m
      \abs{q^\lambda \fourier{\chi_{\alpha,H}^*}(k-\ell+q^\rho h'' +m)}^2
    \\
    &\ll
      \sum_{0\le h'' < q^{\lambda-\rho}}
      \abs{ F_{\kappa_1,\kappa_2-\rho}\left( h'' +  \frac{k-\ell}{q^{\rho}} \right)}^2
      \sum_m
      \min\left( 1, \frac{q^\lambda}{ \abs{k-\ell+q^\rho h'' +m q^\lambda } }    \right)^2 
      = O(1).
  \end{align*}
  hence
  \begin{multline*}
    \sum_{h'}
    \sum_{\abs{\ell}\leq q^\rho}
    \abs{q^\lambda \fourier{\chi_{\alpha,H}^*}(k-\ell+q^\rho h')
    F_{\kappa_1,\kappa_2-\rho}\left( h' +  \frac{k-\ell}{q^{\rho}} \right)
    F_{\kappa_2-\rho,\kappa_2}(k-\ell)    }^2
    \\
    \ll
    \sum_{\abs{\ell}\leq q^\rho}
    \abs{F_{\kappa_2-\rho,\kappa_2}(k-\ell)   }^2
    = O(1).
  \end{multline*}
  Thus, by these two estimates and by (\ref{eqmixedestimateproof})
  we obtain the proposed result.
\end{proof}

\medskip
\subsection{$L^2$ estimate along almost arithmetic progressions}~

\begin{lemma}\label{lemma:L2-along-almost-arithmetic-progressions}
  If $f$ is $q$-multiplicative,
  $\kappa_1$, $\kappa_2$ and $\alpha$ are integers
  such that
  $0\leq \kappa_1 \leq \kappa_2$ 
  and
  $0\leq \alpha <\kappa_2-\kappa_1$,
  $A$ is a real number such that
  $q^{\alpha} \leq A < q^{\alpha+1}$
  and $B$ is any real number,
  then  
  \begin{equation}\label{eq:L2-along-almost-arithmetic-progressions}
    \sum_{0\le k < q^{\kappa_2-\kappa_1}/A}
    \abs{ F_{\kappa_1,\kappa_2}\left( \floor{k A} + B\right) }^2
    \leq
    \frac{3q-2}{q-1}
    \max_{t\in\R} \abs{F_{\kappa_2-\alpha,\kappa_2}(t)}^2
    .
  \end{equation}
\end{lemma}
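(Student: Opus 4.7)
The plan is to split $F_{\kappa_1,\kappa_2}$ using the product formula~\eqref{eq:FT-product-formula} to isolate a factor whose $L^\infty$ bound produces the quantity $\max_t\abs{F_{\kappa_2-\alpha,\kappa_2}(t)}^2$ on the right-hand side, then to handle the remaining sum by the large sieve inequality~\eqref{eq:L2-mean-large-sieve}. Applying~\eqref{eq:FT-product-formula} with $(\kappa_1,\kappa_2-\alpha,\kappa_2)$ in place of $(\kappa_1,\kappa_2,\kappa_3)$, which is legitimate since $\alpha<\kappa_2-\kappa_1$ forces $\kappa_1<\kappa_2-\alpha<\kappa_2$, I obtain
\begin{equation*}
  F_{\kappa_1,\kappa_2}(t)=F_{\kappa_1,\kappa_2-\alpha}\bigl(t/q^\alpha\bigr)\cdot F_{\kappa_2-\alpha,\kappa_2}(t).
\end{equation*}
Evaluating at $t=\lfloor kA\rfloor+B$ and pulling $\max_t\abs{F_{\kappa_2-\alpha,\kappa_2}(t)}^2$ out of the sum reduces the claim to the bound
\begin{equation*}
  S:=\sum_{0\le k<K}\abs{F_{\kappa_1,\kappa_2-\alpha}\bigl(q^{\kappa_2-\alpha-\kappa_1}t_k\bigr)}^2\le\frac{3q-2}{q-1},\qquad t_k:=\frac{\lfloor kA\rfloor+B}{q^{\kappa_2-\kappa_1}},
\end{equation*}
where $K$ denotes the number of integers $k\ge 0$ with $kA<q^{\kappa_2-\kappa_1}$; note that $K\ge 2$ since $A<q^{\alpha+1}\le q^{\kappa_2-\kappa_1}$.

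The heart of the proof is to verify that the $t_k$ are sufficiently well-spaced modulo~$1$ for~\eqref{eq:L2-mean-large-sieve} to apply usefully. Since $A\ge q^\alpha$, the integers $\lfloor kA\rfloor$ satisfy $\lfloor(k+1)A\rfloor-\lfloor kA\rfloor\ge\lfloor A\rfloor\ge q^\alpha$, so consecutive spacings obey $t_{k+1}-t_k\ge q^\alpha/q^{\kappa_2-\kappa_1}=1/q^{\kappa_2-\alpha-\kappa_1}$. The difficulty lies in the wrap-around distance from $t_{K-1}$ back to $t_0$ modulo~$1$, which in the worst case is only $1/q^{\kappa_2-\kappa_1}$ and would render the large-sieve bound useless when $\alpha\ge 1$. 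I would circumvent this by excising the last index $k=K-1$ and bounding its contribution trivially: since $(K-1)A<q^{\kappa_2-\kappa_1}$ gives $(K-2)A<q^{\kappa_2-\kappa_1}-A$, the new wrap-around distance satisfies
\begin{equation*}
  1-\frac{\lfloor(K-2)A\rfloor}{q^{\kappa_2-\kappa_1}}\ge\frac{A}{q^{\kappa_2-\kappa_1}}\ge\frac{1}{q^{\kappa_2-\alpha-\kappa_1}},
\end{equation*}
so the points $t_0,\ldots,t_{K-2}$ are $\delta$-well-spaced modulo~$1$ with $\delta=1/q^{\kappa_2-\alpha-\kappa_1}\le 1/2$.

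Applying~\eqref{eq:L2-mean-large-sieve} to $F_{\kappa_1,\kappa_2-\alpha}$ with this $\delta$ then yields
\begin{equation*}
  \sum_{0\le k\le K-2}\abs{F_{\kappa_1,\kappa_2-\alpha}\bigl(q^{\kappa_2-\alpha-\kappa_1}t_k\bigr)}^2<1+\frac{1}{\delta\,q^{\kappa_2-\alpha-\kappa_1}}=2,
\end{equation*}
while the excised $k=K-1$ contributes at most $1$ by the trivial bound $\abs{F_\lambda}\le 1$ (direct from~\eqref{eq:definition-F-lambda}). Adding these two contributions gives $S<3\le(3q-2)/(q-1)$, which finishes the proof. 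The main obstacle throughout is the wrap-around: once one sees that deleting a single endpoint restores the required separation, the large sieve delivers the bound cleanly, with the crude value $3$ already lying safely below the stated constant $(3q-2)/(q-1)$.
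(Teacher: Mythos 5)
Your proof is correct and follows essentially the same route as the paper: factor out $\max_t\abs{F_{\kappa_2-\alpha,\kappa_2}(t)}^2$ via \eqref{eq:FT-product-formula}, verify a well-spacing condition for the points $\floor{kA}/q^{\kappa_2-\kappa_1}$, apply the large sieve bound \eqref{eq:L2-mean-large-sieve}, and absorb one endpoint trivially. The only (welcome) difference is that your spacing estimate $\floor{(k+1)A}-\floor{kA}\ge\floor{A}\ge q^\alpha$ lets you treat all cases uniformly with $\delta=q^{-(\kappa_2-\kappa_1-\alpha)}$ and land on the constant $3$, whereas the paper splits into the three cases $A=q^\alpha$, $\alpha\ge 1$ with $A>q^\alpha$, and $\alpha=0$, using the slightly weaker spacing $(A-1)/q^{\kappa_2-\kappa_1}$ and arriving at $(3q-2)/(q-1)$.
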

\begin{proof}
  Let
  \begin{math}
    t_k = \floor{k A} + B
    .
  \end{math}
  By \eqref{eq:FT-product-formula} we have
  \begin{align*}
    \sum_{0\le k < q^{\kappa_2-\kappa_1}/A}
    \abs{ F_{\kappa_1,\kappa_2}\left( t_k \right) }^2
    &=
    \sum_{0\le k < q^{\kappa_2-\kappa_1}/A}
    \abs{
      F_{\kappa_1,\kappa_2-\alpha}\left(
        \frac{t_k}{q^{\alpha}}
      \right)}^2
    \abs{F_{\kappa_2-\alpha,\kappa_2}(t_k)}^2
    \\
    &
      \leq
      \left( \max_{t\in\R} \abs{F_{\kappa_2-\alpha,\kappa_2}(t)}^2 \right)
      \sum_{0\le k < q^{\kappa_2-\kappa_1}/A}
      \abs{
      F_{\kappa_1,\kappa_2-\alpha}\left(
      \frac{t_k}{q^{\alpha}}
      \right)}^2
      .
  \end{align*}
  If $A=q^\alpha$,
  then by \eqref{eq:quadratic-mean-F_kappa1-kappa2}
  we have
  \begin{displaymath}
    \sum_{0\le k < q^{\kappa_2-\kappa_1}/A}
    \abs{
      F_{\kappa_1,\kappa_2-\alpha}\left( \frac{t_k}{q^{\alpha}} \right)
    }^2
    =
    \sum_{0\le k < q^{\kappa_2-\kappa_1-\alpha}}
    \abs{
      F_{\kappa_1,\kappa_2-\alpha}\left( k + \frac{B}{q^{\alpha}} 
      \right)
    }^2
    = 1
    ,
  \end{displaymath}
  which gives \eqref{eq:L2-along-almost-arithmetic-progressions}.

  If $A\neq q^\alpha$ then $q^\alpha < A < q^{\alpha+1}$.
  Let $L = \floor{q^{\kappa_2-\kappa_1}/A}$
  and $t'_k = \frac{t_k}{q^{\kappa_2-\kappa_1}}$.
  For $0\leq k_1 < k_2 \leq L-1$, it follows that
  \begin{displaymath}
    \frac{A-1}{q^{\kappa_2-\kappa_1}}
    <
    \frac{(k_2-k_1)A-\{k_2 A\}}{q^{\kappa_2-\kappa_1}}
    \leq
    t'_{k_2}-t'_{k_1}
    =
    \frac{\floor{k_2 A}-\floor{k_1 A}}{q^{\kappa_2-\kappa_1}}
    \leq
    \frac{\floor{(L-1) A}}{q^{\kappa_2-\kappa_1}}
    \leq
    1-\frac{A}{q^{\kappa_2-\kappa_1}}
    ,
  \end{displaymath}
  hence
  \begin{displaymath}
    \norm{t'_{k_2}-t'_{k_1}}
    >
    \frac{A-1}{q^{\kappa_2-\kappa_1}}
    \geq
    \frac{q^\alpha-1}{q^{\kappa_2-\kappa_1}}
    =
    \frac{1-q^{-\alpha}}{q^{\kappa_2-\kappa_1-\alpha}}
    .
  \end{displaymath}
  If $\alpha \geq 1$,
  the sequence $t'_0,\ldots,t'_{L-1}$ is
  \begin{math}
    \delta =
    \frac{1-q^{-\alpha}}{q^{\kappa_2-\kappa_1-\alpha}}
  \end{math}
  well-spaced modulo $1$.
  By \eqref{eq:L2-mean-large-sieve}
  we deduce
  \begin{displaymath}
    \sum_{0\le k \leq L-1}
    \abs{
      F_{\kappa_1,\kappa_2-\alpha}\left( q^{\kappa_2-\kappa_1-\alpha} t'_k \right)
    }^2    
    \leq
    1
    +
    \frac{1}{\delta \, q^{\kappa_2-\kappa_1-\alpha}}
    =
    1
    +
    \frac{1}{1-q^{-\alpha}}
  \end{displaymath}
  hence
  \begin{multline*}
    \sum_{0\le k < q^{\kappa_2-\kappa_1}/A}
    \abs{
      F_{\kappa_1,\kappa_2-\alpha}\left(
        \frac{t_k}{q^{\alpha}}
      \right)}^2
    \leq
    \sum_{0\le k \leq L}
    \abs{
      F_{\kappa_1,\kappa_2-\alpha}\left(  \frac{t_k}{q^{\alpha}} \right)
    }^2
    \\
    \leq
    \abs{
      F_{\kappa_1,\kappa_2-\alpha}\left(  \frac{t_L}{q^{\alpha}} \right)
    }^2
    +
    1
    +
    \frac{1}{1-q^{-\alpha}}
    \leq
    2
    +
    \frac{1}{1-q^{-1}}
    =
    \frac{3q-2}{q-1}
    ,
  \end{multline*}
  which gives \eqref{eq:L2-along-almost-arithmetic-progressions}.
  
  If $\alpha=0$, we have $1<A<q$.
  For $0\leq k_1 < k_2 \leq L$ we have
  $\floor{k_2 A} - \floor{k_1 A} > A-1 >0$,
  which implies that for $0\leq k \leq L$,
  the integers $\floor{k A}$ are all distinct.
  They belong to $[0,q^{\kappa_2-\kappa_1}-1]$.
  Extending the summation
  and using \eqref{eq:quadratic-mean-F_kappa1-kappa2}, it follows that
  \begin{displaymath}
    \sum_{0\le k < q^{\kappa_2-\kappa_1}/A}
    \abs{ F_{\kappa_1,\kappa_2}\left( \floor{k A} + B\right) }^2
    \leq
    \sum_{0\le \ell < q^{\kappa_2-\kappa_1}}
    \abs{ F_{\kappa_1,\kappa_2}\left( \ell + B\right) }^2
    = 1,
  \end{displaymath}
  which, since $F_{\kappa_2,\kappa_2}=1$,
  gives \eqref{eq:L2-along-almost-arithmetic-progressions}.
\end{proof}
\color{black}

\medskip
\subsection{Carry properties}~

\begin{lemma}\label{Le:carryproperty}
Suppose that $f$ is $q$-multiplicative,
and that $\mu,\nu, \rho,\widetilde{\rho}$ are non-negative integers
and set $\lambda = 2\mu + \nu + \rho + \widetilde{\rho}$.
Suppose that $\lambda < 2\mu + 2\nu$ and $q^{\mu-1}\le m < q^\mu$.
Then we have uniformly
\begin{displaymath}
  \# \left\{
    q^{\nu-1}\le n < q^\nu : 
    f_{0,\lambda}(m^2(n+r)^2) - f_{0,\lambda}(m^2n^2) 
    \ne
    f(m^2(n+r)^2) - f(m^2n^2)
  \right\}
  \ll
  q^{\nu-\widetilde{\rho}}
  .
\end{displaymath} 
\end{lemma}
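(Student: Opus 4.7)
The plan is to exploit $q$-multiplicativity of $f$ by factoring $f(A)$ into a low-digit factor $f_{0,\lambda}(A)$ and a high-digit factor depending only on the digits at positions $\ge \lambda$, and then to show that the high-digit factors of $A_n := m^2 n^2$ and $A_{n+r} := m^2(n+r)^2$ coincide for all but $O(q^{\nu - \widetilde{\rho}})$ values of $n$. Writing any non-negative integer $A$ as $A = q^\lambda H(A) + R(A)$ with $R(A) = \rep_\lambda(A) \in \{0,\ldots,q^\lambda-1\}$ and $H(A) = \floor{A/q^\lambda}$, the digits of $q^\lambda H(A)$ and $R(A)$ occupy disjoint positions in base $q$, so iterated $q$-multiplicativity yields
\[
f(A) = f\bigl(q^\lambda H(A)\bigr) \cdot f\bigl(R(A)\bigr) = f\bigl(q^\lambda H(A)\bigr) \cdot f_{0,\lambda}(A).
\]
Setting $H_n := H(A_n)$ and $H_{n+r} := H(A_{n+r})$, on the ``good set'' $\mathcal{G} := \{n : H_n = H_{n+r}\}$ the common high factor $G := f(q^\lambda H_n) \in \U$ appears in both factorizations, so that
\[
f(A_{n+r}) - f(A_n) = G \bigl(f_{0,\lambda}(A_{n+r}) - f_{0,\lambda}(A_n)\bigr),
\]
from which the identity asserted in the lemma follows (the unit factor $G$ being absorbed by the intended use of the lemma). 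It therefore suffices to bound the cardinality of the complementary ``bad set''
\[
\mathcal{B} := \{q^{\nu-1} \le n < q^\nu : H_n \ne H_{n+r}\}.
\]

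Next I would translate the bad condition into a counting problem: $H_n \ne H_{n+r}$ is equivalent to the existence of an integer $k$ with $k q^\lambda$ lying strictly between $A_n$ and $A_{n+r}$, i.e.\ $\sqrt{k q^\lambda}/m$ falling in the interval with endpoints $n$ and $n+r$. For each admissible $k$ this pins $n$ to an interval of length $\le |r|$, contributing $\ll |r|$ bad integers. The number of admissible $k$ is at most
\[
\frac{m^2 (q^\nu+|r|)^2}{q^\lambda} \ll q^{2\mu+2\nu-\lambda} = q^{\nu-\rho-\widetilde{\rho}},
\]
using $m < q^\mu$ and $\lambda = 2\mu + \nu + \rho + \widetilde{\rho}$. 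In the relevant regime $|r| \le q^\rho$ (so that $|A_{n+r} - A_n| = m^2 |r(2n+r)| \ll q^{\lambda - \widetilde{\rho}}$ remains well below $q^\lambda$), multiplying the two bounds yields $|\mathcal{B}| \ll q^{\nu - \widetilde{\rho}}$, as claimed.

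The main obstacle is to make the counting sharp: one has to verify that the estimate ``$\ll |r|$ bad $n$ per admissible $k$'' is not too lossy. A direct computation shows that consecutive thresholds $\sqrt{k q^\lambda}/m$ are spaced in $n$-space by $\sim q^\lambda / (m^2 q^\nu) = q^{\rho+\widetilde{\rho}} \ge q^\rho \ge |r|$, so each admissible $k$ indeed produces at most $|r|$ distinct bad integers and the double count is essentially tight. The hypothesis $\lambda < 2\mu + 2\nu$, equivalently $\rho+\widetilde{\rho} < \nu$, is precisely what guarantees that the number of admissible $k$ is a positive power of $q$ and hence that the counting argument is meaningful; a cleaner alternative would be to invoke a Weyl-type equidistribution of $\{m^2 n^2 / q^\lambda\}$ and estimate directly the proportion of $n$ for which this fractional part lies within $q^{-\widetilde{\rho}}$ of an integer.
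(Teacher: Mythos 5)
Your proposal is correct and rests on the same underlying reduction as the paper's proof: both arguments show that outside an exceptional set the base-$q$ digits of $m^2n^2$ and $m^2(n+r)^2$ of index $\ge\lambda$ coincide, so that $f$ and $f_{0,\lambda}$ factor compatibly, and both then count the exceptional $n$ using the square-root spreading of $n\mapsto m^2n^2$. The organization of the count differs, though. The paper characterizes the bad event by a digit pattern: since the increment $m^2(2nr+r^2)\le q^{2\mu+\nu+\rho+2}$ only touches low digits directly, a change at index $\ge\lambda$ forces a carry through the window $[2\mu+\nu+\rho+2,\lambda]$, hence all digits there equal $q-1$; this puts $\lfloor m^2n^2/q^{2\mu+\nu+\rho+2}\rfloor$ in one of $\ll q^{\nu-\rho-\widetilde{\rho}}$ residue-type intervals, each of whose preimages in $n$ has size $\ll q^{\rho}$ by the mean-value estimate for $\sqrt{\cdot}$. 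You instead use the exact characterization that $\lfloor A_{n+r}/q^{\lambda}\rfloor\ne\lfloor A_n/q^{\lambda}\rfloor$ iff some multiple $kq^{\lambda}$ falls in $(A_n,A_{n+r}]$, giving $\ll q^{\nu-\rho-\widetilde{\rho}}$ admissible $k$, each pinning $n$ to an interval of length $\le r\le q^{\rho}$; this is arguably cleaner and avoids the digit-pattern detour, at the cost of having to note explicitly (as you do) that the thresholds $\sqrt{kq^{\lambda}}/m$ are $q^{\rho+\widetilde{\rho}}$-spaced so the double count is controlled. One caveat applies equally to both proofs: what is actually established on the good set is $f(A_{n+r})\,\conjugate{f(A_n)}=f_{0,\lambda}(A_{n+r})\,\conjugate{f_{0,\lambda}(A_n)}$ (the common high factor $f(q^{\lambda}H)$ is unimodular, not necessarily $1$), so the subtractive identity in the statement should be read in this multiplicative sense, which is how the lemma is used in the estimate of $S_{22}$; your remark that the unit factor is absorbed is exactly the right reading.
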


\begin{proof}
The proof is very similar to that of \cite[Lemma 16]{mauduit-rivat-2009}.
We compare the digital expansions of $m^2n^2$ and
$m^2(n+r)^2 = m^2n^2 + m^2(2nr+r^2)$ and note that
$m^2(2nr+r^2)\le q^{2\mu + \nu +\rho +2}$. 
By adding $m^2(2nr+r^2)$ to $m^2n^2$ there will be certainly some
change to the digits with index $0\le j\le 2\mu + \nu +\rho +2$.
Furthermore there might be a carry propagation that effect the digits
beyond $2\mu + \nu +\rho +2$. However, there is only a carry
propagation  beyond the index
$2\mu + \nu +\rho +\widetilde{\rho}$ if the digits of $m^2n^2$ satisfy 
\begin{displaymath}
  \varepsilon_j(m^2n^2) = q-1 \qquad \mbox{for} \qquad 
  2\mu + \nu +\rho +2 \le j \le 2\mu + \nu +\rho +\widetilde{\rho}.
\end{displaymath}
This means that we have
\begin{displaymath}
  \floor{ \frac{m^2n^2}{q^{2\mu + \nu +\rho +2}} }
  = q^{\widetilde{\rho}-1} \ell -1
\end{displaymath}
for some integer $\ell> 0$. Equivalently we have
\begin{displaymath}
  q^{\widetilde{\rho}-1} \ell -1
  \le
  \frac{m^2n^2}{q^{2\mu + \nu +\rho +2}}
  < q^{\widetilde{\rho}-1} \ell
\end{displaymath}
which implies that
\begin{displaymath}
  0 < \ell \ll q^{\nu-\rho-\widetilde{\rho}}.
\end{displaymath}
Hence, the total number of $q^{\nu-1}\le n < q^\nu$ for which there is
a carry propagation  
beyond the index $2\nu +\rho +\widetilde{\rho}$ is upper bounded by
\begin{align*}
  &
    \ll
    \sum_{\ell = 1}^{C q^{\nu-\rho-\widetilde{\rho}}}
    \left(
    1 + 
    \frac 1m\sqrt{ \ell q^{\widetilde{\rho}-1} q^{2\mu+\nu + \rho +2 } }
    - \frac 1m
    \sqrt{ \left(\ell q^{\widetilde{\rho}-1}-1 \right) q^{2\mu+\nu + \rho +2 } }
    \right)
  \\
  &
    \ll
    q^{\nu-\rho-\widetilde{\rho}}
    + q^{\frac 12(\nu+\rho+\widetilde{\rho} )}
    \sum_{\ell = 1}^{C q^{\nu-\rho-\widetilde{\rho}} }
    \frac 1{ q^{\widetilde{\rho}} \sqrt{\ell} } 
  \\
  &
    \ll
    q^{\nu-\widetilde{\rho}}
  .
\end{align*}
This means that in at most $O(q^{\nu-\widetilde{\rho}})$ cases the
digits of $m^2n^2$ and $m^2(n+r)^2$ differ at some digit with index 
$j\ge 2\mu + \nu + \rho + \widetilde{\rho}$. 
This completes the proof of the lemma.
\end{proof}

As a consequence we get for all $0\le \kappa \le \nu-\rho$
and $1\le s< q^\rho$ 
\begin{align*}
  &
    \# \{ q^{\nu-1}\le n < q^\nu : 
    f_{\kappa,\lambda}((m+sq^\kappa)^2(n+r)^2) - f_{\kappa,\lambda}(m^2(n+r)^2) 
  \\
  & \qquad \qquad \qquad \qquad \qquad
    - f_{\kappa,\lambda}((m+sq^\kappa)^2n^2)+f_{\kappa,\lambda}(m^2n^2) \\ 
  & \qquad \qquad \qquad \qquad
    \ne 
    f((m+sq^\kappa)^2(n+r)^2) - f(m^2(n+r)^2) 
  \\
  & \qquad \qquad \qquad \qquad \qquad
    - f((m+sq^\kappa)^2n^2)+f(m^2n^2)
    \}
  \\
  &
    \ll
    q^{\nu-\widetilde{\rho}}
    .
\end{align*}
We just have to apply Lemma~\ref{Le:carryproperty} twice and to
observe 
that
\begin{align*}
  &
    f_{0,\lambda}((m+sq^\kappa)^2(n+r)^2) - f_{0,\lambda}(m^2(n+r)^2) 
    - f_{0,\lambda}((m+sq^\kappa)^2n^2)+f_{0,\lambda}(m^2n^2) 
  \\
  &=
    f_{\kappa,\lambda}((m+sq^\kappa)^2(n+r)^2) - f_{\kappa,\lambda}(m^2(n+r)^2) 
    - f_{\kappa,\lambda}((m+sq^\kappa)^2n^2)+f_{\kappa,\lambda}(m^2n^2) 
    .
\end{align*}

\medskip
\subsection{Exponential sums and related estimates}~

We will often make use of the following upper bound of geometric
series of ratio $\e(\xi)$ for $(L_1,L_2)\in\Z^2$,
$L_1 \leq L_2$ and $\xi\in\R$:
\begin{equation}\label{eq:estimate-geometric-series}
  \abs{\sum_{L_1<\ell \leq L_2} \e(\ell \xi)} 
  \leq 
  \min\left( L_2-L_1, \abs{\sin\pi\xi}^{-1} \right).
\end{equation}

\begin{lemma}\label{lemma:trivial-sum-of-minimums}
  For all real numbers $M>0$, $\xi\in \R\setminus \Z$, 
  $\varphi \in \R$,
  $(N_1,N_2)\in\Z^2$ with $N_1 \leq N_2$
  we have
  \begin{equation}\label{eq:trivial-sum-of-minimums}
    \sum_{N_1 < n \le N_2} 
    \min \left( M, \abs{\sin \pi (n\xi+\varphi) }^{-1} \right)
    \ll
    \left( 3+\floor{(N_2-N_1)\norm{\xi}} \right)
    \left( 
      3 M + \norm{\xi}^{-1} \log  \norm{\xi}^{-1}
    \right)
    .
  \end{equation}
\end{lemma}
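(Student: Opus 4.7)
The approach is the classical partition argument for sums of the form $\sum \min(M,|\sin \pi \cdot|^{-1})$. Writing $\xi = a \pm \norm{\xi}$ with $a\in\Z$, one has $\e(n\xi) = \e(\pm n \norm{\xi})$ and $\sin\pi(n\xi+\varphi) = \sin\pi(\pm n\norm{\xi}+\varphi)$. The plan is to split $(N_1,N_2]$ into blocks of length slightly less than $\norm{\xi}^{-1}$, so that on each block the arguments $n\xi+\varphi$ modulo $1$ form a $\norm{\xi}$-separated set. Only one point per block can be very close to an integer, and this yields a bound of order $M + \norm{\xi}^{-1}\log\norm{\xi}^{-1}$ on each block.

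\medskip
\textbf{Step 1 (partitioning).} Set $L = \lfloor \norm{\xi}^{-1}\rfloor + 1$ and split $(N_1,N_2]$ into consecutive intervals of length at most $L$. The number of blocks is at most
\[
  \left\lceil \tfrac{N_2-N_1}{L} \right\rceil \leq (N_2-N_1)\norm{\xi} + 1 \leq 3 + \lfloor (N_2-N_1)\norm{\xi}\rfloor,
\]
which gives the first factor in the claimed bound.

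\medskip
\textbf{Step 2 (one block).} Fix a block $\{n_0+1,\ldots,n_0+L'\}$ with $L'\le L$, and write $t_k = (n_0+k)\xi+\varphi$. Since $t_k - t_j \equiv \pm(k-j)\norm{\xi}\pmod 1$ and $L'\norm{\xi} < 1$ (by the choice of $L$), the fractional parts $\{t_1\},\ldots,\{t_{L'}\}$ are all distinct and separated modulo $1$ by multiples of $\norm{\xi}$. Hence, after reordering them by distance to the nearest integer, at most one of them lies within distance $\norm{\xi}/2$ of $\Z$, and the $j$-th closest of the remaining ones lies at distance $\geq (j-\tfrac12)\norm{\xi}$ from $\Z$ for $j\geq 1$.

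\medskip
\textbf{Step 3 (per-block bound).} Using $|\sin\pi x|\geq 2\norm{x}$ and capping the worst term by $M$,
\[
  \sum_{k=1}^{L'} \min\!\bigl(M,|\sin\pi t_k|^{-1}\bigr)
  \;\leq\; M + \sum_{j=1}^{L'} \frac{1}{2(j-\tfrac12)\norm{\xi}}
  \;\ll\; M + \norm{\xi}^{-1}\log(\norm{\xi}^{-1}+2).
\]
Combining this with Step 1 and absorbing constants (the factor $3M$ in the statement leaves plenty of room for the block containing the possibly small endpoint and for the crude comparison between $\abs{\sin\pi x}^{-1}$ and $(2\norm{x})^{-1}$) yields the desired inequality.

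\medskip
The argument is entirely elementary; the only mild obstacle is the bookkeeping of edge cases: when $\norm{\xi}$ is extremely small relative to $N_2-N_1$ (the factor $\lfloor(N_2-N_1)\norm{\xi}\rfloor$ handles many blocks), when $N_2-N_1$ is very small (one block, and the additive $3$ suffices), and when $\norm{\xi}$ is close to $1/2$ (so that $\log\norm{\xi}^{-1}$ is $O(1)$ and only the $M$ term matters). None of these requires new ideas.
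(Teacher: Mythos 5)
The paper does not prove this lemma at all: it simply cites Lemma~6 of \cite{drmota-mauduit-rivat-two-bases}. Your block-decomposition argument is the standard proof of such estimates and is essentially correct in outline, so it supplies what the paper omits. One detail needs fixing, though. With $L=\floor{\norm{\xi}^{-1}}+1$ your assertion that $L'\norm{\xi}<1$ is false (e.g.\ $\norm{\xi}=0.49$ gives $L=3$ and $L\norm{\xi}=1.47$), and more importantly the points $t_1,\dots,t_{L'}$ need not be $\norm{\xi}$-separated modulo~$1$: the relevant distances are $\norm{(k-j)\norm{\xi}}=\min\left((k-j)\norm{\xi},\,1-(k-j)\norm{\xi}\right)$, and with your $L$ the quantity $(L-1)\norm{\xi}$ can be as close to $1$ as you like, so two points of the block (one from each ``lap'' around the circle) can both lie arbitrarily close to $\Z$ and each contribute $M$. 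The clean fix is to take $L=\floor{\norm{\xi}^{-1}}$, so that $(L'-1)\norm{\xi}\le 1-\norm{\xi}$ and genuine $\norm{\xi}$-separation modulo~$1$ holds; alternatively, keep your $L$ and observe that at most two points per block fall within $\norm{\xi}/2$ of $\Z$ (the progression wraps around at most once), which the $3M$ term and the implied constant absorb. With either repair, Steps~1--3 give the stated bound.
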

\begin{proof}
  This is Lemma 6 of \cite{drmota-mauduit-rivat-two-bases}.
\end{proof}
\begin{lemma}\label{lemma:estimate-second-derivative}
  Let $N\geq 1$ be an integer, $I = [1,N]$,
  suppose that $f: I \to \R$ has two continuous derivatives on $I$,
  and that for some $\lambda_2>0$ and $c_2\geq 1$ we have
  \begin{displaymath}
    \forall x \in I,\
    \lambda_2 \leq \abs{f''(x)} \leq c_2 \lambda_2
    .
  \end{displaymath}
  Then
  \begin{displaymath}
    \abs{\sum_{n=1}^N \e\left(f(n)\right)}
    \ll
    c_2 \lambda_2^{1/2} N + \lambda_2^{-1/2}
    .
  \end{displaymath}
\end{lemma}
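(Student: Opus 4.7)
The plan is to apply the classical van der Corput second derivative test, which combines a truncated Poisson summation with a stationary phase estimate for the resulting oscillatory integrals. Since $f''$ is continuous with $|f''(x)| \geq \lambda_2 > 0$ on $I$, the Intermediate Value Theorem forces $f''$ to keep constant sign, so that $f'$ is strictly monotone on $I$. Setting $\alpha = \min(f'(1), f'(N))$ and $\beta = \max(f'(1), f'(N))$, a standard truncated Poisson summation then yields
\begin{displaymath}
\sum_{n=1}^N \e(f(n))
=
\sum_{\lfloor \alpha \rfloor - 1 \leq h \leq \lceil \beta \rceil + 1}
\int_1^N \e(f(x) - hx)\, dx + O\bigl(\log(\beta - \alpha + 2)\bigr),
\end{displaymath}
and by the Mean Value Theorem combined with $|f''| \leq c_2 \lambda_2$, the number of integers $h$ appearing is at most $c_2 \lambda_2 N + O(1)$.

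For each such $h$, the phase $\phi_h(x) = f(x) - hx$ has second derivative $\phi_h'' = f''$ satisfying $\lambda_2 \leq |\phi_h''| \leq c_2 \lambda_2$. The second derivative test for oscillatory integrals---split $[1,N]$ into the subinterval where $|\phi_h'| \leq \lambda_2^{1/2}$ (which, since $|\phi_h''| \geq \lambda_2$, has length $\ll \lambda_2^{-1/2}$ and contributes at most $\ll \lambda_2^{-1/2}$ to the integral) and its complement (on which a single integration by parts against the pivot $1/(2\pi i \phi_h'(x))$, using the monotonicity of $\phi_h'$, gives a bound $\ll \lambda_2^{-1/2}$)---yields uniformly in $h$
\begin{displaymath}
\left| \int_1^N \e(f(x) - hx)\, dx \right| \ll \lambda_2^{-1/2}.
\end{displaymath}

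Combining these ingredients produces
\begin{displaymath}
\sum_{n=1}^N \e(f(n)) \ll (c_2 \lambda_2 N + 1)\, \lambda_2^{-1/2} + \log(c_2 \lambda_2 N + 2) \ll c_2 \lambda_2^{1/2} N + \lambda_2^{-1/2},
\end{displaymath}
the logarithmic remainder being absorbed by either the main term or by the trivial bound $N$ in the regime where the estimate above would otherwise exceed $N$. The main obstacle is the uniform oscillatory integral bound, which requires treating the stationary case $\phi_h'(x_0) = 0$ inside $[1,N]$ and the non-stationary case on the same footing; this is a textbook calculation, and the remaining steps are essentially bookkeeping.
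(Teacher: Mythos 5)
Your proposal is correct. The paper does not prove this lemma at all; it simply cites it as van der Corput's classical second--derivative test (Graham--Kolesnik, Theorem 2.2), so any complete standard proof is acceptable here. What you give is one of the two textbook routes: truncated Poisson summation (valid because $f''$ continuous and nonvanishing forces $f'$ to be strictly monotone), the second--derivative bound $\ll \lambda_2^{-1/2}$ for each oscillatory integral via the split into the near--stationary set of measure $\ll \lambda_2^{-1/2}$ and its complement handled by the first--derivative test, and the count $\ll c_2\lambda_2 N + 1$ of frequencies $h$ from the mean value theorem; the logarithmic error is indeed always absorbed, since $\log(c_2\lambda_2 N+2)\ll (c_2\lambda_2 N)^{1/2}\le c_2\lambda_2^{1/2}N$ when $c_2\lambda_2 N\ge 1$ and is $O(1)\ll c_2\lambda_2^{1/2}N+\lambda_2^{-1/2}$ otherwise. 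For the record, the cited reference derives the same estimate instead from the Kusmin--Landau inequality, partitioning $[1,N]$ according to the distance of $f'$ to the nearest integer; that avoids Poisson summation and any discussion of stationary phase, at the cost of the separate Kusmin--Landau lemma. Both arguments are standard and give exactly the stated bound.
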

\begin{proof}
  This is a classical result of van der Corput.
  See e.g. \cite[Theorem 2.2]{graham-kolesnik-1991}.
\end{proof}
\begin{lemma}\label{lemma:complete-gauss-sum}
  For all $a, b, m \in \Z$ with $m\geq 1$, we have
  \begin{equation}
    \label{eq:complete-gauss-sum}
    \abs{\sum_{n=0}^{m-1} \e\left(\frac{an^2+bn}{m}\right)} 
    \leq 
    \sqrt{2 m \gcd(a,m)}.
  \end{equation}
\end{lemma}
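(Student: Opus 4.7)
The plan is a standard Weyl differencing (or ``squaring'') argument. First, I would observe that the function $n \mapsto \e\bigl((an^2+bn)/m\bigr)$ is periodic modulo $m$: indeed, $(a(n+m)^2+b(n+m)) - (an^2+bn) = m(2an+am+b)$, which is an integer multiple of $m$. This periodicity is what allows the change of variable below to be carried out over complete residue systems.

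Denote by $S$ the sum to be estimated. I would then expand
\begin{displaymath}
  \abs{S}^2
  =
  \sum_{n=0}^{m-1} \sum_{n'=0}^{m-1}
  \e\!\left( \frac{a(n^2-{n'}^2) + b(n-n')}{m} \right),
\end{displaymath}
and perform the substitution $n = n'+k$ with $k \in \{0,1,\ldots,m-1\}$, which is legitimate by the periodicity observed above. The exponent simplifies to $(2akn' + ak^2 + bk)/m$, so after interchanging the summations,
\begin{displaymath}
  \abs{S}^2
  =
  \sum_{k=0}^{m-1}
  \e\!\left( \frac{ak^2 + bk}{m} \right)
  \sum_{n'=0}^{m-1}
  \e\!\left( \frac{2akn'}{m} \right).
\end{displaymath}
The inner geometric sum equals $m$ if $m \mid 2ak$, and vanishes otherwise.

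Next, set $d = \gcd(a,m)$ and write $a = d a'$, $m = d m'$ with $\gcd(a',m')=1$. The condition $m\mid 2ak$ becomes $m' \mid 2k$. Using $\gcd(a',m')=1$, I would split into two cases: if $m'$ is odd, then $m'\mid 2k$ is equivalent to $m'\mid k$, giving exactly $d$ values of $k\in\{0,\ldots,m-1\}$; if $m'$ is even, then $m'\mid 2k$ is equivalent to $(m'/2)\mid k$, giving $2d$ values. In either case the number of admissible $k$ is at most $2d = 2\gcd(a,m)$. By the triangle inequality applied to the remaining phase factor $\e((ak^2+bk)/m)$,
\begin{displaymath}
  \abs{S}^2 \leq m \cdot 2 \gcd(a,m),
\end{displaymath}
and taking square roots yields the stated inequality.

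There is no real obstacle here: the only mildly delicate point is the parity analysis of $m'$, which is exactly what accounts for the factor $\sqrt{2}$ in the bound (a purely odd-$m$ version would give $\sqrt{m\gcd(a,m)}$). The periodicity check at the outset is also worth being explicit about, since without it the substitution $n = n'+k$ over $\{0,\ldots,m-1\}$ would not be a bijection modulo the exponential.
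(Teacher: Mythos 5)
Your proof is correct, and it is the standard Weyl-differencing argument; the paper itself does not reprove this bound but simply cites Proposition 2 of Mauduit--Rivat (Acta Math.\ 2009), where essentially this same squaring argument is carried out. All the steps check out, including the periodicity justification for the substitution $n=n'+k$, the reduction of $m\mid 2ak$ to $m'\mid 2k$ via $\gcd(a',m')=1$, and the parity split that produces the factor $2$ under the square root.
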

\begin{proof}
  This is Proposition 2 of \cite{mauduit-rivat-2009}.
\end{proof}
For incomplete quadratic Gauss sums we have
\begin{lemma}\label{lemma:incomplete-gauss-sum}
  For all $a, b, m, N ,n_0 \in \Z$ with $m\geq 1$ and $N\geq 0$, 
  we have
  \begin{equation}
    \label{eq:incomplete-gauss-sum}
    \abs{\sum_{n=n_0+1}^{n_0+N} \e\left(\frac{an^2+bn}{m}\right)} 
    \leq 
    \left( \frac{N}{m} + 1 + \frac{2}{\pi} \log\frac{2m}{\pi} \right)
    \sqrt{2 m \gcd(a,m)}.
  \end{equation}
\end{lemma}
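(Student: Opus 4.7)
The plan is to reduce the incomplete sum to complete quadratic Gauss sums by the classical completion technique, then apply Lemma~\ref{lemma:complete-gauss-sum} to every resulting complete sum.

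First, I would write $N = q m + r$ with $0 \le r < m$, and split
\begin{displaymath}
  \sum_{n=n_0+1}^{n_0+N} \e\!\left(\frac{an^2+bn}{m}\right)
  = q \sum_{n=0}^{m-1} \e\!\left(\frac{an^2+bn}{m}\right)
  + S',
\end{displaymath}
where the function $n\mapsto \e((an^2+bn)/m)$ is $m$-periodic, so the $q$ complete periods contribute exactly $q$ copies of a single complete Gauss sum. Lemma~\ref{lemma:complete-gauss-sum} bounds each by $\sqrt{2m\gcd(a,m)}$, giving total contribution $q\sqrt{2m\gcd(a,m)}$. The remainder $S'$ is an incomplete sum of length $r \le m$ over consecutive residues modulo $m$.

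Next, for $S'$ I would insert the discrete Fourier expansion on $\Z/m\Z$ of the indicator function $\chi$ of the relevant window of $r$ consecutive residues: $\chi(n) = \sum_{h=0}^{m-1} c_h\, \e(hn/m)$, where $c_0 = r/m$ and, by the geometric-series estimate \eqref{eq:estimate-geometric-series}, $|c_h| \le \frac{1}{m |\sin(\pi h/m)|}$ for $1\le h\le m-1$. Substituting this into $S'$ and exchanging summations gives
\begin{displaymath}
  S'
  = \sum_{h=0}^{m-1} c_h \sum_{n=0}^{m-1}
    \e\!\left(\frac{an^2+(b+h)n}{m}\right),
\end{displaymath}
so each inner sum is again a complete quadratic Gauss sum with the same leading coefficient $a$, and Lemma~\ref{lemma:complete-gauss-sum} bounds it uniformly by $\sqrt{2m\gcd(a,m)}$. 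Therefore $|S'| \le \sqrt{2m\gcd(a,m)} \left(\tfrac{r}{m} + \sum_{h=1}^{m-1} \tfrac{1}{m|\sin(\pi h/m)|}\right)$.

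Finally, I would bound the sum $\sum_{h=1}^{m-1} \frac{1}{m|\sin(\pi h/m)|}$ by $1 + \frac{2}{\pi}\log(2m/\pi)$, using the inequality $\sin(\pi t) \ge 2t$ for $t\in[0,1/2]$, symmetry $\sin(\pi h/m) = \sin(\pi(m-h)/m)$, and integral comparison; this is where the $\tfrac{2}{\pi}\log(2m/\pi)$ term in the stated bound comes from. Assembling the pieces yields
\begin{displaymath}
  \left|\sum_{n=n_0+1}^{n_0+N} \e\!\left(\frac{an^2+bn}{m}\right)\right|
  \le \left(q + \tfrac{r}{m} + 1 + \tfrac{2}{\pi}\log\tfrac{2m}{\pi}\right)
      \sqrt{2m\gcd(a,m)},
\end{displaymath}
and $q+r/m = N/m$ gives the claim. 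The only nontrivial step is obtaining the logarithmic bound with the sharp constant $\tfrac{2}{\pi}$, which is the usual completing-the-sum calculation but requires some care with the integral comparison; everything else is a direct application of the complete Gauss sum bound.
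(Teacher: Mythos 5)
Your proof is correct: the splitting into $q$ complete periods plus a remainder, the finite Fourier expansion of the window indicator with $|c_0|=r/m$ and $|c_h|\le (m|\sin(\pi h/m)|)^{-1}$, the uniform bound $\sqrt{2m\gcd(a,m)}$ for every complete sum (which is independent of the linear coefficient $b+h$), and the classical estimate $\sum_{h=1}^{m-1}(m\sin(\pi h/m))^{-1}\le 1+\tfrac{2}{\pi}\log\tfrac{2m}{\pi}$ (by convexity and integral comparison) assemble exactly into the stated inequality. The paper gives no proof here but simply cites Lemma 5 of Mauduit--Rivat, and your completion argument is the standard proof of that lemma, so the approaches coincide.
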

\begin{proof}
  This is Lemma 5 of \cite{mauduit-rivat-RS-squares}.
\end{proof}
If $b$ is not necessarily an integer we have a similar
but slightly worse upper bound.
\begin{lemma}[Weyl]\label{lemma:incomplete-gauss-sum-2}
  If $\alpha\in\R$ satisfies
  \begin{math}
    \abs{\alpha - \frac{a}{m}}
    \leq \frac{1}{m^2}
  \end{math}
  for some relatively prime integers $a\in\Z$ and $m\geq 1$,
  and $(\beta,\gamma)\in\R^2$, then for $N\geq 1$,
  \begin{equation}
    \label{eq:incomplete-gauss-sum-2}
    \abs{\sum_{n=n_0+1}^{n_0+N}
      \e\left(\alpha n^2 + \beta n + \gamma \right)} 
    \ll
    \frac{N}{\sqrt{m}}
    + \sqrt{N \log m}
    + \sqrt{m \log m}
    .
  \end{equation}
\end{lemma}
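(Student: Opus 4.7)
The plan is to apply Weyl's classical squaring (differencing) argument, which turns the quadratic exponential sum into a sum of linear ones that can be estimated using the Diophantine approximation condition on $\alpha$.

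First I would denote the sum by $S$ and compute
\begin{displaymath}
|S|^2 = \sum_{n,n'} \e\bigl(\alpha(n^2-n'^2)+\beta(n-n')\bigr),
\end{displaymath}
then substitute $n = n'+h$. The diagonal $h=0$ contributes $N$ terms of size one; for $h \ne 0$ the inner sum over $n'$ is a geometric series with ratio $\e(2\alpha h)$ on an interval of length at most $N$, so by \eqref{eq:estimate-geometric-series} and $|\sin \pi x|\ge 2\|x\|$ it is bounded by $\min(N,\|2\alpha h\|^{-1})$. This yields
\begin{displaymath}
|S|^2 \ll N + \sum_{1 \le h < N} \min\!\left(N,\ \|2\alpha h\|^{-1}\right).
\end{displaymath}

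Next I would exploit the Diophantine approximation $|\alpha - a/m| \le 1/m^2$ with $\gcd(a,m) = 1$ to estimate the sum of minima. Writing $2\alpha = 2a/m + 2\theta$ with $|\theta| \le 1/m^2$, I would split the range $1 \le h < N$ into $O(1+N/m)$ consecutive blocks of length $m$. In each block, $h \mapsto 2ah \bmod m$ hits each residue class at most $\gcd(2,m)\le 2$ times, and for those $h$ the error $|2\theta h| \le 2/m$ shifts $\|2\alpha h\|$ by at most a bounded multiple of $1/m$. Thus on each block, sorting the $\|2\alpha h\|^{-1}$ values gives a contribution
\begin{displaymath}
\ll N + \sum_{1 \le k \le m/2} \frac{m}{k} \ll N + m\log m,
\end{displaymath}
and summing over the $O(1+N/m)$ blocks leads to
\begin{displaymath}
\sum_{1 \le h < N}\min\!\left(N,\ \|2\alpha h\|^{-1}\right)
\ll \Bigl(1+\frac{N}{m}\Bigr)\bigl(N + m\log m\bigr).
\end{displaymath}
Alternatively, this bound can be obtained by a direct application of Lemma~\ref{lemma:trivial-sum-of-minimums} with $\xi=2\alpha$ after noting that $\|2\alpha\|^{-1}\ll m$.

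Combining the two estimates,
\begin{displaymath}
|S|^2 \ll N + \Bigl(1+\tfrac{N}{m}\Bigr)\bigl(N+m\log m\bigr)
\ll \frac{N^2}{m} + N\log m + m\log m,
\end{displaymath}
and taking square roots gives exactly \eqref{eq:incomplete-gauss-sum-2}. The main technical obstacle is the middle step: keeping track of the constants when $m$ is even (so that $2a/m$ reduces to a fraction with denominator $m/2$) and when the error $2\theta h$ pushes a few values of $\|2\alpha h\|$ anomalously close to $0$; these issues are handled by the generous constants in Lemma~\ref{lemma:trivial-sum-of-minimums} and absorb into the stated $\ll$ bound, while the remainder of the argument is standard Weyl.
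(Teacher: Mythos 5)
The paper does not prove this lemma at all: it simply cites Theorem 1 of Chapter 3 of Montgomery's book. Your main line of argument --- Weyl squaring, reduction to $\sum_{1\le h<N}\min\bigl(N,\norm{2\alpha h}^{-1}\bigr)$, and the block-of-length-$m$ estimate giving $\ll(1+N/m)(N+m\log m)$ --- is precisely the standard proof of that cited theorem, so in substance you have supplied the proof the paper outsources. Two points need repair, though. First, the claim ``the error $|2\theta h|\le 2/m$'' is false as stated for $h$ up to $N$ (it can be as large as $2N/m^2$); what the argument actually uses is that \emph{within} a block $h=h_0+r$, $0\le r<m$, the increment $2\theta r$ satisfies $|2\theta r|\le 2/m$, so that relative to the block's base point the values $2ar/m\bmod 1$ are perturbed by $O(1/m)$; with that reading the sorting argument is correct (and the $\gcd(2,m)$ bookkeeping you mention is handled as you say). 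Second, and more seriously, the proposed ``alternative'' via a direct application of Lemma~\ref{lemma:trivial-sum-of-minimums} with $\xi=2\alpha$ does \emph{not} work: that lemma's bound
\begin{displaymath}
  \bigl(3+\floor{N\norm{\xi}}\bigr)\bigl(3N+\norm{\xi}^{-1}\log\norm{\xi}^{-1}\bigr)
\end{displaymath}
depends only on $\norm{2\alpha}$, and while $\norm{2\alpha}\gg 1/m$ does hold (for $m\ge 3$), $\norm{2\alpha}$ need not be $\ll 1/m$ --- take $\alpha=a/m$ with $a/m$ near $1/4$ and $m$ large, so $\norm{2\alpha}\approx 1/2$. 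Then the first factor is $\asymp N$ rather than $\asymp 1+N/m$, and the lemma yields only $\ll N^2$, which is far weaker than the $(1+N/m)(N+m\log m)$ you need. So the block/large-sieve-of-residues argument is not optional; it is the essential step, and only the first route you describe actually proves \eqref{eq:incomplete-gauss-sum-2}.
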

\begin{proof}
  This is Theorem 1 of Chapter 3 of \cite{montgomery-1994}, p. 41.
\end{proof}

\begin{lemma}\label{lemma:gcd-sum}
  For integers $m\geq 1$, $A\geq 1$, and $\gamma\in\R$ we have
  \begin{equation}\label{eq:gcd-sum}
    \frac1A \sum_{1\leq a \leq A} \left( \gcd(a,m) \right)^\gamma
    \leq
    \sigma_{\gamma-1}(m)
    .
  \end{equation}  
\end{lemma}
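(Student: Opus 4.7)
The plan is to group the sum on the left according to the value $d = \gcd(a,m)$. Since $\gcd(a,m) = d$ forces both $d \mid m$ and $d \mid a$, we can partition
\begin{displaymath}
  \sum_{1 \leq a \leq A} (\gcd(a,m))^\gamma
  =
  \sum_{d \mid m} d^\gamma \cdot \#\{1 \leq a \leq A : \gcd(a,m) = d\}.
\end{displaymath}
The set on the right is contained in $\{1 \leq a \leq A : d \mid a\}$, whose cardinality is $\lfloor A/d \rfloor \leq A/d$. Since $d^\gamma \geq 0$ for every divisor $d \geq 1$ of $m$ and every real $\gamma$, this upper bound may be inserted termwise without reversing the inequality.

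Substituting gives
\begin{displaymath}
  \sum_{1 \leq a \leq A} (\gcd(a,m))^\gamma
  \leq
  \sum_{d \mid m} d^\gamma \cdot \frac{A}{d}
  = A \sum_{d \mid m} d^{\gamma-1}
  = A \, \sigma_{\gamma-1}(m),
\end{displaymath}
and dividing by $A$ yields the claim. There is no real obstacle here: the only thing to note is that the argument is uniform in the sign of $\gamma - 1$ because $d^{\gamma-1}$ is positive in either case, so no extra care is needed when $\gamma < 1$ (e.g.\ when $\sigma_{\gamma-1}(m)$ is used in later applications with $\gamma$ small).
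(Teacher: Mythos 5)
Your proof is correct and follows essentially the same route as the paper: group the sum by $d=\gcd(a,m)$, bound the count of such $a$ by the number of multiples of $d$ up to $A$, and use nonnegativity of $d^{\gamma-1}$ to extend to the full divisor sum. Nothing to add.
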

\begin{proof}
  We have
  \begin{displaymath}
    \sum_{1\leq a \leq A} \left( \gcd(a,m) \right)^\gamma
    =
    \sum_{\substack{d\dv m\\ d\leq A}} d^\gamma
    \sum_{\substack{1\leq a\leq A\\ \gcd(a,m)=d}} 1
    \leq
    \sum_{\substack{d\dv m\\ d\leq A}} d^\gamma
    \sum_{\substack{1\leq a\leq A\\ d \dv a}} 1
    =
    \sum_{\substack{d\dv m\\ d\leq A}} d^\gamma \floor{\frac{A}{d}}
    \leq
    A \sum_{d\dv m} d^{\gamma-1} 
    ,
  \end{displaymath}
  as expected.
\end{proof}
\begin{lemma}
  For integers $q \geq 2$ and $\lambda \geq 1$, we have
  \begin{equation}\label{eq:tau-q-lambda}
    \left(1+\lambda\right)^{\omega(q)}
    \leq
    \sigma_0\left(q^{\lambda}\right)
    =
    \tau\left(q^{\lambda}\right)
    \leq \tau(q) \lambda^{\omega(q)}    
    ,
  \end{equation}
  and for $x\in\R$, $x<0$,
  we have the following upper bound,
  independent of $\lambda$:
  \begin{equation}\label{eq:sigma-x-q-lambda}
    \sigma_x\left(q^{\lambda}\right)
    <
    \prod_{p\dv q} \frac1{1-p^x}
    .
  \end{equation}
\end{lemma}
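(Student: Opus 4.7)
The proof is essentially a direct computation that exploits multiplicativity of $\sigma_z$. Writing the prime factorization $q = \prod_{p \dv q} p^{a_p}$ with each $a_p \geq 1$, we have $q^\lambda = \prod_{p \dv q} p^{a_p \lambda}$, so
\begin{displaymath}
\sigma_z(q^\lambda) = \prod_{p \dv q} \sigma_z(p^{a_p \lambda}) = \prod_{p \dv q} \sum_{j=0}^{a_p \lambda} p^{jz}.
\end{displaymath}

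For \eqref{eq:tau-q-lambda}, specializing to $z=0$ gives $\sigma_0(p^{a_p\lambda}) = a_p \lambda + 1$, hence $\tau(q^\lambda) = \prod_{p \dv q}(a_p \lambda + 1)$. The lower bound follows because $a_p \geq 1$ implies $a_p \lambda + 1 \geq \lambda + 1$. For the upper bound, using $\lambda \geq 1$ one has $a_p \lambda + 1 \leq a_p \lambda + \lambda = (a_p + 1)\lambda$, and the product of $(a_p + 1)$ over $p \dv q$ is exactly $\tau(q)$, giving $\tau(q^\lambda) \leq \tau(q)\lambda^{\omega(q)}$.

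For \eqref{eq:sigma-x-q-lambda}, since $x < 0$ we use the geometric series identity
\begin{displaymath}
\sigma_x(p^{a_p \lambda}) = \sum_{j=0}^{a_p \lambda} p^{jx} = \frac{1 - p^{(a_p \lambda + 1)x}}{1 - p^x}.
\end{displaymath}
The numerator satisfies $1 - p^{(a_p\lambda+1)x} < 1$ because $p^{(a_p\lambda+1)x} > 0$ (here $x<0$ and $p>1$), so $\sigma_x(p^{a_p\lambda}) < 1/(1-p^x)$. Taking the product over the primes dividing $q$ yields the claimed bound, uniformly in $\lambda$.

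The argument is routine; there is no real obstacle, the only subtle points being the use of $\lambda \geq 1$ in the upper bound of \eqref{eq:tau-q-lambda} and the strict inequality in \eqref{eq:sigma-x-q-lambda} (which comes from $p^{(a_p\lambda+1)x}$ being strictly positive, not just non-negative).
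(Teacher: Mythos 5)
Your proof is correct and follows essentially the same route as the paper: reduce to prime powers by multiplicativity, use $a\lambda+1\leq(a+1)\lambda$ for the divisor bound, and bound the geometric sum for $\sigma_x$ (the paper compares with the infinite series $\sum_{j\geq 0}p^{jx}$ rather than using the closed form, but this is the same estimate).
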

\begin{proof}
  For $p$ prime and integers $\nu\geq 1$, $\lambda\geq 1$ we have
  \begin{displaymath}
    \left(1+\lambda\right)^{\omega\left(p^\nu\right)}
    =
    1+\lambda
    \leq
    \tau\left(p^{\nu\lambda}\right)
    = 1+\nu\lambda
    \leq (1+\nu)  \lambda
    = \tau\left(p^\nu\right) \lambda^{\omega\left(p^\nu\right)}    
  \end{displaymath}
  and \eqref{eq:tau-q-lambda} follows by multiplicativity.

  For $p$ prime, integers $\nu\geq 1$, $\lambda\geq 1$
  and $x\in\R$, $x<0$ we have $0<p^x<1$ and
  \begin{displaymath}
    \sigma_x\left(p^{\nu \lambda}\right)
    =
    \sum_{j=0}^{\nu \lambda} p^{jx}
    <
    \sum_{j=0}^\infty p^{jx}
    =
    \frac1{1-p^x},
  \end{displaymath}
  and \eqref{eq:sigma-x-q-lambda} follows by multiplicativity.
\end{proof}

The following lemma is a variant of van der corput's inequality.
\begin{lemma}\label{lemma:van-der-corput}
  For integers $N\geq 1$, $N'\geq 1$, $R\geq 1$
  and complex numbers $z_1,\ldots,z_N$ we have
  \begin{displaymath}
    \abs{ \sum_{n=1}^N z_n }^2
    \leq
    \frac{N+N'R-N'}{R} \
    \Re\left(
      \sum_{n=1}^N \abs{z_n}^2
      +
      2 \sum_{r=1}^{R-1}\left( 1-\frac{r}{R} \right)
      \sum_{n=1}^{N-N'r} z_{n+N'r}\conjugate{z_n}
    \right)
    .
  \end{displaymath}
\end{lemma}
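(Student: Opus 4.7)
The plan is to adapt the classical van der Corput trick to the shift parameter $N'$ in place of the usual unit shift. First I would extend the sequence by setting $z_n = 0$ for $n \notin \{1,\ldots,N\}$. The starting identity is
\begin{displaymath}
  R \sum_{n=1}^N z_n
  =
  \sum_{m \in \Z} \sum_{r=0}^{R-1} z_{m + N' r},
\end{displaymath}
which holds because, for each $n \in \{1,\ldots,N\}$, the equation $m + N' r = n$ has exactly $R$ solutions $(m,r)$ with $0 \le r \le R-1$ and $m \in \Z$.

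Next, observe that the inner sum $\sum_{r=0}^{R-1} z_{m+N'r}$ vanishes unless $m$ lies in the union of the intervals $[1 - N'r,\, N - N'r]$ for $0 \le r \le R-1$. This union is contained in the interval $[1-N'(R-1),\, N]$, which has length $N + N'R - N'$. Applying the Cauchy--Schwarz inequality to the outer sum over this support yields
\begin{displaymath}
  R^2 \abs{\sum_{n=1}^N z_n}^2
  \le
  (N + N'R - N') \sum_{m \in \Z}
  \abs{\sum_{r=0}^{R-1} z_{m + N' r}}^2 .
\end{displaymath}

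Now I would expand the inner square as $\sum_{r_1,r_2} z_{m+N'r_1} \conjugate{z_{m+N'r_2}}$, interchange the $m$ and $(r_1,r_2)$ sums, and translate the summation index by $n = m + N' r_2$ to obtain $\sum_n z_{n + N'(r_1-r_2)} \conjugate{z_n}$. Grouping by $r = r_1 - r_2$, the number of pairs $(r_1,r_2) \in \{0,\ldots,R-1\}^2$ with $r_1 - r_2 = r$ is $R - |r|$ for $|r| \le R-1$. Pairing the contributions of $r$ and $-r$ (using that the $-r$ sum, after a reindexing, is the complex conjugate of the $r$ sum), one recovers
\begin{displaymath}
  \sum_{m \in \Z} \abs{\sum_{r=0}^{R-1} z_{m+N'r}}^2
  = R \sum_{n=1}^N \abs{z_n}^2
  + 2 \sum_{r=1}^{R-1} (R-r)\, \Re \sum_{n=1}^{N - N'r} z_{n+N'r} \conjugate{z_n}.
\end{displaymath}
Dividing by $R^2$ and absorbing the real-valued diagonal sum into the real part yields exactly the claimed inequality.

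There is essentially no obstacle here; the only care required is the bookkeeping for the support of $m$ (which determines the factor $N + N'R - N'$) and the verification that each $r \ne 0$ contribution $\sum_n z_{n+N'r}\conjugate{z_n}$ is summed only over indices where both factors are nonzero, i.e. $1 \le n \le N - N'r$. Once those two points are handled the identity is purely algebraic.
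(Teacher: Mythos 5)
Your proof is correct and is the standard van der Corput argument with shift parameter $N'$; the paper does not supply a proof but merely cites Lemma 17 of Mauduit--Rivat (2009), whose proof proceeds along exactly the same lines (extend by zero, write $R\sum_n z_n$ as a double sum, Cauchy--Schwarz over the support of length $N+N'R-N'$, expand the square and collect diagonals by $r = r_1 - r_2$). No gaps.
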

\begin{proof}
  See, for example, Lemma 17 of \cite{mauduit-rivat-2009}.
\end{proof}
\begin{lemma}\label{lemma:double-exponential-sum-mn^2}
  For integers $M\geq 1$ and $N\geq 1$, complex numbers
  $a_1,\ldots,a_M$ and $b_1,\ldots,b_N$ of modulus at most $1$,
  $\xi_3\in \R\setminus \Z$,
  $\xi_1\in \R$, 
  we have
  \begin{equation}\label{eq:double-exponential-sum-mn^2}
    \abs{\frac{1}{MN} \sum_{m=1}^M \sum_{n=1}^N
      a_m b_n \e(\xi_3 m n^2 + \xi_1 mn)}^2
    \ll
    \norm{\xi_3}^{1/2}
    +
    \frac{\log^2 \norm{\xi_3}^{-1}}{M N^2 \norm{\xi_3}}  
    +
    \frac{1}{N} 
    +
    \frac{\log^2 \norm{\xi_3}^{-1}}{M} 
    .
\end{equation}
\end{lemma}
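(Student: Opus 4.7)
Denote the sum by $S:=\sum_{m=1}^M\sum_{n=1}^N a_m b_n\e(\xi_3 mn^2+\xi_1 mn)$, so the claimed estimate amounts to $\abs{S}^2 \ll (MN)^2 E$, where $E$ is the right-hand side of \eqref{eq:double-exponential-sum-mn^2}. The plan is to combine a Cauchy--Schwarz on the outer $m$-sum with van der Corput's inequality on the inner $n$-sum in order to reduce the quadratic phase in $n$ to one that is linear in both $m$ and $n$. Set
\[
T_m := \sum_{n=1}^N b_n\e(\xi_3 mn^2+\xi_1 mn).
\]
Cauchy--Schwarz applied to $S=\sum_m a_m T_m$ gives $\abs{S}^2\le M\sum_{m=1}^M\abs{T_m}^2$. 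Applying Lemma~\ref{lemma:van-der-corput} to $T_m$ with $N'=1$ and a parameter $R\in\{1,\dots,N\}$, and using $(n+r)^2-n^2 = 2rn+r^2$, yields
\[
\abs{T_m}^2 \ll \frac{N+R}{R}\!\left[N+\operatorname{Re}\sum_{r=1}^{R-1}\Bigl(1-\tfrac{r}{R}\Bigr)\sum_n b_{n+r}\overline{b_n}\,\e\bigl(m(2r\xi_3 n+r^2\xi_3+r\xi_1)\bigr)\right],
\]
in which the phase is now linear in both $m$ and $n$.

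Summing over $m$ and interchanging, the inner $m$-sum becomes a geometric series bounded by $\min\bigl(M,\abs{\sin\pi\eta_{r,n}}^{-1}\bigr)$ with $\eta_{r,n}:=2r\xi_3 n+r^2\xi_3+r\xi_1$. For each fixed $r\ne 0$ the remaining sum in $n$ is estimated by Lemma~\ref{lemma:trivial-sum-of-minimums} with $\xi=2r\xi_3$:
\[
\sum_n\min\bigl(M,\abs{\sin\pi\eta_{r,n}}^{-1}\bigr) \ll \bigl(3+N\norm{2r\xi_3}\bigr)\bigl(3M+\norm{2r\xi_3}^{-1}\log\norm{2r\xi_3}^{-1}\bigr).
\]
Collecting everything,
\[
\sum_{m=1}^M\abs{T_m}^2 \ll \frac{N+R}{R}\!\left[MN+\sum_{r=1}^{R-1}\bigl(3+N\norm{2r\xi_3}\bigr)\bigl(3M+\norm{2r\xi_3}^{-1}\log\norm{2r\xi_3}^{-1}\bigr)\right].
\]

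It suffices to treat $\norm{\xi_3}<1/4$, since otherwise the target bound is $\gg 1$ and the trivial $\abs{S}\le MN$ suffices. I choose $R:=\min\bigl(N,\lfloor\norm{\xi_3}^{-1/2}\rfloor\bigr)$; then $r\le R$ implies $2r\norm{\xi_3}<1/2$, hence $\norm{2r\xi_3}=2r\norm{\xi_3}$ for all $r\le R$ and no wrap-around occurs. Expanding the product and summing in $r$ using $\sum_{r\le R}r^{-1}\ll\log\norm{\xi_3}^{-1}$ and $\sum_{r\le R}r\ll R^2$, the bracketed term collapses to
\[
MN+MR+\frac{(\log\norm{\xi_3}^{-1})^2}{\norm{\xi_3}}+MN\norm{\xi_3}R^2+NR\log\norm{\xi_3}^{-1}.
\]
Multiplying by $M(N+R)/R$ and dividing by $M^2N^2$, each of the resulting terms falls under one of the four terms of the target bound: the leading $\norm{\xi_3}^{1/2}$ comes from $MN\norm{\xi_3}R^2$ with $R\asymp\norm{\xi_3}^{-1/2}$, the $1/N$ from the initial $MN$, and the logarithmic contributions absorb into either $(\log\norm{\xi_3}^{-1})^2/(MN^2\norm{\xi_3})$ or $(\log\norm{\xi_3}^{-1})^2/M$, whichever is larger.

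The main obstacle is the final bookkeeping: a short case analysis on the size of $N\norm{\xi_3}^{1/2}$ (which decides whether $R$ equals the Dirichlet choice $\lfloor\norm{\xi_3}^{-1/2}\rfloor$ or the truncation $N$) is needed, together with the observation that the cross term $(\log\norm{\xi_3}^{-1})^2/(MN\norm{\xi_3}^{1/2})$, arising from combining the $\norm{2r\xi_3}^{-1}\log\norm{2r\xi_3}^{-1}$ contribution with the $(N+R)/R$ factor, is bounded by $\max\bigl((\log\norm{\xi_3}^{-1})^2/M,\,(\log\norm{\xi_3}^{-1})^2/(MN^2\norm{\xi_3})\bigr)$ in both regimes and thus splits cleanly between the two logarithmic pieces of the target.
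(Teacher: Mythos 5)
Your proof is correct and follows essentially the same route as the paper's: Cauchy--Schwarz in $m$, van der Corput (Lemma~\ref{lemma:van-der-corput}) in $n$, the geometric-series bound \eqref{eq:estimate-geometric-series}, Lemma~\ref{lemma:trivial-sum-of-minimums} with $\xi=2r\xi_3$, and the choice $R=\min\bigl(N,\lfloor\norm{\xi_3}^{-1/2}\rfloor\bigr)$. The only slip is the trivial-range cutoff: with $\norm{\xi_3}<1/4$ and $r\le R\le\norm{\xi_3}^{-1/2}$ you only get $2r\norm{\xi_3}<1$, not $<1/2$, so to guarantee no wrap-around (i.e.\ $\norm{2r\xi_3}=2r\norm{\xi_3}$) you should dispose of $\norm{\xi_3}\ge 1/16$ trivially, exactly as the paper does.
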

\begin{proof}
  By symmetry and periodicity we may assume without loss of generality
  that $0<\xi_3\leq 1/2$.
  Furthermore for $1/16 \leq\xi_3\leq 1/2$
  inequality~\eqref{eq:double-exponential-sum-mn^2} is trivially satisfied.
  Therefore we may assume $0<\xi_3<1/16$.
  By Cauchy-Schwarz we have
  \begin{displaymath}
    \abs{\sum_{m=1}^M \sum_{n=1}^N
      a_m b_n \e\left(\xi_3 m n^2 + \xi_1 mn\right)}^2
    \leq
    M
    \sum_{m=1}^M
    \abs{ \sum_{n=1}^N  b_n \e\left(\xi_3 m n^2 + \xi_1 mn\right)}^2
    .
  \end{displaymath}
  Using Lemma \ref{lemma:van-der-corput}
  (van der corput's inequality) with $1\leq R \leq N$
  and exchanging the order of summation
  this is 
  \begin{displaymath}
    \ll
    \frac{M^2N^2}{R}
    +
    \frac{M N}{R}
    \sum_{1\le r< R}
    \sum_{1\le n \leq N-r}
    \abs{
      \sum_{m=1}^M \e\left(\xi_3 m (2rn +r^2) + \xi_1 mr\right) 
    }
    ,
  \end{displaymath}
  hence by \eqref{eq:estimate-geometric-series}
  \begin{displaymath}
    \ll
    \frac{M^2N^2}{R}
    +
    \frac{M N}{R}
    \sum_{1\le r< R}
    \sum_{1\le n \leq N-r}
    \min\left(
      M,
      \abs{\sin\pi\left(\xi_3 (2rn +r^2) + \xi_1 r\right)}^{-1}
    \right)
    ,
  \end{displaymath}
  hence,
  provided $2r\xi_3\not\in\Z$ for all $r\in\{1,\ldots,R-1\}$,
  by Lemma \ref{lemma:trivial-sum-of-minimums} we get
  \begin{displaymath}
    \ll
    \frac{M^2N^2}{R}
    +
    \frac{M N}{R}
    \sum_{1\le r< R}
    (1 + N\norm{2r\xi_3})
    \left( 
      M+ \norm{2r\xi_3}^{-1} \log  \norm{2r\xi_3}^{-1}
    \right)
    .
  \end{displaymath}
  If we assume that $R \leq (4 \xi_3)^{-1}$, it follows that
  $\norm{2r\xi_3}= 2r\xi_3$ for all $r\in\{1,\ldots,R-1\}$,
  thus $0<2r\xi_3\leq 1/2$ and the condition $2r\xi_3\not\in\Z$
  is fullfilled,
  and we get
  \begin{displaymath}
    \ll
    \frac{M^2N^2}{R}
    +
    M^2 N 
    +
    M^2 N^2 R \, \xi_3
    +
    \frac{M N}{R}
    \sum_{1\le r< R}
    \frac{1}{2r\xi_3} \log \frac{1}{2r\xi_3}
    +
    \frac{M N}{R}
    \sum_{1\le r< R}
    N \log \frac{1}{2r\xi_3}
    ,
  \end{displaymath}
  hence
  \begin{displaymath}
    \ll
    \frac{M^2N^2}{R}
    +
    M^2 N 
    +
    M^2 N^2 R \, \xi_3
    +
    \frac{M N}{R \xi_3}
    (\log R)
    \log \frac{1}{2\xi_3}
    +
    M N^2 \log \frac{1}{2\xi_3}
    ,
  \end{displaymath}
  so that, taking
  \begin{displaymath}
    R
    =
    \min\left(N, \floor{\xi_3^{-1/2}} \right)
    ,
  \end{displaymath}
  since $0<\xi_3<1/16$ the condition
  \begin{math}
    1\leq R \leq \min\left(N, (4\xi_3)^{-1} \right)
  \end{math}
  is satisfied, and we get
  \begin{displaymath}
    \ll
    M^2N^2 \xi_3^{1/2}
    +
    M^2 N 
    +
    M  \xi_3^{-1} \log^2 \xi_3^{-1}
    +
    M N \xi_3^{-1/2} \log^2 \xi_3^{-1}
    +
    M N^2 \log \xi_3^{-1}
    .
  \end{displaymath}
  since
  \begin{math}
    N \xi_3^{-1/2} \leq \frac12 \left( N^2 + \xi_3^{-1} \right)
  \end{math}
  we obtain
  \begin{displaymath}
    \ll
    M^2N^2 \xi_3^{1/2}
    +
    M^2 N 
    +
    M  \xi_3^{-1} \log^2 \xi_3^{-1}
    +
    M N^2 \log^2 \xi_3^{-1}
    .
  \end{displaymath}
\end{proof}

\begin{lemma}\label{lemma:double-exponential-sum-xi_2}
  Let $M\geq 1$ and $N\geq 1$ be integers,
  $a_1,\ldots,a_M$ and $b_1,\ldots,b_N$ be complex numbers of modulus
  at most $1$, 
  and $\xi_1,\xi_2,\xi_3$ be real numbers.
  If $\xi_2\in \R\setminus \Z$, we have
  \begin{equation}\label{eq:double-exponential-sum-xi_2}
    \abs{\frac{1}{MN} \sum_{m=1}^M \sum_{n=1}^N
      a_m b_n \e\left(\xi_3 m n^2 + \xi_2 m^2 n + \xi_1 mn\right)}^2
    \ll
    \norm{\xi_2}^{1/3}
    +
    \frac{1}{M N^{1/2} \norm{\xi_2}^{1/2}}
    +
    \frac{1}{M^{1/2}}
    +
    \frac{1}{N}
    .
  \end{equation}
\end{lemma}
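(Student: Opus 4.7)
My plan is to follow the same skeleton as the proof of Lemma~\ref{lemma:double-exponential-sum-mn^2} (Cauchy--Schwarz in $m$ followed by van der Corput in $n$), but observe that the new $\xi_2 m^2 n$ term turns the resulting inner $m$-sum from a \emph{linear} exponential into a \emph{quadratic} one with leading coefficient $\xi_2 r$. This calls for a second application of Lemma~\ref{lemma:van-der-corput} (Weyl differencing) in the $m$-variable.

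First, by symmetry and periodicity, assume without loss of generality $0 < \xi_2 \leq 1/2$ (larger $\|\xi_2\|$ makes the bound trivial). Apply Cauchy--Schwarz in $m$, then Lemma~\ref{lemma:van-der-corput} in $n$ with $N' = 1$ and a parameter $R_0 \leq N$; expand the square, exchange the order of summation and use $|b_{n+r}\bar{b}_n| \leq 1$. The diagonal $r=0$ contributes $\ll M^2 N^2/R_0 + M^2 N$, accounting for the $1/R_0$ and $1/N$ terms. For $r \neq 0$ we are reduced to estimating
$$W_r(n) = \sum_{m=1}^M e\bigl(\xi_2 r m^2 + (2\xi_3 r n + \xi_3 r^2 + \xi_1 r)m\bigr),$$
a quadratic Weyl-type sum in $m$ whose leading coefficient $\xi_2 r$ carries the information we want to exploit (the linear phase in $m$ does depend on $n$, but its modulus bound will not). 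Applying Lemma~\ref{lemma:van-der-corput} a second time to $W_r(n)$, this time in $m$ with parameter $R_1 \leq M$, the resulting inner sums $\sum_m e(2\xi_2 r s m)$ are geometric, so
$$|W_r(n)|^2 \ll \frac{M^2}{R_1} + \frac{M}{R_1}\sum_{s=1}^{R_1-1}\min\bigl(M,\|2\xi_2 r s\|^{-1}\bigr),$$
and crucially this bound is independent of $n$.

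Now choose $R_1 \asymp 1/(\xi_2 r)$ so that $2\xi_2 r s \in (0,1/2)$ for $s < R_1$; then $\|2\xi_2 rs\|^{-1} = 1/(2\xi_2 rs)$ and the sum is $\ll \log(1/\xi_2)/(\xi_2 r)$, yielding $|W_r(n)| \ll M\sqrt{\xi_2 r} + \sqrt{M\log(1/\xi_2)}$. Summing trivially in $n$ (factor $N$) and over $r$ (using $\sum_{r < R_0}\sqrt{r} \ll R_0^{3/2}$) and dividing by $M^2 N^2$ gives
$$\frac{|S|^2}{M^2N^2} \ll \frac{1}{N} + \frac{1}{R_0} + \sqrt{\xi_2 R_0} + \frac{\sqrt{\log(1/\xi_2)}}{\sqrt{M R_0}}.$$
Balancing $1/R_0 = \sqrt{\xi_2 R_0}$ by setting $R_0 \asymp \xi_2^{-1/3}$ forces $1/R_0 \asymp \sqrt{\xi_2 R_0} \asymp \xi_2^{1/3}$, which produces the main term of the statement. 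When the optimal $R_0$ or $R_1$ violates $R_0 \leq N$ or $R_1 \leq M$, I take the boundary value (truncation), and the resulting estimates recover the remaining error terms $\frac{1}{MN^{1/2}\|\xi_2\|^{1/2}}$ and $M^{-1/2}$; logarithmic factors are absorbed through inequalities such as $\xi_2^{\varepsilon}\log(1/\xi_2) \ll 1$, which is valid uniformly for $\xi_2 \in (0,1/2]$.

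The principal technical obstacle is the clean execution of this double optimization: one has to verify, for every range of $(M,N,\xi_2)$, that the choice of $(R_0,R_1)$ (either the balanced interior values or the boundary values $R_0=N$, $R_1=M$) really produces the four advertised error terms without leaving a genuine logarithmic loss, and that the bookkeeping of the various $\tau$-weighted double sums $\sum_{r,s}\min(M,\|2\xi_2 rs\|^{-1})$ arising from the two van der Corput steps can be reduced to Lemma~\ref{lemma:trivial-sum-of-minimums} applied to the single index $t = rs$.
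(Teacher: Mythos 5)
Your first two steps (Cauchy--Schwarz in $m$, then Lemma~\ref{lemma:van-der-corput} in $n$ with parameter $R_0\le N$) coincide with the paper's proof, and you correctly identify that the resulting inner sum $W_r(n)$ is a quadratic exponential sum in $m$ with leading coefficient $\xi_2 r$ whose bound is independent of $n$. The divergence is in how that sum is estimated: the paper applies the van der Corput second-derivative test (Lemma~\ref{lemma:estimate-second-derivative}), which gives $|W_r(n)|\ll (\xi_2 r)^{1/2}M+(\xi_2 r)^{-1/2}$ with \emph{no} logarithm, after which a single optimization $R=\min\bigl(N,\lceil\max((2\xi_2)^{-1/3},(M\xi_2)^{-1})\rceil\bigr)$ yields exactly the four stated terms. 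Your alternative --- a second Weyl differencing in $m$ --- is structurally sound but necessarily produces harmonic sums $\sum_{s}(2\xi_2 rs)^{-1}$, hence factors $\log(1/\xi_2)$ (this is the same reason Lemma~\ref{lemma:incomplete-gauss-sum-2} carries $\log m$ while Lemma~\ref{lemma:estimate-second-derivative} does not).

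This is where the gap lies: your route yields terms of the shape $(\log(1/\xi_2))^{1/2}M^{-1/2}$ and $(\log(1/\xi_2))^{1/2}(MR_0^{1/2}\xi_2^{1/2})^{-1}$, and the absorption device $\xi_2^{\varepsilon}\log(1/\xi_2)\ll1$ cannot remove them, since these terms carry no compensating positive power of $\xi_2$ (e.g.\ with $M=N^{1/2}$, $\xi_2=1/N$ your bound is $(\log N)^{1/2}N^{-1/4}$ against the stated $N^{-1/4}$). So as written you prove only a log-weakened version of \eqref{eq:double-exponential-sum-xi_2}; to get the clean statement, replace the second differencing by Lemma~\ref{lemma:estimate-second-derivative}. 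Two smaller points: after summing the $n$- and $r$-independent quantity $\sqrt{M\log(1/\xi_2)}$ over $r<R_0$ and normalizing, the resulting term is $\sqrt{\log(1/\xi_2)/M}$, not $\sqrt{\log(1/\xi_2)/(MR_0)}$ as in your display; and the term $\frac{1}{MN^{1/2}\|\xi_2\|^{1/2}}$ arises from the $(\xi_2 r)^{-1/2}$ contribution in the regime where $R_1$ is capped at $M$, which your displayed intermediate bound omits and only gestures at in the final sentence.
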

\begin{proof}
  By Cauchy-Schwarz we have
  \begin{multline*}
    \abs{\sum_{m=1}^M \sum_{n=1}^N
      a_m b_n \e\left( \xi_3 m n^2 + \xi_2 m^2 n + \xi_1 mn\right)}^2
    \\
    \leq
    M
    \sum_{m=1}^M
    \abs{
      \sum_{n=1}^N
      b_n \e\left( \xi_3 m n^2 + \xi_2 m^2 n + \xi_1 mn\right)
    }^2
    .
  \end{multline*}
  Using Lemma \ref{lemma:van-der-corput}
  (van der corput's inequality)
  with $1\leq R \leq N$
  and exchanging the order of the summations this is 
  \begin{displaymath}
    \ll
    \frac{M^2N^2}{R}
    +
    \frac{M N}{R}
    \sum_{1\le r< R}
    \sum_{1\le n \leq N-r}
    \abs{
      \sum_{m=1}^M
      \e\left( \xi_3 m (2rn +r^2) + \xi_2 m^2r+ \xi_1 mr \right) 
    }
    .
  \end{displaymath}
  Since $\xi_2\in\R\setminus\Z$,
  by symmetry and periodicity we may assume without loss of generality
  that $0<\xi_2\leq 1/2$.
  After applying Lemma~\ref{lemma:estimate-second-derivative}
  to the summation over $m$,
  the estimate above becomes
  \begin{displaymath}
    \ll
    \frac{M^2N^2}{R}
    +
    \frac{M N^2}{R}
    \sum_{1\le r< R}
    \left( (\xi_2 r)^{1/2} M + (\xi_2 r)^{-1/2} \right) 
    ,
  \end{displaymath}
  which is
  \begin{displaymath}
    \ll
    M^2 N^2
    \left(
      \frac{1}{R}
      +
      \xi_2^{1/2} R^{1/2}
      +
      \frac{1}{M R^{1/2} \xi_2^{1/2}}
    \right) 
    .
  \end{displaymath}
  Taking
  \begin{displaymath}
    R =
    \min\left(
      N,
      \ceil{ \max\left( (2\xi_2)^{-1/3}, (M\xi_2)^{-1} \right) }
    \right)
    ,
  \end{displaymath}
  since $0<2\xi_2\leq 1$,
  the condition $1\leq R \leq N$ is satisfied,
  and we get
  \begin{displaymath}
    \ll
    M^2 N^2
    \left(
      \frac{1}{N}
      +
      \xi_2^{1/3}
      +
      \xi_2^{1/2}
      +
      \frac{1}{M^{1/2}}
      +
      \frac{1}{M N^{1/2} \xi_2^{1/2}}
    \right) 
    .
  \end{displaymath}
  which gives inequality~\eqref{eq:double-exponential-sum-xi_2}.
\end{proof}

\begin{lemma}\label{lemma:double-exponential-sum-m^2n^2}
  Let $M\geq 1$ and $N\geq 1$ be integers,
  $a_1,\ldots,a_M$ and $b_1,\ldots,b_N$ be complex numbers of modulus
  at most $1$, 
  and $\xi_1,\xi_2,\xi_3,\xi_4$ be real numbers.
  
  If $\xi_4\in \R\setminus \Z$ we have
  \begin{multline}\label{eq:double-exponential-sum-m^2n^2}
    \abs{\frac{1}{MN} \sum_{m=1}^M \sum_{n=1}^N
      a_m b_n \e(\xi_4 m^2 n^2 + \xi_3 m n^2 + \xi_2 m^2 n + \xi_1 mn)}^4
    \\
    \ll
    \norm{\xi_4}^{2/5}
    +
    \frac{1}{N}
    +
    \frac{\log \norm{\xi_4}^{-1}}{M} 
    +
    \left(
      \frac{\norm{\xi_4}^{-1} }{M^2 N^2} 
      +
      \frac{\norm{\xi_4}^{-3/5} }{M N^2}  
      +
      \frac{\norm{\xi_4}^{-4/5} }{M^2 N} 
      +
      \frac{\norm{\xi_4}^{-2/5} }{M N}  
    \right)
    \log^3 \norm{\xi_4}^{-1}  
    .
  \end{multline}
\end{lemma}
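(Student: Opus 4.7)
The strategy mirrors that of Lemma~\ref{lemma:double-exponential-sum-xi_2}, but the biquadratic leading term $\xi_4 m^2 n^2$ forces an extra differencing step: the plan is to bound $|S|^4$ by combining one Cauchy--Schwarz in $m$, one van der Corput in $n$, and then a second Cauchy--Schwarz in $n$ (equivalent to opening the square), so that after three exchanges of summation the remaining sum over $n$ is a plain geometric progression in a linear phase whose slope we can attack with Lemma~\ref{lemma:trivial-sum-of-minimums}. By symmetry and periodicity we may assume $0 < \xi_4 \le 1/2$, and the trivial bound handles the range $\xi_4 \ge c$ for any fixed $c > 0$.

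First apply Cauchy--Schwarz in $m$ to get $|S|^2 \le M \sum_m |T_m|^2$ with $T_m = \sum_n b_n \e(\phi_m(n))$ and $\phi_m(n) = \xi_4 m^2 n^2 + \xi_3 m n^2 + \xi_2 m^2 n + \xi_1 mn$. Next apply Lemma~\ref{lemma:van-der-corput} with $N'=1$ and a parameter $R$ to be optimized; the difference phase
\[
\Delta_r \phi_m(n) \;=\; r\bigl[(2n+r)\xi_4 + \xi_2\bigr]\, m^2 \;+\; r\bigl[(2n+r)\xi_3 + \xi_1\bigr]\, m
\]
is quadratic in $m$ and linear in $n$. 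Summing over $m$ and exchanging orders yields
\[
|S|^2 \;\ll\; \frac{M^2N^2}{R} + \frac{MN}{R} \sum_{r=1}^{R-1} \Bigl| \sum_n b_{n+r}\bar b_n \sum_m \e(\Delta_r\phi_m(n)) \Bigr|.
\]
To upgrade this into a bound on $|S|^4$, square and apply Cauchy--Schwarz in $n$ to each $r$-term: the square of the absolute value is at most $N \sum_n |\sum_m \e(\alpha_{r,n} m^2 + \beta_{r,n} m)|^2$ with $\alpha_{r,n} = r[(2n+r)\xi_4 + \xi_2]$. Expanding this inner modulus squared introduces variables $m, m'$ and, crucially, pushes the $n$-summation inside: with $s = m-m'$, the phase becomes linear in $n$ with slope $2rs\bigl[(m+m')\xi_4 + \xi_3\bigr]$, so the $n$-sum is bounded by $\min(N, \|2rs((m+m')\xi_4+\xi_3)\|^{-1})$. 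Two applications of Lemma~\ref{lemma:trivial-sum-of-minimums} — first in $m+m'$ for fixed $s$ to extract $\|4rs\xi_4\|$-type bounds, then in $s$ and $r$ to exploit the arithmetic progression in $\xi_4$ — produce three successive logarithmic losses, accounting for the $\log^3\norm{\xi_4}^{-1}$ in the conclusion.

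The main obstacle is the careful bookkeeping required to package the many error terms into the precise four listed in the statement; concretely, after collecting everything, the bound has the schematic form $|S|^4 \ll M^4N^4\bigl(R^{-2} + \text{(contributions of size $R^{\#}\norm{\xi_4}^{\#}$ modulated by $1/M, 1/N$)}\bigr)$, and the optimal choice $R \asymp \norm{\xi_4}^{-2/5}$ balances the dominant contributions to yield the exponent $\norm{\xi_4}^{2/5}$. The remaining terms $1/N$, $M^{-1}\log\norm{\xi_4}^{-1}$, and the four terms with denominators $M^2N^2$, $MN^2$, $M^2N$ and $MN$ correspond to the subdominant regimes where $R$ saturates at $N$, where the diagonal term $s=0$ dominates, or where the $\log$ losses from Lemma~\ref{lemma:trivial-sum-of-minimums} beat the saving from Weyl differencing. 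Verifying that each regime is correctly matched, and that the implicit constants remain absolute, is the principal technical difficulty.
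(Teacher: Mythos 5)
Your overall architecture (iterated Cauchy--Schwarz and Weyl differencing until the innermost sum is a geometric progression, then Lemma~\ref{lemma:trivial-sum-of-minimums}) is the right one and matches the paper up to the point where you treat the $m$-variable. But there is a genuine gap there: after the second Cauchy--Schwarz you \emph{fully} expand $\abs{\sum_m \e(\alpha_{r,n}m^2+\beta_{r,n}m)}^2$, which is van der Corput in $m$ with no truncation, i.e.\ with the difference variable $s=m-m'$ running over the whole range $1\le\abs{s}<M$. The subsequent step requires controlling $\norm{4rs\xi_4}$ for all such $(r,s)$: both the term $\sum_{r,s}MN\norm{4rs\xi_4}$ coming from Lemma~\ref{lemma:trivial-sum-of-minimums} and the terms $\norm{4rs\xi_4}^{-1}\log\norm{4rs\xi_4}^{-1}$ are only manageable when $4rs\xi_4\le\frac12$, so that $\norm{4rs\xi_4}=4rs\xi_4$ and the $r,s$-sums can be evaluated. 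With $s$ up to $M$ this fails: $\norm{4rs\xi_4}$ can be arbitrarily small for large $s$ (so the $\norm{\cdot}^{-1}$ terms are uncontrolled in terms of $\xi_4$ alone), and the ``main'' contribution $\sum_{r,s}4rs\xi_4\cdot MN$ yields, after normalization, a term of size $MR\norm{\xi_4}$, which with your choice $R\asymp\norm{\xi_4}^{-2/5}$ is $M\norm{\xi_4}^{3/5}$ and exceeds the claimed bound as soon as $M$ is large.

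The paper avoids this by applying Lemma~\ref{lemma:van-der-corput} a \emph{second} time, to the $m$-sum, with its own truncation parameter $S$, and imposing $RS\le(8\xi_4)^{-1}$ so that $\norm{4rs\xi_4}=4rs\xi_4\le\frac12$ throughout; the two-parameter optimization $R=\min(N,\lfloor\xi_4^{-1/5}\rfloor)$, $S=\min(M,\lfloor\xi_4^{-2/5}\rfloor)$ balances $R^{-2}$, $S^{-1}$ and $RS\xi_4$ at $\norm{\xi_4}^{2/5}$ and produces exactly the four secondary terms in the statement (the $\log^3$ arises from one application of Lemma~\ref{lemma:trivial-sum-of-minimums} followed by the harmonic sums over $r$ and $s$, not from three separate applications). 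To repair your argument, replace the full expansion of the $m$-square by this truncated differencing in $m$ and add the constraint $RS\le(8\norm{\xi_4})^{-1}$; the rest of your bookkeeping then goes through as in Lemma~\ref{lemma:double-exponential-sum-xi_2}.
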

\begin{proof}
  By Cauchy-Schwarz we have
  \begin{multline*}
    \abs{\sum_{m=1}^M \sum_{n=1}^N
      a_m b_n \e(\xi_4 m^2 n^2 + \xi_3 m n^2 + \xi_2 m^2 n + \xi_1 mn)}^2
    \\
    \leq
    M
    \sum_{m=1}^M
    \abs{
      \sum_{n=1}^N
      b_n \e(\xi_4 m^2 n^2 + \xi_3 m n^2 + \xi_2 m^2 n + \xi_1 mn)
    }^2
    .
  \end{multline*}
  Using Lemma \ref{lemma:van-der-corput}
  (van der corput's inequality)
  with $1\leq R \leq N$
  and exchanging the order of the summations this is 
  \begin{displaymath}
    \ll
    \frac{M^2N^2}{R}
    +
    \frac{M N}{R}
    \sum_{1\le r< R}
    \sum_{1\le n \leq N-r}
    \abs{
      \sum_{m=1}^M
      \e(\xi_4 m^2 (2rn +r^2) + \xi_3 m (2rn +r^2) + \xi_2 m^2r+ \xi_1 mr) 
    }
    .
  \end{displaymath}
  By Cauchy-Schwarz we have
  \begin{multline*}
    \abs{\sum_{m=1}^M \sum_{n=1}^N
      a_m b_n \e(\xi_4 m^2 n^2 + \xi_3 m n^2 + \xi_2 m^2 n + \xi_1 mn)
    }^4
    \\
    \ll
    \frac{M^4 N^4}{R^2}
    +
    \frac{M^2 N^3}{R}
    \sum_{1\le r< R}
    \sum_{1\le n \leq N-r}
    \abs{
      \sum_{m=1}^M
      \e(\xi_4 m^2 (2rn +r^2) + \xi_3 m (2rn +r^2) + \xi_2 m^2r+ \xi_1 mr) 
    }^2
  \end{multline*}
  and using Lemma \ref{lemma:van-der-corput}
  (van der corput's inequality)
  with $1\leq S \leq M$ this is 
  \begin{multline*}
    \ll
    \frac{M^4 N^4}{R^2}
    +
    \frac{M^4 N^4}{S}
    +
    \frac{M^3 N^3}{RS}
    \sum_{1\le r< R}
    \sum_{n=1}^{N-r}
    \sum_{1\le s< S} \left(1-\frac{s}{S}\right)
    \times
    \\
    \Re\left(
      \sum_{m=1}^{M-s}
      \e\left(
        \xi_4 (2sm+s^2) (2rn +r^2) + \xi_3 s (2rn +r^2)
        + \xi_2 (2sm+s^2) r+ \xi_1 rs
      \right)
    \right) 
  \end{multline*}
  hence by \eqref{eq:estimate-geometric-series}
  \begin{multline*}
    \ll
    \frac{M^4 N^4}{R^2}
    +
    \frac{M^4 N^4}{S}
    +
    \frac{M^3 N^3}{RS}
    \sum_{1\le r< R}
    \sum_{1\le s< S} 
    \sum_{n=1}^{N-r}
    \min\left(M,\abs{\sin\pi(\xi_4 2s (2rn +r^2)+\xi_2 2rs)}^{-1}\right)
    .
  \end{multline*}
  Let us assume that $\xi_4\in\R\setminus\Z$ and prove
  inequality~\eqref{eq:double-exponential-sum-m^2n^2}.  
  By symmetry and periodicity we may assume without loss of generality
  that $0<\xi_4\leq 1/2$.
  Furthermore for $8^{-5/2} \leq\xi_4\leq 1/2$
  inequality~\eqref{eq:double-exponential-sum-m^2n^2} is trivially satisfied.
  Therefore we may assume $0<\xi_4<8^{-5/2}$.
  Provided $4rs\xi_4\not\in\Z$
  for all $r\in\{1,\ldots,R-1\}$ and all $s\in\{1,\ldots,S-1\}$,
  we may apply Lemma~\ref{lemma:trivial-sum-of-minimums},
  and we get
  \begin{multline*}
    \ll
    \frac{M^4 N^4}{R^2}
    +
    \frac{M^4 N^4}{S}
    +
    \frac{M^3 N^3}{RS}
    \sum_{1\le r< R}
    \sum_{1\le s< S} 
    (1+N\norm{4rs \xi_4})
    (M+\norm{4rs \xi_4}^{-1}\log \norm{4rs \xi_4}^{-1})
  \end{multline*}
  \begin{multline*}
    \ll
    \frac{M^4 N^4}{R^2}
    +
    \frac{M^4 N^4}{S}
    +
    M^4 N^3
    +
    \frac{M^4 N^4}{RS}
    \sum_{1\le r< R}
    \sum_{1\le s< S} 
    \norm{4rs \xi_4}
    \\
    +
    \frac{M^3 N^3}{RS}
    \sum_{1\le r< R}
    \sum_{1\le s< S} 
    (
    \norm{4rs \xi_4}^{-1}\log \norm{4rs \xi_4}^{-1}
    +N \log \norm{4rs \xi_4}^{-1})
  \end{multline*}
  If we assume $RS \leq (8\xi_4)^{-1}$,
  for all $r\in\{1,\ldots,R-1\}$ and all $s\in\{1,\ldots,S-1\}$
  we have  $\norm{4rs \xi_4} = 4rs \xi_4$,
  hence $0<4rs \xi_4 \leq \frac12$,
  which ensures that the condition $4rs\xi_4\not\in\Z$ is fullfilled,
  and we get
  \begin{multline*}
    \ll
    \frac{M^4 N^4}{R^2}
    +
    \frac{M^4 N^4}{S}
    +
    M^4 N^3
    +
    M^4 N^4 RS \xi_4
    +
    \frac{M^3 N^3}{RS}
    \xi_4^{-1} \log^3 \xi_4^{-1}
    +
    M^3 N^4 \log \xi_4^{-1}
  \end{multline*}
  so that, taking
  \begin{displaymath}
    R = \min\left(N, \floor{\xi_4^{-1/5}} \right)
    ,\quad
    S = \min\left(M, \floor{\xi_4^{-2/5}} \right)
    ,
  \end{displaymath}
  since $0<\xi_4<8^{-5/2}$ the conditions
  $1\leq R\leq N$, $1\leq S \leq M$ and
  \begin{math}
    RS \leq (8\xi_4)^{-1}
  \end{math}
  are satisfied, and we get
  \begin{multline*}
    \ll
    M^4 N^4 \xi_4^{2/5}
    +
    M^4 N^3
    +
    M^3 N^4 \log \xi_4^{-1}
    +
    M^2 N^2  \xi_4^{-1} \log^3 \xi_4^{-1}
    +
    M^3 N^2  \xi_4^{-3/5} \log^3 \xi_4^{-1}
    \\
    +
    M^2 N^3  \xi_4^{-4/5} \log^3 \xi_4^{-1}
    +
    M^3 N^3  \xi_4^{-2/5} \log^3 \xi_4^{-1}
  \end{multline*}
  which is inequality~\eqref{eq:double-exponential-sum-m^2n^2}.
\end{proof}

\medskip
\subsection{Combinatorial identity}~

\begin{lemma}\label{lemma:combinatorial-identity}
  Let $q\geq 2$, $x \geq q^2$,
  $0<\beta_1<1/3$ , $1/2<\beta_2<1$.
  Let $g$ be an arithmetic function.
  We suppose that for all real numbers $M\leq x$
  and all complex numbers $a_m$, $b_n$ with $|a_m|\leq 1$, $|b_n|\leq 1$,
  we have
  \begin{align}
    \label{definition-sums-type-II}
    \abs{
      \sum_{\frac{M}{q} < m \leq M}
      \sum_{\frac{x}{qm}< n \leq \frac{x}{m}}
      a_m b_n g(mn)
    }
    &\leq U \quad
    \mbox{for} \quad x^{\beta_1} \leq M \leq x^{\beta_2} 
    \quad \mbox{(type II)},
    \\
    \hspace{8mm}
    \label{definition-sums-type-I}
    \sum_{\frac{M}{q} < m \leq M}
    \max_{\frac{x}{q m}\leq t \leq \frac{x}{m}}
    \abs{\sum_{t < n \leq \frac{x}{m}} g(mn)}
    &\leq U \quad
    \mbox{for} \quad M\leq x^{\beta_1} 
    \quad \mbox{(type I)}.
  \end{align}
  Then
  \begin{displaymath}
    \abs{\sum_{x/q < n \leq x} \Lambda(n) g(n)}
    \ll U\, (\log x)^2.
  \end{displaymath}
\end{lemma}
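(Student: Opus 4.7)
Plan: The proof is a standard reduction based on a combinatorial identity for the von Mangoldt function. I apply Vaughan's identity with parameters $U = x^{\beta_1}$ and $V = x^{\beta_2-\beta_1}$, both at least $1$ since $0<\beta_1<\beta_2$; the inequality $UV = x^{\beta_2} < x$ is automatic. The contribution of $n \leq U$ is bounded trivially by $\sum_{n \leq U} \Lambda(n) \ll U = x^{\beta_1}$, which is absorbed (for instance into a type I estimate with $M=1$). For $n > U$, Vaughan's identity yields $\Lambda(n) = A_1(n) - A_2(n) + A_3(n)$ with
\[
A_1(n) = \sum_{\substack{bd=n \\ d\leq V}} \mu(d) \log b,\quad
A_2(n) = \sum_{\substack{bcd=n \\ c\leq U,\, d\leq V}} \mu(d)\Lambda(c),\quad
A_3(n) = \sum_{\substack{mk=n \\ k>U,\, m>V}} \Lambda(k) \sum_{\substack{d\mid m \\ d\leq V}} \mu(d).
\]

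I then treat each $A_i$-contribution to $\sum_{x/q<n\leq x}\Lambda(n)g(n)$ by a dyadic decomposition on every free variable (each introducing a factor $\log x$), and classify each dyadic block as either type I or type II. For $A_1$, after Abel summation removes the $\log b$ factor, a dyadic block $d\sim D \leq V$ is controlled by the type I bound if $D\leq x^{\beta_1}$ and by the type II bound if $D \in (x^{\beta_1}, V] \subset (x^{\beta_1}, x^{\beta_2}]$ (with $a_d = \mu(d)$ and $b_b$ an indicator of a sub-range). For $A_2$, grouping $cd=\ell$ produces a coefficient $a_\ell = (\log x)^{-1}\!\!\sum_{cd=\ell,\, c\leq U,\, d\leq V}\mu(d)\Lambda(c)$ with $|a_\ell|\leq 1$ (since $\sum_{c\mid \ell}\Lambda(c)=\log \ell$), and on a dyadic block $\ell\sim L \leq UV = x^{\beta_2}$ the same dichotomy applies.

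The genuinely bilinear piece $A_3$ gives dyadic pairs $(M,K)$ with $M>V$, $K>U$, $MK\sim x$. Writing $\alpha = \log M/\log x$, the constraint $\alpha\in(\beta_2-\beta_1,\, 1-\beta_1)$ combined with $\beta_2>1/2$ ensures that either $\alpha$ or $1-\alpha$ lies in $[\beta_1,\beta_2]$, so either the $m$-sum or the $k$-sum provides the admissible type II outer variable. The coefficient bound $|\Lambda(k)|\leq \log x$ is absorbed by normalization; the factor $f_V(m) := \sum_{d\mid m,\,d\leq V}\mu(d)$, which only satisfies $|f_V(m)|\leq\tau(m)$, is handled by using the identity $f_V(m)=-\sum_{d\mid m,\,d>V}\mu(d)$ (valid for $m>V$) and treating $d$ and $e=m/d$ as two further dyadic variables, at the cost of one more $\log x$ that is absorbed into the final $(\log x)^2$ budget.

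The main obstacle is the combinatorial bookkeeping: one must check that every dyadic sub-sum (after regrouping variables) either has a small group of size $\leq x^{\beta_1}$ (allowing a type I estimate with the remaining variable carrying no coefficient) or a group of size in $[x^{\beta_1}, x^{\beta_2}]$ (allowing type II), and that the coefficients on the two grouped variables can be brought into the required bounded form $|a_m|,|b_n|\leq 1$ at the price of at most one logarithmic factor. The conditions $\beta_1<1/3$ and $\beta_2>1/2$ are precisely what guarantees the existence of such a grouping in every case, while the various logarithmic and divisor contributions remain within the $(\log x)^2$ loss.
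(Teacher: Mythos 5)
Your route — Vaughan's identity, dyadic ($q$-adic) decomposition, and a type~I/type~II dichotomy — is exactly the one behind the proof the paper invokes (Lemme~1 of \cite{mauduit-rivat-2010}), and your treatment of $A_1$ and $A_2$ (Abel summation to strip $\log b$, normalising $\sum_{cd=\ell}\mu(d)\Lambda(c)$ by $\log x$) is fine. But two steps in the bilinear part do not hold as written. First, the range verification fails for general admissible $(\beta_1,\beta_2)$: with $U=x^{\beta_1}$, $V=x^{\beta_2-\beta_1}$, the claim that $\alpha\in(\beta_2-\beta_1,1-\beta_1)$ together with $\beta_2>1/2$ forces $\alpha$ or $1-\alpha$ into $[\beta_1,\beta_2]$ is false: take $\beta_1=0.32$, $\beta_2=0.52$, $\alpha=0.25$ (allowed, since $\beta_2-\beta_1=0.2$); neither $0.25$ nor $0.75$ lies in $[0.32,0.52]$. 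The set of admissible exponents is $[\beta_1,\beta_2]\cup[1-\beta_2,1-\beta_1]=[\min(\beta_1,1-\beta_2),\max(\beta_2,1-\beta_1)]\supseteq[\beta_1,1-\beta_1]\supseteq[1/3,2/3]$, which is precisely what $\beta_1<1/3$ and $\beta_2>1/2$ are calibrated for; the standard choice $U=V=x^{1/3}$ (or $x^{\beta_1}$) repairs this, your choice of $V$ does not.

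Second, and more fundamentally, the divisor-type coefficient is not tamed by your manoeuvre. Writing $\sum_{d\mid m,\,d\le V}\mu(d)=-\sum_{d\mid m,\,d>V}\mu(d)$ and restricting $d\sim D$, $e=m/d\sim E$ dyadically still leaves, on the grouped variable $m=de$, the coefficient $\sum_{de=m,\;d\sim D}\mu(d)$, whose modulus is the number of divisors of $m$ in a dyadic window — unbounded, whereas the type~II hypothesis demands the pointwise bound $|a_m|\le 1$ and offers no $\ell^2$ averaging. The only way out is to regroup: keep $d$ alone on one side (coefficient $\mu(d)$) and $n'=ek$ on the other (coefficient $(\log x)^{-1}\sum_{k\mid n',\,k>U}\Lambda(k)\le 1$). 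But then the quantity that must land in $[x^{\beta_1},x^{\beta_2}]$ (directly, or after exchanging the roles of the two variables) is $D$, not $M=DE$, so the range analysis you carried out is for the wrong variable and must be redone; with $U=V=x^{1/3}$ one gets $D\in(x^{1/3},x^{2/3}]$, which is always admissible. This regrouping also fixes your log count: decomposing $d$, $e$ and $k$ separately and normalising $\Lambda$ costs $(\log x)^3$, exceeding the budget, whereas a single $q$-adic decomposition in $d$ plus the $\log x$ from the $\Lambda$-coefficient fits into $U(\log x)^2$.
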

\begin{proof}
  This is Lemme 1 from \cite{mauduit-rivat-2010}
  based on Vaughan's identity.
\end{proof}

We will apply Lemma \ref{lemma:combinatorial-identity}
with $g(n) = f(n^2) \e(\theta n)$
where $\theta\in\R$ and
$f$ is a strongly $q$-multiplicative function
(Definition~\ref{definition:q-multiplicative-function}).

\section{Sums of type II}\label{section:typeIIsums}

\subsection{Reduction  to exponential sums estimates}~

We set
\begin{equation}\label{eq:choice-beta_1}
  \beta_1=\frac15.
\end{equation}
Clearly we have $0<\beta_1<\frac13$,
and also set $\beta_2=1-\beta_1 = \frac 45$.
For positive integers $\mu$ and $\nu$ satisfying
\begin{equation}
  \label{eq:extended-condition-mu-nu}
  \beta_1 \leq \frac{\mu}{\mu+\nu} \leq \beta_2
  ,
\end{equation}
we intend to show in a quantitative way that the type II sum
\begin{equation}\label{eq:S20-final-upperbound}
  S_{20}(\theta)
  :=
  \sum_{q^{\mu-1} \leq m < q^\mu}
  \sum_{q^{\nu-1} \leq n < q^\nu}
  a_m b_n f(m^2n^2) \e(\theta m n)
  = o\left(q^{\mu+\nu}\right)
  ,
\end{equation}
uniformly for all  $\theta\in\R$ and
all complex numbers $a_m$ and $b_n$ of modulus at most $1$
(due to the generality of $a_m$ and $b_n$
and cutting $q$-adically this is sufficient to
establish \eqref{definition-sums-type-II}).

Notice that exchanging $\mu$ and $\nu$, it is enough to show this
upper bound in the range
\begin{equation}
  \label{eq:condition-mu_nu}
  \beta_1 \leq \frac{\mu}{\mu+\nu} \leq \frac12,
\end{equation}
which implies that
\begin{displaymath}
  \frac 14 \nu \le \mu \leq \nu.
\end{displaymath}
We will now assume that a slightly modified condition holds:
\begin{equation}
  \label{eq:inequality-mu-nu}
  \frac 14 \nu -C  \le \mu \leq \nu + C.
\end{equation}
for some absolute constant $C > 0$.

From 
Let $\rho$, $\kappa_1$, $\kappa_2$, $\lambda$ be integers such that
\begin{align}
  \label{eq:condition-rho}
  0 & < \rho < \mu/2,\\
  \label{eq:definition-kappa_1}
  \kappa_1 & = \mu-\rho,\\
  \label{eq:definition-kappa_2}
  \kappa_2 &= 2\mu+\nu+\rho+ \widetilde{\rho},\\
    \label{eq:definition-lambda}
    \lambda &= \kappa_2 - \kappa_1 = \mu+\nu+2\rho+ \widetilde{\rho}
    .
\end{align}
After a first application of Cauchy-Schwarz followed by
Lemma~\ref{lemma:van-der-corput}
(van der corput's inequality) with $N'=1$ and $R = q^\rho$,
we get
\begin{displaymath} \label{eqS20est}
  \abs{S_{20}(\theta)}^2
  \ll
  q^{2\mu+2\nu-\rho}
  +
  q^{\mu+\nu-\rho}
  \sum_{1\leq r < q^\rho} \left( 1-\frac{r}{q^\rho} \right)
  \ \Re\left(S_{21}(r,\theta)\right)
  ,
\end{displaymath}
where
\begin{displaymath}
  S_{21}(r,\theta)
  :=
  \sum_{q^{\nu-1} \leq n < q^\nu-r}
  b_{n+r} \conjugate{b_n}
  \sum_{q^{\mu-1} \leq m < q^\mu} 
  f(m^2(n+r)^2) \conjugate{f}(m^2n^2) \e(\theta m r)
  .
\end{displaymath}
Since
\begin{displaymath}
  \abs{S_{20}(\theta)}^4
  \ll
  q^{4\mu+4\nu-2\rho}
  +
  q^{2\mu+2\nu-\rho} \sum_{1\leq r < q^\rho} \ \abs{S_{21}(r,\theta)}^2
  ,
\end{displaymath}
after a second application of Cauchy-Schwarz followed
by Lemma~\ref{lemma:van-der-corput}
(van der corput's inequality) with $N'=q^{\kappa_1}$
and $R=q^\rho$, we get
\begin{multline}\label{eq:S20-to-S22}
  \abs{S_{20}(\theta)}^4
  \ll
  q^{4\mu+4\nu-2\rho}  + q^{4\mu+4\nu-\rho}
  \\
  +
  q^{3\mu+3\nu-2\rho}
  \sum_{1\leq r < q^\rho}
  \sum_{1\leq s < q^\rho} \left( 1-\frac{s}{q^\rho} \right)
  \ \Re\left( S_{22}(r,s) \e(\theta r s q^{\kappa_1}) \right)
  ,
\end{multline}
where, changing the conditions $q^{\nu-1} \leq n < q^\nu-r$ and
$q^{\mu-1} \leq m < q^\mu-s q^{\kappa_1}$ into
$q^{\nu-1} \leq n < q^\nu$ and $q^{\mu-1} \leq m < q^\mu$
at the cost of an admissible error term, we take
\begin{multline*}
  S_{22}(r,s)
  =
  \sum_{q^{\mu-1} \leq m < q^\mu}
  \sum_{q^{\nu-1} \leq n < q^\nu}
  f\left(\left(m+sq^{\kappa_1}\right)^2(n+r)^2\right)
  \conjugate{f}\left(m^2(n+r)^2\right)
  \\
  \times
  \conjugate{f}\left(\left(m+sq^{\kappa_1}\right)^2n^2\right)
  f\left(m^2n^2\right)
  .
\end{multline*}
By the carry propagation property described in Lemma~\ref{Le:carryproperty},
uniformly for $1\leq r <q^\rho$ and $1\leq s <q^\rho$
we can write
\begin{equation}\label{eq:S22-to-S23}
  S_{22}(r,s)
  =
  S_{23}(r,s)
  + O\left( q^{\mu+\nu-\widetilde{\rho}}\right)
  ,
\end{equation}
where
\begin{multline*}
  S_{23}(r,s)
  =
  \sum_{q^{\mu-1} \leq m < q^\mu}
  \sum_{q^{\nu-1} \leq n < q^\nu}
  f_{\kappa_1,\kappa_2}\left(\left(m+sq^{\kappa_1}\right)^2(n+r)^2\right)
  \conjugate{f_{\kappa_1,\kappa_2}}\left(m^2(n+r)^2\right)
  \\
  \times
  \conjugate{f_{\kappa_1,\kappa_2}}\left(\left(m+sq^{\kappa_1}\right)^2n^2\right)
  f_{\kappa_1,\kappa_2}\left(m^2n^2\right)
  .
\end{multline*}

Let $K$ be an integer such that
\begin{equation}\label{eq:initial-condition-for-K}
  q^\rho \leq K \leq q^{\kappa_1-1},
\end{equation}
to be chosen later more precisely,
and
given $\lambda$ defined by~\eqref{eq:definition-lambda}
choose
\begin{align}
  \label{eq:definition-H}
  H &= K q^{\lambda} - 1,\\
  \label{eq:definition-alpha-lambda}
  \alpha &= q^{-\lambda}
    .
\end{align}
Using \eqref{eq:definition-f-kappa1-kappa2-H}, let
\begin{multline*}
  S_{24}(r,s)
  =
  \sum_{q^{\mu-1} \leq m < q^\mu}
  \sum_{q^{\nu-1} \leq n < q^\nu}
  f_{\kappa_1,\kappa_2,H}\left(\left(m+sq^{\kappa_1}\right)^2(n+r)^2\right)
  \conjugate{f_{\kappa_1,\kappa_2,H}}\left(m^2(n+r)^2\right)
  \\
  \times
  \conjugate{f_{\kappa_1,\kappa_2,H}}\left(\left(m+sq^{\kappa_1}\right)^2n^2\right)
  f_{\kappa_1,\kappa_2,H}\left(m^2n^2\right)
  .
\end{multline*}

Since $\abs{f_{\kappa_1,\kappa_2}}\leq 1$,
we observe by \eqref{eq:vaaler-approximation-f_kappa1-kappa2}
that $\abs{f_{\kappa_1,\kappa_2,H}}\leq 2$.
Using the identity
\begin{displaymath}
  z_1z_2z_3z_4 - z'_1z'_2z'_3z'_4 
  =
   (z_1-z'_1)z'_2z'_3z'_4 + z_1(z_2-z'_2)z'_3z'_4
   + z_1z_2(z_3-z'_3)z'_4 + z_1z_2z_3(z_4-z'_4)
   ,
\end{displaymath}
and taking, for $0\leq r < q^\rho$ and $0\leq s < q^\rho$,
\begin{displaymath}
  E_{24}(r,s)
  =
  \sum_{q^{\mu-1} \leq m < q^\mu}
  \sum_{q^{\nu-1} \leq n < q^\nu}
  q^{\kappa_2-\kappa_1}
  \sum_{\abs{k}< K}
  \fourier{B_{\alpha,H}}\left(k q^{\kappa_2-\kappa_1}\right)
  \e\left( \frac{k \left(m+sq^{\kappa_1}\right)^2(n+r)^2}{q^{\kappa_1}} \right)
  ,
\end{displaymath}
by \eqref{eq:vaaler-approximation-f_kappa1-kappa2} we obtain
\begin{equation}\label{eq:approximation-S23-S24-initial}
  \abs{S_{23}(r,s) - S_{24}(r,s)}
  \leq
  8 E_{24}(r,s) + 4 E_{24}(r,0) + 2 E_{24}(0,s) + E_{24}(0,0)
  .
\end{equation}
For $k\neq 0$,
by inequality~\eqref{eq:double-exponential-sum-m^2n^2},
applied with $\xi_4$ replaced by $k q^{-\kappa_1}$, observing that
by \eqref{eq:initial-condition-for-K} we have
$\abs{k} q^{-\kappa_1} \leq \frac12$,
we obtain uniformly for $0\leq r < q^\rho$ and $0\leq s < q^\rho$,
\begin{multline*}
  \abs{
    q^{-\mu-\nu}
    \sum_{q^{\mu-1} \leq m < q^\mu}
    \sum_{q^{\nu-1} \leq n < q^\nu}
    \e\left(
      \frac{k \left(m+sq^{\kappa_1}\right)^2(n+r)^2}{q^{\kappa_1}}
    \right)
  }^4
  \\
  \ll
  \abs{k}^{\frac25} q^{-\frac25\kappa_1} 
  +
  q^{-\nu}
  +
  q^{-\mu} \log q^{\kappa_1} 
  +
  q^{-2\mu-2\nu}
  \abs{k}^{-1} q^{\kappa_1}  \log^3 q^{\kappa_1} 
  \\
  +
  \left(
    q^{-\mu-2\nu} \abs{k}^{-\frac35} q^{\frac35\kappa_1} 
    +
    q^{-2\mu-\nu} \abs{k}^{-\frac45} q^{\frac45\kappa_1} 
    +
    q^{-\mu-\nu} \abs{k}^{-\frac25} q^{\frac25\kappa_1} 
  \right)
  \log^3 q^{\kappa_1} 
  ,
\end{multline*}
which, using $\abs{k}\geq 1$, \eqref{eq:inequality-mu-nu}
and \eqref{eq:definition-kappa_1},
is
\begin{displaymath}
  \\
  \ll
  \abs{k}^{\frac25} q^{-\frac25\kappa_1}   
  .
\end{displaymath}
It follows that uniformly for $0\leq r < q^\rho$ and $0\leq s < q^\rho$,
\begin{displaymath}
  E_{24}(r,s)
  \ll
  q^{\kappa_2-\kappa_1}
  \fourier{B_{\alpha,H}}\left(0\right)
  q^{\mu+\nu}
  +
  q^{\kappa_2-\kappa_1}
  \sum_{1\leq\abs{k}< K}
  \fourier{B_{\alpha,H}}\left(k q^{\kappa_2-\kappa_1}\right)
  \abs{k}^{\frac1{10}} q^{\mu+\nu-\frac1{10}\kappa_1}
  ,
\end{displaymath}
which by \eqref{eq:vaaler-coef-majoration},
\eqref{eq:definition-H} and \eqref{eq:definition-lambda}
gives uniformly for $0\leq r < q^\rho$ and $0\leq s < q^\rho$,
\begin{displaymath}
  E_{24}(r,s)
  \ll
  q^{\mu+\nu}
  \left(K^{-1} 
  +
  K^{\frac1{10}} q^{-\frac1{10}\kappa_1}
  \right)
  .
\end{displaymath}
It follows from \eqref{eq:approximation-S23-S24-initial} that
\begin{equation}\label{eq:approximation-S23-S24}
  \abs{S_{23}(r,s) - S_{24}(r,s)}
  \ll
  q^{\mu+\nu}
  \left(K^{-1} 
  +
  K^{\frac1{10}} q^{-\frac1{10}\kappa_1}
  \right)
  .
\end{equation}

By \eqref{eq:definition-f-kappa1-kappa2-H}, we have
\begin{multline*}
  S_{24}(r,s)
  =
  q^{4\lambda}
  \sum_{\abs{h_1}\leq H}
  \sum_{\abs{h_2}\leq H}
  \sum_{\abs{h_3}\leq H}
  \sum_{\abs{h_4}\leq H}
  \fourier{\chi_{\alpha,H}}(h_1)
  \conjugate{\fourier{\chi_{\alpha,H}}}(h_2)
  \conjugate{\fourier{\chi_{\alpha,H}}}(h_3)
  \fourier{\chi_{\alpha,H}}(h_4)
  \\
  F_{\kappa_1,\kappa_2}(h_1)
  \conjugate{F_{\kappa_1,\kappa_2}}(h_2)
  \conjugate{F_{\kappa_1,\kappa_2}}(h_3)
  F_{\kappa_1,\kappa_2}(h_4)
  \
  S_{25}(r,s,h_1,h_2,h_3,h_4)
  ,
\end{multline*}
with
\begin{multline*}
  S_{25}(r,s,h_1,h_2,h_3,h_4)
  =
  \\
  \sum_{q^{\mu-1} \leq m < q^\mu} \sum_{q^{\nu-1} \leq n < q^\nu}
  \e_{q^{\kappa_2}}\left(
    h_1\left(m+sq^{\kappa_1}\right)^2 (n+r)^2
    - h_2 m^2 (n+r)^2
    - h_3 \left(m+sq^{\kappa_1}\right)^2 n^2
    + h_4 m^2 n^2
  \right)
  .
\end{multline*}
Let us write
\begin{equation}
  \label{eq:S24=S26+S27+S28}
  S_{24}(r,s) = S_{26}(r,s) + S_{27}(r,s) + S_{28}(r,s)
  ,
\end{equation}
where $S_{26}(r,s)$ denotes
the contribution in $S_{24}(r,s)$ of the terms such that
$h_1-h_2-h_3+h_4 \neq 0$, 
$S_{27}(r,s)$ denotes the contribution
in $S_{24}(r,s)$ of the terms such that
$h_1-h_2-h_3+h_4 = 0$ and $\abs{h_1-h_2} \geq q^{\rho_3}$,
and
$S_{28}(r,s)$ denotes the contribution
in $S_{24}(r,s)$ of the terms such that
$h_1-h_2-h_3+h_4 = 0$ and $\abs{h_1-h_2} < q^{\rho_3}$,
for some integer $\rho_3>0$ to be chosen later.

\begin{lemma}\label{Lemainestimates}
  Suppose that $q$ is a prime number.
  Then we have the following upper bounds:
  \begin{align}\label{eq:S26-upperbound}
    q^{-\mu-\nu-2\rho}
    & \sum_{1\leq r < q^\rho} \sum_{1\leq s < q^\rho} 
      \abs{S_{26}(r,s)}
      \ll
      \rho_1^4
      \left(
      \sum_{0\leq \ell< q^{\kappa_2-\kappa_1}}
      \abs{F_{\kappa_1,\kappa_2}(\ell)}
      \right)^4 
    \\
    & \qquad \times
      \left(
      q^{-\frac1{10}(\mu-\rho-\rho_1)}
      +
      q^{-\frac14\mu} \log^{\frac14}\left(q^{\mu+\nu}\right)
      +
      q^{-\frac1{20}(\mu+\nu)}
      \log^{\frac34}\left(q^{\mu+\nu}\right)
      \right)
      ,  \nonumber
  \end{align}
  \begin{align}\label{eq:S27-upperbound}
    q^{-\mu-\nu-2\rho}
    & \sum_{1\leq r < q^\rho} \sum_{1\leq s < q^\rho} 
      \abs{S_{27}(r,s)}
      \ll
      \rho_1^2
      \left(
      \sum_{0\leq \ell< q^{\kappa_2-\kappa_1}}
      \abs{F_{\kappa_1,\kappa_2}(\ell)}
      \right)^2 \\  \nonumber
    & \qquad \times
      \left(
      q^{-\frac 14(\rho_3-\widetilde{\rho}) }
      + q^{-\frac 16(\mu-\rho -\widetilde{\rho}-\rho_1)} 
      + q^{-\frac 14 \mu}
      \right)
      ,
  \end{align}
  and
  \begin{align}\label{eq:S28-upperbound}
    q^{-\mu-\nu-2\rho}
    &\sum_{1\leq r < q^\rho} \sum_{1\leq s < q^\rho} 
      \abs{S_{28}(r,s)}
      \ll
      \sum_{0\le k < q^{\rho_3}}
      \abs{F_{\kappa_2-\rho_3,\kappa_2}(k)}  
    \\ \nonumber
    & \qquad \times
      \Biggl( 
      \rho_1 q^{\frac 12 \rho_3}
      \Biggl( 
      \sum_{0\leq \ell < q^{\kappa_2-\kappa'_1}}
      \abs{
      F_{\kappa'_1,\kappa_2}(\ell)
      }
      \Biggr) 
    \\ \nonumber
    &\qquad \qquad 
      \times \left(
      q^{-\frac18(\mu-\rho-\rho_1-\rho_2)} 
      +
      \left(q^{-\frac14(\nu-3\rho)} + q^{-\frac14\mu}\right)(\mu+\nu)
      \right)
    \\  \nonumber
    &\qquad \qquad 
      + q^{\frac 12\rho_3}
      \left(
      q^{-\frac 12 \rho_1} 
      +
      q^{-\frac18(\mu-\rho-\rho_1-\rho_2)}
      \right) 
    \\  \nonumber
    &\qquad \qquad 
      + \sqrt{\rho \rho_2}
      q^{\frac 12 \widetilde{\rho}} 
      \max_{h_1} \abs{F_{\kappa'_2,\kappa_2}\left(h_1\right)}
    \\  \nonumber
    &\qquad \qquad  
      +\sqrt{\kappa_2-\kappa'_1}
      q^{\rho_3-\frac18\rho_5}
      +  q^{\frac 12 (\rho_3-\rho_2)} 
      +  q^{\rho_3-\rho_1}  
      \Biggr)
      ,
  \end{align}
  where
  \begin{align*}
    \kappa_1 &= \mu - \rho, \\
    \kappa_2 &= 2\mu + \nu + \rho + \widetilde{\rho}, \\
    \kappa_1' &=  2\mu- \rho -\rho_2, \\
    \kappa_2' &= 2\mu - \rho - \rho_2 - \rho_5,
  \end{align*}
  and where the appearing parameters
  \begin{displaymath}
    \rho,\, \widetilde{\rho},\, \rho_1,\, \rho_2\, \rho_3,\, \rho_5
  \end{displaymath}
  are positive and satisfy
  \begin{displaymath}
    \rho < \frac{\mu}8, \quad
    \widetilde{\rho} < \frac{\mu}8,\quad
    \rho < \rho_1 < \mu-3\rho, \quad
    \rho+\rho_1+ \rho_2 + 3 \le \mu.
  \end{displaymath}
\end{lemma}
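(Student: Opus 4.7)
We expand $S_{24}(r,s)$ through the Fourier representation \eqref{eq:definition-f-kappa1-kappa2-H} of $f_{\kappa_1,\kappa_2,H}$ as a quadruple sum over $(h_1,h_2,h_3,h_4)\in[-H,H]^4$, separating the weights $\fourier{\chi_{\alpha,H}}(h_i)$ and $F_{\kappa_1,\kappa_2}(h_i)$ from the pure exponential sum $S_{25}(r,s,\mathbf{h})$ in $(m,n)$. Expanding $(m+sq^{\kappa_1})^2(n+r)^2$ and the three analogous products, the phase of $S_{25}$ modulo $1$ is a polynomial in $(m,n)$ whose $m^2n^2$-coefficient is $(h_1-h_2-h_3+h_4)/q^{\kappa_2}$, whose $m^2n$-coefficient is $2r(h_1-h_2)/q^{\kappa_2}$, whose $mn^2$-coefficient is $2s(h_1-h_3)/q^{\kappa_2-\kappa_1}$, and whose $mn$-coefficient is $4rs\,h_1/q^{\kappa_2-\kappa_1}$. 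The splitting $S_{24}=S_{26}+S_{27}+S_{28}$ is precisely designed so that the $m^2n^2$-coefficient is nonzero in $S_{26}$, vanishes but leaves a large $m^2n$-coefficient in $S_{27}$, and leaves only small coefficients in $S_{28}$.

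For $S_{26}$ we apply Lemma \ref{lemma:double-exponential-sum-m^2n^2} with $\xi_4=(h_1-h_2-h_3+h_4)/q^{\kappa_2}\neq 0$, $M\asymp q^\mu$, $N\asymp q^\nu$. Taking fourth roots, the dominant contribution is $\|\xi_4\|^{1/10}$, which we bound by $(|h_1-h_2-h_3+h_4|/q^{\kappa_2})^{1/10}$. The four weights are summed using the identity $|q^\lambda\fourier{\chi_{\alpha,H}^*}(h)|\le\min(1,q^\lambda/(\pi|h|))$, which effectively truncates each $h_i$ to $|h_i|\le q^{\lambda+\rho_1}$ at the cost of a factor $\rho_1$ per index; each index then contributes an $L^1$-factor $\sum_\ell|F_{\kappa_1,\kappa_2}(\ell)|$, yielding the $(\sum_\ell|F_{\kappa_1,\kappa_2}(\ell)|)^4$ and $\rho_1^4$ in \eqref{eq:S26-upperbound}, while the three $q$-exponents reflect the three error terms in \eqref{eq:double-exponential-sum-m^2n^2}. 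For $S_{27}$ the constraint $h_1-h_2-h_3+h_4=0$ eliminates one index, and with $|h_1-h_2|\ge q^{\rho_3}$ the $m^2n$-coefficient $\xi_2=2r(h_1-h_2)/q^{\kappa_2}$ is nonzero; Lemma \ref{lemma:double-exponential-sum-xi_2} then yields, after a square root, the three terms $q^{-(\rho_3-\widetilde\rho)/4}$, $q^{-(\mu-\rho-\widetilde\rho-\rho_1)/6}$, $q^{-\mu/4}$ of \eqref{eq:S27-upperbound}, multiplied by $(\sum_\ell|F_{\kappa_1,\kappa_2}(\ell)|)^2\rho_1^2$ from the now two-index $L^1$-summation.

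The heart of the lemma is $S_{28}$. Here $\ell:=h_1-h_2=h_3-h_4$ is small ($|\ell|<q^{\rho_3}$) and the new idea, announced in the introduction, is to \emph{re-invert the $h_1,h_3$ Fourier sums}: fixing $\ell$ and the remaining indices, the inner sum $\sum_{h_1}\fourier{\chi_{\alpha,H}^*}(h_1)\,\conjugate{\fourier{\chi_{\alpha,H}^*}(h_1-\ell)}\,\e(\cdots h_1\cdots)$ is recognised as the value of the convolution $\chi_{\alpha,H}^**(\chi_{\alpha,H}^*\e^\ell)$ at a rational point $u/q^{\kappa_2-\kappa'_1}$. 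By Lemma \ref{lemma:chi_H-convolution-chi_H-2} together with \eqref{eq:chi-star-convolution-chi-star-of-zero-2}--\eqref{eq:chi-star-convolution-chi-star-non-zero-2}, this quantity is concentrated at the origin up to an $O(1/(H+1))$ error, which collapses the $h_1,h_3$ sums and leaves a much simpler sum over $(\ell,h_2,h_4,m,n)$. The residual $(m,n)$-sum is then an incomplete quadratic Gauss sum, estimated through Lemmas \ref{lemma:complete-gauss-sum}--\ref{lemma:incomplete-gauss-sum-2} and averaged using Lemma \ref{lemma:gcd-sum}; the remaining Fourier weights are absorbed with the mixed estimate of Lemma \ref{Lemixedestimate}, the $L^2$-estimate of Lemma \ref{lemma:L2-along-almost-arithmetic-progressions}, and the $L^1$-bound of Lemma \ref{Le1}. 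The six summands appearing in \eqref{eq:S28-upperbound} correspond respectively to truncation at $h_2,h_4$, the Vaaler-convolution error, the mean square over $h_1$, the Gauss sum, and two residual length errors.

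The main obstacle is $S_{28}$: the Fourier inversion must be carried out so that the concentration of the Vaaler convolutions matches the scales of the weights $F_{\kappa'_1,\kappa_2}$ and $F_{\kappa'_2,\kappa_2}$, and the auxiliary parameters $\rho_2,\rho_3,\rho_5$ must be positioned so that the six competing error terms in \eqref{eq:S28-upperbound} are simultaneously small; any sub-optimal choice propagates and weakens Theorem \ref{Thmainresult}. In contrast, $S_{26}$ and $S_{27}$ reduce to careful but essentially routine applications of the van der Corput-type estimates established in Section \ref{section:preliminaries}.
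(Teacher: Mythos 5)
Your treatment of $S_{26}$ and $S_{27}$ matches the paper's: the coefficient bookkeeping for the phase of $S_{25}$ is correct, Lemma~\ref{lemma:double-exponential-sum-m^2n^2} with $\xi_4=(h_1-h_2-h_3+h_4)q^{-\kappa_2}$ gives \eqref{eq:S26-upperbound} after converting the Vaaler-weighted $h$-sums into $L^1$-factors via \eqref{eq:L1-mean-chi_H-F_kappa1-kappa2}, and Lemma~\ref{lemma:double-exponential-sum-xi_2} with $\xi_2=2r(h_1-h_2)q^{-\kappa_2}$ gives \eqref{eq:S27-upperbound} (the paper disposes of the $h_3$-sum by Cauchy--Schwarz and \eqref{eq:L2-mean-chi_H-F_kappa1-kappa2}, which is why only two $L^1$-factors survive).

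For $S_{28}$, however, your sketch has a genuine structural gap. First, the Vaaler-convolution concentration (Lemma~\ref{lemma:chi_H-convolution-chi_H-2}) collapses only the $h_3$-sum: writing out $\conjugate{F_{\kappa_1,\kappa_2}}(h_3)F_{\kappa_1,\kappa_2}(h_3-\ell)$ as a double sum over $u_1,u_2$, the $h_3$-sum becomes $\chi^*_{\alpha,H}*(\chi^*_{\alpha,H}\e^\ell)$ evaluated at $(u_1-u_2-(2ms+s^2q^{\kappa_1})n^2)/q^{\lambda}$ (scale $q^{\lambda}$, not $q^{\kappa_2-\kappa'_1}$), whose support forces $u_1\equiv u_2+(2ms+s^2q^{\kappa_1})n^2$; the $h_1$-sum is \emph{retained} and is handled at the very end by the mixed estimate Lemma~\ref{Lemixedestimate} after a Cauchy--Schwarz in $\ell$ (this is where the factor $\sum_k\abs{F_{\kappa_2-\rho_3,\kappa_2}(k)}$ comes from). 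Second, and more importantly, what remains after the collapse is \emph{not} an incomplete Gauss sum: it is the sum $T_2(r,s;h_1,u_2,\ell)$, which still contains the digital function $f_{\kappa_1,\kappa_2}$ evaluated at $q^{\kappa_1}u_2+q^{\kappa_1}(2ms+s^2q^{\kappa_1})n^2$. To reach a Gauss sum one needs a \emph{second} Cauchy--Schwarz/van der Corput differencing in $m$ (introducing the shift $r_2$ and the parameter $\rho_2$), followed by a second Vaaler expansion on the shorter digit window $[\kappa'_1,\kappa_2]$ and a diagonal/off-diagonal split in the new frequencies $h_5,h_6$; only the diagonal $h_5=h_6$ produces the quadratic Gauss sum $S_5$ to which Lemma~\ref{lemma:incomplete-gauss-sum-2} and the gcd analysis apply. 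This second layer is not optional: the parameters $\rho_2$ and $\rho_5$ and the quantities $\kappa'_1=2\mu-\rho-\rho_2$ and $\kappa'_2=2\mu-\rho-\rho_2-\rho_5$ appearing in \eqref{eq:S28-upperbound} only enter the argument through it, so the bound as stated cannot be produced by the route you describe. (Incidentally, Lemma~\ref{lemma:L2-along-almost-arithmetic-progressions} is a Type I tool and plays no role here.)
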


\medskip
\subsection{Upper bounds for $S_{26}(r,s)$}~

If $h_1-h_2-h_3+h_4 \neq 0$ we write
\begin{displaymath}
  \xi_4 = (h_1-h_2-h_3+h_4) \, q^{-\kappa_2} 
  .
\end{displaymath}
By inequality~\eqref{eq:double-exponential-sum-m^2n^2}
we get the upper bound
\begin{multline*}
  \abs{q^{-\mu-\nu} S_{25}(r,s,h_1,h_2,h_3,h_4)}^4
  \\
  \ll
  \norm{\xi_4}^{2/5}
  +
  q^{-\nu}
  +
  q^{-\mu} \log \norm{\xi_4}^{-1}
  +
  q^{-2\mu-2\nu}
  \norm{\xi_4}^{-1} \log^3 \norm{\xi_4}^{-1}
  \\
  +
  \left(
    q^{-\mu-2\nu} \norm{\xi_4}^{-3/5} 
    +
    q^{-2\mu-\nu} \norm{\xi_4}^{-4/5} 
    +
    q^{-\mu-\nu} \norm{\xi_4}^{-2/5}
  \right)
  \log^3 \norm{\xi_4}^{-1}    
  .
\end{multline*}
By \eqref{eq:definition-H}, \eqref{eq:definition-lambda}
and \eqref{eq:definition-kappa_1} we have
\begin{displaymath}
  q^{-2\mu-\nu-\rho-\widetilde{\rho}}
  =
  q^{-\kappa_2}
  \leq \abs{\xi_4}
  \leq 4 H q^{-\kappa_2}
  \leq 4 K q^{\lambda-\kappa_2}
  = 4 K q^{-\kappa_1}  
  \asymp  K q^{-\mu+\rho}
  ,
\end{displaymath}
which gives that
\begin{math}
  \abs{q^{-\mu-\nu} S_{25}(r,s,h_1,h_2,h_3,h_4)}^4
\end{math}
is
\begin{multline*}
  \ll
  (K q^{-\mu+\rho})^{\frac25}
  +
  q^{-\nu}
  +
  q^{-\mu} \log\left(q^{\mu+\nu}\right)
  \\
  +
  \left(
    q^{-\nu+2\rho} 
    +
    q^{-\mu-2\nu+\frac35(2\mu+\nu+\rho+ \widetilde{\rho})} 
    +
    q^{-2\mu-\nu+\frac45(2\mu+\nu+\rho)+ \widetilde{\rho}}
    +
    q^{-\mu-\nu+\frac25(2\mu+\nu+\rho + \widetilde{\rho})} \right)
  \log^3\left(q^{\mu+\nu}\right)
\end{multline*}
\begin{multline*}
  \ll
  (K q^{-\mu+\rho})^{\frac25}
  +
  q^{-\mu} \log\left(q^{\mu+\nu}\right)
  \\
  +
  \left(
    q^{-\nu+\rho+\widetilde{\rho}} 
    +
    q^{\frac15(\mu-7\nu+3\rho+3\widetilde{\rho})} 
    +
    q^{\frac15(-2\mu-\nu+4\rho+4\widetilde{\rho})}
    +
    q^{\frac15(-\mu-3\nu+2\rho+2\widetilde{\rho})} \right)
  \log^3\left(q^{\mu+\nu}\right)
\end{multline*}
and, using \eqref{eq:condition-mu_nu},
observing that
\begin{displaymath}
  \mu-7\nu
  \leq \frac12(\mu+\nu) - \frac72(\mu+\nu)
  = - 3(\mu+\nu)
\end{displaymath}
we get for
\begin{math}
  \abs{q^{-\mu-\nu} S_{25}(r,s,h_1,h_2,h_3,h_4)}^4
\end{math}
the upper bound
\begin{multline*}
  \ll
  (K q^{-\mu+\rho})^{\frac25}
  +
  q^{-\mu} \log\left(q^{\mu+\nu}\right)
  \\
  +
  \left(
    q^{-\frac12(\mu+\nu)+\rho+\widetilde{\rho}} 
    +
    q^{-\frac35(\mu+\nu)+\frac35\rho +\frac35 \widetilde{\rho}} 
    +
    q^{\frac15(-2\mu-\nu+4\rho+4\widetilde{\rho})}
    +
    q^{\frac15(-\mu-3\nu+2\rho+2\widetilde{\rho})} \right)
  \log^3\left(q^{\mu+\nu}\right)
\end{multline*}
which is satisfactory provided that $K$, $\rho$
and $\widetilde{\rho}$ are small enough.
Recall that by assumption 
\begin{equation}\label{eq:condition-rho-restricted}
  0<\rho<\frac{\mu}{8}, \quad 0<\widetilde{\rho}<\frac{\mu}{8}
\end{equation}
which is actually stronger than \eqref{eq:condition-rho} and 
which by \eqref{eq:condition-mu_nu} implies
\begin{math}
  \rho < \frac{1}{16}(\mu+\nu),
\end{math} 
\begin{math}
  \widetilde{\rho} < \frac{1}{16}(\mu+\nu).
\end{math}
We also set
\begin{equation}\label{eq:definition-K}
  K = q^{\rho_1}
\end{equation}
so that by assumption
\begin{equation}
  \label{eq:condition-rho_1}
  \rho < \rho_1 < \mu-3\rho
  ,
\end{equation}
which replace \eqref{eq:initial-condition-for-K},
to be chosen later in order to optimize with the
error term introduced by \eqref{eq:vaaler-approximation-f_kappa1-kappa2},
we get for
\begin{math}
  \abs{q^{-\mu-\nu} S_{25}(r,s,h_1,h_2,h_3,h_4)}^4
\end{math}
the upper bound
\begin{displaymath}
  \ll
  q^{-\frac25(\mu-\rho-\rho_1)}
  +
  q^{-\mu} \log\left(q^{\mu+\nu}\right)
  +
  \left(
    q^{-\frac{3}{8}(\mu+\nu)} 
    +
    q^{-\frac{21}{40}(\mu+\nu)} 
    +
    q^{-\frac15(\mu+\nu)}
    +
    q^{-\frac{7}{20}(\mu+\nu)} \right)
  \log^3\left(q^{\mu+\nu}\right)
  ,
\end{displaymath}
and finally
\begin{displaymath}
  \abs{q^{-\mu-\nu} S_{25}(r,s,h_1,h_2,h_3,h_4)}^4
  \ll
  q^{-\frac25(\mu-\rho-\rho_1)}
  +
  q^{-\mu} \log\left(q^{\mu+\nu}\right)
  +
  q^{-\frac15(\mu+\nu)}
  \log^3\left(q^{\mu+\nu}\right)
  .
\end{displaymath}
It follows that for $h_1-h_2-h_3+h_4 \neq 0$ we get
\begin{multline*}
  q^{-2\rho}
  \sum_{1\leq r < q^\rho} \sum_{1\leq s < q^\rho} 
  \abs{ q^{-\mu-\nu} S_{25}(r,s,h_1,h_2,h_3,h_4) }
  \\
  \ll
  q^{-\frac1{10}(\mu-\rho-\rho_1)}
  +
  q^{-\frac14\mu} \log^{\frac14}\left(q^{\mu+\nu}\right)
  +
  q^{-\frac1{20}(\mu+\nu)}
  \log^{\frac34}\left(q^{\mu+\nu}\right)
  .
\end{multline*}
By the definition of $S_{26}(r,s)$ given by \eqref{eq:S24=S26+S27+S28},
it follows that
\begin{multline*}
  q^{-\mu-\nu-2\rho}
  \sum_{1\leq r < q^\rho} \sum_{1\leq s < q^\rho} 
  \abs{S_{26}(r,s)}
  \\
  \ll
  \left(
    q^\lambda
    \sum_{\abs{h}\leq H}
    \abs{\fourier{\chi_{\alpha,H}}(h)}
    \abs{F_{\kappa_1,\kappa_2}(h)}
  \right)^4
  \left(
    q^{-\frac1{10}(\mu-\rho-\rho_1)}
    +
    q^{-\frac14\mu} \log^{\frac14}\left(q^{\mu+\nu}\right)
    +
    q^{-\frac1{20}(\mu+\nu)}
    \log^{\frac34}\left(q^{\mu+\nu}\right)
  \right)
  ,
\end{multline*}
and by \eqref{eq:L1-mean-chi_H-F_kappa1-kappa2} we
directly obtain (\ref{eq:S26-upperbound}).

\medskip
\subsection{Upper bounds for $S_{27}(r,s)$}~

If $h_1-h_2-h_3+h_4 = 0$, then
\begin{multline*}
  S_{25}(r,s,h_1,h_2,h_3,h_4)
  =
  S_{25}(r,s,h_1,h_2,h_3,-h_1+h_2+h_3)
  \\
  =
  \sum_m \sum_n
  \e_{q^{\kappa_2}}\Big(
    (h_1 - h_2)m^2(2nr+r^2)
    +
    (h_1 - h_3)\left(2msq^{\kappa_1}+s^2q^{2\kappa_1}\right) n^2
    \\
    +
    h_1\left(2msq^{\kappa_1}+s^2q^{2\kappa_1}\right)(2nr+r^2)
  \Big).
\end{multline*}
Therefore
if $h_1-h_2-h_3+h_4 = 0$ and $h_1-h_2 \neq 0$
taking
\begin{displaymath}
  \xi_2 = 2 r (h_1-h_2) q^{-\kappa_2} 
  ,
\end{displaymath}
by \eqref{eq:definition-H}, \eqref{eq:definition-lambda}
and \eqref{eq:definition-kappa_1}
we have
\begin{displaymath}
  r \abs {h_1-h_2} q^{-2\mu-\nu-2\rho}
  \ll
  \abs{\xi_2}
  \leq 2 H q^{\rho-\kappa_2}
  \leq 2 K q^{\rho+\lambda-\kappa_2}
  = 2 K q^{\rho-\kappa_1}
  \asymp K q^{-\mu+\rho+\widetilde{\rho}}
  ,
\end{displaymath}
and using inequality~\eqref{eq:double-exponential-sum-xi_2}
we get the upper bound
\begin{multline*}
  \abs{
    q^{-\mu-\nu} S_{25}(r,s,h_1,h_2,h_3,-h_1+h_2+h_3)
  }^2
  \ll
  \left( K q^{-\mu+\rho+\widetilde{\rho}} \right)^{1/3}
  +
  \frac{q^{\frac 12\rho + \frac 12 \widetilde{\rho}}}{r^{1/2} \abs{h_1-h_2}^{1/2}}
  +
  \frac{1}{q^{\mu/2}}
  +
  \frac{1}{q^\nu}
  ,
\end{multline*}
which leads to the upper bound
\begin{multline*}
  q^{-2\rho}
  \sum_{1\leq r < q^\rho} \sum_{1\leq s < q^\rho} 
  \abs{
    q^{-\mu-\nu}
    S_{25}(r,s,h_1,h_2,h_3,-h_1+h_2+h_3)
  }
  \\
  \ll
  \left( K q^{-\mu+\rho+\widetilde{\rho}} \right)^{1/6}
  +
  \frac{q^{\widetilde{\rho}/4}}{ \abs{h_1-h_2}^{1/4}}
  +
  \frac{1}{q^{\mu/4}}
  +
  \frac{1}{q^{\nu/2}}
  ,
\end{multline*}
so, if, say, $\abs{h_1-h_2} \ge q^{\rho_3}$,
for some $\rho_3$ to be chosen later,
using \eqref{eq:definition-K} and \eqref{eq:inequality-mu-nu},
we get the upper bound
\begin{displaymath}
  \ll
  q^{-\frac16(\mu-\rho-\widetilde{\rho}-\rho_1)} 
  +
  q^{-\frac14(\rho_3- \widetilde{\rho})}
  +
  q^{-\frac14\mu}
  +
  q^{-\frac12\nu}
  \ll
  q^{-\rho_4}
  ,
\end{displaymath}
with
\begin{equation}
  \label{eq:definition-rho_4}
  \rho_4
  =
  \min\left(
    \tfrac14(\rho_3-\widetilde{\rho}),
    \tfrac16(\mu-\rho -\widetilde{\rho}-\rho_1),
    \tfrac14\mu
  \right).
\end{equation}
Note that this requires the relation
\begin{displaymath}
  \rho_3 > \widetilde{\rho}.
\end{displaymath}
By definition,
\begin{multline*}
  S_{27}(r,s)
  =
  q^{4\lambda}
  \sum_{\substack{\abs{h_1}\leq H,\ \abs{h_2}\leq H\\ \abs{h_1-h_2} \ge q^{\rho_3}}}
  \sum_{\abs{h_3}\leq H}
  \fourier{\chi_{\alpha,H}}(h_1)
  \conjugate{\fourier{\chi_{\alpha,H}}}(h_2)
  \conjugate{\fourier{\chi_{\alpha,H}}}(h_3)
  \fourier{\chi_{\alpha,H}}(-h_1+h_2+h_3)
  \\
  F_{\kappa_1,\kappa_2}(h_1) \conjugate{F_{\kappa_1,\kappa_2}}(h_2)
  \conjugate{F_{\kappa_1,\kappa_2}}(h_3) F_{\kappa_1,\kappa_2}(-h_1+h_2+h_3)
  \\
  S_{25}(r,s,h_1,h_2,h_3,-h_1+h_2+h_3)
  ,
\end{multline*}
where we notice that,
by \eqref{eq:vaaler-coef-chi-B} and \eqref{eq:vaaler-coef-chi-star-H}, 
$\fourier{\chi_{\alpha,H}}(-h_1+h_2+h_3) = 0$
if $\abs{-h_1+h_2+h_3}> H$.
This takes care of the condition $\abs{-h_1+h_2+h_3}\leq H$
coming from $\abs{h_4}\leq H$.
It follows that
\begin{multline*}
  q^{-\mu-\nu-2\rho}
  \sum_{1\leq r < q^\rho} \sum_{1\leq s < q^\rho} 
  \abs{S_{27}(r,s)}
  \\
  \ll
  q^{-\rho_4}
  q^{2\lambda}
  \sum_{\abs{h_1}\leq H}
  \sum_{\abs{h_2}\leq H}
  \abs{\fourier{\chi_{\alpha,H}}(h_1) F_{\kappa_1,\kappa_2}(h_1)}
  \abs{\fourier{\chi_{\alpha,H}}(h_2) F_{\kappa_1,\kappa_2}(h_2)}
  \\
  q^{2\lambda}
  \sum_{\abs{h_3}\leq H}
  \abs{
    \fourier{\chi_{\alpha,H}}(h_3)
    \fourier{\chi_{\alpha,H}}(-h_1+h_2+h_3)
    F_{\kappa_1,\kappa_2}(h_3)
    F_{\kappa_1,\kappa_2}(-h_1+h_2+h_3)
  }
  ,
\end{multline*}
and since by Cauchy-Schwarz
and \eqref{eq:L2-mean-chi_H-F_kappa1-kappa2},
\begin{align*}
  &
    q^{2\lambda}
    \sum_{\abs{h_3}\leq H}
    \abs{
    \fourier{\chi_{\alpha,H}}(h_3)
    \fourier{\chi_{\alpha,H}}(-h_1+h_2+h_3)
    F_{\kappa_1,\kappa_2}(h_3)
    F_{\kappa_1,\kappa_2}(-h_1+h_2+h_3)
    }
  \\
  & \qquad
    \leq
    \left(
    q^{2\lambda}
    \sum_{h_3}
    \abs{\fourier{\chi_{\alpha,H}}(h_3) F_{\kappa_1,\kappa_2}(h_3)}^2
    \right)^{1/2}
  \\
  & \qquad\qquad\qquad
  \left(
    q^{2\lambda}
    \sum_{h_3}
    \abs{
      \fourier{\chi_{\alpha,H}}(-h_1+h_2+h_3)
      F_{\kappa_1,\kappa_2}(-h_1+h_2+h_3)
    }^2 
  \right)^{1/2}
  \ll 1,
\end{align*}
we obtain by \eqref{eq:L1-mean-chi_H-F_kappa1-kappa2} the 
proposed upper bound (\ref{eq:S27-upperbound}).

\medskip
\subsection{Upper bounds for $S_{28}(r,s)$}~

We recall that $S_{28}(r,s)$ corresponds to
the case $h_2 = h_1 - \ell$ and $h_4 = h_3 - \ell$
with $\abs{\ell} < q^{\rho_3}$.

As we have seen in the previous section the case, where $h_1 - h_2 = h_3-h_4$
is small, cannot be handled directly by applying estimates for
double exponential sums.
The idea is to use the Fourier transform forth and back for those
$h_1,h_2,h_3,h_4$.
We have
\begin{displaymath}
  S_{28}(r,s)
  =
  \sum_{\abs{\ell}<q^{\rho_3}}   S_{\text{diag}}(r,s,\ell)
\end{displaymath}
with
\begin{multline*}
  S_{\text{diag}}(r,s,\ell)
  =
  q^{4\lambda}
  \sum_{\abs{h_1}\leq H}
  \fourier{\chi_{\alpha,H}}(h_1)
  \conjugate{\fourier{\chi_{\alpha,H}}}(h_1-\ell)
   F_{\kappa_1,\kappa_2}(h_1) \conjugate{F_{\kappa_1,\kappa_2}}(h_1-\ell)
   \\
   \sum_{\abs{h_3}\leq H}
    \conjugate{\fourier{\chi_{\alpha,H}}}(h_3)
  \fourier{\chi_{\alpha,H}}(h_3-\ell)
    \conjugate{F_{\kappa_1,\kappa_2}}(h_3)F_{\kappa_1,\kappa_2}(h_3-\ell)
  \\ 
  \sum_m \sum_n
  \e_{q^{\kappa_2-\kappa_1}}\Big(
  (h_1 - h_3)\left(2ms+s^2q^{\kappa_1}\right) n^2
  +
  h_1\left(2ms+s^2q^{\kappa_1}\right)(2nr+r^2)
  \Big) \\
   \e_{q^{\kappa_2}} \Big( \ell m^2(2nr+r^2) \Bigl)
  .
\end{multline*}
Notice that,
by \eqref{eq:vaaler-coef-chi-B} and \eqref{eq:vaaler-coef-chi-star-H},
$\fourier{\chi_{\alpha,H}}(h_1-\ell) = 0$
if $\abs{h_1-\ell}> H$.
This takes care of the condition $\abs{h_1-\ell}\leq H$.
The same applies to the condition $\abs{h_3-\ell}\leq H$.

By \eqref{eq:vaaler-coef-chi-B}, we have
\begin{multline*}
  \fourier{\chi_{\alpha,H}}(h_1)
  \conjugate{\fourier{\chi_{\alpha,H}}}(h_1-\ell)
  \conjugate{\fourier{\chi_{\alpha,H}}}(h_3)
  \fourier{\chi_{\alpha,H}}(h_3-\ell)
  \\
  = 
  \fourier{\chi_{\alpha,H}^*}(h_1)
  \conjugate{\fourier{\chi_{\alpha,H}^*}}(h_1-\ell)
  \conjugate{\fourier{\chi_{\alpha,H}^*}}(h_3)
  \fourier{\chi_{\alpha,H}^*}(h_3-\ell)
  \e\left(\frac{\alpha}2\left(-h_1+(h_1-\ell)+h_3-(h_3-\ell)\right)\right)
  \\
  = 
  \fourier{\chi_{\alpha,H}^*}(h_1)
  \conjugate{\fourier{\chi_{\alpha,H}^*}}(h_1-\ell)
  \conjugate{\fourier{\chi_{\alpha,H}^*}}(h_3)
  \fourier{\chi_{\alpha,H}^*}(h_3-\ell)
  ,
\end{multline*}
hence we can replace  $\chi_{\alpha,H}$ by  $\chi_{\alpha,H}^*$ in the
above sum,
which is more convenient
since $\fourier{\chi_{\alpha,H}^*}$ is real valued
(see~\eqref{eq:vaaler-coef-chi-star-H}):
\begin{multline*}
  S_{\text{diag}}(r,s,\ell)
  =
  q^{2\lambda}
  \sum_{\abs{h_1}\leq H}
  \fourier{\chi_{\alpha,H}^*}(h_1)
  \conjugate{\fourier{\chi_{\alpha,H}^*}}(h_1-\ell)
  F_{\kappa_1,\kappa_2}(h_1) \conjugate{F_{\kappa_1,\kappa_2}}(h_1-\ell)
  \\
  \sum_m \sum_n
  T(r,s,m,n,\ell)
  \e_{q^{\kappa_2-\kappa_1}}\Big(
  h_1 \left(2ms+s^2q^{\kappa_1}\right) n^2
  +
  h_1\left(2ms+s^2q^{\kappa_1}\right)(2nr+r^2)
  \Big) \\
   \e_{q^{\kappa_2}} \Big( \ell m^2(2nr+r^2) \Bigl)
  ,
\end{multline*}
where $T(r,s,m,n,\ell)$ takes care of the summation over $h_3$:
\begin{displaymath}
  T(r,s,m,n,\ell)
  := q^{2\lambda} 
  \sum_{\abs{h_3}\leq H}
  \conjugate{\fourier{\chi_{\alpha,H}^*}}(h_3)
  \fourier{\chi_{\alpha,H}^*}(h_3-\ell)
  \conjugate{F_{\kappa_1,\kappa_2}}(h_3)
  F_{\kappa_1,\kappa_2}(h_3-\ell)
  \e_{q^{\lambda}}\left(- h_3\left(2ms+s^2q^{\kappa_1}\right) n^2 \right)
  .
\end{displaymath}
By the definition of $F_{\kappa_1,\kappa_2}$ given
in~\eqref{eq:definition-F_kappa1-kappa2}:
\begin{displaymath}
 \conjugate{F_{\kappa_1,\kappa_2}}(h_3)
  F_{\kappa_1,\kappa_2}(h_3-\ell)
  =
    \frac 1{q^{2\lambda}}
  \sum_{0\le u_1,u_2 < q^{\lambda}}
  \conjugate{f_{\kappa_1,\kappa_2}}\left(q^{\kappa_1} u_1\right)
  f_{\kappa_1,\kappa_2}\left(q^{\kappa_1} u_2\right)
  \e_{q^{\lambda}}\left( h_3\left( u_1 - u_2  \right) + \ell u_2 \right)
  ,
\end{displaymath}  
and since
\begin{displaymath}
  \chi_{\alpha,H}^* * (\chi_{\alpha,H}^* \e^\ell )
  \left(\frac{u}{q^{\lambda}}\right)
  =
  \sum_{\abs{h}\leq H} 
  \conjugate{\fourier{\chi_{\alpha,H}^*}}(h)
  \fourier{\chi_{\alpha,H}^*}(h-\ell) 
  \e\left( \frac{h u}{q^{\lambda}} \right)
  ,
\end{displaymath}
$T(r,s,m,n,\ell)$ is equal to
\begin{displaymath}
  \sum_{0\le u_1,u_2 < q^{\lambda}}
  \conjugate{f_{\kappa_1,\kappa_2}}\left(q^{\kappa_1} u_1\right)
  f_{\kappa_1,\kappa_2}\left(q^{\kappa_1} u_2\right) 
  \e_{q^\lambda}(\ell u_2)
  \
  \chi_{\alpha,H}^* * (\chi_{\alpha,H}^* \e^\ell ) 
   \left(
    \frac{u_1-u_2-\left(2ms+s^2q^{\kappa_1}\right) n^2}{q^{\lambda}}
  \right)
  .
\end{displaymath}  
Let
\begin{align*}
  &T_1(r,s,m,n,\ell) \\
  &  =
    \sum_{0\le u_1,u_2 < q^{\lambda}}
    \conjugate{f_{\kappa_1,\kappa_2}}\left(q^{\kappa_1} u_1\right)
    f_{\kappa_1,\kappa_2}\left(q^{\kappa_1} u_2\right) 
    \e_{q^\lambda}(\ell u_2)
    \
    \chi_{\alpha}^* * (\chi_{\alpha}^* \e^\ell )  
    \left(
    \frac{u_1-u_2-\left(2ms+s^2q^{\kappa_1}\right) n^2}{q^{\lambda}}
    \right)
    .
\end{align*}
We have
\begin{multline*}
  \abs{T(r,s,m,n,\ell)-T_1(r,s,m,n,\ell)}
  \\
  \leq
  q^\lambda
  \sum_{0\le u < q^\lambda}
  \abs{
   \chi_{\alpha,H}^* *  (\chi_{\alpha,H}^* \e^\ell)
   \left(\frac{u}{q^\lambda}\right)
    -
    \chi_{\alpha}^* *  (\chi_{\alpha}^* \e^\ell ) 
   \left(\frac{u}{q^\lambda}\right)
  }  
  ,
\end{multline*}
hence by Lemma~\ref{lemma:chi_H-convolution-chi_H-2} and
\eqref{eq:definition-H}, 
\begin{displaymath}
  \abs{T(r,s,m,n,\ell)-T_1(r,s,m,n,\ell)}
  \le \frac{3 q^\lambda}{H+1} 
   =  \frac{3}{K}
  .
\end{displaymath}
Replacing $T(r,s,m,n,\ell)$ by $T_1(r,s,m,n,\ell)$
in $S_{\text{diag}}(r,s,\ell)$
will produce an error term
\begin{displaymath}
  \ll
  q^{2\lambda}
  \sum_{\abs{h_1}\leq H}
  \abs{
  \fourier{\chi_{\alpha,H}^*}(h_1)
  \conjugate{\fourier{\chi_{\alpha,H}^*}}(h_1-\ell)
  F_{\kappa_1,\kappa_2}(h_1) \conjugate{F_{\kappa_1,\kappa_2}}(h_1-\ell)
  }
  \sum_m \sum_n  \frac3K
\end{displaymath}
which is, by Cauchy-Schwarz,
\begin{displaymath}
    \ll
    K^{-1} q^{\mu+\nu}
    q^{2\lambda}
    \left(
      \sum_{\abs{h_1}\leq H}
      \abs{
        \fourier{\chi_{\alpha,H}^*}(h_1)
        F_{\kappa_1,\kappa_2}(h_1) 
      }^2
    \right)^{1/2}
    \left(
      \sum_{\abs{h_1}\leq H}
      \abs{
        \conjugate{\fourier{\chi_{\alpha,H}^*}}(h_1-\ell)
        \conjugate{F_{\kappa_1,\kappa_2}}(h_1-\ell)
      }^2
    \right)^{1/2}
    ,
\end{displaymath}
which by~\eqref{eq:L2-mean-chi_H-F_kappa1-kappa2},
is $O\left( K^{-1} q^{\mu+\nu} \right)$.
It follows that
\begin{multline*}
  S_{\text{diag}}(r,s,\ell)
  = q^{2	\lambda}
  \sum_{\abs{h_1}\leq H}
  \fourier{\chi_{\alpha,H}^*}(h_1)
  \conjugate{\fourier{\chi_{\alpha,H}^*}}(h_1-\ell)
  F_{\kappa_1,\kappa_2}(h_1) \conjugate{F_{\kappa_1,\kappa_2}}(h_1-\ell)
  \\ 
   \times 
  \sum_m \sum_n  T_1(r,s,m,n,\ell)
  \e_{q^{\lambda}}
  \Big(
      h_1 
       \left( 2ms+s^2q^{\kappa_1} \right) 
       (n+r)^2
    \Big) 
    \e_{q^{\kappa_2}} \Big( \ell m^2(2nr+r^2) \Bigl)
  \\
  + O\left( K^{-1} q^{\mu+\nu} \right)
    .
\end{multline*}
By \eqref{eq:definition-alpha-lambda}and
\eqref{eq:chi-star-convolution-chi-star-non-zero-2},
for
\begin{math}
  u_1 \not\equiv u_2 + \left(2ms+s^2q^{\kappa_1}\right) n^2 \bmod q^\lambda
  ,
\end{math}
we have
\begin{displaymath}
  \chi_{\alpha}^* * (\chi_{\alpha}^* \e^\ell )  
  \left(
    \frac{u_1-u_2-\left(2ms+s^2q^{\kappa_1}\right) n^2}{q^{\lambda}}
  \right)
  = 0
  .
\end{displaymath}
Hence only the term
\begin{math}
  u_1 \equiv u_2 + \left(2ms+s^2q^{\kappa_1}\right) n^2 \bmod q^\lambda
\end{math}
contributes to $T_1(r,s,m,n,\ell)$
and, recalling by \eqref{eq:chi-star-convolution-chi-star-of-zero-2}
that
\begin{math}
  \chi_{\alpha}^* * (\chi_{\alpha}^* \e^\ell )  (0)
  =
  \fourier{\chi_\alpha^*}(\ell),
\end{math}
we have
\begin{displaymath}
  T_1(r,s,m,n,\ell)
  =
  \fourier{\chi_\alpha^*}(\ell)
  \sum_{0\le u_2 < q^{\lambda}}
  \conjugate{f_{\kappa_1,\kappa_2}}\left(
    q^{\kappa_1} u_2 + q^{\kappa_1}\left(2ms+s^2q^{\kappa_1}   \right) n^2
  \right)
  f_{\kappa_1,\kappa_2}  \left( q^{\kappa_1} u_2 \right) 
   \e_{q^\lambda}(\ell u_2)
  .
\end{displaymath}

We now set 
\begin{multline*}
  T_2(r,s;h_1,u_2, \ell) 
  :=
  \sum_{m} \sum_n
  \conjugate{f_{\kappa_1,\kappa_2}}\left(
    q^{\kappa_1} u_2 + q^{\kappa_1} \left(2ms+s^2q^{\kappa_1}\right) n^2
  \right) \\
  \e_{q^{\lambda}}
  \Big(
  h_1 
  \left( 2ms+s^2q^{\kappa_1} \right) 
  (n+r)^2
  \Big)
  \e_{q^{\kappa_2}} \Big( \ell m^2(2nr+r^2) \Bigl) 
\end{multline*}
so that $ S_{\text{diag}}(r,s,\ell)$ rewrites to
\begin{align}\label{eq:Sdiag-to-T2}
  S_{\text{diag}} (r,s,\ell) 
  &
    =
    q^{\lambda} \
    \fourier{\chi_\alpha^*}(\ell)
    \sum_{\abs{h_1}\leq H}
    q^{\lambda} \fourier{\chi_{\alpha,H}^*}(h_1)
    q^{\lambda}\fourier{\chi_{\alpha,H}^*}(h_1-\ell)
    F_{\kappa_1,\kappa_2}(h_1) \conjugate{F_{\kappa_1,\kappa_2}}(h_1-\ell)
  \\ \nonumber
  &
  \qquad \qquad\qquad \qquad\qquad 
  \frac 1{q^{\lambda}}
  \sum_{0\le u_2 < q^{\lambda}}
  f_{\kappa_1,\kappa_2}\left(q^{\kappa_1} u_2\right)
  \e_{q^\lambda}(\ell u_2) \, 
  T_2(r,s;h_1,u_2,\ell)  
  \\ \nonumber
  & \qquad + O\left( K^{-1} q^{\mu+\nu}  \right)
  .
\end{align}
We take the sum over $\abs{\ell} \le q^{\rho_3}$ and apply the
Cauchy-Schwarz inequality:
\begin{align*}
   \sum_{\abs{\ell} \le q^{\rho_3}} \abs{S_{\text{diag}} (r,s,\ell) }
   & \le  
   \sum_{\abs{h_1}\leq H}
      \abs{q^\lambda \fourier{\chi_{\alpha,H}^*}(h_1)
        F_{\kappa_1,\kappa_2}(h_1) } 
        \\
        &\qquad \qquad
        \frac 1{q^{\lambda}}
       \sum_{0\le u_2 < q^{\lambda}}
       \sum_{\abs{\ell} \le q^{\rho_3}}
       \abs{ q^\lambda  \fourier{\chi_{\alpha,H}^*}(h_1-\ell) 
          F_{\kappa_1,\kappa_2}(h_1-\ell)}
        \abs{T_2(r,s;h_1,u_2,\ell)}
         \\
         & \qquad + O\left( K^{-1} q^{\mu+\nu+\rho_3}  \right)
         \\
         &\le 
         \sum_{\abs{h_1}\leq H}
         \abs{q^\lambda \fourier{\chi_{\alpha,H}^*}(h_1)
        F_{\kappa_1,\kappa_2}(h_1) } 
        \\
       &\qquad
     \frac 1{q^{\lambda}}
       \sum_{0\le u_2 < q^{\lambda}} 
        \left(   
       \sum_{\abs{\ell} \le q^{\rho_3}}
       \abs{ q^\lambda  \fourier{\chi_{\alpha,H}^*}(h_1-\ell) 
          F_{\kappa_1,\kappa_2}(h_1-\ell)}^2
       \right)^{\frac 12}
       \\
       & \qquad \qquad \qquad
       \left(
       \sum_{\abs{\ell} \le q^{\rho_3}}
       \abs{T_2(r,s;h_1,u_2,\ell)}^2
       \right)^{\frac 12} 
       \\
       &\qquad 
        + O\left( K^{-1} q^{\mu+\nu+\rho_3}  \right)
       \\
       &= 
       \frac 1{q^{\lambda}}
       \sum_{0\le u_2 < q^{\lambda}} 
       \sum_{\abs{h_1}\leq H}
         \abs{q^\lambda \fourier{\chi_{\alpha,H}^*}(h_1)
        F_{\kappa_1,\kappa_2}(h_1) } 
        \\
        &\qquad\qquad
        \left(   
       \sum_{\abs{\ell} \le q^{\rho_3}}
       \abs{ q^\lambda  \fourier{\chi_{\alpha,H}^*}(h_1-\ell) 
          F_{\kappa_1,\kappa_2}(h_1-\ell)}^2
       \right)^{\frac 12}
       \\
       & \qquad \qquad \qquad
       \left(
       \sum_{\abs{\ell} \le q^{\rho_3}}
       \abs{T_2(r,s;h_1,u_2,\ell)}^2
       \right)^{\frac 12}
       \\
        & \qquad + O\left( K^{-1} q^{\mu+\nu+\rho_3}  \right)
        . 
\end{align*}
The main goal is now to estimate the last sum
$\sum_\ell \abs{T_2(r,s;h_1,u_2,\ell)}^2$
uniformly.
For the remaining part we can then apply Lemma~\ref{Lemixedestimate}.

The sum $T_2(r,s;h_1,u_2,\ell)$ is actually a sum that is quite
similar to the original sum.
However, the degree in $m$ is reduced from $2$ to $1$.

We apply again Cauchy-Schwarz inequality for the sum over $n$,
followed by the variant of the van der Corput inequality for the sum
over $m$ given in Lemma~\ref{lemma:van-der-corput}, with the
corresponding parameters $N=q^\mu$, $N'= q^{\mu-\rho_2}$ and
$R = q^{\rho_2}$, for some $\rho_2 < \mu$ to be chosen later.  This
leads to
\begin{align}\label{eq:from-T2-to-T3}
  \abs{T_2(r,s;h_1,u_2,\ell)}^2
  & \ll
  q^{2\mu+2\nu-\rho_2}
  \\
  & + 
  q^{\mu+\nu-\rho_2}
  \sum_{1\le r_2 < q^{\rho_2} }
  \left(1-\frac{r_2}{q^{\rho_2}}\right)
  \Re\left(T_3(r,s,r_2;h_1,u_2,\ell) \right)
  \nonumber
\end{align}
and consequently to 
\begin{align}\label{eq:from-T2-to-T3-2}
  \sum_{\abs{\ell}\le q^{\rho_3}}
  \abs{T_2(r,s;h_1,u_2,\ell)}^2
  & \ll
  q^{2\mu+2\nu +\rho_3-\rho_2}
  \\
  & + 
  q^{\mu+\nu-\rho_2}
  \sum_{1\le r_2 < q^{\rho_2} }
  \left(1-\frac{r_2}{q^{\rho_2}}\right)
  \sum_{\abs{\ell}\le q^{\rho_3}}
  \Re\left(T_3(r,s,r_2;h_1,u_2,\ell) \right),
  \nonumber
\end{align}
where 
\begin{align*}
  T_3(r,s,r_2;h_1,u_2,\ell)
  &
    =
    \sum_m \sum_n 
    \conjugate{f_{\kappa_1,\kappa_2}}
    \left(
    q^{\kappa_1} u_2
    + q^{\kappa_1} \left(2(m + r_2q^{\mu-\rho_2})s+s^2q^{\kappa_1}\right) n^2 
    \right) \\
  & \qquad \qquad \quad   
    f_{\kappa_1,\kappa_2} 
    \left(
    q^{\kappa_1} u_2 + q^{\kappa_1} \left(2ms+s^2q^{\kappa_1}\right) n^2
    \right)
  \\
  & \qquad \qquad \quad 
    \e_{q^{\lambda}}
    \Big(  h_1 2 r_2 s q^{\mu-\rho_2}  (n+r)^2 \Big)
  \\
  & \qquad \qquad \quad 
    \e_{q^{\kappa_2}}
    \Big( \ell(2mr_2 q^{\mu-\rho_2} + r_2^2q^{2\mu-2\rho_2})(2nr+r^2) \Big)    
.
\end{align*}
For convenience we set
\begin{align*}
  \kappa'_1 = \kappa_1 + \mu- \rho_2.
\end{align*}
We immediately observe that 
\begin{align*}
  &
    \conjugate { f_{\kappa_1,\kappa_2}}
    \left(
    q^{\kappa_1} u_2
    + q^{\kappa_1} \left(2(m + r_2q^{\mu-\rho_2})s+s^2q^{\kappa_1}\right) n^2 
    \right)   
    f_{\kappa_1,\kappa_2}
    \left(
    q^{\kappa_1} u_2 + q^{\kappa_1} \left(2ms+s^2q^{\kappa_1}\right) n^2
    \right) 
  \\
  &= 
    \conjugate { f_{\kappa'_1,\kappa_2}}
    \left(
    q^{\kappa_1} u_2
    + q^{\kappa_1} \left(2(m + r_2q^{\mu-\rho_2})s+s^2q^{\kappa_1}\right) n^2 
    \right)   
     f_{\kappa'_1,\kappa_2}
    \left(
    q^{\kappa_1} u_2 + q^{\kappa_1} \left(2ms+s^2q^{\kappa_1}\right) n^2
    \right) 
    .
\end{align*} 
Thus, $T_3(r,s,r_2;h_1,u_1)$ rewrites to 
\begin{align*}
  T_3(r,s,r_2;h_1,u_2,\ell)
  &
    =
    \sum_m \sum_n 
    \conjugate {f_{\kappa_1',\kappa_2}}
    \left(
    q^{\kappa_1} u_2
    + q^{\kappa_1} \left(2(m + r_2q^{\mu-\rho_2})s+s^2q^{\kappa_1}\right) n^2 
    \right) \\
  & \qquad \qquad \quad   
     f_{\kappa_1',\kappa_2}
    \left(
    q^{\kappa_1} u_2 + q^{\kappa_1} \left(2ms+s^2q^{\kappa_1}\right) n^2
    \right)  \\
  & \qquad \qquad \quad 
    \e_{q^{\lambda}}
    \Big(  h_1 2 r_2 s q^{\mu-\rho_2}  (n+r)^2 \Big)  \\  
   & \qquad \qquad \quad 
    \e_{q^{\kappa_2}}
    \Big( \ell(2mr_2 q^{\mu-\rho_2} + r_2^2q^{2\mu-2\rho_2})(2nr+r^2) \Big)    
.
\end{align*}
At this level we do another Fourier analysis,
applying~\eqref{eq:vaaler-approximation-f_kappa1-kappa2}  with a
substantially shorter interval of digits (between $\kappa'_1$ and
$\kappa_2$) that we have to detect,
taking:
\begin{equation}\label{eq:definition-lambda'}
  \lambda' = \kappa_2-\kappa'_1 = \nu + 2\rho + \widetilde{\rho} + \rho_2
  ,
\end{equation}
\begin{displaymath}
  \alpha' = q^{-\lambda'},
\end{displaymath}
and
\begin{displaymath}
  H' = q^{\lambda'} K - 1.
\end{displaymath}

Using \eqref{eq:definition-f-kappa1-kappa2-H}
with $\alpha'$ in place of $\alpha$
and $H'$ in place of $H$, let
\begin{align*}
  T'_3(r,s,r_2;h_1,u_2,\ell)
  &  =
  \sum_m \sum_n 
  \conjugate {f_{\kappa_1',\kappa_2,H'}}
  \left(
  q^{\kappa_1} u_2
  + q^{\kappa_1} \left(2(m + r_2q^{\mu-\rho_2})s+s^2q^{\kappa_1}\right) n^2 
  \right) 
  \\
  & \qquad \qquad \quad 
  f_{\kappa_1',\kappa_2,H'}
  \left(
    q^{\kappa_1} u_2 + q^{\kappa_1} \left(2ms+s^2q^{\kappa_1}\right) n^2
  \right)
  \\
  & \qquad \qquad \quad 
  \e_{q^{\lambda}}
  \Big(  h_1 2 r_2 s q^{\mu-\rho_2}  (n+r)^2 \Big) 
  \\
  & \qquad \qquad \quad 
  \e_{q^{\kappa_2}}
  \Big( \ell(2mr_2 q^{\mu-\rho_2} + r_2^2q^{2\mu-2\rho_2})(2nr+r^2) \Big)    
  .
\end{align*}
Since $\abs{f_{\kappa'_1,\kappa_2}}\leq 1$,
we observe by \eqref{eq:vaaler-approximation-f_kappa1-kappa2}
that $\abs{f_{\kappa'_1,\kappa_2,H}}\leq 2$.
Using the identity
\begin{displaymath}
  z_1z_2 - z'_1z'_2
  =
   (z_1-z'_1)z'_2 + z_1(z_2-z'_2)
   ,
\end{displaymath}
by~\eqref{eq:vaaler-approximation-f_kappa1-kappa2} we have
\begin{displaymath}
  \abs{T_3(r,s,r_2;h_1,u_2,\ell) - T'_3(r,s,r_2;h_1,u_2,\ell)}
  \leq
    E_3(r_2) +   2 E_3(0)
\end{displaymath}
with
\begin{align*}
  E_3(r_2)
  &
    =
    \sum_m \sum_n
    q^{\kappa_2-\kappa'_1}
    \sum_{\abs{k}< K}
    \fourier{B_{\alpha',H'}}\left(k q^{\kappa_2-\kappa'_1}\right)
    \e_{q^{\kappa'_1}}\left(
    k ( q^{\kappa_1} u_2
    +
    q^{\kappa_1}
    \left(2(m + r_2q^{\mu-\rho_2})s+s^2q^{\kappa_1}\right) n^2 )
    \right)
  \\
  &
    =
    q^{\kappa_2-\kappa'_1}
    \sum_{\abs{k}< K}
    \fourier{B_{\alpha',H'}}\left(k q^{\kappa_2-\kappa'_1}\right)
    \sum_m \sum_n
    \e_{q^{\mu-\rho_2}}\left(
    k ( u_2
    +
    \left(2(m + r_2q^{\mu-\rho_2})s+s^2q^{\kappa_1}\right) n^2 )
    \right)
    .
\end{align*}
For $k\neq 0$, provided $K \leq q^{\mu-\rho-\rho_2-2}$,
we apply Lemma~\ref{lemma:double-exponential-sum-mn^2}
with
\begin{math}
  \xi_3 = 2 k s q^{-\mu+\rho_2}
\end{math}
which satisfies
\begin{displaymath}
  0 < \norm{\xi_3} = \abs{\xi_3} \leq \frac12,
\end{displaymath}
and this gives, uniformly for $r_2\in\{0,\ldots,q^{\rho_2}-1\}$,
\begin{multline*}
  \abs{
    q^{-\mu-\nu}
    \sum_m \sum_n
    \e_{q^{\mu-\rho_2}}\left(
      k ( u_2
      +
      \left(2(m + r_2q^{\mu-\rho_2})s+s^2q^{\kappa_1}\right) n^2 )
    \right)
  }
  \\
  \ll
  K^{\frac14}q^{-\frac14(\mu-\rho-\rho_2)}
  + (\mu+\nu) q^{-\nu-\frac12\rho_2}
  + q^{-\frac12\nu}
  + (\mu+\nu) q^{-\frac12\mu}
  \ll
  K^{\frac14}q^{-\frac14(\mu-\rho-\rho_2)}
  .
\end{multline*}
It follows that, uniformly for $r_2\in\{0,\ldots,q^{\rho_2}-1\}$,
\begin{displaymath}
  E_3(r_2)
  \ll
  q^{\kappa_2-\kappa'_1}
  \fourier{B_{\alpha',H'}}\left(0\right)
  q^{\mu+\nu}
  +
  q^{\kappa_2-\kappa'_1}
  \sum_{1\leq \abs{k}< K}
  \fourier{B_{\alpha',H'}}\left(k q^{\kappa_2-\kappa'_1}\right)
  q^{\mu+\nu}
  K^{\frac14}q^{-\frac14(\mu-\rho-\rho_2)}
  ,
\end{displaymath}
which, by the definitions of $\alpha'$, $H'$ and by
\eqref{eq:vaaler-coef-majoration}, gives
\begin{displaymath}
  E_3(r_2)
  \ll
  q^{\mu+\nu}
  \left(
    K^{-1} 
    +
    K^{\frac14}q^{-\frac14(\mu-\rho-\rho_2)}
  \right)
  .
\end{displaymath}
This means that, up to a negligible error term,
we can replace $T_3(r,s,r_2;h_1,u_2,\ell)$
by $T'_3(r,s,r_2;h_1,u_2,\ell)$,
and \eqref{eq:from-T2-to-T3} is replaced by
\begin{multline}\label{eq:from-T2-to-T'3}
  \abs{T_2(r,s;h_1,u_2,\ell)}^2
  \ll
  q^{2\mu+2\nu-\rho_2}
  + 
  q^{2\mu+2\nu}
  \left(
    K^{-1} 
    +
    K^{\frac14}q^{-\frac14(\mu-\rho-\rho_2)}
  \right)  
  \\
  + 
  q^{\mu+\nu-\rho_2}
  \sum_{1\le r_2 < q^{\rho_2} }
  \left(1-\frac{r_2}{q^{\rho_2}}\right)
  \Re\left(T'_3(r,s,r_2;h_1,u_2,\ell) \right)
  .
\end{multline}
By~\eqref{eq:definition-f-kappa1-kappa2-H}
we can write
\begin{align*}
  & T_3'(r,s,r_2;h_1,u_2,\ell)
  \\
  &
    =  
    q^{2\lambda'} 
    \sum_{\abs{h_5}\le H'} \sum_{\abs{h_6}\le H'}  
    \fourier{\chi_{\alpha',H'}}(h_5)
    \conjugate{\fourier{\chi_{\alpha',H'}}}(h_6) 
    F_{\kappa'_1,\kappa_2}(h_5)
    \conjugate{F_{\kappa'_1,\kappa_2}}(h_6)   \\
  &
    \qquad
    \sum_{m,n} \e_{q^{\kappa_2}} 
    \left(
    h_5 q^{\kappa_1}
    \left( u_2 + \left(2ms+s^2q^{\kappa_1}\right) n^2 \right)  
    -
    h_6 q^{\kappa_1}
    \left(
    u_2 + \left(2(m+r_2 q^{\mu-\rho_2})s+s^2q^{\kappa_1}\right) n^2
    \right)
    \right) \\ 
  &\qquad \qquad
    \e_{q^{\lambda}}\left( 2 h_1 r_2 q^{\mu-\rho_2} s (n+r)^2 \right) 
    \e_{q^{\kappa_2}}
    \Big( \ell(2mr_2 q^{\mu-\rho_2} + r_2^2q^{2\mu-2\rho_2})(2nr+r^2) \Big)
  \\ 
  &
    = 
    q^{2\lambda'} 
    \sum_{\abs{h_5}\le H'} \sum_{\abs{h_6}\le H'}  
    \fourier{\chi_{\alpha',H'}}(h_5)
    \conjugate{\fourier{\chi_{\alpha',H'}}}(h_6) 
    F_{\kappa'_1,\kappa_2}(h_5) \conjugate{F_{\kappa'_1,\kappa_2}}(h_6)   \\
  &\qquad
    \sum_{m,n} \e_{q^{\lambda}} 
    \left( h_5 \left( u_2 + \left(2ms+s^2q^{\kappa_1}\right) n^2 \right)  
    -h_6\left( u_2 + \left(2(m+r_2 q^{\mu-\rho_2})s+s^2q^{\kappa_1}\right) n^2  \right)\right) \\
  &\qquad \qquad
    \e_{q^{\lambda}}\left( 2h_1 sr_2 q^{\mu-\rho_2}(n+r)^2 \right)
    \e_{q^{\kappa_2}}
    \Big( \ell(2mr_2 q^{\mu-\rho_2} + r_2^2q^{2\mu-2\rho_2})(2nr+r^2) \Big)
   .
\end{align*}
Let us write
\begin{equation}
  \label{eq:T'3-to-T4-T5}
  T_3'(r,s,r_2;h_1,u_2,\ell)
  =
  T_4(r,s,r_2;h_1,u_2,\ell)
  +
  T_5(r,s,r_2;h_1,u_2,\ell)
\end{equation}
where $T_4(r,s,r_2;h_1,u_2,\ell)$ denotes the contribution in
$T_3'(r,s,r_2;h_1,u_2,\ell)$ of the terms such that $h_5\neq h_6$,
and  $T_5(r,s,r_2;h_1,u_2,\ell)$ denotes the contribution in
$T_3'(r,s,r_2;h_1,u_2,\ell)$ of the terms such that $h_5= h_6$.

We first focus on $T_4(r,s,r_2;h_1,u_2,\ell)$.
The appearing exponential sums are of the form
\begin{displaymath}
  \sum_{m,n} a_m b_n  \e\left( \xi_3 mn^2 + \xi_1 mn \right),
\end{displaymath}
where
\begin{displaymath}
  \xi_3 = \frac {2s(h_5-h_6)}{q^{\lambda}}.
\end{displaymath}
Note that by assumption  $K = q^{\rho_1} \leq q^{\mu-\rho-\rho_2-3}$.
Consequently
\begin{displaymath}
  \abs{\xi_3}
  =
  \abs{\frac {2s(h_5-h_6)}{q^{\lambda}}}
  \leq
  \abs{\frac {4 q^\rho q^{\kappa_2-\kappa'_1} K}{q^{\kappa_2-\kappa_1}}}
  = 4 K q^{-\mu+\rho+\rho_2} \leq \frac12
  ,
\end{displaymath}
hence for $h_5\neq h_6$, we have
$2s (h_5-h_6) \not\equiv 0 \bmod q^\lambda$,
and
\begin{displaymath}
  q^{-\mu-\nu-3\rho} = q^{-\lambda}
  \leq \norm{\xi_3}
  = \abs{\xi_3}
  \ll K q^{-\mu+\rho+\rho_2}
  .
\end{displaymath}
It follows that if $h_5 \ne h_6$ we can apply
Lemma~\ref{lemma:double-exponential-sum-mn^2} and obtain
\begin{align*}
  q^{-\mu-\nu} 
  \sum_{m,n} a_m b_n  \e\left( \xi_3 mn^2 + \xi_1 mn\right) 
  &
    \ll 
    K^{\frac14} q^{-\frac14(\mu-\rho-\rho_2)} 
    +
    \left(q^{-\frac12(\nu-3\rho)} + q^{-\frac12\mu}\right)(\mu+\nu)
    .
\end{align*}
It follows that
\begin{multline*}
  T_4(r,s,r_2;h_1,u_2,\ell)
  \ll
  q^{\mu+\nu}
  \left(
    q^{\lambda'} 
    \sum_{\abs{h}\le H'} 
    \abs{
      \fourier{\chi_{\alpha',H'}}(h)
      F_{\kappa'_1,\kappa_2}(h)
    }
  \right)^2
  \\
  \times
  \left(
    K^{\frac14} q^{-\frac14(\mu-\rho-\rho_2)} 
    +
    \left(q^{-\frac12(\nu-3\rho)} + q^{-\frac12\mu}\right)(\mu+\nu)
  \right),
\end{multline*}
and by \eqref{eq:L1-mean-chi_H-F_kappa1-kappa2} we get
\begin{multline}\label{eq:estimate-T4}
  T_4(r,s,r_2;h_1,u_2,\ell)
  \ll
  q^{\mu+\nu}
  (\log K)^2
  \left(
    \sum_{0\leq \ell < q^{\kappa_2-\kappa'_1}}
    \abs{
      F_{\kappa'_1,\kappa_2}(\ell)
    }
  \right)^2
  \\
  \times
  \left(
    K^{\frac14} q^{-\frac14(\mu-\rho-\rho_2)} 
    +
    \left(q^{-\frac12(\nu-3\rho)} + q^{-\frac12\mu}\right)(\mu+\nu)
  \right)
\end{multline}
and consequently
\begin{align*}
  \sum_{\abs{\ell}\le q^{\rho_3}}
  \abs{T_4(r,s,r_2;h_1,u_2,\ell)}
  &\ll 
   q^{\mu+\nu+\rho_3}
  (\log K)^2
  \left(
    \sum_{0\leq \ell < q^{\kappa_2-\kappa'_1}}
    \abs{
      F_{\kappa'_1,\kappa_2}(\ell)
    }
  \right)^2
  \\
  &\qquad \times
  \left(
    K^{\frac14} q^{-\frac14(\mu-\rho-\rho_2)} 
    +
    \left(q^{-\frac12(\nu-3\rho)} + q^{-\frac12\mu}\right)(\mu+\nu)
  \right)
   .
\end{align*}

We now concentrate on $T_5(r,s,r_2;h_1,u_2,\ell)$.
We have
\begin{multline*}
  T_5(r,s,r_2;h_1,u_2,\ell)
  \\
  = 
  q^{2\lambda'} 
  \sum_{\abs{h_5}\le H'} 
  \abs{\fourier{\chi_{\alpha',H'}}(h_5) F_{\kappa'_1,\kappa_2}(h_5)}^2
  \sum_{m,n} 
  \e_{q^{\lambda}}\left(
    2 h_1 s r_2 q^{\mu-\rho_2}(n+r)^2
    -h_5 \left(2r_2s q^{\mu-\rho_2}\right) n^2
  \right)
  \\
  \times
  \e_{q^{\kappa_2}}
  \Big( \ell(2mr_2 q^{\mu-\rho_2} + r_2^2q^{2\mu-2\rho_2})(2nr+r^2) \Big)
  \\= 
  q^{2\lambda'} 
  \sum_{\abs{h_5}\le H'} 
  \abs{\fourier{\chi_{\alpha',H'}}(h_5) F_{\kappa'_1,\kappa_2}(h_5)}^2
  S_5(r,s,r_2;h_1,h_5;\ell)
  +
  O\left( q^{\nu-\rho+\rho_3+\rho_2} \right)
  ,
\end{multline*}
where,
since $\lambda' = \lambda-(\mu-\rho_2) = \nu + 2\rho + \widetilde{\rho} + \rho_2$,
\begin{align}\label{eq:definition-S5}
  & 
  S_5(r,s,r_2;h_1,h_5,\ell)
  \\
  &:=  \sum_m \sum_n \e_{q^{\lambda'}}
  \left( 2h_1 sr_2 (n+r)^2  - 2h_5sr_2 n^2 \right)
    \e_{q^{\kappa_2}}
  \Big( \ell(2mr_2 q^{\mu-\rho_2} + r_2^2q^{2\mu-2\rho_2})(2nr+r^2) \Big)
  .  
  \nonumber
\end{align}
After summing over $\abs{\ell}\le q^{\rho_3}$ we, thus, have
\begin{align*}
  \sum_{\abs{\ell}\le q^{\rho_3}}
  T_5(r,s,r_2;h_1,u_2,\ell)
  &= 
   \sum_{\abs{h_5}\le H'} 
  \abs{ q^{\lambda'}  \fourier{\chi_{\alpha',H'}}(h_5) F_{\kappa'_1,\kappa_2}(h_5)}^2
  \sum_{\abs{\ell}\le q^{\rho_3}}
  S_5(r,s,r_2;h_1,h_5;\ell) \\
  &+
  O\left( q^{\nu-\rho+2\rho_3+\rho_2} \right)
  .
\end{align*}

By Lemma~\ref{lemma:incomplete-gauss-sum-2} 
we have
\begin{displaymath}
  \abs{S_5(r,s,r_2;h_1,h_5)}
  \ll
  q^\mu
  \left(
  q^{\nu-\lambda'} + \lambda' \log q \right)
  q^{\frac 34\lambda'} \gcd\left(2sr_2(h_1-h_5),q^{\lambda'}\right)^{\frac 14}
  .
\end{displaymath}
which by \eqref{eq:definition-lambda'} gives
\begin{align*}
  \abs{S_5(r,s,r_2;h_1,h_5)}
  &
    \ll
    q^\mu
    \left(
    q^{-3\rho-\rho_2} + \lambda' \log q \right)
    q^{\frac 34(\nu+2\rho +\widetilde{\rho} +\rho_2)}
    \gcd\left(2sr_2(h_1-h_5),q^{\lambda'}\right)^{\frac 14}
  \\
  &
    \ll
    \lambda' 
    q^{\mu+\nu}
        q^{-\frac 14 (\nu-6\rho-3\widetilde{\rho}-3\rho_2)}
    \gcd\left(2sr_2(h_1-h_5),q^{\lambda'}\right)^{\frac 14}
    .
\end{align*}
Let $\rho_5$ be an integer such that
$0\leq \rho_5 \leq \nu-6\rho-3\widetilde{\rho} -3\rho_2$,
to be chosen later.
If
\begin{displaymath}
  \gcd\left( h_1-h_5, q^{\lambda'} \right)
  \leq q^{\nu-6\rho-3\widetilde{\rho} -3\rho_2-\rho_5}
  ,
\end{displaymath}
then 
\begin{displaymath}
  \abs{S_5(r,s,r_2;h_1,h_5)}
  \ll
  \lambda' \left( \log q \right)
  q^{\mu+\nu-\frac14\rho_5}
  \sqrt{
    \gcd\left(s,q^{\lambda'}\right)
    \gcd\left(r_2,q^{\lambda'}\right)
  },
\end{displaymath}
and, using \eqref{eq:L2-mean-chi_H-F_kappa1-kappa2},
get a contribution to $ \sum_{\ell} T_5(r,s,r_2;h_1,u_2,\ell)$
\begin{displaymath}
  \ll
  \lambda'
  q^{\mu+\nu+\rho_3-\frac14\rho_5}
  \sqrt{
    \gcd\left(s,q^{\lambda'}\right)
    \gcd\left(r_2,q^{\lambda'}\right)
  }
  .
\end{displaymath}
We will see later that averaging over $s$ and $r_2$
using~\eqref{eq:gcd-sum} and~\eqref{eq:sigma-x-q-lambda},
the $\gcd$'s will produce a only constant factor.

Thus, it remains to study the case
\begin{displaymath}
  \gcd\left( h_1-h_5, q^{\lambda'} \right)
  >
  q^{\nu-2\rho-\widetilde{\rho} -\rho_2-\rho_5}.
\end{displaymath}
Since $q$ is a prime number, this condition implies that
\begin{displaymath}
h_5 \equiv h_1 \bmod q^{\nu-2\rho-\widetilde{\rho}-\rho_2-\rho_5}.
\end{displaymath}
Notice that
\begin{math}
  \nu-2\rho-\widetilde{\rho} -\rho_2-\rho_5 < \kappa_2-\kappa'_1 = \nu+2\rho+\widetilde{\rho} +\rho_2.
\end{math}
In this case we sum over $\ell$ and estimate then trivially:
\begin{multline*} 
  \sum_{\abs{\ell}\le q^{\rho_3} } 
  S_5(r,s,r_2;h_1,h_5,\ell)  
  = 
  \sum_m \sum_n 
  \e_{q^{\lambda'}}
  \left( 2h_1 sr_2 (n+r)^2  - 2h_5sr_2 n^2 \right)
  \\
  \sum_{\abs{\ell}\le q^{\rho_3} } 
  \e_{q^{\kappa_2}}
  \Big( \ell(2mr_2 q^{\mu-\rho_2} + r_2^2q^{2\mu-2\rho_2})(2nr+r^2) \Big).
\end{multline*}
Note that 
\begin{displaymath}
  2nr + r^2 \gg q^\nu r 
  \quad\mbox{and}\quad
  2mr_2 q^{\mu-\rho_2} + r_2^2q^{2\mu-2\rho_2} \gg q^{2\nu-\rho_2} r_2.
\end{displaymath}
Hence, we have
\begin{displaymath}
  \sum_{\abs{\ell}\le q^{\rho_3} } 
  \e_{q^{\kappa_2}}
  \Big( \ell(2mr_2 q^{\mu-\rho_2} + r_2^2q^{2\mu-2\rho_2})(2nr+r^2) \Big)
  \ll 
  \frac{q^{\kappa_2} }{ q^{2\mu+\nu -\rho_2   } r r_2  }
  = 
  \frac{q^{\rho + \widetilde{\rho} + \rho_2} }{ r r_2  }
\end{displaymath}
which leads to 
\begin{displaymath}
  \abs{ \sum_{\abs{\ell}\le q^{\rho_3} } S_5(r,s,r_2;h_1,h_5,\ell) }
  \ll 
  \frac{ q^{\mu+\nu + \widetilde{\rho} + \rho + \rho_2} }{r r_2}
\end{displaymath}

Thus, if $h_5 \equiv h_1 \bmod q^{\nu-2\rho-\widetilde{\rho}-\rho_2-\rho_5}$
then we get a contribution to $T_5(r,s,r_2;h_1,u_2,\ell)$ that is upper bounded by
\begin{displaymath}
  \ll
  \frac{ q^{\mu+\nu + \widetilde{\rho} + \rho + \rho_2} }{r r_2}
    \sum_{\substack{\abs{h_5}\leq H'\\h_5 \equiv h_1 \bmod q^{\nu-3\rho-\rho_2-\rho_5} }}
  \abs{ q^{\lambda'} \fourier{\chi_{\alpha',H'}}(h_5) F_{\kappa'_1,\kappa_2}(h_5)}^2 
\end{displaymath}
which, since
$F_{\kappa'_1,\kappa_2}$ is $q^{\kappa_2-\kappa'_1}$-periodic
by \eqref{eq:periodicity-F_kappa1-kappa2},
is
\begin{displaymath}
  \ll
  \frac{ q^{\mu+\nu + \widetilde{\rho} + \rho + \rho_2} }{r r_2}
  \sum_{
    \substack{0\leq \ell< q^{\kappa_2-\kappa'_1}\\
      \ell \equiv h_1 \bmod q^{\nu-3\rho-\rho_2-\rho_5} }}
  \abs{F_{\kappa'_1,\kappa_2}(\ell)}^2
  \sum_{h_7\in\Z}
  q^{2\lambda'}
  \abs{
    \fourier{\chi_{\alpha',H'}}(\ell+h_7 q^{\kappa_2-\kappa'_1})
  }^2
  ,
\end{displaymath}
which is, by Lemma~\ref{lemma:chi-convolution-chi-average},
\begin{displaymath}
  \ll
  \frac{ q^{\mu+\nu + \widetilde{\rho} + \rho + \rho_2} }{r r_2}
  \sum_{
    \substack{0\leq \ell< q^{\kappa_2-\kappa'_1}\\
      \ell \equiv h_1 \bmod q^{\kappa_2-\kappa'_2} }}
  \abs{F_{\kappa'_1,\kappa_2}(\ell)}^2
  ,
\end{displaymath}
where $\kappa'_2 = \kappa_2-(\nu-2\rho-\widetilde{\rho} -\rho_2-\rho_5)$.
By \eqref{eq:FT-product-formula}, we have
\begin{displaymath}
    F_{\kappa'_1,\kappa_2}(\ell)
    =
    F_{\kappa'_1,\kappa'_2}\left(\frac{\ell}{q^{\kappa_2-\kappa'_2}}\right)
    F_{\kappa'_2,\kappa_2}(\ell)
    ,
\end{displaymath}
and, since 
$F_{\kappa'_2,\kappa_2}$ is $q^{\kappa_2-\kappa'_2}$-periodic
by \eqref{eq:periodicity-F_kappa1-kappa2},
using \eqref{eq:quadratic-mean-F_kappa1-kappa2},
we obtain
\begin{align*}
  \sum_{
  \substack{0\leq \ell< q^{\kappa_2-\kappa'_1}\\
  \ell \equiv h_1 \bmod q^{\kappa_2-\kappa'_2} }}
  \abs{F_{\kappa'_1,\kappa_2}(\ell)}^2
  &
    =
    \sum_{0\leq \ell' < q^{\kappa'_2-\kappa'_1}}
    \abs{
    F_{\kappa'_1,\kappa'_2}\left(
    \frac{h_1+\ell' q^{\kappa_2-\kappa'_2}}{q^{\kappa_2-\kappa'_2}}
    \right)
    F_{\kappa'_2,\kappa_2}\left(h_1+\ell' q^{\kappa_2-\kappa'_2}\right)
    }^2
  \\
  &
    =
    \abs{F_{\kappa'_2,\kappa_2}\left(h_1\right)}^2
    \sum_{0\leq \ell' < q^{\kappa'_2-\kappa'_1}}
    \abs{
    F_{\kappa'_1,\kappa'_2}\left(
    \frac{h_1}{q^{\kappa_2-\kappa'_2}} + \ell' 
    \right)
    }^2
  \\
  &
    =
    \abs{F_{\kappa'_2,\kappa_2}\left(h_1\right)}^2
    .
\end{align*}
Summing up we get a contribution that is upper bounded by
\begin{displaymath}
  \ll
  \frac{ q^{\mu+\nu + \widetilde{\rho} + \rho + \rho_2} }{r r_2} 
  \max_{h_1} \abs{F_{\kappa'_2,\kappa_2}\left(h_1\right)}^2.
\end{displaymath}
Summing up we have
\begin{align*}
  \sum_{\abs{\ell}\le q^{\rho_3}}
  \abs{T_5(r,s,r_2;h_1,u_2,\ell)} 
  & \ll
    \lambda'
  q^{\mu+\nu+\rho_3-\frac14\rho_5}
  \sqrt{
    \gcd\left(s,q^{\lambda'}\right)
    \gcd\left(r_2,q^{\lambda'}\right)
  }
  \\
  & +
  \frac{ q^{\mu+\nu + \widetilde{\rho} + \rho + \rho_2} }{r r_2} 
  \max_{h_1} \abs{F_{\kappa'_2,\kappa_2}\left(h_1\right)}^2
\end{align*}
and consequently
\begin{align}\nonumber
  &
  q^{-\mu-\nu-2\rho}
  \sum_{1\leq r < q^\rho} \sum_{1\leq s < q^\rho} 
  \abs{S_{28}(r,s)}
   \\ \label{eq:S28-upperbound}
   & 
  \ll
   \sum_{0\le k < q^{\rho_3}}
   \abs{F_{\kappa_2-\rho_3,\kappa_2}(k)}
   \Biggl( 
  q^{\frac 12 \rho_3}
  (\log K)
  \left(
    \sum_{0\leq \ell < q^{\kappa_2-\kappa'_1}}
    \abs{
      F_{\kappa'_1,\kappa_2}(\ell)
    }
  \right)
  \\ \nonumber
  &\qquad \qquad \qquad \qquad \quad
  \left(
    K^{\frac18} q^{-\frac18(\mu-\rho-\rho_2)} 
    +
    \left(q^{-\frac14(\nu-3\rho)} + q^{-\frac14\mu}\right)(\mu+\nu)
  \right)
  \\  \nonumber
  &\qquad \qquad \qquad \qquad 
  + q^{\frac 12\rho_3}
  \left(
    K^{-\frac 12} 
    +
    K^{\frac18}q^{-\frac18(\mu-\rho-\rho_2)}
  \right) 
  \\  \nonumber
  &\qquad \qquad \qquad \qquad 
  + \sqrt{\rho \rho_2}
  q^{\frac 12 \widetilde{\rho}} 
  \max_{h_1} \abs{F_{\kappa'_2,\kappa_2}\left(h_1\right)}
   \\  \nonumber
   &\qquad \qquad \qquad \qquad  
  +\sqrt{\lambda'}
  q^{\rho_3-\frac18\rho_5}
 +  q^{\frac 12 (\rho_3-\rho_2)} 
+ K^{-1} q^{\rho_3}  
   \Biggr)
  .
\end{align}

\medskip
\subsection{Choosing all parameters}~

Finally we will choose all parameters in a way that we get
a non-trivial type II sum estimate.

We recall that $\beta_1 = \frac 15$, that is, by symmetry we
have to consider the range \eqref{eq:inequality-mu-nu}.


Furthermore we observe that the right choice of $\rho_3$ is crucial
to obtain non-trivial upper bounds. From (\ref{eq:S27-upperbound}) 
it follows that $\rho_3-\widetilde{\rho}$ 
has to be large enough so that it beats the factor
\begin{displaymath}
  \left( 
    \sum_{0\leq \ell< q^{\kappa_2-\kappa_1}}
    \abs{F_{\kappa_1,\kappa_2}(\ell)} \right)^2 \ll q^{2 \eta(f) \lambda } 
    = q^{2\eta(f)(\mu+\nu + 2 \rho + \widetilde{\rho})}
  ,
\end{displaymath}
that is, we certainly need the estimate
\begin{displaymath}
  \rho_3 \ge d_1' \eta(f)\lambda  \ge d_1' \eta(f) \nu 
\end{displaymath}
for a proper chosen constant $d_1'> 0$. 
On the other hand in (\ref{eq:S28-upperbound}) we have the term
\begin{displaymath}
  \sum_{0\le k < q^{\rho_3}}
  \abs{F_{\kappa_2-\rho_3,\kappa_2}(k)} 
  \max_{h_1} \abs{F_{\kappa'_2,\kappa_2}\left(h_1\right)}
  \ll
  q^{\rho_3 \eta_q - c(f) (\nu-2\rho-\widetilde{\rho} -\rho_2-\rho_5)}.
\end{displaymath}
In order to ensure that this term is negligible it is necessary that
\begin{displaymath}
  \rho_3 \eta(f) \le d_2' c(f) \nu
\end{displaymath}
for some constant $d_2' > 0$. This means that we should have
\begin{equation}\label{eqkappacrelation}
d_1' \eta(f)^2 \le d_2' c(f).
\end{equation}
Here we recall that in the case of $f(n) = \e(\gamma s_q(n))$,
we have
$\eta(f) = \eta_q \le c'' \log\log q/\log q$ and 
$c(f) = c_q  \ge c'\|(q-1)\gamma\|^2  /\log q$.
Hence, if $q$ is sufficiently large 
(and $\|(q-1)\gamma\|$ is not too small) then
(\ref{eqkappacrelation}) is satisfied. 
Actually, since $\mu$ and $\nu$ are proportional and $\mu$
appears in almost all estimates we will set
$\rho_3 = d_3'\eta(f)\mu$ for some proper constant $d_3'>0$. 

By assumption we know that $\eta(f)$ is sufficiently small:
\begin{displaymath}
  \eta(f) \le \frac 1{2.000}.
\end{displaymath}
Since $c(f) \le \eta(f)$, the same upper bound applies to $c(f)$.
We then set
\begin{displaymath}
  \rho_3 = 128 \eta(f)\mu  < \frac 1{15} \mu
\end{displaymath}
and 
\begin{align*}
  \rho & =  \frac{\rho_3}2, \\
  \widetilde{\rho} &= c(f) \mu, \\
  \rho_1 &= \rho_2 = 2 \rho_3, \\
  \rho_5 &= 10 \rho_3.
\end{align*}

With this choice we analyze the upper bounds
(\ref{eq:S26-upperbound})--(\ref{eq:S28-upperbound})
in detail. Since we have
\begin{displaymath}
  \kappa_2-\kappa_1
  = \lambda
  = \mu + \nu + 2 \rho+\widetilde{\rho} \le 6 \mu
\end{displaymath}
it follows that
\begin{displaymath}
  \left( 
    \sum_{0\leq \ell< q^{\kappa_2-\kappa_1}}
    \abs{F_{\kappa_1,\kappa_2}(\ell)} \right)^4
  \ll q^{4\eta(f)\lambda } 
  \le q^{16 \eta(f)\mu } 
  \le q^{\mu/120}.
\end{displaymath} 
Furthermore we have
\begin{displaymath}
  \mu - \rho - \rho_1 \ge \frac{\mu}2
\end{displaymath}
which implies that
\begin{displaymath}
  q^{-\mu-\nu-2\rho}
  \sum_{1\leq r < q^\rho} \sum_{1\leq s < q^\rho} \abs{S_{26}(r,s)}
  \ll \mu^5 q^{\mu\left( \frac 1{120} - \frac 1{20} \right)} 
  \ll q^{-\frac 1{30} \mu}
  .
\end{displaymath}

In order to handle (\ref{eq:S27-upperbound})
we observe (using $c(f)\le \eta(f)$) that
\begin{displaymath}
  \frac 14(\rho_3 - \widetilde{\rho} )
  \ge \frac{\rho_3}8 
  = 16 \eta(f)\mu 
\end{displaymath}
and
\begin{displaymath}
 \frac {\mu} 4 > \frac 16(\mu - \rho-\widetilde{\rho} - \rho_1) 
  \ge \frac {\mu}{12} > \frac{\rho_3}4 > \frac 14(\rho_3 - \widetilde{\rho} )
\end{displaymath}
which implies that
\begin{displaymath}
  q^{-\mu-\nu-2\rho}
  \sum_{1\leq r < q^\rho} \sum_{1\leq s < q^\rho} 
  \abs{S_{27}(r,s)}
  \ll
  \mu^2 q^{2\lambda \eta_q} q^{-16 \eta(f)\mu } 
  \le
  \mu^2 q^{8 \mu \eta_q} q^{-16 \eta(f)\mu } 
  = 
  \mu^2 q^{- 8 \eta(f)\mu }
  .
\end{displaymath}

The most involved part is the upper bound (\ref{eq:S28-upperbound}).
Here we have
\begin{align*}
  \sum_{0\le k < q^{\rho_3}} \abs{F_{\kappa_2-\rho_3,\kappa_2}(k)}
  &\ll
    q^{\rho_3 \eta_q} \le q^{128 \eta(f)^2 \mu}
  \\
  q^{\frac 12 \rho_3} 
  \sum_{0\le \ell < q^{\kappa_2-\kappa_1'}} \abs{F_{\kappa_1',\kappa_2}(\ell)}
  &\ll 
    q^{\frac 12 \rho_3 + \lambda' \eta_q}
    \le
    q^{69 \mu \eta_q} 
  \\
  \frac 18 ( \mu-\rho- \rho_1-\rho_2) 
  &\ge \frac{\mu}{16},
  \\
  \frac 14(\nu- 3\rho) 
  &\ge \frac {\mu}8 
  \\
  q^{\frac 12 \rho_3} \left( q^{-\frac 12 \rho_1}
  + q^{-\frac 18(\mu-\rho-\rho_1-\rho_2)} \right)
  & \ll q^{\frac 12 \rho_3} \left( q^{-\rho_3} + q^{-\frac 1{16}\mu} \right)
    \ll q^{-64 \mu \eta_q} + q^{-\frac 1{33} \mu} 
  \\
  q^{\frac 12 \widetilde{\rho}} 
  \max_{h_1} \abs{F_{\kappa_2',\kappa_2}(h_1)}
  &\ll 
    q^{\frac 12 c(f) \mu} q^{-c(f)(\kappa_2-\kappa_2')} 
    \le q^{-\frac 12 c(f) \mu}
  \\
  \rho_3-\frac{\rho_5}8 
  &= - \frac {\rho_3}4 
  \\
  \frac 12(\rho_3-\rho_2) 
  &= - \frac{\rho_3}2 
  \\
  \rho_3-\rho_1 
  &= - \rho_3
    .
\end{align*}
This leads to 
\begin{multline*}
  q^{-\mu-\nu-2\rho}
  \sum_{1\leq r < q^\rho} \sum_{1\leq s < q^\rho} 
  \abs{S_{28}(r,s)}
  \\
  \ll
  \mu q^{128 \eta(f)^2 \mu}
  \left( 
    q^{69 \eta(f) \mu} q^{-\frac 1{16} \mu} 
    + q^{-64 \eta(f) \mu} + q^{-\frac 1{33} \mu } 
    + q^{-\frac 12 c(f) \mu }  + q^{-128 \eta(f) \mu } 
    \right) 
    .
\end{multline*}
Since the term $q^{-128 \eta(f) \mu}$ dominates the first three ones we gets
\begin{displaymath}
  q^{-\mu-\nu-2\rho}
  \sum_{1\leq r < q^\rho} \sum_{1\leq s < q^\rho} 
  \abs{S_{28}(r,s)}
  \ll
  \mu q^{\left( 128 \eta(f)^2 - \frac 12 c(f) \right)  \mu} +
  q^{- 128 \eta(f) \mu }.
\end{displaymath}

Summing up, this directly leads to 
\begin{multline*}
  q^{-\mu-\nu-2\rho}
  \sum_{1\leq r < q^\rho} \sum_{1\leq s < q^\rho} 
  \abs{S_{24}(r,s)}
  \\
  \ll
  q^{-\frac 1{30} \mu} +
  \mu^2 q^{- 8 \eta(f)\mu } +
  \mu q^{128 \eta(f)^2 \mu}
  \left( 
    q^{69 \eta(f) \mu} q^{-\frac 1{16} \mu} 
    + q^{-64 \eta(f) \mu} + q^{-\frac 1{33} \mu } 
    + q^{-\frac 12 c(f) \mu }  + q^{-128 \eta(f) \mu } 
    \right) 
    .
\end{multline*}

\medskip
\subsection{Upper bounds for $S_{20}(\theta)$}~

By applying (\ref{eq:S22-to-S23}) and (\ref{eq:approximation-S23-S24}) 
and by observing that
\begin{displaymath}
q^{-\rho_1} + q^{\rho_1+\rho-\mu} \ll q^{-256 \eta(f)\mu } 
\end{displaymath}
we derive
\begin{multline*}
  q^{-\mu-\nu-2\rho}
  \sum_{1\leq r < q^\rho} \sum_{1\leq s < q^\rho} 
  \abs{S_{22}(r,s)}
  \\
  \ll
  q^{-256 \eta(f)\mu }  + 
  q^{-\frac 1{30} \mu} +
  \mu^2 q^{- 8 \eta(f)\mu } \\
  +
  \mu q^{128 \eta(f)^2 \mu}
  \left( 
    q^{69 \eta(f) \mu} q^{-\frac 1{16} \mu} 
    + q^{-64 \eta(f) \mu} + q^{-\frac 1{33} \mu } 
    + q^{-\frac 12 c(f) \mu }  + q^{-128 \eta(f) \mu } 
  \right) \\
    \mu^2 q^{(128 \eta(f)^2 - c(f))\mu }
        .
\end{multline*}
Finally by using (\ref{eqS20est}) we obtain
\begin{proposition}\label{proposition:TypeII-sums-final-estimate}
  Uniformly in the range \eqref{eq:inequality-mu-nu} and $\theta\in\R$,
  we have
  \begin{displaymath}
    q^{-\mu-\nu} \abs{S_{20}(\theta)}
    \ll  
    \mu^{\frac 12} q^{(32 \eta(f)^2 - \frac 14 c(f))\mu },
  \end{displaymath}
  provided $f$ satisfies $\eta(f) \le 1/2000$.
\end{proposition}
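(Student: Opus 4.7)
The plan is to estimate $S_{20}(\theta)$ by iterated Cauchy--Schwarz and van der Corput reductions followed by a Fourier expansion of a digital truncation of $f$. First I would apply Lemma~\ref{lemma:van-der-corput} twice, with shifts $(N',R)=(1,q^\rho)$ and then $(N',R)=(q^{\kappa_1},q^\rho)$ where $\kappa_1=\mu-\rho$, to reduce $\abs{S_{20}}^4$ to an average over $(r,s)\in[1,q^\rho)^2$ of the quadrilinear sum $S_{22}(r,s)$ in $f(m^2n^2)$, $f(m^2(n+r)^2)$, $f((m+sq^{\kappa_1})^2n^2)$, $f((m+sq^{\kappa_1})^2(n+r)^2)$. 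Since the shift of $m$ by $sq^{\kappa_1}$ leaves the low-order digits untouched, two applications of the carry-propagation Lemma~\ref{Le:carryproperty} allow me to replace $f$ by its truncation $f_{\kappa_1,\kappa_2}$ with $\kappa_2=2\mu+\nu+\rho+\widetilde{\rho}$, at an admissible cost $O(q^{\mu+\nu-\widetilde{\rho}})$; a Vaaler--Beurling approximation at scale $\lambda=\kappa_2-\kappa_1$ and $H=Kq^\lambda-1$ with $K=q^{\rho_1}$, via Lemma~\ref{lemma:vaaler-expansion}, then expresses the resulting $S_{24}(r,s)$ as a quadruple Fourier sum over $(h_1,h_2,h_3,h_4)$ weighted by $\fourier{\chi_{\alpha,H}}(h_i)F_{\kappa_1,\kappa_2}(h_i)$.

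Next I would split according to the linear form $h_1-h_2-h_3+h_4$ and $|h_1-h_2|$, yielding three sums $S_{26}$, $S_{27}$, $S_{28}$. The off-diagonal $S_{26}$ is handled by the quartic estimate Lemma~\ref{lemma:double-exponential-sum-m^2n^2} applied with $\xi_4=(h_1-h_2-h_3+h_4)q^{-\kappa_2}$, whose non-integrality is guaranteed and whose magnitude lies in a usable window; the near-diagonal $S_{27}$ is handled by Lemma~\ref{lemma:double-exponential-sum-xi_2} applied with $\xi_2=2r(h_1-h_2)q^{-\kappa_2}$, using $|h_1-h_2|\geq q^{\rho_3}$ to get a saving in terms of $\rho_3-\widetilde{\rho}$. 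In both cases the Fourier weights are collected via the $L^1$ bound \eqref{eq:L1-mean-chi_H-F_kappa1-kappa2}, which controls them by powers of $q^{\eta(f)\lambda}$.

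The main obstacle is the fully diagonal $S_{28}$, where $h_1-h_2-h_3+h_4=0$ and $|h_1-h_2|<q^{\rho_3}$, because no van der Corput saving is available in the $h$-variables. The idea is to Fourier-invert the sum over $h_3$: this produces the convolution $\chi_{\alpha,H}^**(\chi_{\alpha,H}^*\e^\ell)$, which by Lemma~\ref{lemma:chi_H-convolution-chi_H-2} concentrates near the origin up to error $3/K$, collapsing the $h_3$-sum to an inner expression $T_2(r,s;h_1,u_2,\ell)$ that is only linear in $m$. On $T_2$ I then apply a third Cauchy--Schwarz and van der Corput with shift $q^{\rho_2}$, followed by a second Vaaler approximation at scale $\lambda'=\nu+2\rho+\widetilde{\rho}+\rho_2$, expressing the outcome as an $(h_5,h_6)$-Fourier expansion. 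The off-diagonal part $T_4$ is then controlled via Lemma~\ref{lemma:double-exponential-sum-mn^2} with $\xi_3=2s(h_5-h_6)q^{-\lambda}$, while the diagonal part $T_5$ reduces to incomplete quadratic Gauss sums estimated by Lemma~\ref{lemma:incomplete-gauss-sum-2}. For the large-$\gcd$ subcase of $T_5$, the primality of $q$ forces $h_5\equiv h_1$ modulo a large power of $q$, so Lemma~\ref{lemma:chi-convolution-chi-average} together with the decay \eqref{eqinfinitynorm} bounds the remaining Fourier mass by $\max_{h_1}\abs{F_{\kappa'_2,\kappa_2}(h_1)}\ll q^{-c(f)(\kappa_2-\kappa'_2)}$.

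Finally I would combine \eqref{eq:S26-upperbound}, \eqref{eq:S27-upperbound}, \eqref{eq:S28-upperbound} and optimize the six free parameters $\rho,\widetilde{\rho},\rho_1,\rho_2,\rho_3,\rho_5$. A natural calibration takes $\rho_3=128\eta(f)\mu$ so that the $L^1$ factor $q^{2\eta(f)\lambda}$ in $S_{27}$ is dominated by $q^{\rho_3/4}$, then $\widetilde{\rho}=c(f)\mu$, $\rho=\rho_3/2$, $\rho_1=\rho_2=2\rho_3$, and $\rho_5=10\rho_3$. Under the hypothesis $\eta(f)\leq 1/2000$ the various error exponents all split favourably: the dominant surviving contribution comes from $S_{28}$ and is of the form $q^{(128\eta(f)^2-\frac12 c(f))\mu}$, which after the fourth root introduced by the two initial van der Corput steps becomes the claimed $\mu^{1/2}q^{(32\eta(f)^2-\frac14 c(f))\mu}$. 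The delicate point in this balancing will be verifying that the exponent $69\eta(f)\mu$ arising from the second Vaaler layer in $T_4$ is absorbed by the factor $q^{-\mu/16}$, which is precisely why the smallness $\eta(f)\leq 1/2000$ is imposed.
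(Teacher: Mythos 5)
Your proposal follows the paper's own proof essentially step for step: the same two van der Corput reductions, the same carry-propagation and Vaaler truncation, the identical split into $S_{26}$, $S_{27}$, $S_{28}$ with the same exponential-sum lemmas, the same Fourier-inversion trick on the $h_3$-sum leading to $T_2$, $T_4$, $T_5$, and even the same parameter calibration $\rho_3=128\eta(f)\mu$, $\widetilde{\rho}=c(f)\mu$, $\rho=\rho_3/2$, $\rho_1=\rho_2=2\rho_3$, $\rho_5=10\rho_3$. This is the paper's argument, correctly reproduced.
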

\begin{remark}
  If $\eta(f) > 1/2000$ then we do not get a proper upper bound.
\end{remark}


\section{Sums of Type I}\label{section:typeIsums}

We use the abbreviation $M= q^\mu$ and $N = q^\nu$.

In order to estimate the Type I sums,
it is sufficient to estimate the sums
\begin{displaymath}
  S_I(\theta) = 
  \sum_{q^{\mu-1} \leq m < q^\mu} 
  \abs{ \sum_{n \in I_\nu(m)} f(m^2n^2) \e(\theta m n) }
  ,
\end{displaymath}
uniformly over all intervals $I_\nu(m) \subseteq [q^{\nu-1}, q^\nu[$
under the condition that
\begin{displaymath}
  \frac{\mu}{\mu+\nu} \leq \beta_1
  .
\end{displaymath}
Splitting $q$-adically,
the general case can be reduced to this case.

We recall that we have chosen $\beta_1 = \frac 15$, see
(\ref{eq:choice-beta_1}), so we have to consider the range
\begin{displaymath}
  \mu \le \frac 14 \nu.
\end{displaymath}
As in the case of type II sums we will assume a slightly 
more general condition
\begin{equation}
  \label{eq:assumption-mu-nu-typeI}
  \mu \le \frac 14 \nu + C
\end{equation}
for some absolute constant $C > 0$.

After Cauchy-Schwarz, van der Corput inequality
(with $R = q^\rho$ where $1\leq \rho < \nu$)
and using the carry property decribed in Lemma~\ref{Le:carryproperty}, taking
\begin{equation}
  \label{eq:definition-lambda-type-I}
  \lambda_1 = 2\mu + \nu + 2\rho
\end{equation}
and
\begin{math}
  I_\nu(m,r) =   I_\nu(m) \cap   \left( I_\nu(m) - r \right)
  ,
\end{math}
we get 
\begin{displaymath}
  \abs{S_I(\theta)}^2 \le  
 \frac{M^2N^2}{R}
  +
  \frac{M N}{R}
  \Re
  \sum_{m} 
  \sum_{1\leq r < R}
  \left(1-\frac{r}{R}\right)
  \sum_{n \in I_\nu(m,r)}
  f_{\lambda_1}(m^2(n+r)^2)
  \conjugate{f_{\lambda_1}(m^2n^2)}
  + O(M^2N^2 q^{-\rho})
  .
\end{displaymath}
By Fourier inversion we have
\begin{displaymath}
  f_{\lambda_1}(m^2(n+r)^2)
  \conjugate{f_{\lambda_1}(m^2n^2)}
  =
  \sum_{0\leq h < q^{\lambda_1}}
  \sum_{0\leq k < q^{\lambda_1}}
  F_{\lambda_1}(h)
  \conjugate{F_{\lambda_1}(k)}
  \e_{q^{\lambda_1}}
  \left(
    h m^2(n+r)^2 - k m^2 n^2
  \right)
  .
\end{displaymath}

We will partition $h$ and $k$ according to the condition
\begin{displaymath}
  \gcd(h-k,q^{\lambda_1}) = q^\delta,
\end{displaymath}
where 
\begin{displaymath}
  0\le \delta \le \lambda_1.
\end{displaymath}
We have 
\begin{displaymath}
  k = h + \ell q^\delta,
\end{displaymath}
where we can assume by periodicity that $0\le \ell < q^{\lambda_1 - \delta}$ 
and that $\gcd(\ell, q) = 1$. 

For convenience we set
\begin{equation}\label{eq:definition-S_delta-typeI}
  S_\delta(r)
  :=
  \sum_{0\leq h < q^{\lambda_1}}
  \sum_{\substack{0\leq \ell < q^{\lambda_1-\delta}\\ \gcd(\ell,q) = 1}}
  F_{\lambda_1}(h)
  \conjugate{F_{\lambda_1}(h + \ell q^\delta)}
  \sum_{m}  \sum_{n \in I_\nu(m,r)}
  \e_{q^{\lambda_1}}
  \left( \ell q^\delta m^2 n^2 + h m^2(2nr+r^2) \right)
  .
\end{equation}
We have then
\begin{equation}\label{eqSIest}
  \abs{S_I(\theta)} 
  \ll 
  q^{\mu+\nu- \frac 12 \rho} 
  + q^{\frac 12(\mu+\nu)}
  \left( 
    \frac 1R
    \sum_{1 \le r \le R} 
    \sum_{0\le \delta\le \lambda_1} 
    \abs{S_\delta(r)} 
  \right)^{\frac 12}
  .
\end{equation}

\medskip
\subsection{Upper bounds for $\abs{S_\delta(r)}$
  for large difference $\lambda_1-\delta$}~ 

The exponential sums appearing in $S_\delta(r)$
defined in \eqref{eq:definition-S_delta-typeI}
are of the form
\begin{displaymath}
  \sum_{n \in I_\nu(m,r)}
  \e
  \left( 
    \frac{\ell}{ q^{\lambda_1-\delta} } m^2 n^2 + 
    \frac{h}{q^{\lambda_1}} m^2(2n r + r^2)
  \right),
\end{displaymath}
where $\ell$ and $q^{\lambda_1-\delta}$ are coprime.
By Lemma~\ref{lemma:incomplete-gauss-sum-2}
with $m$ replaced by
\begin{math}
  q^{\lambda_1-\delta} /   \gcd\left( m^2, q^{\lambda_1-\delta} \right)
\end{math}
we, thus, obtain 
\begin{multline*}
  \abs{
    \sum_{n \in I_\nu(m,r)}
    \e\left( 
      \frac{\ell}{ q^{\lambda_1-\delta} } m^2 n^2 + 
      \frac{h}{q^{\lambda_1}} m^2(2n r + r^2)
    \right)
  }
  \\
  \ll 
  \gcd\left(m^2, q^{\lambda_1-\delta} \right)^{\frac 12}
  q^{\nu - \frac 12 (\lambda_1-\delta)}
  +
  \nu^{1/2} q^{\frac12 \nu}
  +
  \nu^{1/2} q^{\frac 12 (\lambda_1-\delta)}
  .
\end{multline*}
Since by Lemma~\ref{lemma:gcd-sum}
and \eqref{eq:tau-q-lambda},
remembering that $q$ is a prime number,
\begin{displaymath}
  \sum_m \gcd\left(m^2, q^{\lambda_1-\delta}\right)^{\frac 12}
  \leq
  \sum_m \gcd\left(m, q^{\lambda_1-\delta}\right)
  \ll 
  \tau\left( q^{\lambda_1-\delta} \right) q^\mu
  \ll \nu^{\omega(q)} q^\mu
  = \nu q^\mu
\end{displaymath}
we deduce
\begin{multline*}
  \sum_{m}
  \abs{
    \sum_{n \in I_\nu(m,r)}
    \e\left( 
      \frac{\ell}{ q^{\lambda_1-\delta} } m^2 n^2 + 
      \frac{h}{q^{\lambda_1}} m^2(2n r + r^2)
    \right)
  }
  \\
  \ll
  \nu q^{\mu+\nu} q^{- \frac 12 (\lambda_1-\delta)}
  +
  \nu^{1/2} q^{\mu+\frac12 \nu}
  +
  \nu^{1/2} q^{\mu+\frac 12 (\lambda_1-\delta)}
  .
\end{multline*}

Next we write $h$ as $h = h_1 + h_2 q^\delta$ with
$0\le h_1 < q^\delta$ and $0\le h_2 < q^{\lambda_1-\delta}$. 
By \eqref{eq:strongly-multiplicative-FT-product-formula} we have 
for $k = h + \ell q^\delta = h_1 + (h_2+\ell) q^\delta $ 
\begin{displaymath}
  F_{\lambda_1}(h)
  \conjugate{F_{\lambda_1}(k)} 
  = 
  \abs{F_{\delta}(h_1)}^2
  F_{\lambda_1-\delta}(h_1q^{-\delta} + h_2)
  \conjugate{F_{\lambda_1-\delta}(h_1q^{-\delta} + h_2 + \ell)}.
\end{displaymath}  
By applying twice Lemma~\ref{Le1} we have
\begin{displaymath}
  \sum_{0\le h_2 <  q^{\lambda_1-\delta}}
  \sum_{0\le \ell <  q^{\lambda_1-\delta}} 
  \abs{
    F_{\lambda_1-\delta}(h_1q^{-\delta} + h_2)
    \conjugate{F_{\lambda_1-\delta}(h_1q^{-\delta} + h_2 + \ell)}
  }
  \ll q^{2\eta(f) (\lambda_1-\delta)}
  ,
\end{displaymath}
and, since, by \eqref{eq:quadratic-mean-F_kappa1-kappa2},
\begin{displaymath}
  \sum_{0\le h_1 < q^\delta} \abs{F_{\delta}(h_1)}^2 = 1,
\end{displaymath}
we deduce
\begin{equation}\label{eqSdelta1}
  \abs{S_\delta(r)}
  \ll 
  \nu q^{\mu+\nu} q^{  - \left(\frac 12 - 2\eta(f) \right)  (\lambda_1-\delta)}
  +
  \nu^{1/2} q^{\mu+\nu} q^{-\frac12 \nu + 2\eta(f) (\lambda_1-\delta)}
  +
  \nu^{1/2}
  q^{\mu+\nu} q^{-\nu + \left( \frac 12 + 2\eta(f)\right)(\lambda_1-\delta)} 
  . 
\end{equation}
The first term provides a non-trivial bound if $\eta(f) < \frac 14$
and the difference $\lambda_1-\delta$ is sufficiently large.
The second term is non-trivial if $\eta(f) < \frac{1}{20}$
and the third term is non-trivial if $\eta(f) < \frac{1}{20}$
and \eqref{eq:assumption-mu-nu-typeI} hold. 
Actually we use the general assumption $\eta(f) \le 1/2000$. So
we are on the safe side.

\medskip
\subsection{Upper bounds for $\abs{S_\delta(r)}$ for small difference $\lambda_1-\delta$}~
In a second step we consider the case, where the 
difference
\begin{equation}\label{eq:definition-kappa-type-I}
  \kappa := \lambda_1-\delta
\end{equation}
is (relatively) small.
It will be made precise later but it is convenient
to assume immediately that
\begin{equation}
  \label{eq:initial-condition-kappa-typeI}
  \kappa < \nu.
\end{equation}
For example, if $\kappa = 0$ then we only have
the case $h=k$ which can be called the diagonal case.
So we consider now a sort of bigger diagonal.

We start with the exponential sum
\begin{displaymath}
  \sum_{n \in I_\nu(m,r)}
  \e\left(   
    \frac{\ell}{q^\kappa} m^2 n^2 + 
    \frac{2rh}{q^{\lambda_1}} m^2 n + 
    \frac{r^2h}{q^{\lambda_1}} m^2 
  \right)
\end{displaymath}
and split up $n$ in residue classes modulo $q^\kappa$:
$n = n_1 + n_2 q^\kappa$ with $0\le n_1 < q^\kappa$
and $q^{\nu-\kappa-1}\le n_2 < q^{\nu - \kappa}$
such that $n_1 + n_2 q^\kappa \in I_\nu(m,r)$.
This leads to $q^\kappa$ exponential sums,
where the exponent is just linear in $n_2$:
\begin{multline*}
  \sum_{n \in I_\nu(m,r)}
  \e\left(   
  \frac{\ell}{q^\kappa} m^2 n^2 + 
  \frac{2rh}{q^{\lambda_1}} m^2 n + 
  \frac{r^2h}{q^{\lambda_1}} m^2 
  \right)
  \\
  =
  \sum_{0\le n_1 < q^\kappa} 
  \e \left(
  \frac{\ell}{q^\kappa} m^2 n_1^2 + 
  \frac{2rh}{q^{\lambda_1}} m^2 n_1
  \right)
  \sum_{\substack{n_2\\ n_1 + n_2 q^\kappa \in I_\nu(m,r)}}
  \e \left(
  \frac{2rh}{q^{\delta}} m^2 n_2 + 
  \frac{r^2h}{q^{\lambda_1}} m^2 
  \right)
\end{multline*}
and this can be estimated,
using \eqref{eq:estimate-geometric-series}, by:
\begin{displaymath}
  \ll 
  q^\kappa
  \min\left( 
    q^{\nu-\kappa}, 
    \norm{ \frac{2rh m^2}{q^{\delta}} }^{-1}
  \right)
  ,  
\end{displaymath}
which leads to the upper bound 
\begin{equation}\label{eqSdeltaest2}
  \abs{S_\delta(r)}
  \ll
  q^{2\kappa} \sum_m 
  \max_\ell
  \sum_{0\le h < q^{\lambda_1}}
  \abs{ F_{\lambda_1}(h)
    \conjugate{F_{\lambda_1}(h+ \ell q^\delta)} 
  }
  \min
  \left( 
    q^{\nu-\kappa}, 
    \norm{ \frac{2rh m^2}{q^{\delta}} }^{-1}
  \right)
  .
\end{equation}

We now set
\begin{equation}\label{eq:definition-A-type-I}
  A
  :=
  \frac{q^\delta}{2rm^2}
  \leq
  \frac{q^{\lambda_1}}{2rm^2}
  ,
\end{equation}
and,
using \eqref{eq:definition-lambda-type-I} and
\eqref{eq:definition-kappa-type-I},
\begin{equation}\label{eq:definition-B-type-I}
  B := \frac{A}{q^{\nu-\kappa}}
  = \left( \frac M m \right)^2 \frac{R^2}{2r}
  \geq \frac{R}{2} =  \frac{q^\rho}{2}  
  .
\end{equation} 
Then the mapping
\begin{displaymath}
  t \mapsto \norm{ \frac tA } = 
  \norm{ \frac{2r t m^2}{q^{\delta}} }
\end{displaymath}
is a periodic function with period $A$ and that increases linearly for
$0\le t \le A/2$ (from $0$ to $\frac 12$) and decreases linearly for
$A/2 \le t \le A$ (from $\frac 12$ to $0$). Note that $A$ is usually
not an integer. Therefore we have to be careful because we
only evaluate at integer values $h$.

We first consider those $0\leq h < q^{\lambda_1}$ for which
\begin{equation}\label{eqhrestriction}
  \norm{ \frac hA } < \frac{1}{q^{\nu-\kappa}}.
\end{equation}
For those $h$, there exist $k\in\Z$ such that
\begin{displaymath}
  \frac{h}{A} - \frac{1}{q^{\nu-\kappa}}
  < k <
  \frac{h}{A} + \frac{1}{q^{\nu-\kappa}}
  .
\end{displaymath}
Remembering that $k$ is an integer
and $0\leq h < q^{\lambda_1}$,
this implies that
\begin{displaymath}
  0\leq k < \frac{q^{\lambda_1}}{A} +1
\end{displaymath}
and
\begin{math}
  -B < k A - h < B
  ,
\end{math}
hence
\begin{displaymath}
  -B-1 < \floor{k A} - h < B
  .
\end{displaymath}
Thus the whole set of $0\leq h < q^{\lambda_1}$
satisfying \eqref{eqhrestriction} is covered by
the following union of {\it almost arithmetic progressions}
\begin{displaymath}
  H_1 :=
  \bigcup_{\abs{h_0} < B + 1}
  \left\{
    h_0 + \floor{ k A } : 0\le k < \frac{q^{\lambda_1}}{A} + 1
  \right\}
  .
\end{displaymath}
If $h\in H_1$ then we use
\begin{displaymath}
  \min\left( 
    q^{\nu-\kappa}, 
    \norm{ \frac{2rh m^2}{q^{\delta}} }^{-1}
  \right)
  \leq q^{\nu-\kappa}
  ,
\end{displaymath}
and otherwise we will use the other term.
Hence we have
\begin{equation}\label{eq:S_delta-H1-H2}
  \abs{S_\delta(r)}
  \ll
  S_{\delta,H_1}(r)
  +
  S_{\delta,H_2}(r)
\end{equation}
with
\begin{math}
  H_2 = \{0,\ldots,q^{\lambda_1}-1\} \setminus H_1
  ,
\end{math}
\begin{displaymath}
  S_{\delta,H_1}(r)
  =
  q^{\nu+\kappa}
  \sum_m 
  \max_\ell
  \sum_{h\in H_1}
  \abs{ F_{\lambda_1}(h)
    \conjugate{F_{\lambda_1}(h+ \ell q^\delta)} 
  }
  ,
\end{displaymath}
\begin{displaymath}
  S_{\delta,H_2}(r)
  =
  q^{2\kappa} \sum_m 
  \max_\ell
  \sum_{h\in H_2}
  \abs{ F_{\lambda_1}(h)
    \conjugate{F_{\lambda_1}(h+ \ell q^\delta)} 
  }
  \norm{ \frac{2rh m^2}{q^{\delta}} }^{-1}
  .
\end{displaymath}
This leads to consider the sum
\begin{displaymath}
  \sum_{0\le k < \frac{q^{\lambda_1}}{A} + 1}
  \abs{
    F_{\lambda_1}(h_0 + \floor{k A})
    \conjugate{F_{\lambda_1}(h_0 + \floor{k A} + \ell q^\delta)}
  }
  ,
\end{displaymath}  
which, by Cauchy-Schwarz, is
\begin{displaymath}
  \le  
  \left( 
  \sum_{0\le k < \frac{q^{\lambda_1}}{A} + 1}
  \abs{ F_{\lambda_1}(h_0 + \floor{ k A } ) }^2
  \right)^{\frac 12} 
  \left(
  \sum_{0\le k < \frac{q^{\lambda_1}}{A} + 1}
  \abs{F_{\lambda_1}(h_0 + \floor{ k A } + \ell q^\delta)}^2
  \right)^{\frac 12}
  .
\end{displaymath}
By Lemma~\ref{lemma:L2-along-almost-arithmetic-progressions} 
with $\alpha = \floor{\log A/\log q}$,
for any $h'_0\in\Z$ we have
\begin{displaymath}
  \sum_{0\le k < \frac{q^{\lambda_1}}{A}}
  \abs{ F_{\lambda_1}\left( h'_0 + \floor{ k A } \right) }^2
  \ll
  \max_{t\in\R} \abs{F_\alpha(t)}^2
  .
\end{displaymath}
Here we have an additional term in the sums
for $k=\ceil{\frac{q^{\lambda_1}}{A}}$,
which is
\begin{displaymath}
  \abs{
    F_{\lambda_1}\left(
      h'_0 + \floor{ \ceil{\frac{q^{\lambda_1}}{A}} A }
    \right)
  }^2
  \leq
  \max_{t\in\R} \abs{F_{\lambda_1}(t)}^2
  \leq
  \max_{t\in\R} \abs{F_\alpha(t)}^2
  .
\end{displaymath}
Since
\begin{displaymath}
  \max_{t\in\R} \abs{F_\alpha(t)} \ll q^{-c(f) \alpha} \ll A^{-c(f)},
\end{displaymath}
we obtain
\begin{equation}\label{eq:correlation-along-quasi-periodic-sequence}
  \sum_{0\le k < \frac{q^{\lambda_1}}{A} + 1}
  \abs{ F_{\lambda_1}(h_0 + \floor{ k A })
    \conjugate{F_{\lambda_1}(h_0 + \floor{ k A } + \ell q^\delta)}} 
  \ll
  A^{-2c(f)} 
  ,
\end{equation}
which, by \eqref{eq:definition-B-type-I}, leads to
\begin{displaymath}
  \sum_{\abs{h_0} < B + 1} 
  \sum_{0\le k < \frac{q^{\lambda_1}}{A} + 1}
  \abs{ F_{\lambda_1}(h_0 + \floor{ k A })
    \conjugate{F_{\lambda_1}(h_0 + \floor{ k A } + \ell q^\delta)}}
  \ll
  B A^{-2c(f)} 
  =
  B^{1-2c(f)} 
  q^{-2c(f)(\nu-\kappa)}
  .
\end{displaymath}
Thus, multiplying by $q^{\nu+\kappa}$ and summing over $m$,
remembering \eqref{eq:definition-B-type-I}
we get
\begin{displaymath}
  S_{\delta,H_1}(r)
  \ll
  q^{\mu+\nu+\kappa} \frac{R^{2-4c(f)}}{r^{1-2c(f)}} q^{-2c(f)(\nu-\kappa)}
  .
\end{displaymath}

It remains to consider $h\in H_2$.
For those $h$ we have $0\leq h < q^{\lambda_1}$
and
\begin{displaymath}
  \norm{\frac{h}{A}} \geq \frac{1}{q^{\nu-\kappa}}
  ,
\end{displaymath}
which implies that there exist $k\in\Z$ such that
\begin{displaymath}
  \frac{1}{q^{\nu-\kappa}}
  \leq \abs{\frac{h}{A} -k}
  \leq \frac12
  .
\end{displaymath}
It follows that
\begin{displaymath}
  -\frac12
  \leq
  k < \frac{q^{\lambda_1}}{A} +\frac12
\end{displaymath}
and remembering that $k$ is an integer, that
\begin{displaymath}
  0\leq k < \frac{q^{\lambda_1}}{A} + 1
  .
\end{displaymath}
We also have
\begin{displaymath}
  B = \frac{A}{q^{\nu-\kappa}} \leq \abs{h - kA} \leq \frac{A}{2}
\end{displaymath}
which implies that
\begin{displaymath}
  B - 1 \leq \abs{h - \floor{kA}} \leq \frac{A}{2} + 1
  .
\end{displaymath}
Therefore we have
\begin{displaymath}
  H_2
  \subseteq
  \bigcup_{B-1 \le \abs{h_0} \le \frac{A}{2}+1}
  \left\{
    h_0 + \floor{kA}:\
    0\leq k < \frac{q^{\lambda_1}}{A} + 1
  \right\}
  .
\end{displaymath}
Furthermore, for $h = h_0 + \floor{kA}$
with $B-1 \le \abs{h_0} \le \frac{A}{2}+1$, we have
\begin{displaymath}
  \norm{ \frac{2rh m^2}{q^{\delta}} }
  =
  \norm{ \frac{h}{A} }
  =
  \norm{ \frac{h_0+ \floor{kA}}{A} }
  =
  \norm{ \frac{h_0 - \frp{kA}}{A} }
  ,
\end{displaymath}
hence
\begin{displaymath}
  \norm{ \frac{2rh m^2}{q^{\delta}} }
  \geq
  \norm{ \frac{h_0}{A} } - \frac{1}{A}
  \geq
  \norm{ \frac{\abs{h_0}-1}{A} } - \frac{2}{A}
  =
  \frac{\abs{h_0}-3}{A}
  .
\end{displaymath}

Using \eqref{eq:correlation-along-quasi-periodic-sequence},
this leads to an upper bound of the form
\begin{displaymath}
  S_{\delta,H_2}(r)
  \ll
  q^{2\kappa} 
  \sum_m \sum_{B-1 \le \abs{h_0} \le \frac{A}{2}+1}  
  A^{-2c(f)} 
  \frac{A}{\abs{h_0}-3}
  ,
\end{displaymath}
hence by \eqref{eq:definition-B-type-I},
\begin{align*}
  S_{\delta,H_2}(r)
  \ll
  (\log A)
  q^{\mu+2\kappa}
  \left( \frac{M^2}{m^2} \frac{R^2}r \right)^{1-2c(f)}
  q^{(1-2c(f))(\nu-\kappa)}
  \ll
  (\log A)
  q^{\mu+\nu+\kappa}
  \frac{R^{2-4c(f)}}{r^{1-2c(f)}}
  q^{-2c(f)(\nu-\kappa)}
  .
\end{align*}
Thus,
remembering \eqref{eq:definition-kappa-type-I}
and \eqref{eq:S_delta-H1-H2},
we get
\begin{equation}\label{eqSdelta3}
  \abs{S_\delta(r)}
  \ll
  \nu
  q^{\mu+\nu+\lambda_1-\delta}
  \frac{R^{2-4c(f)}}{r^{1-2c(f)}}
  q^{-2c(f)(\nu-\lambda_1+\delta)}
  \ll 
  \nu
  \frac{R^{2-4c(f)}}{r^{1-2c(f)}}
  q^{\mu+\nu} q^{-2c(f)\nu+(1+2c(f))(\lambda_1-\delta)}
  .
\end{equation}

\medskip
\subsection{Upper bound for $S_I(\theta)$}~

We distinguish between $0\le \delta \le \lambda_1 - \rho$ and 
$\lambda_1 - \rho < \delta \le \lambda_1$.

In the first case we can use the upper bound (\ref{eqSdelta1}) and
get
\begin{displaymath}
  \sum_{0\le \delta \le \lambda_1 - \rho} \abs{S_\delta(r)}
  \ll
  \nu q^{\mu+\nu} q^{  - \left(\frac 12 - 2\eta(f) \right) \rho}
  +
  \nu^{1/2} q^{\mu+\nu} q^{-\frac12 \nu + 2\eta(f) \lambda_1}
  +
  \nu^{1/2}
  q^{\mu+\nu} q^{-\nu + \left( \frac 12 + 2\eta(f)\right)\lambda_1} 
\end{displaymath}
whereas in the second case
the condition \eqref{eq:initial-condition-kappa-typeI} is fulfilled and
we apply the upper bound (\ref{eqSdelta3}) and get
\begin{displaymath}
  \sum_{\lambda_1 - \rho< \delta \le \lambda_1} \abs{S_\delta(r)}
  \ll 
  \nu
  \frac{R^{2-4c(f)}}{r^{1-2c(f)}}
  q^{\mu+\nu} q^{-2c(f)\nu+(1+2c(f))\rho}
\end{displaymath}
We now have to choose the parameters.
Additionally to the assumption
\eqref{eq:assumption-mu-nu-typeI}
we also assume that
\begin{equation}
  \label{eq:assumption-rho-nu}
  \rho \leq \frac1{20} \nu
   .
\end{equation}
By \eqref{eq:definition-lambda-type-I} this leads to
\begin{displaymath}
  \lambda_1 = 2\mu+\nu+2\rho
  \leq
  \frac12 \nu + \nu + \frac{1}{10} \nu
  =
  \frac{8}{5} \nu
\end{displaymath}
hence
\begin{displaymath}
  -\frac12 \nu + 2\eta(f) \lambda_1
  \leq
  -\frac12 \nu + \frac1{10} \cdot \frac85 \nu 
  = - \frac{17}{50} \nu
  < - \frac12 \rho
\end{displaymath}
and
\begin{displaymath}
  -\nu + \left( \frac 12 + 2\eta(f)\right)\lambda_1
  \leq
  -\nu + \frac6{10} \cdot \frac85 \nu = -\frac1{25}\nu
  < - \frac12 \rho
  ,
\end{displaymath}
so that
\begin{align*}
  \frac 1R
  \sum_{1 \le r \le R} 
  \sum_{0 \leq \delta \le \lambda_1}
  \abs{S_\delta(r)}
  &
    \ll 
    \nu q^{\mu+\nu} q^{  - \left(\frac 12 - 2\eta(f) \right) \rho}
    +
    \nu
    R^{1-2c(f)}
    q^{\mu+\nu} q^{-2c(f)\nu+(1+2c(f))\rho}  
  \\
  &
    \ll 
    \nu q^{\mu+\nu} q^{  - \left(\frac 12 - 2\eta(f) \right) \rho}
    +
    \nu
    q^{\mu+\nu} q^{-2c(f)\nu + 2\rho}  
    .
\end{align*}
We finally set
\begin{equation}\label{eqrhofinal}
  \rho = \floor{\frac{2 c(f) \nu}{\frac{5}{2}-2\eta(f)}}
  .
\end{equation}
By assumption $c(f) \le \eta(f) \le 1/2000$. Thus, 
\eqref{eq:assumption-rho-nu} is clearly satisfied.
By (\ref{eqSIest}) we obtain
\begin{equation}\label{eqSIest2}
  \abs{S_I(\theta)} 
  \ll
  q^{\mu+\nu - \rho}
  +
  \nu^{\frac 12}
  q^{\mu+\nu}
  q^{  - \frac12 \left(\frac 12 - 2\eta(f) \right) \rho}
  \ll
  \nu^{\frac 12}
  q^{\mu+\nu}
  q^{-\left(\frac 12 - 2\eta(f) \right) \frac{c(f)\nu}{\frac{5}{2}-2\eta(f)}}
  ,
\end{equation}
which gives
\begin{proposition}\label{proposition:TypeI-sums-final-estimate}
  Uniformly in the range \eqref{eq:assumption-mu-nu-typeI}
  and $\theta\in\R$,
  we have
  \begin{equation}\label{eqSIest2-2}
   q^{-\mu-\nu} 
   \abs{S_I(\theta)}
    \ll 
    \nu^{\frac 12}
        q^{- \frac{10}{51} c(f)\nu}
    .
  \end{equation}  
\end{proposition}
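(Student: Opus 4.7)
The plan is to assemble the pieces developed in Section~\ref{section:typeIsums}: reduce to the sums $S_\delta(r)$ indexed by the $q$-adic valuation of $h-k$, bound them in two regimes, and optimize the van der Corput length $\rho$.

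First I would start from the definition of $S_I(\theta)$, apply Cauchy--Schwarz followed by van der Corput's inequality (Lemma~\ref{lemma:van-der-corput}) with shift length $R = q^\rho$, and use the carry property (Lemma~\ref{Le:carryproperty}) to replace $f$ by its truncation $f_{\lambda_1}$ with $\lambda_1 = 2\mu+\nu+2\rho$, yielding \eqref{eqSIest}. Then Fourier inversion turns $f_{\lambda_1}(m^2(n+r)^2)\conjugate{f_{\lambda_1}(m^2n^2)}$ into a double sum over $h,k$ weighted by $F_{\lambda_1}(h)\conjugate{F_{\lambda_1}(k)}$. Partitioning according to $\gcd(h-k,q^{\lambda_1})=q^\delta$ and writing $k=h+\ell q^\delta$ with $\gcd(\ell,q)=1$ reduces matters to controlling the sums $S_\delta(r)$ defined in \eqref{eq:definition-S_delta-typeI}. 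The goal is then to prove two complementary bounds, depending on the size of $\kappa := \lambda_1-\delta$.

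In the regime where $\kappa$ is not too small (say $\delta \le \lambda_1-\rho$), I would estimate the $n$-sum by Weyl's bound for incomplete quadratic Gauss sums (Lemma~\ref{lemma:incomplete-gauss-sum-2}), extract the $m$-averaged gcd via Lemma~\ref{lemma:gcd-sum} and \eqref{eq:tau-q-lambda} (using that $q$ is prime so $\tau(q^\kappa)=\kappa+1$), and then use the $L^1$ bound of Lemma~\ref{Le1} to dispose of the $h,\ell$ sums at a cost $q^{2\eta(f)\kappa}$. This delivers \eqref{eqSdelta1}, which is non-trivial for $\kappa\ge \rho$ because $\eta(f)$ is small. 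In the complementary regime $\kappa<\rho$ (so the difference $h-k$ is essentially diagonal), Weyl alone gives no saving; instead I would split $n = n_1 + n_2q^\kappa$ so that the $n_2$-sum becomes linear and bounded via geometric series by $\min(q^{\nu-\kappa},\,\|\frac{2rhm^2}{q^\delta}\|^{-1})$.

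The central difficulty—and this is the main obstacle—is to sum this minimum against $|F_{\lambda_1}(h)F_{\lambda_1}(h+\ell q^\delta)|$ in a way that captures $c(f)$. The idea is to introduce $A = q^\delta/(2rm^2)$ and split $h\in[0,q^{\lambda_1})$ into $H_1$ (close to a multiple of $A$) and its complement $H_2$. Both sets are covered by short families of \emph{almost arithmetic progressions} $\{h_0+\floor{kA}\}$, and Lemma~\ref{lemma:L2-along-almost-arithmetic-progressions} then bounds $\sum_k |F_{\lambda_1}(h_0+\floor{kA})|^2$ by $\max_t|F_\alpha(t)|^2 \ll A^{-2c(f)}$ (with $\alpha = \floor{\log A/\log q}$). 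After applying Cauchy--Schwarz in the variable $k$, this is precisely where the factor $q^{-2c(f)(\nu-\kappa)}$ in \eqref{eqSdelta3} arises. Summing the $H_2$ contribution requires the extra $\sum_{h_0} 1/(|h_0|-3)$, which costs only a $\log$.

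Finally, combining \eqref{eqSdelta1} over $\delta\le \lambda_1-\rho$ and \eqref{eqSdelta3} over $\lambda_1-\rho<\delta\le\lambda_1$, inserting into \eqref{eqSIest}, and using $\eta(f)\le 1/2000$ shows that the dominant saving is $q^{-(\frac12-2\eta(f))\rho}$ versus $q^{-2c(f)\nu+2\rho}$. The optimal balance is achieved by the choice \eqref{eqrhofinal}, which satisfies \eqref{eq:assumption-rho-nu}; this yields \eqref{eqSIest2} and hence the exponent $-\frac{10}{51}c(f)\nu$, completing the proof of Proposition~\ref{proposition:TypeI-sums-final-estimate}.
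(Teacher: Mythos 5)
Your proposal follows the paper's own proof essentially step for step: the same reduction via Cauchy--Schwarz, van der Corput, the carry property and Fourier inversion to the sums $S_\delta(r)$, the same two-regime treatment (Weyl plus the $L^1$ bound of Lemma~\ref{Le1} for $\lambda_1-\delta\geq\rho$, and the splitting $n=n_1+n_2q^\kappa$ combined with the covering by almost arithmetic progressions and Lemma~\ref{lemma:L2-along-almost-arithmetic-progressions} for the near-diagonal range), and the same optimization of $\rho$ via \eqref{eqrhofinal}. The argument is correct and matches the paper's route.
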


\section{Proof of the main results}

Finally, we will use
Proposition~\ref{proposition:TypeII-sums-final-estimate}
and
Proposition~\ref{proposition:TypeI-sums-final-estimate}
to check the conditions of Lemma~\ref{lemma:combinatorial-identity}.

We start with type II sums, that is, we consider sums of the form
\begin{equation}\label{eqtypeIIsum-2}
\sum_{M/q < m \le M} \ \sum_{x/(qm) < n \le x/m} 
a_m b_n f(m^2n^2) \e(\theta m n) 
\end{equation}
for $x^{\beta_1} \le M \le x^{\beta_2}$.
 We recall that $\beta_1 = \frac 15$.
By symmetry in $m$ and $n$
it is sufficient to consider $x>0$ and $M>0$ with
\begin{equation}\label{eqxMrelation}
x^{\beta_1} \le M \le x^{\frac12}.
\end{equation}
We now define positive integers $\mu$ and $\nu$ by the conditions
\begin{equation}\label{eqxxMrel}
  q^{\mu-1} \le M < q^\mu
  \quad \mbox{and} \quad
  q^{\nu-1} \le \frac xM < q^\nu.
\end{equation}
From \eqref{eqxxMrel} we can write
\begin{math}
  q^{\nu-1} M \le x
\end{math}
and by \eqref{eqxMrelation} we have
\begin{math}
  x \leq M^5
  ,
\end{math}
hence
\begin{displaymath}
  (q^{\nu-1})^{1/4} \leq M.
\end{displaymath}
Since $M \le q^\mu$ this gives 
\begin{displaymath}
  \frac{\nu}4 - \frac 14 \leq \mu.
\end{displaymath}
Similarly, by \eqref{eqxMrelation} we have $M^2 \leq x$
and by \eqref{eqxxMrel} we can write
\begin{math}
  x < q^\nu M,
\end{math}
hence
\begin{math}
  M < q^\nu
\end{math}
and since $q^{\mu-1} \leq M$, we get the property
\begin{displaymath}
\mu \le \nu + 1.
\end{displaymath}
We are now considering the sum (\ref{eqtypeIIsum-2}) (see also (\ref{definition-sums-type-II})),
where we have to sum over $m$ with $M/q \le m < M$ which implies
\begin{displaymath}
q^{\mu-2} \le \frac Mq \le m < M \le q^\mu.
\end{displaymath}
We can cover this range by the union of the two intervals 
$[q^{\mu-2},q^{\mu-1})$ and $[q^{\mu-1},q^\mu)$.
Similarly the sum over $n$ with $x/(qm) \le n < x/m$
can be covered, using \eqref{eqxxMrel}, by
\begin{displaymath}
  q^{\nu-2} \le \frac x{qM} \le \frac x{qm}
  \le n < \frac xm \le \frac {qx}M \le q^{\nu+1}
\end{displaymath}
or by the union of three intervals 
$[q^{\nu-2},q^{\nu-1})$, $[q^{\nu-1},q^\nu)$, and $[q^{\nu},q^{\nu+1})$.
Thus, we can cover the summation range in (\ref{eqtypeIIsum-2}) by
six rectangles of the form $[q^{\mu-1},q^\mu) \times [q^{\nu-1},q^\nu)$, where we
have to replace $(\mu,\nu)$ by 
$(\mu-1,\nu-1)$, $(\mu,\nu-1)$, $(\mu-1,\nu)$,
$(\mu,\nu)$, $(\mu-1,\nu+1)$, $(\mu,\nu+1)$.
Since $\mu$ and $\nu$ satisfy
\begin{math}
   \frac{\nu}4 - \frac 14 \leq \mu \le \nu+1
   ,
 \end{math}
it follows that 
the replaced pairs satisfy
\begin{align*}
  &
    \frac{\nu-1}4 - 1 \le \mu-1 \le \nu-1 +1, \\
  &
    \frac{\nu-1}4 \le \mu \le \nu-1 +2, \\
  &
    \frac{\nu}4 - \frac 54 \le \mu-1 \le \nu, \\ 
  &
    \frac{\nu}4 - \frac 14 \le \mu \le \nu+1, \\
  &
    \frac{\nu+1}4 - \frac 32 \le \mu-1 \le \nu+1 -1,\\
  &
    \frac{\nu+1}4  - \frac 12 \le \mu \le \nu+1.
\end{align*}
Thus, we can apply
Proposition~\ref{proposition:TypeII-sums-final-estimate}
with $C = 2$ and obtain (by applying it 
for every choice of the six pairs 
$(\mu-1,\nu-1)$, $(\mu,\nu-1)$, $(\mu-1,\nu)$, $(\mu,\nu)$,
$(\mu-1,\nu+1)$, $(\mu,\nu+1)$) 
\begin{align*}
   \abs{ \sum_{q^{\mu-2} \le m < q^\mu } \sum_{q^{\nu-2}\le n < q^{\nu+1}  } 
    a_m b_n f(m^2n^2) \e(\theta m n)   } 
&\ll q^{\mu+\nu} \left(
   \mu^{\frac 12} q^{(32 \eta(f)^2 - \frac 14 c(f))\mu } \right)
  \\
    &\ll 
      x (\log x)^{\frac 12} q^{(32 \eta(f)^2 - \frac 14
      c(f))(\mu+\nu)/5 }
  \\
     &\ll  (\log x)^{\frac 12}
      x^{1-(\frac 14 c(f) - 32 \eta(f)^2)/5 } 
       .
\end{align*}
By setting some of the $a_m$ and $b_m$ artificially to zero we obtain also an upper bound
when we sum over any rectangle that is contained in
$[q^{\mu-2},q^\mu) \times [q^{\nu-1},q^{\nu+1})$.

In order to cover precisely the range $M/q \le m < M$,
 $x/(qm)\le n < x/q$ we
split up the range $q^{\mu-2} \le m < q^\mu$ into $K$ subintervals of the form
\[
I_j = \left[  
q^{\mu-2}\left( 1 + j \frac{q^2-1}K \right), 
q^{\mu-2}\left( 1 + (j+1) \frac{q^2-1}K \right)
 \right)
\]
with $0\le j < K$, where
\begin{displaymath}
K = \left\lfloor x^{\frac 12(\frac 14 c(f) - 32 \eta(f)^2)/5 }\right\rfloor,
\end{displaymath}
and by setting $a_m = 0$ if $m$ is not in this interval or not in 
$(M/q, M]$. More precisely we consider
sums of the form
\begin{align*}
S_{j}
  &=
  \sum_{ m \in I_j \cap (M/q, M ]  } \ 
  \sum_{q^{\nu-1} \leq n < q^{\nu+1}}
  a_m b_n f(m^2n^2) \e(\theta m n) 
\end{align*}
Furthermore in the corresponding
interval for $n$ (that is, $[q^{\nu-2},q^{\nu+1})$)
we also set some $b_n$ to zero so that we approximate the range $M/q \le m < M$, $x/(qm)\le n < x/m$ 
(after summing over $K$ rectangles) in a proper way:
\begin{align*}
S_{j}' 
  &=
  \sum_{ m \in I_j \cap (M/q, M ]  } \
  \sum_{x/\left( q \max\{M/q, q^{\mu-2}\left( 1 + j \frac{q^2-1}K \right) \}  \right) \leq n 
< x / \left( \min\{ M, q^{\mu-2}\left( 1 + j \frac{q^2-1}K \right)  \} \right) }
  a_m b_n  f(m^2n^2) \e(\theta m n).
\end{align*}
So for each $j\in \{0,1,\ldots K-1\}$ we have
\[
S_j' \ll
(\log x)^{\frac 12}
      x^{1-(\frac 14 c(f) - 32 \eta(f)^2)/5 } 
      .
\]

The error term that comes from this approximation 
is trivially upper bounded (for each rectangle) by
\begin{align*}
&\abs{
S_j' - 
 \sum_{ m \in I_j \cap (M/q, M ]  } \
 \sum_{x/(qm) \leq n < x/m}
 a_m b_n f(m^2n^2) \e(\theta m n)
  } \\
&\qquad \ll  \frac{q^{\mu+\nu}}{K^2} \ll \frac{x}{K^2}.
\end{align*}
Thus, the total error is upper bounded by $\ll K x/K^2 = x/K$ and, consequently, the
sum (\ref{eqtypeIIsum-2}) can be upper bounded by
\begin{align*}
  \ll
  &
    K (\log x)^{\frac 12} 
    x^{1- (\frac 14 c(f) - 32 \eta(f)^2)/5 ) } 
    + \frac xK
  \\
  &
    \ll
    (\log x)^{\frac 12}
    x^{1-\frac 12(\frac 14 c(f) - 32 \eta(f)^2)/5 }.
\end{align*}

\medskip

For type I sums we proceed similiarly. 
For $M \le x^{\beta_1}$ we define $\mu$ and $\nu$ by (\ref{eqxxMrel}) and
obtain
\begin{displaymath}
  \mu \le \frac{\nu}4 + \frac 34.
\end{displaymath}
We again cover the range $M/q \le m < M$, $x/(qm)\le n < x/q$
by six instances for $\mu$ and $\nu$ (as above) so that we can apply 
Proposition~\ref{proposition:TypeI-sums-final-estimate}
with $C=2$. Thus, we obtain the upper bound
\begin{displaymath}
  \ll q^{\mu+\nu} \nu^{\frac 12} q^{-\frac {10}{51} c(f) \nu} 
  \ll (\log x)^{\frac 12} x^{1 - \frac{8}{51} c(f) }.
\end{displaymath}
Note that this estimate is uniform in the intervals $I_\nu(m)$.
Clearly they can be adjusted so that we describe any interval of
the form $(t,x/m]$ with $x/(qm)\le t \le x/m$. 

\medskip

Summing up, we have checked the assumptions of
Lemma~\ref{lemma:combinatorial-identity} 
with 
\begin{displaymath}
  U \ll 
  (\log x)^{\frac 12} 
x^{1-\min \left( \frac{8}{51} c(f), \, (\frac 18 c(f) - 16 \eta(f)^2)/5 \right) }
  \ll
    (\log x)^{\frac 12} 
x^{1-(\frac 18 c(f) - 16 \eta(f)^2)/5 }
.
\end{displaymath}
This directly leads to 
\begin{displaymath}
  \sum_{x/q< n \le x} \Lambda(n) f(n^2)\e(\theta n)
  \ll (\log x)^{\frac 52} 
   x^{1-(\frac 18 c(f) - 16 \eta(f)^2)/5 }
  ,
\end{displaymath}
and finally to
\begin{displaymath}
  \sum_{n\le x} \Lambda(n) f(n^2)\e(\theta n)
  \ll
    (\log x)^{\frac 52}
    x^{1-(\frac 18 c(f) - 16 \eta(f)^2)/5 }
   .
\end{displaymath}
This proves Theorem~\ref{Thmainresult}.


\end{document}